\begin{document}

\theoremstyle{plain}
\newtheorem{C}{Convention}
\newtheorem{SA}{Standing Assumption}
\newtheorem{theorem}{Theorem}[section]
\newtheorem{condition}{Conditions}[section]
\newtheorem{lemma}[theorem]{Lemma}
\newtheorem{proposition}[theorem]{Proposition}
\newtheorem{corollary}[theorem]{Corollary}
\newtheorem{claim}[theorem]{Claim}
\newtheorem{definition}[theorem]{Definition}
\newtheorem{Ass}[theorem]{Assumption}
\newcommand{\q}{{\mathbf{Q}}}
\theoremstyle{definition}
\newtheorem{remark}[theorem]{Remark}
\newtheorem{note}[theorem]{Note}
\newtheorem{example}[theorem]{Example}
\newtheorem{assumption}[theorem]{Assumption}
\newtheorem*{notation}{Notation}
\newtheorem*{assuL}{Assumption ($\mathbb{L}$)}
\newtheorem*{assuAC}{Assumption ($\mathbb{AC}$)}
\newtheorem*{assuEM}{Assumption ($\mathbb{EM}$)}
\newtheorem*{assuES}{Assumption ($\mathbb{ES}$)}
\newtheorem*{assuM}{Assumption ($\mathbb{M}$)}
\newtheorem*{assuMM}{Assumption ($\mathbb{M}'$)}
\newtheorem*{assuL1}{Assumption ($\mathbb{L}1$)}
\newtheorem*{assuL2}{Assumption ($\mathbb{L}2$)}
\newtheorem*{assuL3}{Assumption ($\mathbb{L}3$)}
\newtheorem{charact}[theorem]{Characterization}
\newcommand{\notiz}{\textup} 
\renewenvironment{proof}{{\parindent 0pt \it{ Proof:}}}{\mbox{}\hfill\mbox{$\Box\hspace{-0.5mm}$}\vskip 16pt}
\newenvironment{proofthm}[1]{{\parindent 0pt \it Proof of Theorem #1:}}{\mbox{}\hfill\mbox{$\Box\hspace{-0.5mm}$}\vskip 16pt}
\newenvironment{prooflemma}[1]{{\parindent 0pt \it Proof of Lemma #1:}}{\mbox{}\hfill\mbox{$\Box\hspace{-0.5mm}$}\vskip 16pt}
\newenvironment{proofcor}[1]{{\parindent 0pt \it Proof of Corollary #1:}}{\mbox{}\hfill\mbox{$\Box\hspace{-0.5mm}$}\vskip 16pt}
\newenvironment{proofprop}[1]{{\parindent 0pt \it Proof of Proposition #1:}}{\mbox{}\hfill\mbox{$\Box\hspace{-0.5mm}$}\vskip 16pt}

\newcommand{\Law}{\ensuremath{\mathop{\mathrm{Law}}}}
\newcommand{\loc}{{\mathrm{loc}}}
\newcommand{\Log}{\ensuremath{\mathop{\mathcal{L}\mathrm{og}}}}
\newcommand{\Meixner}{\ensuremath{\mathop{\mathrm{Meixner}}}}
\newcommand{\of}{[\hspace{-0.06cm}[}
\newcommand{\gs}{]\hspace{-0.06cm}]}

\let\MID\mid
\renewcommand{\mid}{|}

\let\SETMINUS\setminus
\renewcommand{\setminus}{\backslash}

\def\stackrelboth#1#2#3{\mathrel{\mathop{#2}\limits^{#1}_{#3}}}

\renewcommand{\theequation}{\thesection.\arabic{equation}}
\numberwithin{equation}{section}

\newcommand\llambda{{\mathchoice
      {\lambda\mkern-4.5mu{\raisebox{.4ex}{\scriptsize$\backslash$}}}
      {\lambda\mkern-4.83mu{\raisebox{.4ex}{\scriptsize$\backslash$}}}
      {\lambda\mkern-4.5mu{\raisebox{.2ex}{\footnotesize$\scriptscriptstyle\backslash$}}}
      {\lambda\mkern-5.0mu{\raisebox{.2ex}{\tiny$\scriptscriptstyle\backslash$}}}}}

\newcommand{\prozess}[1][L]{{\ensuremath{#1=(#1_t)_{0\le t\le T}}}\xspace}
\newcommand{\prazess}[1][L]{{\ensuremath{#1=(#1_t)_{0\le t\le T^*}}}\xspace}

\newcommand{\tr}{\operatorname{tr}}
\newcommand{\lijepoa}{{\mathcal{A}}}
\newcommand{\lijepob}{{\mathcal{B}}}
\newcommand{\lijepoc}{{\mathcal{C}}}
\newcommand{\lijepod}{{\mathcal{D}}}
\newcommand{\lijepoe}{{\mathcal{E}}}
\newcommand{\lijepof}{{\mathcal{F}}}
\newcommand{\lijepog}{{\mathcal{G}}}
\newcommand{\lijepok}{{\mathcal{K}}}
\newcommand{\lijepoo}{{\mathcal{O}}}
\newcommand{\lijepop}{{\mathcal{P}}}
\newcommand{\lijepoh}{{\mathcal{H}}}
\newcommand{\lijepom}{{\mathcal{M}}}
\newcommand{\lijepou}{{\mathcal{U}}}
\newcommand{\lijepov}{{\mathcal{V}}}
\newcommand{\lijepoy}{{\mathcal{Y}}}
\newcommand{\cF}{{\mathcal{F}}}
\newcommand{\cG}{{\mathcal{G}}}
\newcommand{\cH}{{\mathcal{H}}}
\newcommand{\cM}{{\mathcal{M}}}
\newcommand{\cD}{{\mathcal{D}}}
\newcommand{\bD}{{\mathbb{D}}}
\newcommand{\bF}{{\mathbb{F}}}
\newcommand{\bG}{{\mathbb{G}}}
\newcommand{\bH}{{\mathbb{H}}}
\newcommand{\dd}{\operatorname{d}\hspace{-0.003cm}}
\newcommand{\ddd}{\operatorname{d}}
\newcommand{\er}{{\mathbb{R}}}
\newcommand{\ce}{{\mathbb{C}}}
\newcommand{\erd}{{\mathbb{R}^{d}}}
\newcommand{\en}{{\mathbb{N}}}
\newcommand{\de}{{\mathrm{d}}}
\newcommand{\im}{{\mathrm{i}}}
\newcommand{\indik}{{\mathbf{1}}}
\newcommand{\D}{{\mathbb{D}}}
\newcommand{\E}{{\mathbf{E}}}
\newcommand{\N}{{\mathbb{N}}}
\newcommand{\Q}{{\mathbb{Q}}}
\renewcommand{\P}{{\mathbb{P}}}
\newcommand{\ud}{\operatorname{d}\!}
\newcommand{\ii}{\operatorname{i}\kern -0.8pt}
\newcommand{\Var}{\operatorname{Var }\,}
\newcommand{\dt}{\operatorname{d}\!t}   
\newcommand{\ds}{\operatorname{d}\!s}   
\newcommand{\dy}{\operatorname{d}\!y }    
\newcommand{\du}{\operatorname{d}\!u}  
\newcommand{\dv}{\operatorname{d}\!v}   
\newcommand{\dx}{\operatorname{d}\!x}   
\newcommand{\dq}{\operatorname{d}\!q}   
\newcommand{\cadlag}{c\`adl\`ag }
\newcommand{\p}{{\mathbf{P}}}
\newcommand{\F}{\mathfrak{F}}
\newcommand{\1}{\mathbf{1}}
\newcommand{\f}{\mathcal{F}^{\hspace{0.03cm}0}}
\newcommand{\lle}{\langle\hspace{-0.085cm}\langle}
\newcommand{\rre}{\rangle\hspace{-0.085cm}\rangle}
\newcommand{\llbr}{[\hspace{-0.085cm}[}
\newcommand{\rrbr}{]\hspace{-0.085cm}]}

\def\EM{\ensuremath{(\mathbb{EM})}\xspace}

\newcommand{\la}{\langle}
\newcommand{\ra}{\rangle}

\newcommand{\Norml}[1]{%
{|}\kern-.25ex{|}\kern-.25ex{|}#1{|}\kern-.25ex{|}\kern-.25ex{|}}

\title[]{Martingale Property in Terms of Semimartingale Problems} 

\author[D. Criens]{David Criens}
\address{D. Criens - Technical University of Munich, Center for Mathematics, Germany}
\email{david.criens@tum.de}
\author[K. Glau]{Kathrin Glau}
\address{K. Glau - Technical University of Munich, Center for Mathematics, Germany}
\email{kathrin.glau@tum.de}
%
%
%

\keywords{martingale, local martingale, strict local martingale, semimartingale, semimartingale problem, stochastic exponential, explosion, infinite dimensional Brownian motion
\vspace{1ex}}

\subjclass[2010]{60G44, 60G48, 60H10 }



%
%
%

\thanks{D. Criens - Technische Universit\"at M\"unchen, Center for Mathematics, Germany,  \texttt{david.criens@tum.de}.}
\thanks{
K. Glau - Technische Universit\"at M\"unchen, Center for Mathematics, Germany, \texttt{kathrin.glau@tum.de}.\vspace{0.5ex}
}
\thanks{\textit{Acknowledgement:} The authors thank Jean Jacod for fruitful discussions. 
}

\date{\today}
\maketitle

\frenchspacing
\pagestyle{myheadings}


\begin{abstract}
Starting from the seventies mathematicians face the question whether a non-negative local martingale is a true or a strict local martingale.
In this article we answer this question from a semimartingale perspective. We connect the martingale property to existence, uniqueness and topological properties of semimartingale problems. 
This not only leads to valuable characterizations of the martingale property, but also reveals new existence and uniqueness results for semimartingale problems.
As a case study we derive explicit conditions for the martingale property of stochastic exponentials driven by infinite-dimensional Brownian motion.
\end{abstract}



\section{True or Strict Local Martingale}
\subsection{Introduction}
The question whether a non-negative local martingale is a strict local or a true martingale is of fundamental probabilistic nature. It relates to existence of solutions to stochastic differential equations and martingale problems, as well as to
absolute continuity 
of laws.

The question has been tackled in different settings most prominently by integrability conditions of Novikov- and Kazamaki-type, c.f. \cite{J79,KS(2002b),kazamaki77,LM,Novikov73,protter,RufNK}. A series of papers further explored the one-dimensional diffusion setting that thanks to the work \cite{MU(2012)} is particularly well understood. 
Their main result relates martingality, i.e. the martingale property, of a generalized stochastic exponential driven by a solution \((X, W)\) to an SDE of the type
\begin{align}\label{SDE1}
\dd X_t = b(X_t)\dd t + a(X_t)\dd W_t
\end{align} to exit times of a solution to a modified SDE 
\begin{align}\label{SDE2}
\dd Y_t = \widetilde{b}(Y_t) \dd t + a(Y_t)\dd B_t. 
\end{align}
In particular, the result distinguishes at which end of the state space \((l, r)\) the modified solution exits.
A remarkable observation is that there are examples where the generalized stochastic exponential is a martingale while both the solution of the driving and the modified SDE exit their state space. In the case where it is the real line, this means that both SDEs may be explosive.


Already the topology of multidimensional state spaces does not allow a direct generalization of this subtle observation 
to higher dimensions. 
In finite-dimensional diffusion settings \cite{HR,Sin} connect the martingality of a non-explosive stochastic exponential driven by a solution to an SDE to existence properties of a related modified SDE.
\cite{RufSDE} starts in a possibly explosive diffusion setting and gives a condition for martingality, which is in the spirit of absolute continuity conditions for laws of semimartingales as given by \cite{JS}. 
The proof of his condition is based on the Dambis-Dubins-Schwarz theorem and hence restricted to a continuous path setting.

A condition for martingality in a jump-diffusion setting can be deduced from the main result of \cite{CFY}. 
\cite{KMK,Mayerhofer2011568} provide conditions for affine processes.
Based on an extension of probability measures \cite{kardaras2015} provide techniques to construct strict local martingales. 
\cite{BR16} study martingality from a perspective of weak convergence.

In this article we raise the question how martingality of non-negative local martingales driven by Hilbert-space valued semimartingales on stochastic intervals relates to path properties of their driving processes.
Building our framework on semimartingale problems (SMPs) in the sense of \cite{J79} allows us to connect martingality with several interesting properties of semimartingales.
To briefly describe the concept let us be given a filtered space, a process and a triplet, which serves as a candidate for semimartingale characteristics. A probability measure under which the process is a semimartingale with the candidate triplet as characteristics is called a solution to the SMP.
We particularly allow for infinite-dimensions and stochastic lifetimes. 

Let \(Z\) be a non-negative local martingale (usually) defined on a filtered probability space equipped with a solution to a given SMP.
In the diffusion setting this boils down to the assumption that \(Z\) is driven by ~\eqref{SDE1}.
We now summarize our main contributions. 

Firstly, we prove under mild topological assumptions that if \(Z\) is a martingale then a modified SMP, comparable to the modified SDE \eqref{SDE2}, has a solution, c.f. Theorem \ref{main theorem new 2}. This is not only an existence result for solutions to SMPs, but also provides a condition for strict local martingality via contradiction.

Secondly, we replace the topological assumption by a uniqueness condition on the modified SMP and assume existence of a solution.
Our uniqueness condition is related to the definition of local uniqueness used by 
\cite{JS} in their study of absolute continuity of laws. In Markovian settings it is implied by well-posedness of the SMP. If the SMP corresponds to a not necessarily Markovian SDE the uniqueness is implied by pathwise-uniqueness.
In this setting we formulate abstract convergence conditions in terms of a sequence of stopping times, c.f. Proposition \ref{main prop uni 1} and \ref{main prop uni 2}.
We use these results to formulate a boundedness condition of local type that also can be verified in cases where the conditions of Novikov- and Kazamaki-type are too strict, c.f. Section \ref{MSE}. 
In Section \ref{Martingality of Generalized Stochastic Exponentials} we investigate 
stochastic exponentials as explicit examples.

To illustrate these claims in a non-Markovian and infinite-dimensional setting let 
\[W^* := \sup_{s < \cdot} W_s\ \textup{ and }\ N := W^* \cdot W,
\] 
where \(W\) is an (infinite-dimensional) Brownian motion.
In the one-dimensinal case, we have \(\E[\exp\{(W^*)^2/2 \cdot I_t\}] \geq \E[\exp\{W^2/2 \cdot I_t\}] = \infty\), for \(t\) large enough, where \(I_t = t\) denotes the identity process. This implies that Novikov's condition is violated.
We will see in Section \ref{A Case Study - Martingality in Terms of SDEs driven by Hilbert-Space-Valued Brownian Motion} that \(Z\) is in fact a true martingale.

Thirdly, we show that under appropriate mild assumptions existence of a solution to the modified SMP implies martingality even if the original SMP has no global solution, c.f. Corollary \ref{explosion cond coro}.
This can be interpreted as a type of explosion condition as given by \cite{MU(2012)}, but now in 
an infinite-dimensional and discontinuous setting. 

Fourthly, we impose additional topological assumptions on the underlying filtered space. 
This setting traces back to ideas employed by \cite{follmer72,perkowski2015} in the context of the F\"ollmer measure.
Thanks to an extension result for probability measures these conditions guarantee uniqueness and existence properties of the modified SMP, which lead to an explosion-type condition for martingality, c.f. Section \ref{Martingality on Standard Systems}.

Martingale criteria for non-negative local martingales driven by semimartingales are also interesting from an applications point of view. 
The class of semimartingales lies at the heart of financial modeling as it plays a fundamental role for stochastic integration, c.f. \cite{bichteler1981stochastic,DellacherieMeyer78,Pro} and \cite{DS}, Chapter 7.3.
In a semimartingale setting the seminal work \cite{DelbaenSchachermayer94,Delbaen96thefundamental} relates absense of arbitrage to existence of an equivalent martingale measure (EMM). The existence of an EMM or more generally of changes of probability measures is deeply connected to the question when a candidate density process is a true martingale.
Typically candidate processes are already non-negative local martingales.
On a more practical side, changes of measures frequently lead to a substantial reduction of 
computational complexity. In mathematical finance for instance they arise as changes of numeraire and simplify the computation of option prices as 
highlighted in the influential work
\cite{GEK95}. 
In the context of utility maximization, \cite{RePEc:wsi:ijtafx:v:13:y:2010:i:03:p:459-477} connects the martingality of a local martingale with optimal terminal wealth.
In all these cases, criteria for martingality of non-negative local martingales are substantial.

Recently also strict local martingales have attracted more attention and are used to model financial bubbles, c.f. \cite{Cox2005,Jarrow2007,MAFI:MAFI394,kardaras2015,Pal20101424,Pro(2013)}. 
Examples of strict local martingales are given by \cite{Chybiryakov2007,Delbaen1995,ELY,KellerRessel2015,MU(2012),protter2015strict}. 



Martingality of local martingales is also interesting from a purely probabilistic point of view, as
it determines absolute continuity or singularity of laws. This observation was exploited in the seminal work \cite{doi:10.1137/1105027}, c.f. also \cite{JM76,KLS-LACOM1,KLS-LACOM2} for a semimartingale perspective. 
Such absolute continuity results are valuable tools to study the statistics of stochastic processes, c.f. \cite{10.2307/4616544}, and existence properties of SDEs and martingale problems, c.f. \cite{deprato,KaraShre,RY}.

Let us shortly summarize the structure of the article.
In Section \ref{First important Observations} we present simple characterizations of martingality of a non-negative local martingale.
These observations prove useful for establishing our main results. 
In Section ~\ref{Hilbert-Vlaued Semimartingales and Semimartingale Problems on Stochastic Intervals} we introduce our mathematical setting. 
The main results are given in Section \ref{section: Martingality of local martingales}. 
Finally, in Section \ref{A Case Study - Martingality in Terms of SDEs driven by Hilbert-Space-Valued Brownian Motion}, we deduce explicit martingale conditions for stochastic exponentials driven by infinite-dimensional Brownian motion.

\textbf{A Short Remark Concerning Notations:}
All non-explained notations can be found in the monograph of \cite{JS}. 
Moreover, we usually skip the terminology \emph{up to indistinguishability}. \emph{Equality of processes} as well as \emph{uniqueness of processes} should be read up to indistinguishability. 


\subsection{A Simple Description of Martingality}\label{First important Observations}
We start with a characterization of the martingality of non-negative local matingales, which is as elementary as useful.
Fatou's lemma implies that a non-negative local martingale is a supermartingale, which itself is a martingale if and only if it has constant expectation.
Therefore, the martingality of non-negative martingales is a property that only depends on the expectation. 
The following two consequences take advantage of this important observation. 

We fix a filtered probability space \((\Omega, \mathcal{F}, \F, \p)\), where \(\F = (\mathcal{F}_t)_{t \geq 0}\) is a right-continuous filtration, 
and a one-dimensional non-negative local \((\F, \p)\)-martingale \(Z\) with \(\p\)-a.s. \(Z_0=1\). 
The assumption that \(Z_0\) is \(\p\)-a.s. constant can be relaxed to the case where \(Z_0\) is a positive \(\mathcal{F}_0\)-measurable and \(\p\)-integrable random variable by replacing \(Z\) with \(Z/Z_0\).

The following lemma is in the spirit of Theorem 1.3.5 in \cite{SV}, Lemma III.3.3 in \cite{JS}, and Corollary 2.1 in \cite{BR16}. 
\begin{lemma}\label{ruf mimic}
The following is equivalent:
\begin{enumerate}
\item[\textup{(i)}]
 \(Z\) is an \((\F, \p)\)-martingale.
\item[\textup{(ii)}] There exists an \((\F, \p)\)-localization sequence \((\rho_n)_{n \in \mathbb{N}}\) of \(Z\) such that
\begin{align}\label{difficult to check}
\lim_{n \to \infty} \q_n (\rho_n> t) = 1\ \textit{ for all } t \geq 0,
\end{align}
where \(\q_n := Z_{\rho_n} \cdot \p\), i.e. \(\q_n(\dd \omega) = Z_{\rho_n}(\omega)\p(\dd \omega)\), for \(n \in \mathbb{N}\).
\item[\textup{(iii)}]
For all \((\F, \p)\)-localizing sequences \((\rho_n)_{n \in \mathbb{N}}\) of \(Z\) the convergence \eqref{difficult to check} holds.
\end{enumerate}
\end{lemma}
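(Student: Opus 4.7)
The plan is to reduce everything to the single elementary identity
\[
\mathbf{Q}_n(\rho_n > t) \;=\; \E_\p\bigl[Z_t \indik_{\{\rho_n > t\}}\bigr],
\]
which, together with monotone (or dominated) convergence, immediately forces the limit on the left to equal $\E_\p[Z_t]$ for \emph{any} localizing sequence. Once this is established, the three-way equivalence is essentially automatic because a non-negative local martingale starting at $1$ is a martingale if and only if $\E_\p[Z_t]=1$ for every $t\geq 0$.

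First I would record the preliminary facts. Since $Z$ is a non-negative local martingale with $Z_0=1$, Fatou's lemma gives that $Z$ is a supermartingale and, by the remark preceding the lemma, $Z$ is a martingale exactly when $\E_\p[Z_t]=1$ for all $t\geq 0$. For a localizing sequence $(\rho_n)_{n\in\N}$ the stopped process $Z^{\rho_n}$ is a uniformly integrable martingale with $\E_\p[Z_{\rho_n}]=1$, so each $\q_n:=Z_{\rho_n}\cdot\p$ is a genuine probability measure. Splitting according to $\{\rho_n>t\}$ and $\{\rho_n\le t\}$ yields
\[
1 \;=\; \E_\p[Z_{t\wedge\rho_n}] \;=\; \E_\p\bigl[Z_t\indik_{\{\rho_n>t\}}\bigr] + \E_\p\bigl[Z_{\rho_n}\indik_{\{\rho_n\le t\}}\bigr] \;=\; \E_\p\bigl[Z_t\indik_{\{\rho_n>t\}}\bigr] + \q_n(\rho_n\le t),
\]
which, after rearrangement, gives the key identity displayed above.

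Next, since $\rho_n\uparrow\infty$ $\p$-a.s., dominated convergence applied with the integrable dominating function $Z_t$ shows $\lim_{n\to\infty}\E_\p[Z_t\indik_{\{\rho_n>t\}}]=\E_\p[Z_t]$, hence
\[
\lim_{n\to\infty}\q_n(\rho_n>t) \;=\; \E_\p[Z_t] \qquad \text{for every localizing sequence } (\rho_n).
\]
This relation does the entire work. The implication (iii) $\Rightarrow$ (ii) is trivial since a localizing sequence exists. For (ii) $\Rightarrow$ (i), if \eqref{difficult to check} holds for one sequence then $\E_\p[Z_t]=1$ for every $t$, so $Z$ is a martingale. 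For (i) $\Rightarrow$ (iii), the martingale property gives $\E_\p[Z_t]=1$, and thus $\lim_n \q_n(\rho_n>t)=1$ along every localizing sequence.

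The argument is almost all bookkeeping; there is no genuine obstacle. The only point that needs mild care is ensuring that $\q_n$ is a probability measure (so that $\q_n(\rho_n>t)$ makes sense as a probability), which requires knowing $\E_\p[Z_{\rho_n}]=1$; this follows from $Z^{\rho_n}$ being a uniformly integrable martingale, a standard consequence of $(\rho_n)$ being a localizing sequence for the non-negative supermartingale $Z$.
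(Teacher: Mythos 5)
Your proof is correct and takes essentially the same route as the paper: both reduce the lemma to showing \(\lim_{n\to\infty}\q_n(\rho_n>t)=\E[Z_t]\) for an arbitrary localizing sequence and then invoke the fact that the non-negative supermartingale \(Z\) with \(Z_0=1\) is a martingale iff \(\E[Z_t]=1\) for all \(t\); you get the identity \(\q_n(\rho_n>t)=\E[Z_t\1_{\{\rho_n>t\}}]\) by splitting \(\E[Z_{t\wedge\rho_n}]=1\) and using that \(\q_n\) has total mass one, while the paper conditions on \(\mathcal{F}_{t\wedge\rho_n}\) via Doob's stopping theorem --- a cosmetic difference. The only imprecision is your final remark: the uniform integrability of \(Z^{\rho_n}\) (hence \(\E[Z_{\rho_n}]=1\)) is not a consequence of \(Z\) being a non-negative supermartingale (take \(\rho_n\equiv\infty\) for a non-uniformly-integrable non-negative martingale), but it is built into the Jacod--Shiryaev definition of a localizing sequence used in the paper, so the fact you need does hold.
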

\begin{proof}
The implication (iii) \(\Longrightarrow\) (ii) is trivial. We show that (ii) \(\Longrightarrow\) (i) \(\Longrightarrow\) (iii).
\(Z\) is an \((\F, \p)\)-martingale if and only if for all \(t \geq 0\) we have \(\E[Z_t] = \E[Z_0] = 1\). 
Let \((\rho_n)_{n \in \mathbb{N}}\) be an arbitrary \((\F, \p)\)-localizing sequence. Due to monotone convergence, Doob's stopping theorem and \(\{\rho_n > t\} \in \mathcal{F}_{t \wedge \rho_n}\), we have
\begin{align*}
\E[Z_t] &= \E\big[Z_t \lim_{n \to \infty} \1_{\{\rho_n > t\}}\big] = \lim_{n \to \infty} \E\big[Z_{t \wedge \rho_n} \1_{\{\rho_n> t\}}\big] = \lim_{n \to \infty} \E\big[\E\big[Z_{\rho_n}|\mathcal{F}_{t \wedge \rho_n}\big]\1_{\{\rho_n> t\}}\big]
\\&= \lim_{n \to \infty} \E\big[\E\big[Z_{\rho_n} \1_{\{\rho_n > t\}}|\mathcal{F}_{t \wedge \rho_n}\big]\big]
= \lim_{n \to \infty} \q_n(\rho_n> t).
\end{align*}
This finishes the proof.
\end{proof}
If we relinquish on an equivalence statement we can relax the assumption in Lemma \ref{ruf mimic} that \((\rho_n)_{n \in \mathbb{N}}\) is a localizing sequence.
\begin{lemma}\label{neues lemma}
Assume that \((\rho_n)_{n \in \mathbb{N}}\) is a increasing sequence of \(\F\)-stopping times such that for all \(n \in \mathbb{N}\) the process \(Z^{\rho_n}\) is an uniformly integrable \((\F, \p)\)-martingale.
\begin{enumerate}
\item[\textup{(i)}]
If \eqref{difficult to check} holds for all \(t \geq 0\), then \(Z\) is an \((\F, \p)\)-martingale.
\item[\textup{(ii)}]
If 
\begin{align*}
\lim_{n \to \infty} \q_n(\rho_n = \infty) = 1,\textit{ where } \q_n = Z_{\rho_n} \cdot \p,
\end{align*}
then \(Z\) is a uniformly integrable \((\F, \p)\)-martingale.
\end{enumerate}
\end{lemma}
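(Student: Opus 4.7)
The plan is to mimic closely the computation in the proof of Lemma \ref{ruf mimic}, extending it to the weaker hypothesis that the $\rho_n$ form only an increasing sequence of stopping times — not necessarily localizing — under which each $Z^{\rho_n}$ is uniformly integrable.

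For part (i) I would first note that the UI assumption together with Doob's optional stopping theorem gives $\mathbf{E}[Z_{\rho_n}] = \mathbf{E}[Z_0] = 1$, so each $\mathbf{Q}_n = Z_{\rho_n} \cdot \mathbf{P}$ is indeed a probability measure. The same optional stopping, applied on the set $\{\rho_n > t\} \in \mathcal{F}_{t\wedge\rho_n}$ and using that $Z_{t\wedge\rho_n} = Z_t$ there, produces the key identity
\[
\mathbf{Q}_n(\rho_n > t) \;=\; \mathbf{E}\bigl[Z_{\rho_n}\,\mathbf{1}_{\{\rho_n>t\}}\bigr] \;=\; \mathbf{E}\bigl[Z_t\,\mathbf{1}_{\{\rho_n>t\}}\bigr].
\]
Since $(\rho_n)$ is increasing, the indicators $\mathbf{1}_{\{\rho_n>t\}}$ are monotone in $n$, so by monotone convergence the right-hand side converges to $\mathbf{E}[Z_t \mathbf{1}_A] \leq \mathbf{E}[Z_t]$ for a suitable event $A$. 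By hypothesis this limit is $1$, hence $\mathbf{E}[Z_t] \geq 1$. Since $Z$ is a non-negative supermartingale with $\mathbf{E}[Z_0]=1$ we also have $\mathbf{E}[Z_t] \leq 1$, so $\mathbf{E}[Z_t]=1$ for every $t\geq 0$ and $Z$ is an $(\mathbb{F},\mathbf{P})$-martingale.

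For part (ii) I would first invoke the supermartingale convergence theorem to obtain an $\mathbf{P}$-a.s.\ limit $Z_\infty \geq 0$. Because $Z^{\rho_n}$ is UI it is closed by its a.s.\ limit, which on $\{\rho_n = \infty\}$ coincides with $Z_\infty$; hence $Z_{\rho_n} = Z_\infty$ on $\{\rho_n=\infty\}$. Exactly as above I get
\[
\mathbf{Q}_n(\rho_n = \infty) \;=\; \mathbf{E}\bigl[Z_\infty\,\mathbf{1}_{\{\rho_n = \infty\}}\bigr],
\]
and the sets $\{\rho_n = \infty\}$ are increasing in $n$, so monotone convergence and the hypothesis force $\mathbf{E}[Z_\infty] \geq 1$. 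Fatou applied to the a.s.\ convergence $Z_t\to Z_\infty$ gives $\mathbf{E}[Z_\infty]\le \liminf_t \mathbf{E}[Z_t] \le \mathbf{E}[Z_0]=1$. Therefore $\mathbf{E}[Z_\infty]=\mathbf{E}[Z_0]=1$, which combined with the supermartingale inequality yields $\mathbf{E}[Z_t]=1$ for all $t$ (so $Z$ is a martingale). Finally, Scheff\'e's lemma applied to the non-negative family $(Z_t)$ with $\mathbf{E}[Z_t] \to \mathbf{E}[Z_\infty]$ upgrades the a.s.\ convergence to $L^1$, giving uniform integrability.

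I do not anticipate a substantial obstacle; the argument is a careful bookkeeping of Doob's stopping theorem and monotone convergence. The only two steps that require mild care are the identification $Z_{\rho_n} = Z_\infty$ on $\{\rho_n=\infty\}$, which uses the closability of the UI martingale $Z^{\rho_n}$, and the observation (useful as a sanity check but not strictly needed) that the hypothesis in (ii) implies the hypothesis in (i) via $\{\rho_n = \infty\} \subseteq \{\rho_n > t\}$, so that the martingale property established in (i) is automatic in (ii) and only uniform integrability has to be added.
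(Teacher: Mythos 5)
Your proposal is correct and follows essentially the same route as the paper: Doob's optional stopping on \(\{\rho_n > t\} \in \mathcal{F}_{t\wedge\rho_n}\) (resp. the identification \(Z_{\rho_n}=Z_\infty\) on \(\{\rho_n=\infty\}\)), monotone convergence along the increasing events, and the non-negative supermartingale property to conclude. The only cosmetic difference is in (ii), where the paper directly invokes the standard fact that a non-negative supermartingale with \(\E[Z_\infty]\geq \E[Z_0]\) is a uniformly integrable martingale, while you re-derive it via Fatou and Scheff\'e.
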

\begin{proof}
We first prove (i) and obtain that 
\begin{align*}
1 \geq \E[Z_t] \geq \E[Z_t \lim_{n \to \infty} \1_{\{\rho_n> t\}}] = \lim_{n \to \infty} \E[Z_t \1_{\{\rho_n > t\}}] = \lim_{n \to \infty} \q_n(\rho_n > t) = 1,
\end{align*}
by the same argumentation as in the proof of Lemma \ref{ruf mimic}. Therefore, as \(Z\) is an \((\F, \p)\)-supermartingale, \(Z\) is an \((\F, \p)\)-martingale.

We now turn to (ii). 
\(Z\) is a uniformly integrable \((\F, \p)\)-martingale if \(\E[Z_\infty] \geq 1\), where \(Z_\infty := \lim_{t \to \infty} Z_t\) exists due to the supermartingale convergences theorem. We obtain that
\begin{align*}
\E[Z_\infty] &\geq \lim_{n \to \infty} \E[Z_\infty \1_{\{\rho_n = \infty\}}] = \lim_{n \to \infty} \E[ \E[Z_\infty |\mathcal{F}_{\rho_n}] \1_{\{\rho_n = \infty\}}]
= \lim_{n \to \infty} \q_n(\rho_n = \infty) = 1,
\end{align*}
thanks to \(\{\rho_n = \infty\} \in \mathcal{F}_{\rho_n}\) and Doob's stopping theorem. 
This finishes the proof.
\end{proof}
Our main results introduce more structure for these conditions and thereby increase their applicability.
In order to identify a promising structure, let us observe the following: 
If there exists a probability measure \(\q\) such that 
\begin{align*}
\q = \q_n = Z_{\rho_n}\cdot \p \textup{ on } \mathcal{F}_{\rho_n},\textup{ for all } n \in \mathbb{N}, 
\end{align*}
then \eqref{difficult to check} boils down to 
\begin{align}\label{cond q}
\lim_{n \to \infty} \q(\rho_n > t) = 1,\textup{ for all } t \geq 0.
\end{align}
Moreover, if \eqref{cond q} holds and \(\p\)-a.s. \(\rho_n \uparrow \infty\) as in Lemma \ref{ruf mimic}, then Lemma III.3.3 in \cite{JS} yields that \(\q \ll_\textup{loc} \p\) with density process \(Z\).
To use this observation and identify a structure of \(\q\), let us introduce an additional player, an \((\F, \p)\)-semimartingale \(X\).
It is well-known that \(\q \ll_\textup{loc}\p\) implies that \(X\) is also an \((\F, \q)\)-semimartingale and its characteristics are related to \(X\) and \(Z\) via Girsanov's theorem.
Therefore \(\q\) is a probability measure under which \(X\) is a semimartingale with known characteristics.
This observation leads us to the concept of semimartingale problems. 
\section{Hilbert-space-valued Semimartingales and \\ Semimartingale Problems on Stochastic Intervals}
\label{Hilbert-Vlaued Semimartingales and Semimartingale Problems on Stochastic Intervals}
Throughout this section we fix a filtered probability space \((\Omega, \mathcal{F}, \F, \p)\), where \(\F = (\mathcal{F}_t)_{t \geq 0}\) is a right-continuous filtration.
For the first part of Section
\ref{Processes on Stochastic Intervals} the probability measure \(\p\) does not play a ~role.
\subsection{Processes on Stochastic Intervals}
\label{Processes on Stochastic Intervals}
In this first section 
we introduce the concept of stochastic processes on stochastic intervals, which build the mathematical background to study possibly explosive processes. 
We partially follow \cite{HWY,J79, JS}. 
Related topics were also studied by \cite{emery1982, Schwartz1981,sharpe}. 
We fix a 
Polish space \(E\), which serves as the \emph{state space}. 
Recall that for two \(\F\)-stopping times \(\tau\) and \(\rho\) we set
\begin{align*}
\of \tau, \rho\gs := \big\{(\omega, t) \in \Omega \times \mathbb{R}^+ : \tau(\omega) \leq t \leq \rho(\omega)\big\},
\end{align*}
and \(\of \tau, \rho\of, \gs \tau, \rho\gs, \gs \tau, \rho\of\) analogeously. Moreover, we denote \(\of \tau, \tau\gs =: \of \tau\gs\).
\begin{definition}
We call a random set \(A\) a \emph{set of interval type}, if there exists an increasing sequence of \(\F\)-stopping times \((\tau_n)_{n \in \mathbb{N}}\) such that 
\begin{align*}
A = \bigcup_{n \in \mathbb{N}} \of 0, \tau_n\gs.
\end{align*}
\end{definition}
Sets of interval type are \(\F\)-predictable. 
With a little abuse of terminology, we call the sequence \((\tau_n)_{n \in \mathbb{N}}\) \emph{announcing sequence for \(A\)}.
The set of all random sets of interval type is denoted by \(\mathcal{I}(\F)\). 
Let \(\mathcal{C}\) be a class of \(E\)-valued processes. 
For \(A \in \mathcal{I}(\F)\) we set 
\begin{align*}
\mathcal{C}^{A} &:= \big\{ X : A \to\ E : \exists \textup{ announcing sequence } (\tau_n)_{n \in \mathbb{N}} \textup{ for } A \textup{ s.th. } 
X^{\tau_n} \in \mathcal{C}\ \forall n \in \mathbb{N}\big\}.
\end{align*}
Since \((\omega, t \wedge \tau_n (\omega)) \in A\) for all \((\omega, t) \in \of 0, \infty\of\), we have \(X^{\tau_n} : \of 0, \infty\of\ \to E\) for all \(X \in \mathcal{C}^A\). 
\begin{proposition}\label{subinterval semimartingale}
Let \(\mathcal{C}\) be stable under stopping and localization, \(X \in \mathcal{C}^{A}\), and \(\rho\) an \(\F\)-stopping time such that 
\(\of 0, \rho\gs\subseteq A\). Then \(X^{\rho}\in \mathcal{C}\).
\end{proposition}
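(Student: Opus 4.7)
The plan is to lift the announcing sequence of $A$ to a localizing sequence for $X^{\rho}$ whose limit is $\infty$, and then combine the two stability hypotheses on $\mathcal{C}$.

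The definition of $\mathcal{C}^{A}$ supplies an announcing sequence $(\tau_{n})_{n\in\mathbb{N}}$ for $A$ with $X^{\tau_{n}}\in\mathcal{C}$ for every $n$. The inclusion $\of 0,\rho\gs\subseteq A=\bigcup_{n}\of 0,\tau_{n}\gs$ translates into the pointwise alternative: either $\rho(\omega)<\infty$ and $\tau_{n}(\omega)\geq\rho(\omega)$ for all $n$ large enough, or $\rho(\omega)=\infty$ and $\tau_{n}(\omega)\uparrow\infty$. In either case, $\tau_{n}\wedge\rho$ eventually stabilizes at $\rho$ pointwise. By stability of $\mathcal{C}$ under stopping, $X^{\tau_{n}\wedge\rho}=(X^{\tau_{n}})^{\rho}\in\mathcal{C}$, and the same process coincides with $(X^{\rho})^{\tau_{n}}$.

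The subtle point is that $\tau_{n}\wedge\rho$ only increases up to $\rho$, and when $\rho$ is finite this does not qualify as a localizing sequence in the standard sense. I would handle this by replacing $\tau_{n}$ with the modified stopping time $\sigma_{n}:=\tau_{n}\1_{\{\tau_{n}<\rho\}}+\infty\cdot\1_{\{\tau_{n}\geq\rho\}}$. Since $\{\tau_{n}<\rho\}\in\mathcal{F}_{\tau_{n}}$, an elementary check confirms that $\sigma_{n}$ is an $\F$-stopping time, a short case distinction shows that $(\sigma_{n})_{n\in\mathbb{N}}$ is increasing, and the pointwise alternative established above yields $\sigma_{n}\uparrow\infty$. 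A direct computation gives $\sigma_{n}\wedge\rho=\tau_{n}\wedge\rho$, so $(X^{\rho})^{\sigma_{n}}=X^{\tau_{n}\wedge\rho}=(X^{\tau_{n}})^{\rho}\in\mathcal{C}$ by stability under stopping.

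Applying stability of $\mathcal{C}$ under localization to the sequence $(\sigma_{n})$ then delivers $X^{\rho}\in\mathcal{C}$, which finishes the argument. The main obstacle is precisely the construction of $(\sigma_{n})$: the naive choice $\tau_{n}\wedge\rho$ fails to be a localizing sequence when $\rho$ is bounded, so one must let $\sigma_{n}$ jump to $\infty$ exactly on $\{\tau_{n}\geq\rho\}$, thereby simultaneously securing $\sigma_{n}\uparrow\infty$ and $(X^{\rho})^{\sigma_{n}}\in\mathcal{C}$.
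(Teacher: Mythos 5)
Your proof is correct, and it is essentially the argument the paper invokes by citing Theorem 8.20 of He--Wang--Yan: the standard device of replacing $\tau_n$ by the restricted stopping time $\sigma_n=(\tau_n)_{\{\tau_n<\rho\}}$ (equal to $\tau_n$ on $\{\tau_n<\rho\}$ and $+\infty$ otherwise), which satisfies $\sigma_n\wedge\rho=\tau_n\wedge\rho$ and $\sigma_n\uparrow\infty$, so that stability under stopping gives $(X^{\rho})^{\sigma_n}=(X^{\tau_n})^{\rho}\in\mathcal{C}$ and stability under localization concludes. Your measurability check $\{\tau_n<\rho\}\in\mathcal{F}_{\tau_n}$ and the pointwise dichotomy from $\of 0,\rho\gs\subseteq A$ are exactly the points the cited proof rests on, so there is no gap.
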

\begin{proof}
The claim follows identically to \cite{HWY}, Theorem 8.20. 
\end{proof}
\vspace{-0.3cm}
Examples of classes which are stable under stopping and localization are the class of \(E\)-valued local \((\F, \p)\)-martingales, denoted by \(\mathcal{M}_\textup{loc}(E, \F, \p)\), and the class of \(E\)-valued \((\F, \p)\)-semimartingales, denoted by \(\mathcal{S}(E, \F, \p)\), where \(E\) is a real separable Hilbert space. 
More generally, if \(\mathcal{C}\) is stable under stopping, then its localized class \(\mathcal{C}_\textup{loc}\) is stable under stopping and localization, c.f. \cite{JS}, Lemma I.1.35.
We obtain two obvious consequences of Proposition \ref{subinterval semimartingale}:
\begin{corollary}\label{ordinary semimartingale}
Let \(\mathcal{C}\) be stable under stopping and localization. 
\begin{enumerate}
\item[\textup{(i)}]
\(Y\in\mathcal{C}^{A}\) if and only if for all \(\F\)-stopping times \(\rho\) such that \(\of 0, \rho\gs \subseteq A\), \(Y^{\rho}\in \mathcal{C}\).
\item[\textup{(ii)}]
\(\mathcal{C}^{\of 0, \infty\of} = \mathcal{C}\).
\end{enumerate}
\end{corollary}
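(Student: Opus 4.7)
The plan is to read the corollary as two bookkeeping consequences of Proposition \ref{subinterval semimartingale} together with the definition of $\mathcal{C}^A$ and the fact that stability under localization means $\mathcal{C}_\textup{loc}=\mathcal{C}$.

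For part (i), the forward implication is literally the statement of Proposition \ref{subinterval semimartingale}: if $Y\in\mathcal{C}^A$ and $\rho$ is an $\F$-stopping time with $\of 0,\rho\gs\subseteq A$, then $Y^\rho\in\mathcal{C}$. For the converse I would fix any announcing sequence $(\tau_n)_{n\in\mathbb{N}}$ for $A$, observe that $\of 0,\tau_n\gs\subseteq\bigcup_m\of 0,\tau_m\gs = A$ for each $n$, and apply the hypothesis to $\rho=\tau_n$. This yields $Y^{\tau_n}\in\mathcal{C}$ for every $n$, which is exactly what the definition of $\mathcal{C}^A$ requires of $Y$.

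For part (ii), the inclusion $\mathcal{C}\subseteq\mathcal{C}^{\of 0,\infty\of}$ follows by choosing the deterministic announcing sequence $\tau_n\equiv n$: stability of $\mathcal{C}$ under stopping gives $X^n\in\mathcal{C}$ for every $X\in\mathcal{C}$, so $X\in\mathcal{C}^{\of 0,\infty\of}$. For the reverse inclusion, pick $X\in\mathcal{C}^{\of 0,\infty\of}$ with a witnessing announcing sequence $(\tau_n)$. The covering identity $\bigcup_n\of 0,\tau_n\gs=\of 0,\infty\of$ forces $\tau_n(\omega)\uparrow\infty$ for every $\omega$, since otherwise a pair $(\omega,t)$ with $\sup_n\tau_n(\omega)<t<\infty$ would escape the union. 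Combining $\tau_n\uparrow\infty$ with $X^{\tau_n}\in\mathcal{C}$ and the hypothesis $\mathcal{C}_\textup{loc}=\mathcal{C}$ produces $X\in\mathcal{C}$.

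The whole argument is essentially bureaucratic; I do not expect any genuine obstacle. The only point worth attention is the notational distinction between $\of 0,\infty\of$ and $\of 0,\infty\gs$, which is what makes the covering condition in part (ii) equivalent to pointwise divergence of the announcing sequence and thereby lets us invoke stability under localization rather than just stability under stopping.
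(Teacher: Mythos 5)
Your proof is correct and follows exactly the route the paper intends: the paper gives no written argument, presenting the corollary as an obvious consequence of Proposition \ref{subinterval semimartingale}, and your details (forward direction of (i) from the proposition, converse and the inclusion \(\mathcal{C}\subseteq\mathcal{C}^{\of 0,\infty\of}\) from the definition with the announcing sequences \((\tau_n)\) resp. \(\tau_n\equiv n\), and the reverse inclusion via \(\tau_n\uparrow\infty\) and stability under localization) are precisely the bookkeeping the paper leaves to the reader. The only cosmetic alternative is that the reverse inclusion in (ii) also follows directly from Proposition \ref{subinterval semimartingale} applied with \(\rho\equiv\infty\), but your localization argument is equally valid.
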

To construct non-negative local martingales from semimartingale on sets of interval type the following extension due to \cite{J79} proves itself useful in the Sections \ref{MSE} and \ref{Martingality of Generalized Stochastic Exponentials}.
For \(X\in \mathcal{C}^A\), where \(\mathcal{C}\) is a class of \([- \infty, \infty]\)-valued processes, we define the extension \(\widetilde{X}\) by
\begin{align}\label{extension}
\widetilde{X}_t := \begin{cases} 
X_t&\textup{ on } \{t < \tau\},\\
X_{\tau},&\textup{ on } \{t \geq \tau\} \cap G^c,\\
\liminf_{s \uparrow \tau, s \in A \cap \mathbb{Q} \times \Omega} X_s,&\textup{ on } \{t \geq \tau\} \cap G,
\end{cases}
\end{align}
where
\(
\tau := \lim_{n \to \infty} \tau_n\) and \(G := \bigcap_{n \in \mathbb{N}} \{\tau_n < \tau < \infty\}
\)
for an arbitrary announcing sequence \((\tau_n)_{n \in \mathbb{N}}\) for ~\(A\).
\begin{remark}
If \(\of 0, \rho\gs \subseteq A\) for an \(\F\)-stopping time \(\rho\), then \(\widetilde{X}^\rho = X^{\rho}\). 
\end{remark}
Clearly, in general \(\widetilde{X} \not \in \mathcal{C}\), however, if \(X \in \mathcal{M}^A_\textup{loc}(\mathbb{R}, \F, \p)\), non-negativity implies 
\(\widetilde{X} \in \mathcal{M}_\textup{loc}(\mathbb{R}, \F, \p)\), c.f. Proposition \ref{lemma tilde Z local martingale} in Appendix \ref{Extensions of non-negative local martingales on sets of interval type}.

\subsection{Characteristics of Hilbert-space-valued Semimartingales on Sets of Interval Type}
In what follows it is of little additional cost to proceed directly with Hilbert-space-valued processes. We are following closely the monographs \cite{MP80,metivier}. 
For the definition Hilbert-space-valued semimartingales, which can be done parallel to the finite-dimensional case, we refer to Appendix \ref{A HVS}.
Let \(\mathbb{H}\) be a real separable Hilbert space and
denote by \(\mathcal{N}(\mathbb{H}, \mathbb{H})\) the space of nuclear operators from \(\mathbb{H}\) into \(\mathbb{H}\).\footnote{\(\mathcal{N}(\mathbb{H}, \mathbb{H})\) is a separable Banach space, c.f. \cite{gohberg2013classes}, p.106}
Let \(A \in \mathcal{I}(\F)\).
\newpage
\begin{definition}\label{def A-c}
Let \(X\in \mathcal{S}^A(\mathbb{H}, \F, \p)\) and assume that \((B(h), C, \nu)\) is a triplet consisting of the following:
\begin{enumerate}

\item[\textup{(i)}] an \(\F\)-predictable \(\mathbb{H}\)-valued process \(B(h)\) on \(A\), 

\item[\textup{(ii)}] an \(\F\)-predictable \(\mathcal{N}(\mathbb{H}, \mathbb{H})\)-valued process \(C\) on \(A\), 
\item[\textup{(iii)}] an \(\F\)-predictable random measure \(\nu\) on \(\mathbb{R}^+ \times \mathbb{H}\) such that 
\begin{align}\label{nu dec}
\nu(\omega, \dd t, \dd x) = \1_{A \times \mathbb{H}} (\omega, t, x) \nu(\omega, \dd t, \dd x).
\end{align}

\end{enumerate}
If there exists an announcing sequence \((\tau_n)_{n \in \mathbb{N}}\) for \(A\) such that \((B(h)^{\tau_n}, C^{\tau_n}, \nu^{\tau_n})\), where 
\begin{align*}
\nu^{\tau_n} (\omega, \dd t, \dd x) := \1_{\of 0, \tau_n \gs \times \mathbb{H}}(\omega, t, x) \nu(\omega, \dd t, \dd x),
\end{align*} 
is \(\p\)-indistinguishable of the \((\F, \p)\)-characteristics of \(X^{\tau_n} \in \mathcal{S}(\mathbb{H}, \F, \p)\) for all \(n \in \mathbb{N}\)
, then we call the triplet \((B(h), C, \nu)\) the \emph{\((\F, \p, A)\)-characteristics} of \(X\). 
\end{definition}
The most important consistency observations concerning the \(A\)-characteristics, such as \emph{existence} and \emph{uniqueness}, and some additional notations, such as the \emph{continuous local martingale part on \(A\)}, are given in Appendix \ref{facts HSVSM SI}.
\subsection{Semimartingale Problems on Stochastic Invervals}\label{Semimartingale Problems on Predictable Stochastic Invervals}

We denote the Borel \(\sigma\)-field of \(\mathbb{H}\) by \(\mathcal{H}\). 
In order to deal with possibly explosive processes
 we introduce the notion of a \emph{grave}. For a Polish space \(E\), i.e. in particular for \(\mathbb{H}\) and for \(\mathcal{N}(\mathbb{H}, \mathbb{H})\),\footnote{since separable Banach spaces are Polish spaces} let \(\Delta\) be a point outside of \(E\) and denote \(E_\Delta = E\cup \{\Delta\}\).
We define \(\mathbb{D}^{E_\Delta}\) as the space of all functions \(\alpha : \mathbb{R}^+ \to E_\Delta\), such that \(\alpha(s) = \Delta,\) for all \(s \in [\xi(\alpha), \infty)\), where \(\xi(\alpha) := \inf(t \geq 0 : \alpha(t) = \Delta)\), which are \cadlag on \((0, \xi(\alpha))\).
We assume that we are given the following: 
\begin{enumerate}
\item[\textup{(i)}]
an \(\F\)-adapted 
process \(X\) with paths in \(\mathbb{D}^{\mathbb{H}_\Delta}\), 
which we call \textit{candidate process}, 
\item[\textup{(ii)}]
a triplet \((B(h), C, \nu)\), called a \textit{candidate triplet}, 
consisting of
\\[-1.3ex]
\begin{enumerate}
\item[--] an \(\mathfrak{F}\)-predictable process \(B(h)\) with paths in \(\mathbb{D}^{\mathbb{H}_\Delta}\), 
\\[-1.5ex]
\item[--] an \(\mathfrak{F}\)-predictable process \(C\) with paths in \(\mathbb{D}^{\hspace{0.04cm}\mathcal{N}(\mathbb{H}, \mathbb{H})_\Delta}\), 
\\[-1.5ex]
\item[--] an \(\mathfrak{F}\)-predictable random measure \(\nu\) on \(\mathbb{R}^+ \times \mathbb{H}\), 
\\[-1.5ex]
\end{enumerate}
\item[\textup{(iii)}]
a probability measure \(\eta\) on \((\mathbb{H}, \mathcal{H})\), which is called \textit{initial law}.
\end{enumerate}
The process \(B(h)\) may depend on the truncation function \(h\).
\begin{definition}\label{Definition Semimartingale Problem}
We call a probability measure \(\p\) on \((\Omega, \mathcal{F})\) a \emph{solution to the SMP} associated with \((\mathbb{H};\rho;\eta; X; B(h), C, \nu)\), where \(\rho\) is an \(\F\)-stopping time, \(\eta\) is an initial law, \(X\) a candidate process and \((B(h), C, \nu)\) a candidate triplet, if
\\[-1.3ex]
\begin{enumerate}
\item[\textup{(i)}] \(X^{\rho}\) is \(\p\)-indistinguishable from an \(\mathbb{H}\)-valued \((\mathfrak{F}, \p)\)-semimartingale,
\\[-1.3ex]
\item[\textup{(ii)}] the \((\F, \p)\)-characteristics of \(X^{\rho}\) 
are \(\p\)-indistinguishable from \((B^{\rho}(h), C^{\rho}, \nu^{\rho})\),
\\[-1.3ex]
\item[\textup{(iii)}] \(\p \circ X^{-1}_0 = \eta\).\\[-1.3ex]
\end{enumerate}
We call \(\p\) a solution to the SMP associated with \((\mathbb{H}; A; \eta; X; B(h), C, \nu)\), where \(A \in \mathcal{I}(\F)\), if there exists an announcing sequence \((\tau_n)_{n \in \mathbb{N}}\) for \(A\), such that for all \(n \in \mathbb{N}\), \(\p\) is a solution to the SMP associated with \((\mathbb{H};\tau_n; \eta; X; B(h), C, \nu)\).
\end{definition}
\begin{remark}
\(\p\) is a solution to the SMP associated with \((\mathbb{H}; A; X; \eta; B(h), C, \nu)\) if and only ~if 
\begin{enumerate}
\item[\textup{(i)}]
\(X|_A\) is \(\p\)-indistinguishable from a process in \(\mathcal{S}^A(\mathbb{H},\F, \p)\),
\item[\textup{(ii)}]
\((B(h), C, \nu)|_A\) is \(\p\)-indistinguishable from the \((\F, \p,A)\)-characteristics of \(X|_A\),
\item[\textup{(iii)}]
\(\p\circ X^{-1}_0 = \eta\).
\end{enumerate}
\end{remark}
\begin{proposition}\label{SMP subinterval}
\(\p\) is a solution to the SMP associated with \((\mathbb{H}; A; \eta; X; B(h), C, \nu)\) if and only if for all \(\F\)-stopping times \(\rho\) such that \(\of 0, \rho\gs\subseteq A\), \(\p\) is a solution to the SMP associated with \((\mathbb{H}; \rho; \eta; X; B(h), C, \nu)\).
\end{proposition}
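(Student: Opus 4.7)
The plan is to prove both directions by combining Proposition~\ref{subinterval semimartingale} (which handles the semimartingale property under stopping within $A$) with the fact that semimartingale characteristics commute with stopping.

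The ``if'' direction is essentially immediate from the definition. Fix any announcing sequence $(\tau_n)_{n \in \mathbb{N}}$ for $A$; since each $\tau_n$ satisfies $\of 0, \tau_n\gs \subseteq A$, the hypothesis applied to $\rho = \tau_n$ directly yields that $\p$ is a solution to the SMP associated with $(\mathbb{H};\tau_n;\eta;X;B(h),C,\nu)$ for every $n \in \mathbb{N}$, which is exactly the defining condition for $A$.

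For the ``only if'' direction, assume $\p$ is a solution to the SMP on $A$, so there is an announcing sequence $(\tau_n)_{n \in \mathbb{N}}$ for $A$ such that $X^{\tau_n}$ is an $(\F,\p)$-semimartingale with characteristics $(B^{\tau_n}(h), C^{\tau_n}, \nu^{\tau_n})$ and $\p \circ X_0^{-1}=\eta$. Fix an $\F$-stopping time $\rho$ with $\of 0, \rho\gs \subseteq A$. The remark following Definition~\ref{def A-c} gives $X|_A \in \mathcal{S}^A(\mathbb{H},\F,\p)$, so Proposition~\ref{subinterval semimartingale}, applied with $\mathcal{C} = \mathcal{S}(\mathbb{H},\F,\p)$ (a class that is stable under stopping and localization), yields $X^\rho \in \mathcal{S}(\mathbb{H},\F,\p)$. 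The condition $\p\circ X_0^{-1}=\eta$ transfers trivially since $X^\rho_0 = X_0$.

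It remains to identify the $(\F,\p)$-characteristics of $X^\rho$ with $(B^\rho(h), C^\rho, \nu^\rho)$. Writing $(B', C', \nu')$ for these characteristics and using $(X^\rho)^{\tau_n} = X^{\rho \wedge \tau_n} = (X^{\tau_n})^\rho$, the fact that stopping a semimartingale stops its characteristics gives, on one hand, that $X^{\rho \wedge \tau_n}$ has characteristics $((B')^{\tau_n}, (C')^{\tau_n}, (\nu')^{\tau_n})$, and on the other hand, characteristics $(B^{\rho \wedge \tau_n}(h), C^{\rho \wedge \tau_n}, \nu^{\rho \wedge \tau_n})$. By $\p$-uniqueness of semimartingale characteristics these two triplets agree on $\of 0, \rho \wedge \tau_n \gs$. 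Since $\of 0, \rho\gs \subseteq A = \bigcup_n \of 0, \tau_n\gs$, the sequence $\rho \wedge \tau_n$ is eventually equal to $\rho$ on $\of 0, \rho\gs$, so passing to $n \to \infty$ gives $(B', C', \nu') = (B^\rho(h), C^\rho, \nu^\rho)$ up to $\p$-indistinguishability.

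The only mildly delicate step is the characteristics bookkeeping in the last paragraph; everything else follows from invoking the already established Proposition~\ref{subinterval semimartingale} and the uniqueness of characteristics. No genuine obstacle is expected.
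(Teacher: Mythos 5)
Your proposal is correct and follows essentially the same route as the paper: the \(\Longleftarrow\) direction is trivial, and the \(\Longrightarrow\) direction rests on stability of the semimartingale property under stopping inside \(A\) (Proposition \ref{subinterval semimartingale}/Corollary \ref{ordinary semimartingale}) together with the identification of the characteristics of \(X^\rho\) with \((B^\rho(h), C^\rho, \nu^\rho)\). The only difference is that your final ``bookkeeping'' paragraph re-derives inline exactly the content of Proposition \ref{exist A c} (ii) in Appendix \ref{facts HSVSM SI} (stopping at \(\rho\wedge\tau_n\), invoking uniqueness of characteristics, and passing to the limit via \(\of 0,\rho\gs=\bigcup_n\of 0,\rho\wedge\tau_n\gs\)), which the paper simply cites.
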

\begin{proof}
The implication \(\Longleftarrow\) is trivial. The implication \(\Longrightarrow\) follows from
Corollary \ref{ordinary semimartingale} and Proposition \ref{exist A c} in Appendix \ref{facts HSVSM SI}.
\end{proof}
As an immediate consequence we obtain the following corollary.
\begin{corollary}\label{coro subinterval}
If \(\p\) is a solution to the SMP associated with \((\mathbb{H}; A; \eta; X; B(h), C, \nu)\), and \(\mathcal{I}(\F)\ni A^* \subseteq A\), then \(\p\) is also a solution to the SMP associated with \((\mathbb{H}; A^*; \eta; X;\) \(B^*(h),\) \(C^*, \nu^*)\) for any candidate triplet \((B^*(h), C^*, \nu^*)\) which coincides on \(A^*\) with \((B(h), C, \nu)\). 
\end{corollary}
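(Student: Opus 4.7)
The corollary is an easy consequence of Proposition \ref{SMP subinterval} combined with the definition of a solution to the SMP on a set of interval type. Here is the plan.

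First I would pick an announcing sequence $(\tau^*_n)_{n \in \mathbb{N}}$ for $A^*$, so that $\of 0, \tau^*_n \gs \subseteq A^* \subseteq A$ for every $n$. Since $\p$ is assumed to solve the SMP associated with $(\mathbb{H}; A; \eta; X; B(h), C, \nu)$, Proposition \ref{SMP subinterval} applied to the stopping times $\tau^*_n$ gives that $\p$ solves the SMP associated with $(\mathbb{H}; \tau^*_n; \eta; X; B(h), C, \nu)$ for each $n \in \mathbb{N}$. In particular, $X^{\tau^*_n}$ is indistinguishable from an $\mathbb{H}$-valued $(\F, \p)$-semimartingale whose characteristics are $\p$-indistinguishable from $(B^{\tau^*_n}(h), C^{\tau^*_n}, \nu^{\tau^*_n})$, and $\p \circ X_0^{-1} = \eta$.

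Next I would translate the coincidence of $(B(h), C, \nu)$ and $(B^*(h), C^*, \nu^*)$ on $A^*$ into an equality after stopping at $\tau^*_n$. For the $\mathbb{H}$-valued and $\mathcal{N}(\mathbb{H}, \mathbb{H})$-valued processes this is immediate, since $B(h) = B^*(h)$ and $C = C^*$ hold on $\of 0, \tau^*_n \gs$ and hence $B^{\tau^*_n}(h) = (B^*)^{\tau^*_n}(h)$ and $C^{\tau^*_n} = (C^*)^{\tau^*_n}$. For the random measures, observe that by the decomposition \eqref{nu dec} applied to $A^*$ both $\nu$ and $\nu^*$ are concentrated on $A^* \times \mathbb{H}$ only up to this support condition, but the hypothesis supplies $\1_{A^* \times \mathbb{H}}\, \nu = \1_{A^* \times \mathbb{H}}\, \nu^*$, whence $\nu^{\tau^*_n} = (\nu^*)^{\tau^*_n}$ because $\of 0, \tau^*_n \gs \subseteq A^*$.

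Combining these two points, $\p$ solves the SMP associated with $(\mathbb{H}; \tau^*_n; \eta; X; B^*(h), C^*, \nu^*)$ for every $n \in \mathbb{N}$. Since $(\tau^*_n)_{n \in \mathbb{N}}$ is an announcing sequence for $A^*$, Definition \ref{Definition Semimartingale Problem} then yields that $\p$ is a solution to the SMP associated with $(\mathbb{H}; A^*; \eta; X; B^*(h), C^*, \nu^*)$, which is what we wanted. There is no real obstacle here; the only tiny subtlety worth stating explicitly is that the candidate triplet $(B^*(h), C^*, \nu^*)$ need only agree with $(B(h), C, \nu)$ on $A^*$, and may behave arbitrarily off $A^*$, but this is invisible after stopping at any $\tau^*_n$ with $\of 0, \tau^*_n \gs \subseteq A^*$.
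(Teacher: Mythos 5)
Your argument is correct and is precisely the route the paper intends: the corollary is stated as an immediate consequence of Proposition \ref{SMP subinterval}, namely stopping at an announcing sequence for \(A^*\), noting that the stopped triplets \((B^{\tau^*_n}(h), C^{\tau^*_n}, \nu^{\tau^*_n})\) and \(((B^*)^{\tau^*_n}(h), (C^*)^{\tau^*_n}, (\nu^*)^{\tau^*_n})\) coincide because \(\of 0, \tau^*_n\gs \subseteq A^*\), and invoking Definition \ref{Definition Semimartingale Problem}. Nothing is missing.
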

Our next goal is to introduce a concept of uniqueness which is in the spirit of the \emph{local uniqueness} concept introduced by \cite{J79}, Section 12.3.e).
\begin{definition}
\begin{enumerate}
\item[\textup{(i)}]
We say that the SMP associated with \((\mathbb{H}; \rho; \eta; X; B(h), C, \nu)\) satisfies \emph{uniqueness}, if all solutions coincide on \(\mathcal{F}_{\rho-}\). 
\item[\textup{(ii)}]
Let \(\Lambda\) be a set of \(\F\)-stopping times such that for all \(\rho\in \Lambda\) we have \(\of 0, \rho\gs \subseteq A\).
We say that the SMP associated with \((\mathbb{H}; A; \eta; X; B(h), C, \nu)\) satisfies \emph{\(\Lambda\)-uniqueness} if for all \(\rho \in \Lambda\) the SMP associated with \((\mathbb{H}; \rho; \eta; X; B(h), C, \nu)\) satisfies uniqueness. 
\end{enumerate}
\end{definition}
\begin{remark}
To hope for uniqueness we usually have to assume that \((\mathcal{F}, \F)\) is \emph{generated by the candidate process \(X\)}, i.e. that
\begin{align*}
\mathcal{F} = \sigma(X_t, t \geq 0),\ \mathcal{F}^{\hspace{0.03cm}0}_t = \sigma(X_s, s\leq t)\textup{ and } \mathcal{F}_t = \bigcap_{s > t} \mathcal{F}^{\hspace{0.03cm}0}_s.
\end{align*}
The choice of the \(\sigma\)-field \(\mathcal{F}_{\rho-}\) in the definition of uniqueness has its origin in classical Markovian settings.
More precisely, standard techniques show that all solutions coincide on \(\mathcal{F}^{\hspace{0.03cm}0}_{\rho}\) instead of \(\mathcal{F}_\rho\). However, for a large class of stopping times \(\rho\) it holds that \(\mathcal{F}_{\rho-} \subseteq \mathcal{F}^{\hspace{0.03cm}0}_\rho\). 
\end{remark}
In a finite dimensional Markovian setting a related concept of uniqueness is well-studied, c.f. \cite{J79}, Section 13.3. In a non-Markovian setting we derive explicit conditions implying uniqueness in Appendix \ref{SMPs and SDEs}. 
The following remark is an immediate consequence of the definition of \emph{uniqueness}.
\begin{remark}\label{uni prop}
Let \(\Lambda\) be a set of \(\F\)-stopping times such that \(\of 0, \rho\gs \subseteq A^*\in \mathcal{I}(\F)\) for all \(\rho \in \Lambda\), let \(A\in \mathcal{I}(\F)\) such that \(A^* \subseteq A\), and let \((B(h),C,\nu)\), \((B^*(h), C^*, \nu^*)\) be two candidate triplets which coincide on \(A^*\). Then the following is equivalent:
\begin{enumerate}
\item[\textup{(i)}] The SMP associated with \((\mathbb{H}; A; \eta; X; B(h), C, \nu)\) satisfies \(\Lambda\)-uniqueness.
\item[\textup{(ii)}] The SMP associated with \((\mathbb{H}; A^*; \eta; X; B^*(h), C^*, \nu^*)\) satisfies \(\Lambda\)-uniqueness.
\end{enumerate}
\end{remark}
\section{Main Results}\label{section: Martingality of local martingales}
Let \((\Omega, \mathcal{F}, \F, \p)\) be a given filtered probability space with right-continuous filtration \(\F = (\mathcal{F}_t)_{t \geq 0}\).
We are interested in the martingality of an \(\mathbb{R}^+\)-valued local \((\F, \p)\)-martingale \(Z\) with \(\p\)-a.s. \(Z_0 = 1\).
Later we will assume that \(Z\) is driven by a Hilbert-space-valued semimartingale.
We describe the martingality of \(Z\) by convergence properties of sequences of stopping times satisfying the following convention which we impose from now on.
\begin{C}\label{conv}
Let \((\rho_n)_{n \in \mathbb{N}}\) be an increasing sequence of \(\F\)-stopping times such that \(Z^{\rho_n}\) is a uniformly integrable \((\F, \p)\)-martingale for all \(n \in \mathbb{N}\).
\end{C}
A sequence as in Convention \ref{conv} always exists, namely any localizing sequence of \(Z\).
It is important to note that the sequence in Convention \ref{conv} need not to increase \(\p\)-a.s. to infinity.
In Appendix \ref{An Integrability Condition to Indentify} we present an integrability condition to identify such sequences, c.f. \cite{CFY} for a related approach based on a Novikov-type condition.
In Section \ref{Martingality in Terms of SMPs}, we relate martingality to existence and uniqueness properties of SMPs.
In Section \ref{Martingality on Standard Systems}, under topological assumptions on the underlying filtered space, we derive sufficient conditions for the martingality of \(Z\) in terms of an almost sure convergence of ~\((\rho_n)_{n \in \mathbb{N}}\). 
\subsection{Martingality in Terms of SMPs}\label{Martingality in Terms of SMPs}
We consider five situations: Firstly, we derive a condition for the strict local martingality of \(Z\) in terms of existence properties of SMPs. Secondly, we impose the assumption of \(\Lambda\)-uniqueness for a modified SMP.
This additional structure enables us to investigate the martingality of \(Z\) by examining the limit behavior of the sequence of stopping times  \((\rho_n)_{n \in \mathbb{N}}\) under a solution to an SMP.
Thirdly, we construct an explicit sequence \((\rho_n)_{n \in \mathbb{N}}\) driven solely by \(Z\). This results in martingality conditions only depending on the path of \(Z\).
Fourthly, we consider the situation where \(Z\) is an generalized stochastic exponential. In this case the previously established integrability condition boils down to a localized Novikov-type condition, which is suitable for applications also when classical Novikov-type conditions fail to hold.
Fifthly, we present another application of our abstract results which starts with the explicit choice of \((\rho_n)_{n \in \mathbb{N}}\) to be exit times of compacta. Thanks to this assumption, we derive an explosion-type condition in the spirit of \cite{MU(2012)}.
We impose the following additional assumption. 
\begin{SA}\label{SA}
We assume that the probability measure \(\p\) is a solution to the SMP 
associated with \((\mathbb{H}; A; \eta; X; B(h), C, \nu)\). 
We denote the unique 
decomposition of \(X(h)\) 
by \(X(h) = X_0 + M(h) + B(h)\), c.f. \eqref{B(h) def} in Appendix \ref{Semimartingale Characterstics}.
\end{SA}
All following standing assumptions are only assumed to hold in the particular subsection.
\subsubsection{A Condition for Strict Local Martingality}
We now consider the situation where the underlying filtered space allows an extension of every consistent family.
This topological property is called \emph{fullness}, c.f. Definition \ref{def full} in Appendix \ref{Classical Path-Spaces}, and is for instance possessed by all standard path spaces.
It enables us to extend the consistent family \((\q_t, \mathcal{F}_t)_{t \geq 0}\) where \(\q_t := Z_t \cdot \p\) to a measure \(\q\) which is locally absolutely continuous w.r.t. \(\p\) with density process 
\(Z\). 
An application of Girsanov's theorem then yields the existence of a solution to a modified SMP. 
In view of this observation a contradiction argument yields a sufficient condition for strict local martingality. 
In the following we formalize this idea.
\begin{theorem}\label{main theorem new 2}
Assume that \((\Omega, \mathcal{F}, \F)\) is full. 
If \(Z\) is an \((\F, \p)\)-martingale, there exists a solution \(\q\) to the SMP 
associated with \((\mathbb{H}; A; \eta; X; \dot{B}(h), C, \dot{\nu})\), where on \(A\) 
\begin{equation}\label{candidtate triplet main theorem}
\begin{split}
\dot{B}(h) = B(h) + &1/Z_- \1_{\{Z_- > 0\}}\cdot \lle Z, M(h)\rre^{\p}\textit{ and } \dot{\nu} = Y \cdot \nu,\\ \textit{with } Y &= 1/Z_-\1_{\{Z_- > 0\}} M^\p_{\mu^X} (Z |\widetilde{\mathcal{P}}(\mathfrak{F})),
\end{split}
\end{equation} 
for the notation c.f. Appendix \ref{Girsanov's Theorem for Hilbert-Space-valued Semimartingales}.
Additionally we have \(\q \ll_\textup{loc} \p\) with density process \(Z\).
\end{theorem}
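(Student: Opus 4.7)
The plan is to construct $\q$ by extending the consistent family $(\q_t, \mathcal{F}_t)_{t\ge 0}$, where $\q_t := Z_t \cdot \p$, and then read off the characteristics of $X$ under $\q$ via Girsanov's theorem for Hilbert-space-valued semimartingales.

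First, I would verify that $(\q_t, \mathcal{F}_t)_{t \geq 0}$ forms a consistent family of probability measures. Since $Z$ is assumed to be an $(\F, \p)$-martingale with $\p$-a.s.\ $Z_0 = 1$, we have $\E[Z_t] = 1$ for every $t \geq 0$, so each $\q_t$ is a probability measure on $\mathcal{F}_t$. For $s \leq t$ and $F \in \mathcal{F}_s$, the martingale property gives
\[
\q_t(F) = \E[Z_t \1_F] = \E[\E[Z_t \mid \mathcal{F}_s] \1_F] = \E[Z_s \1_F] = \q_s(F),
\]
establishing consistency. Invoking the fullness of $(\Omega, \mathcal{F}, \F)$ (Definition~\ref{def full} in Appendix~\ref{Classical Path-Spaces}), this consistent family extends to a probability measure $\q$ on $(\Omega, \mathcal{F})$. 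By construction $\q|_{\mathcal{F}_t} = Z_t \cdot \p|_{\mathcal{F}_t}$ for every $t \geq 0$, so $\q \ll_\textup{loc} \p$ with density process $Z$, which yields the final claim of the theorem.

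Next, I would verify that $\q$ solves the modified SMP. For the initial law, note that $Z_0 = 1$ implies $\q = \p$ on $\mathcal{F}_0$, hence $\q \circ X_0^{-1} = \p \circ X_0^{-1} = \eta$. To handle the semimartingale property and characteristics, I would fix an announcing sequence $(\tau_n)_{n \in \N}$ for $A$ coming from the Standing Assumption, so that for each $n$, $X^{\tau_n}$ is an $(\F, \p)$-semimartingale with characteristics $(B^{\tau_n}(h), C^{\tau_n}, \nu^{\tau_n})$. Since $Z^{\tau_n}$ is the density process of $\q$ relative to $\p$ on $\of 0, \tau_n \gs$ (and is in particular a uniformly integrable $(\F,\p)$-martingale as $Z$ itself is a martingale), Girsanov's theorem for Hilbert-space-valued semimartingales (Appendix~\ref{Girsanov's Theorem for Hilbert-Space-valued Semimartingales}) ensures that $X^{\tau_n}$ remains an $(\F, \q)$-semimartingale, and identifies its $(\F,\q)$-characteristics to be exactly $(\dot{B}^{\tau_n}(h), C^{\tau_n}, \dot{\nu}^{\tau_n})$ with the transformations in \eqref{candidtate triplet main theorem}. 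By Definition~\ref{Definition Semimartingale Problem}, this proves that $\q$ solves the SMP on $A$ with the claimed modified triplet.

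The main obstacle is the extension step: the consistent family only lives on $\bigcup_t \mathcal{F}_t$, and one needs the topological/measurable structure of $(\Omega, \mathcal{F}, \F)$ (fullness) to guarantee a genuine probability measure $\q$ on all of $\mathcal{F}$ matching the $\q_t$. Once $\q$ is in hand, the rest is bookkeeping: checking that the Girsanov transformation formulae coincide with \eqref{candidtate triplet main theorem} on $A$ (noting that $Z_->0$ on $\{Z_->0\}$ is handled by the indicator $\1_{\{Z_->0\}}$, while $\q$-a.s.\ $Z_->0$, so the indicator is immaterial under $\q$), and that the characteristics on $A$ obtained by gluing the characteristics on $\of 0, \tau_n \gs$ are consistent via Corollary~\ref{coro subinterval} and Proposition~\ref{SMP subinterval}.
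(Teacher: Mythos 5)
Your proposal is correct and follows essentially the same route as the paper: extend the consistent family $(Z_t\cdot\p,\mathcal{F}_t)_{t\ge0}$ via fullness to obtain $\q\ll_{\textup{loc}}\p$ with density process $Z$, then apply the Hilbert-space Girsanov theorem to each stopped process $X^{\tau_n}$ and check the initial law from $Z_0=1$. The only difference is one of detail: the paper explicitly verifies the ``bookkeeping'' you defer, namely $\lle Z, M(h)^{\tau_n}\rre^{\p}=\lle Z, M(h)\rre^{\p}_{\cdot\wedge\tau_n}$ (via the uniqueness of the predictable finite-variation compensator) and $M^{\p}_{\mu^{X^{\tau_n}}}(Z\mid\widetilde{\mathcal{P}}(\F))=\1_{\of 0,\tau_n\gs}M^{\p}_{\mu^{X}}(Z\mid\widetilde{\mathcal{P}}(\F))$, so that the stopped Girsanov triplets are indeed $(\dot{B}(h)^{\tau_n},C^{\tau_n},\dot{\nu}^{\tau_n})$.
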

\begin{proof}
Thanks to the assumption that \((\Omega, \mathcal{F}, \F)\) is full, there exists a probability measure \(\q\) such that for all \(t \geq 0\) we have \(\q = \q_t = Z_t \cdot \p\) on \(\mathcal{F}_t\). 
Since for all \(t \geq 0\) it holds that \(\q_t \ll \p\) on \(\mathcal{F}_t\), we conclude that \(\q \ll_{\textup{loc}} \p\) with density process \(Z\).
Denote by \((\tau_n)_{n \in \mathbb{N}}\) an announcing sequence for \(A\). Then \(X^{\tau_n}\) is an \((\F, \p)\)-semimartingale with characteristics \((B(h)^{\tau_n}, C^{\tau_n}, \nu^{\tau_n})\), since \(\p\) solves the SMP associated with \((\mathbb{H}; A; \eta; X; B(h), C, \nu)\).
Proposition \ref{pred finie var 0} in Appendix \ref{A HVS} yields that the unique decomposition \eqref{B(h) def} of \(X(h)^{\tau_n}\) is given by \(X(h)^{\tau_n} =X_0+ M(h)^{\tau_n} + B(h)^{\tau_n}\).
Moreover, the identity \(\llbr Z, M(h)^{\tau_n}\rrbr^\p = \llbr Z, M(h)\rrbr^\p_{\cdot \wedge \tau_n}\) and again Proposition \ref{pred finie var 0} in Appendix \ref{A HVS} imply that 
\begin{align*}
\lle Z, M(h)^{\tau_n}\rre^\p = \lle Z, M(h)\rre^{\p}_{\cdot \wedge \tau_n},
\end{align*}
where \(\lle Z, M(h)^{\tau_n}\rre^\p\) is the unique \cadlag 
\(\F\)-predictable process of finite variation such that the process \(\llbr Z, M(h)^{\tau_n}\rrbr^\p - \lle Z, M(h)^{\tau_n}\rre^\p\) is a local \((\F, \p)\)-martingale, c.f. Theorem \ref{GT} in Appendix ~\ref{Girsanov's Theorem for Hilbert-Space-valued Semimartingales}.
Furthermore, Proposition II.1.30 in \cite{JS} yields that 
\begin{align*}
M^\p_{\mu^{X^{\tau_n}}}(Z|\widetilde{\mathcal{P}}(\F)) = \1_{\of 0, \tau_n\gs} M^\p_{\mu^X}(Z|\widetilde{\mathcal{P}}(\F)).
\end{align*}
Therefore, the Girsanov-type theorem given by Theorem \ref{GT} in Appendix \ref{Girsanov's Theorem for Hilbert-Space-valued Semimartingales} yields that \(X^{\tau_n}\) is also an \((\F, \q)\)-semimartingale with \((\F, \q)\)-characteristics \((\dot{B}(h)^{\tau_n}, C^{\tau_n}, \dot{\nu}^{\tau_n})\).
Now the identity \(\q \circ X_0^{-1} = \q_0 \circ X_0^{-1} = \p \circ X_0^{-1} = \eta\), as \(\p\)-a.s. \(Z_0 = 1\), yields that \(\q\) is a solution to the SMP associated with \((\mathbb{H}; \tau_n; \eta; X; \dot{B}(h), C, \dot{\nu})\).
Since this holds for all \(n \in \mathbb{N}\), the claim is proven.
\end{proof}
As a corollary we obtain the following sufficient condition for strict local ~martingality.
\begin{corollary}\label{coro strict explosion}
Let \((\Omega,\mathcal{F}, \F)\) be full. If the SMP 
associated with \((\mathbb{H}; A; \eta; X; \dot{B}(h), C, \dot{\nu})\), where \(\dot{B}(h)\) and \(\dot{\nu}\) are given by \eqref{candidtate triplet main theorem} on \(A\), has no solution, then the process \(Z\) is a strict local \((\F, \p)\)-martingale.
\end{corollary}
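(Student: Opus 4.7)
The plan is to prove this corollary by a direct contrapositive argument using Theorem \ref{main theorem new 2}. Since $Z$ is assumed throughout this section to be a non-negative local $(\mathfrak{F},\mathbb{P})$-martingale with $\mathbb{P}$-a.s. $Z_0=1$, the dichotomy is straightforward: $Z$ is either a true martingale or a strict local martingale. Hence to conclude strict local martingality, it suffices to rule out that $Z$ is an $(\mathfrak{F},\mathbb{P})$-martingale.

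More precisely, I would argue as follows. Assume toward a contradiction that $Z$ is an $(\mathfrak{F},\mathbb{P})$-martingale. Since $(\Omega,\mathcal{F},\mathfrak{F})$ is full by assumption, Theorem \ref{main theorem new 2} applies verbatim and produces a probability measure $\mathbb{Q}$ with $\mathbb{Q}\ll_{\textup{loc}}\mathbb{P}$ (density process $Z$) which solves the SMP associated with $(\mathbb{H};A;\eta;X;\dot{B}(h),C,\dot{\nu})$, where the modified characteristics are precisely the ones defined in \eqref{candidtate triplet main theorem}. This directly contradicts the hypothesis that the modified SMP has no solution, so the assumption that $Z$ is a martingale must be false. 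Consequently, $Z$ is a strict local martingale.

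There is no substantial obstacle here; the entire content of the corollary has already been packaged into Theorem \ref{main theorem new 2}, and what remains is just the logical contrapositive together with the terminological observation that a non-negative local martingale which is not a martingale is by definition a strict local martingale. The only minor point to be careful about is that the modified triplet in the statement of the corollary is stipulated only on $A$, but this is exactly what Definition \ref{Definition Semimartingale Problem} requires for an SMP on a set of interval type (compare with the remark following that definition and with Corollary \ref{coro subinterval}), so no further argument is needed.
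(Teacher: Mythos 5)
Your contrapositive argument via Theorem \ref{main theorem new 2} is correct and is exactly how the paper derives this corollary (the paper states it as an immediate consequence, with no separate proof). Nothing is missing.
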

\begin{remark}
In the one-dimensional diffusion setting the \textit{Feller test for explosion}, c.f. \cite{Feller52} or \cite{KaraShre}, Section 5.5.C, provides a technique to apply 
Corollary \ref{coro strict explosion}.
A comprehensive discussion is given by \cite{MU(2012)}.
\end{remark}
\subsubsection{The Martingale Property under Uniqueness Assumptions}\label{main section}
In this section we impose existence and uniqueness assumptions on the modified SMP 
and derive sufficient conditions for the martingality of \(Z\).
We denote by \((\tau_n)_{n \in \mathbb{N}}\) an arbitrary announcing sequence for \(A\) and set \(\bar{A} := \bigcup_{n \in \mathbb{N}} \of 0, \tau_n \wedge \rho_n\gs \in \mathcal{I}(\F)\) and \(\tau := \lim_{n \to \infty} \tau_n\).
\begin{condition}\label{cond main uni}
We define the following conditions:
\begin{enumerate}
\item[\textup{(I)}]
The SMP associated with \((\mathbb{H}; \bar{A}; \eta; X;\dot{B}(h), C, \dot{\nu})\), where \(\dot{B}(h)\) and \(\dot{\nu}\) are given by \eqref{candidtate triplet main theorem} on \(\bar{A}\),
has a solution \(\q\) and satisfies \(\{\tau_n \wedge \rho_m, n,m \in \mathbb{N}\}\)-uniqueness.
\item[\textup{(II)}] For all \(n \in \mathbb{N}\) and \(t \geq 0\) we have \(\{\rho_n > t\}\in \mathcal{F}_{(\tau \wedge \rho_n)-}\). 
\item[\textup{(III)}] For all \(n \in \mathbb{N}\) we have \(\{\rho_n = \infty\} \in \mathcal{F}_{(\tau \wedge \rho_n)-}\).

\end{enumerate}
\end{condition}
Note that (II) and (III) are structural assumptions which relate the sequences \((\rho_n)_{n \in \mathbb{N}}\) and \((\tau_n)_{n \in \mathbb{N}}\).
Now we are in the position to give the main results of this section.
\begin{proposition}\label{main prop uni 1}
Assume \textup{(I)} and \textup{(II)} in Conditions \ref{cond main uni}. 
\begin{enumerate}
\item[\textup{(i)}]
If we have 
\begin{align}\label{Main condition}
\lim_{n \to \infty}\q(\rho_n > t) = 1,\textit{ for all } t \geq 0,
\end{align}
then \(Z\) is an \((\F, \p)\)-martingale.
\item[\textup{(ii)}]
If \(\p\)-a.s. \(\rho_n \uparrow_{n \to \infty} \infty\) and \(Z\) is an \((\F, \p)\)-martingale, then \eqref{Main condition} holds.
\end{enumerate}
\end{proposition}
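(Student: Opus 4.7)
The strategy is to reduce both parts to the characterizations of martingality in Section~\ref{First important Observations}: part~(i) to Lemma~\ref{neues lemma}(i) and part~(ii) to Lemma~\ref{ruf mimic}. The key intermediate claim, from which both halves follow quickly, is the identity
\begin{equation*}
\q(\rho_n > t) = \q_n(\rho_n > t) \qquad \text{for every } n\in\mathbb{N},\ t\geq 0,
\end{equation*}
where $\q_n := Z_{\rho_n}\cdot\p$.

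To establish this identity, fix $m,n\in\mathbb{N}$. By Convention~\ref{conv}, $Z^{\rho_n}$ is a uniformly integrable non-negative $(\F,\p)$-martingale with $Z_0=1$, so $\q_n$ is a probability measure and $\q_n\ll_{\mathrm{loc}}\p$ with density process $Z^{\rho_n}$. Arguing exactly as in the proof of Theorem~\ref{main theorem new 2}, Girsanov's theorem (Theorem~\ref{GT}) shows that $\q_n$ solves the SMP associated with $(\mathbb{H};\tau_m\wedge\rho_n;\eta;X;\dot{B}(h),C,\dot{\nu})$. On the other hand, $\of 0,\tau_m\wedge\rho_n\gs\subseteq\of 0,\tau_{m\vee n}\wedge\rho_{m\vee n}\gs\subseteq\bar A$, so by Proposition~\ref{SMP subinterval} the given $\q$ solves this SMP as well; the $\{\tau_n\wedge\rho_m\}$-uniqueness in~(I) then yields $\q=\q_n$ on $\mathcal{F}_{(\tau_m\wedge\rho_n)-}$. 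To pass from $\mathcal{F}_{(\tau_m\wedge\rho_n)-}$ to $\mathcal{F}_{(\tau\wedge\rho_n)-}$, recall that the latter $\sigma$-field is generated by $\mathcal{F}_0$ together with sets of the form $G\cap\{s<\tau\wedge\rho_n\}$, $s\geq 0$, $G\in\mathcal{F}_s$; since $\tau_m\uparrow\tau$, one has $G\cap\{s<\tau_m\wedge\rho_n\}\uparrow G\cap\{s<\tau\wedge\rho_n\}$, and each level-$m$ set lies in $\mathcal{F}_{(\tau_m\wedge\rho_n)-}$. A standard monotone-class argument therefore upgrades $\q=\q_n$ to $\mathcal{F}_{(\tau\wedge\rho_n)-}$, and hypothesis~(II) gives the displayed claim.

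Both parts now follow: in~(i), combining the identity with $\lim_n\q(\rho_n>t)=1$ produces $\lim_n\q_n(\rho_n>t)=1$, so Lemma~\ref{neues lemma}(i) delivers the $(\F,\p)$-martingale property of $Z$; in~(ii), the assumption $\rho_n\uparrow\infty$ $\p$-a.s.\ makes $(\rho_n)$ a localizing sequence, Lemma~\ref{ruf mimic} yields $\lim_n\q_n(\rho_n>t)=1$, and the displayed identity rewrites this as~\eqref{Main condition}. The main subtlety lies in the $m\to\infty$ passage between $\sigma$-fields; crucially the generators involve the \emph{strict} inequality $\{s<\tau\wedge\rho_n\}$, so the argument does not require $\tau_m\wedge\rho_n$ to increase strictly to $\tau\wedge\rho_n$ (which typically fails on $\{\rho_n<\tau\}$).
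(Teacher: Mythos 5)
Your proof is correct and takes essentially the same route as the paper: Girsanov's theorem plus the \(\{\tau_m\wedge\rho_n\}\)-uniqueness identify \(\q\) with the \(Z\)-tilted measure on \(\mathcal{F}_{(\tau\wedge\rho_n)-}\) (the paper's Lemmas \ref{lemma stopped} and \ref{lemma 2}, which cite Dellacherie--Meyer IV.56 for \(\bigvee_m\mathcal{F}_{(\tau_m\wedge\rho_n)-}=\mathcal{F}_{(\tau\wedge\rho_n)-}\) where you argue via the generators), and then part (II) of Conditions \ref{cond main uni} reduces both statements to Lemmas \ref{ruf mimic} and \ref{neues lemma}. Your only cosmetic deviation is working with the single measure \(\q_n=Z_{\rho_n}\cdot\p\) throughout, which absorbs the paper's separate Doob optional-stopping step (its identity \eqref{q comp}) into the uniqueness argument.
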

\begin{proposition}\label{main prop uni 2}
Assume \textup{(I)} and \textup{(III)} in Conditions \ref{cond main uni}. 
\begin{enumerate}
\item[\textup{(i)}]
If we have 
\begin{align}\label{Main condition ui} 
\lim_{n \to \infty} \q(\rho_n = \infty) = 1,
\end{align}
then \(Z\) is a uniformly integrable \((\F, \p)\)-martingale.
\item[\textup{(ii)}]
If \(\bigcup_{n \in \mathbb{N}} \{\rho_n = \infty\}\) is a \(\p\)-a.s. event and \(Z\) is a uniformly integrable \((\F, \p)\)-martingale, then \eqref{Main condition ui} holds.
\end{enumerate}
\end{proposition}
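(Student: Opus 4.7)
The plan is to mirror the expected proof of Proposition~\ref{main prop uni 1}, replacing Lemma~\ref{ruf mimic} by Lemma~\ref{neues lemma}(ii). For each $n\in\mathbb{N}$ I would introduce the probability measure $\q_n:=Z_{\rho_n}\cdot\p$, which is well-defined on $(\Omega,\mathcal{F})$ because Convention~\ref{conv} makes $Z^{\rho_n}$ a uniformly integrable $(\F,\p)$-martingale; it satisfies $\q_n\ll\p$ with density process $Z^{\rho_n}$, and $\q_n\circ X_0^{-1}=\eta$ since $Z_0=1$ $\p$-a.s.

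The heart of the argument is to show that $\q=\q_n$ on $\mathcal{F}_{(\tau\wedge\rho_n)-}$. First, by repeating the Girsanov computation from the proof of Theorem~\ref{main theorem new 2} after stopping at $\tau_m\wedge\rho_n$, and using the identities
\[
\lle Z^{\rho_n},M(h)^{\tau_m\wedge\rho_n}\rre^{\p}=\lle Z,M(h)\rre^{\p}_{\,\cdot\,\wedge\,\tau_m\wedge\rho_n}
\]
together with the analogous localization of $M^{\p}_{\mu^X}(Z\mid\widetilde{\mathcal{P}}(\F))$ and the equality $Z^{\rho_n}_-=Z_-$ on $\of 0,\tau_m\wedge\rho_n\gs$, one checks that $\q_n$ solves the SMP associated with $(\mathbb{H};\tau_m\wedge\rho_n;\eta;X;\dot{B}(h),C,\dot{\nu})$ for every $m,n$. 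By Conditions~\ref{cond main uni}(I) and Proposition~\ref{SMP subinterval}, $\q$ also solves this SMP, and the $\{\tau_m\wedge\rho_n\}$-uniqueness provided by Conditions~\ref{cond main uni}(I) yields $\q=\q_n$ on $\mathcal{F}_{(\tau_m\wedge\rho_n)-}$ for all $m$.

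I would then lift this equality to $\mathcal{F}_{(\tau\wedge\rho_n)-}$. Because $\tau_m\uparrow\tau$ one has $\{t<\tau\wedge\rho_n\}=\bigcup_m\{t<\tau_m\wedge\rho_n\}$, and since $\mathcal{F}_{\sigma-}$ is generated by $\mathcal{F}_0$ together with the sets $A\cap\{t<\sigma\}$ for $A\in\mathcal{F}_t$, $t\geq 0$, it follows that $\mathcal{F}_{(\tau\wedge\rho_n)-}=\bigvee_m\mathcal{F}_{(\tau_m\wedge\rho_n)-}$. Hence $\q=\q_n$ on $\mathcal{F}_{(\tau\wedge\rho_n)-}$. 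Conditions~\ref{cond main uni}(III) then places $\{\rho_n=\infty\}$ in this $\sigma$-algebra, giving $\q_n(\rho_n=\infty)=\q(\rho_n=\infty)$ for every $n$.

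Part (i) is now immediate: hypothesis~\eqref{Main condition ui} transfers from $\q$ to $\q_n$, and Lemma~\ref{neues lemma}(ii) yields that $Z$ is a uniformly integrable $(\F,\p)$-martingale. For part (ii) I would use that $\{\rho_n=\infty\}\in\mathcal{F}_{\rho_n}$ (its intersection with $\{\rho_n\leq t\}$ is empty for every $t<\infty$), and that uniform integrability of $Z$ gives the optional-stopping identity $Z_{\rho_n}=\E[Z_\infty\mid\mathcal{F}_{\rho_n}]$, whence
\[
\q(\rho_n=\infty)=\E[Z_{\rho_n}\1_{\{\rho_n=\infty\}}]=\E[Z_\infty\1_{\{\rho_n=\infty\}}].
\]
Monotonicity of $(\rho_n)$ makes $\{\rho_n=\infty\}$ increase to an event of full $\p$-measure, and monotone convergence delivers the limit $\E[Z_\infty]=1$. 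The main obstacle I anticipate is the $\sigma$-algebra lifting step from $\mathcal{F}_{(\tau_m\wedge\rho_n)-}$ to $\mathcal{F}_{(\tau\wedge\rho_n)-}$, since the Girsanov step is a direct transcription of the proof of Theorem~\ref{main theorem new 2}, while this lifting is the place where the structural condition~(III) is genuinely exploited.
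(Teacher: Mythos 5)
Your proposal is correct and follows essentially the same route as the paper: a Girsanov computation identifying the stopped density measure as a solution of the modified SMP (the paper's Lemma \ref{lemma stopped}, stated there with density $Z_{\tau_m\wedge\rho_n}$ rather than your $Z_{\rho_n}$, which only rearranges an optional-stopping step), $\{\tau_m\wedge\rho_n\}$-uniqueness plus the join identity $\bigvee_m\mathcal{F}_{(\tau_m\wedge\rho_n)-}=\mathcal{F}_{(\tau\wedge\rho_n)-}$ (the paper cites Dellacherie--Meyer IV.56 where you argue via generators), condition (III) to transfer $\q(\rho_n=\infty)$ to $\E[Z_{\rho_n}\1_{\{\rho_n=\infty\}}]$, and then Lemma \ref{neues lemma}(ii) for (i) and the a.s. exhaustion of $\Omega$ by $\{\rho_n=\infty\}$ with convergence and $\E[Z_\infty]=1$ for (ii).
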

The proofs of these two results are based on the following two lemmata.

\begin{lemma}\label{lemma stopped}
For all \(m, n \in \mathbb{N}\), the probability measure \(\q_{m, n} := Z_{\tau_m \wedge \rho_n} \cdot \p\) solves the SMP associated with \((\mathbb{H}; \tau_m \wedge \rho_n; \eta; X; \dot{B}(h), C, \dot{\nu})\), where
\(\dot{B}(h)\) and \(\dot{\nu}\) are given by \eqref{candidtate triplet main theorem} on \(\bar{A}\).
\end{lemma}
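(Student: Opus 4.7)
The plan is to mirror the Girsanov argument from the proof of Theorem \ref{main theorem new 2}, with the crucial simplification that the bounded stopping horizon $\tau_m \wedge \rho_n$ lets us bypass any fullness hypothesis: the candidate measure $\q_{m,n}$ is defined directly by the density $Z_{\tau_m \wedge \rho_n}$ without any extension. First I would check that $\q_{m,n}$ is a probability measure. Since $Z^{\rho_n}$ is a uniformly integrable $(\F,\p)$-martingale by Convention \ref{conv}, optional stopping at $\tau_m$ yields that $Z^{\tau_m \wedge \rho_n} = (Z^{\rho_n})^{\tau_m}$ is again a uniformly integrable $(\F,\p)$-martingale, so $\E[Z_{\tau_m \wedge \rho_n}] = \E[Z_0] = 1$. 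Moreover $Z^{\tau_m \wedge \rho_n}$ is then the density process of $\q_{m,n}$ with respect to $\p$, in particular $\q_{m,n} \ll_{\textup{loc}} \p$, and the identity $Z_0 = 1$ $\p$-a.s.\ gives $\q_{m,n} \circ X_0^{-1} = \p \circ X_0^{-1} = \eta$.

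Next, since $\of 0, \tau_m \wedge \rho_n\gs \subseteq \bar A \subseteq A$, I would invoke Proposition \ref{SMP subinterval} applied to the given $\p$-solution of the SMP on $A$ to obtain that $X^{\tau_m \wedge \rho_n}$ is an $(\F,\p)$-semimartingale with $(\F, \p)$-characteristics $(B(h)^{\tau_m \wedge \rho_n}, C^{\tau_m \wedge \rho_n}, \nu^{\tau_m \wedge \rho_n})$ and canonical decomposition $X(h)^{\tau_m \wedge \rho_n} = X_0 + M(h)^{\tau_m \wedge \rho_n} + B(h)^{\tau_m \wedge \rho_n}$, as in the proof of Theorem \ref{main theorem new 2}. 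Then I would apply the Girsanov-type Theorem \ref{GT} to the change of measure $\p \rightsquigarrow \q_{m,n}$ along the density process $Z^{\tau_m \wedge \rho_n}$: under $\q_{m,n}$ the process $X^{\tau_m \wedge \rho_n}$ remains a semimartingale, with drift modified by $(Z_-)^{-1} \mathbf{1}_{\{Z_- > 0\}} \cdot \lle Z^{\tau_m \wedge \rho_n}, M(h)^{\tau_m \wedge \rho_n}\rre^{\p}$ and compensator rescaled by the analogous Radon--Nikodym factor.

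Finally, I would show that this Girsanov-modified triplet coincides with $(\dot B(h)^{\tau_m \wedge \rho_n}, C^{\tau_m \wedge \rho_n}, \dot\nu^{\tau_m \wedge \rho_n})$. This is literally the same bookkeeping as in the proof of Theorem \ref{main theorem new 2}: the stopping identities $\lle Z^{\tau_m \wedge \rho_n}, M(h)^{\tau_m \wedge \rho_n}\rre^{\p} = \lle Z, M(h)\rre^{\p}_{\cdot \wedge \tau_m \wedge \rho_n}$, together with $Z_- = (Z^{\tau_m \wedge \rho_n})_-$ on $\of 0, \tau_m \wedge \rho_n\gs$, match the modified drift with $\dot B(h)^{\tau_m \wedge \rho_n}$, while Proposition II.1.30 in \cite{JS} gives $M^{\p}_{\mu^{X^{\tau_m \wedge \rho_n}}}(Z^{\tau_m \wedge \rho_n} \mid \widetilde{\mathcal{P}}(\F)) = \mathbf{1}_{\of 0, \tau_m \wedge \rho_n\gs}\, M^{\p}_{\mu^X}(Z \mid \widetilde{\mathcal{P}}(\F))$, which matches the modified compensator with $\dot\nu^{\tau_m \wedge \rho_n}$. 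The hard part will be nothing genuinely new, only the careful handling of these stopping identities for the predictable bracket and the dual predictable projection; since these are precisely the computations already carried out in the proof of Theorem \ref{main theorem new 2}, they should transfer verbatim.
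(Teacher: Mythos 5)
Your proposal is correct and takes essentially the same route as the paper's proof: identify the initial law from \(Z_0 = 1\), note \(\q_{m,n} \ll \p\) with density process \(Z^{\tau_m \wedge \rho_n}\), apply the Girsanov-type Theorem \ref{GT} to \(X^{\tau_m \wedge \rho_n}\), and match the resulting triplet with \((\dot{B}(h)^{\tau_m \wedge \rho_n}, C^{\tau_m \wedge \rho_n}, \dot{\nu}^{\tau_m \wedge \rho_n})\) via the stopping identities for the bracket and for \(M^\p_{\mu^X}(Z \mid \widetilde{\mathcal{P}}(\F))\). The only difference is that you make explicit the preliminary facts (uniform integrability of \(Z^{\tau_m \wedge \rho_n}\) from Convention \ref{conv} and the use of Proposition \ref{SMP subinterval} on \(\of 0, \tau_m \wedge \rho_n \gs \subseteq \bar{A} \subseteq A\)) which the paper leaves implicit.
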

\begin{proof}
Since \(\p\)-a.s. \(Z_0 = 1\), it holds that 
\begin{align}\label{qn initial}
\q_{m,n} \circ X^{-1}_0 = \eta. 
\end{align}
Moreover, since \(X^{\tau_m \wedge \rho_n}\) is an \((\F, \p)\)-semimartingale 
and \(\q_{m,n} \ll \p\) with density process \(Z^{\tau_m \wedge \rho_n}\), the Girsanov-type theorem as given in Theorem \ref{GT} in Appendix \ref{Girsanov's Theorem for Hilbert-Space-valued Semimartingales} implies that \(X^{\tau_m \wedge \rho_n}\) is an  \((\F, \q)\)-semimartingale with \((\F, \q)\)-characteristics \((\bar{B}(h), C^{\tau_m \wedge \rho_n}, \bar{\nu})\), where 
\begin{align*}
\bar{B}(h) &= B(h)^{\tau_m \wedge \rho_n} + \1_{\{Z^{\tau_m \wedge \rho_n}_- > 0\}}/Z^{\tau_m \wedge \rho_n}_- 
\cdot \lle Z^{\tau_m \wedge \rho_n}, M(h)^{\tau_m \wedge \rho_n}\rre^\p \\
&= B(h)^{\tau_m \wedge \rho_n} + \1_{\{Z_- > 0\}}/Z_- \1_{\of 0, \tau_m \wedge \rho_n\gs} \cdot \lle Z, M(h)\rre^\p  
\\&= \dot{B}(h)^{\tau_m \wedge \rho_n},\\
\bar{\nu}(\dd t, \dd x) &= \frac{\1_{\{Z^{\tau_m \wedge \rho_n}_{t-} > 0\}}}{Z^{\tau_m \wedge \rho_n}_{t-}} M^\p_{\mu^{X^{\tau_m \wedge \rho_n}}}(Z^{\tau_m \wedge \rho_n} | \widetilde{\mathcal{P}}(\F))(t, x) \1_{\of 0, \tau_m \wedge \rho_n\gs \times \mathbb{H}}(\cdot, t, x) \nu(\dd t, \dd x)
\\&= \frac{\1_{\{Z_{t-} > 0\}}}{Z_{t-}} M^\p_{\mu^X}(Z|\widetilde{\mathcal{P}}(\F))(t,x) \1_{\of 0, \tau_m \wedge \rho_n\gs \times \mathbb{H}}(\cdot, t, x) \nu(\dd t, \dd x) 
\\&= \dot{\nu}^{\tau_m \wedge \rho_n} (\dd t, \dd x).
\end{align*}
This and \eqref{qn initial} implies that 
\(\q_{m,n}\) is a solution to the SMP 
 \((\mathbb{H};\tau_m\wedge \rho_n; \eta; X; \dot{B}(h), C, \dot{\nu})\). 
\end{proof}
\begin{lemma}\label{lemma 2}
Assume part \textup{(I)} of Conditions \ref{cond main uni}, then
\begin{align*}
\q  = Z_{\tau \wedge \rho_n} \cdot \p \textit{ on } 
\mathcal{F}_{(\tau \wedge \rho_n)-}. 
\end{align*}
\end{lemma}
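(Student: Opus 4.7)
The plan is to exploit the $\{\tau_n \wedge \rho_m\}$-uniqueness assumption in part (I) of Conditions \ref{cond main uni} at each finite level $\tau_m \wedge \rho_n$ and then let $m \to \infty$ through a monotone-class argument.

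First I would show that for every $m,n \in \N$ both $\q$ and the candidate $\q_{m,n} := Z_{\tau_m \wedge \rho_n} \cdot \p$ solve the SMP associated with $(\mathbb{H}; \tau_m \wedge \rho_n; \eta; X; \dot B(h), C, \dot\nu)$. For $\q_{m,n}$ this is precisely Lemma \ref{lemma stopped}. For $\q$, since $\of 0, \tau_m \wedge \rho_n\gs \subseteq \bar A$, Proposition \ref{SMP subinterval} yields that $\q$, restricted to the SMP on the stopping time $\tau_m \wedge \rho_n$, is also a solution. The uniqueness hypothesis then gives $\q = \q_{m,n}$ on $\mathcal{F}_{(\tau_m \wedge \rho_n)-}$ for every $m,n \in \N$.

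Second I would use the standard generating $\pi$-system $\{B \cap \{t < \tau \wedge \rho_n\} : t \geq 0,\, B \in \mathcal{F}_t\}$ of $\mathcal{F}_{(\tau \wedge \rho_n)-}$. Since $\tau_m \uparrow \tau$ there is the increasing-union identity
\[
\{t < \tau \wedge \rho_n\} = \bigcup_{m \in \N} \{t < \tau_m \wedge \rho_n\}.
\]
For fixed $m$, the set $B \cap \{t < \tau_m \wedge \rho_n\}$ lies in $\mathcal{F}_{(\tau_m \wedge \rho_n)-} \subseteq \mathcal{F}_{\tau_m \wedge \rho_n}$, so combining the uniqueness step with Doob's optional sampling applied to the uniformly integrable martingale $Z^{\rho_n}$ at the bounded stopping times $\tau_m \wedge \rho_n \leq \tau \wedge \rho_n \leq \rho_n$ gives
\[
\q(B \cap \{t < \tau_m \wedge \rho_n\}) = \E_\p\!\left[Z_{\tau_m \wedge \rho_n}\, \indik_{B \cap \{t < \tau_m \wedge \rho_n\}}\right] = \E_\p\!\left[Z_{\tau \wedge \rho_n}\, \indik_{B \cap \{t < \tau_m \wedge \rho_n\}}\right].
\]
Monotone convergence in $m$ then produces
\[
\q(B \cap \{t < \tau \wedge \rho_n\}) = \E_\p\!\left[Z_{\tau \wedge \rho_n}\, \indik_{B \cap \{t < \tau \wedge \rho_n\}}\right] = (Z_{\tau \wedge \rho_n}\cdot \p)(B \cap \{t < \tau \wedge \rho_n\}).
\]
The same optional-sampling computation at $\mathcal{F}_0 \subseteq \mathcal{F}_{(\tau_m \wedge \rho_n)-}$, together with $Z_0 = 1$, shows $\q = Z_{\tau \wedge \rho_n}\cdot\p = \p$ on $\mathcal{F}_0$.

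Finally, both $\q$ and $Z_{\tau \wedge \rho_n}\cdot \p$ are finite measures of the same total mass (by optional sampling of $Z^{\rho_n}$ one has $\E_\p[Z_{\tau \wedge \rho_n}]=1$) and they coincide on a $\pi$-system that, together with $\mathcal{F}_0$, generates $\mathcal{F}_{(\tau \wedge \rho_n)-}$. Dynkin's $\pi$-$\lambda$ theorem then extends the equality to all of $\mathcal{F}_{(\tau \wedge \rho_n)-}$. The main delicate point I expect is the monotone-class bookkeeping -- carefully verifying that the chosen family genuinely generates $\mathcal{F}_{(\tau \wedge \rho_n)-}$ -- since all other ingredients (Doob's optional sampling for $Z^{\rho_n}$, monotone convergence along $\tau_m \uparrow \tau$, and the uniqueness hypothesis of part (I)) are standard once this setup is in place.
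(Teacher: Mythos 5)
Your proposal is correct and follows essentially the same route as the paper: uniqueness together with Lemma \ref{lemma stopped} at each level \(\tau_m \wedge \rho_n\), Doob's optional sampling for the uniformly integrable martingale \(Z^{\rho_n}\), and a passage to the limit in \(m\); your explicit \(\pi\)-\(\lambda\) argument on the generators of \(\mathcal{F}_{(\tau\wedge\rho_n)-}\) merely inlines what the paper obtains by a monotone class argument plus the citation of Dellacherie--Meyer, Theorem IV.56, for \(\bigvee_{m}\mathcal{F}_{(\tau_m\wedge\rho_n)-}=\mathcal{F}_{(\tau\wedge\rho_n)-}\). One harmless slip: the stopping times \(\tau_m\wedge\rho_n\) need not be bounded, but optional sampling applies anyway precisely because \(Z^{\rho_n}\) is uniformly integrable, which is what you (and the paper) actually use.
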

\begin{proof}
Due to the assumption of \(\{\tau_m\wedge \rho_n, n, m \in \mathbb{N}\}\)-uniqueness and Lemma \ref{lemma stopped} we have 
\begin{align*}
\q =Z_{\tau_m \wedge \rho_n} \cdot \p  \textup{ on } \mathcal{F}_{(\tau_m \wedge \rho_n)-}
\end{align*}
Let \(F \in \mathcal{F}_{(\tau_m \wedge \rho_n)-}\), then 
\begin{align*}
\q(F) = \E[Z_{\tau_m\wedge \rho_n} \1_F] = \E[\E[Z_{\tau \wedge \rho_n}| \mathcal{F}_{\tau_m \wedge \rho_n}] \1_F] = \E[Z_{\tau \wedge \rho_n} \1_F],
\end{align*}
due to Doob's stopping theorem. 
This proves that \(\q = Z_{\tau \wedge \rho_n} \cdot \p\) on \(\bigcup_{m \in \mathbb{N}} \mathcal{F}_{(\tau_m \wedge \rho_n)-}\). Then a monotone class argument 
yields 
\begin{align*}
\q = Z_{\tau \wedge \rho_n} \cdot \p\ \textup{ on } \bigvee_{m \in \mathbb{N}} \mathcal{F}_{(\tau_m \wedge \rho_n)-}. 
\end{align*}
Finally, thanks to \cite{DellacherieMeyer78}, Theorem IV.56, \(\bigvee_{m \in \mathbb{N}} \mathcal{F}_{(\tau_m \wedge \rho_n)-} = \mathcal{F}_{(\tau \wedge \rho_n)-}\),
which finishes the proof.
\end{proof}
\hspace{-0.45cm}\textit{Proof of Proposition \ref{main prop uni 1}:}
(i). 
Due to part (II) of Conditions \ref{cond main uni}, Lemma \ref{lemma 2} and Doob's stopping theorem we obtain 
\begin{align}\label{q comp}
\q(\rho_n > t) &= \E\big[Z_{\tau \wedge \rho_n} \1_{\{\rho_n > t\}}\big] = \E\big[ \E\big[Z_{\rho_n}|\mathcal{F}_{\tau \wedge \rho_n}\big]\1_{\{\rho_n > t\}}\big] = \E\big[Z_{\rho_n} \1_{\{\rho_n > t\}}\big].
\end{align}
Therefore, Lemma \ref{neues lemma} (i) yields the claim.
(ii). Since \(\p\)-a.s. \(\rho_n \uparrow_{n \to \infty} \infty\), Lemma \ref{ruf mimic} and \eqref{q comp} yield the claim.
\qed
\\\\
\hspace{-0.45cm}\textit{Proof of Proposition \ref{main prop uni 2}:}
(i). 
The claim follows identically to the proof of Proposition \ref{main prop uni 1} ~(i).

(ii). 
Part (III) of Conditions \ref{cond main uni}, Lemma \ref{lemma 2}, Doob's optional stopping theorem, dominated convergence and the assumption that \(\bigcup_{n \in \mathbb{N}} \{\rho_n = \infty\}\) is a \(\p\)-a.s. event yield 
\begin{align*}
\lim_{n \to \infty} \q(\rho_n = \infty) = \lim_{n \to \infty} \E[Z_{\rho_n} \1_{\{\rho_n = \infty\}}] =
 \lim_{n \to \infty} \E[Z_\infty \1_{\{\rho_n = \infty\}}] = \E[Z_\infty] = 1.
\end{align*}
This finishes the proof.
\qed
\begin{remark}
For explicit conditions implying existence of a solution to the modified SMP we refer to Theorem \ref{theorem extension2} below.
\end{remark}

\subsubsection{A first explicit definition of \((\rho_n)_{n \in \mathbb{N}}\)}\label{MSE}
In this section we benefit from a boundedness condition implying uniformly integrable martingality due to \cite{J79} in order to construct an explicit sequence \((\rho_n)_{n \in \mathbb{N}}\).

Let \((\sigma_n)_{n \in \mathbb{N}}\) be a sequence of \(\F\)-stopping times, denote \(A' = \bigcup_{n \in \mathbb{N}} \of 0, \sigma_n\gs\), and let \(Z \in \mathcal{M}^{A'}_\textup{loc}(\mathbb{R}, \F, \p)\) be non-negative.
We extend \(Z\) as in \eqref{extension} and denote the extension by \(\widetilde{Z}\). Due to Proposition \ref{lemma tilde Z local martingale} in Appendix \ref{Extensions of non-negative local martingales on sets of interval type}, \(\widetilde{Z}\) is a non-negative local \((\F, \p)\)-martingale.
We set 
\begin{align}\label{C(Z)}
C(\widetilde{Z}) := \lle \widetilde{Z}^c, \widetilde{Z}^c\rre^{\p} + \sum_{s \leq \cdot} \bigg(\widetilde{Z}_{s-}- \sqrt{\widetilde{Z}_s \widetilde{Z}_{s-}}\hspace{0.05cm}\bigg)^2.
\end{align}
Thanks to Lemma 8.5 in \cite{J79} it holds that \(C(\widetilde{Z}) \in \mathcal{A}^+_\textup{loc}(\F, \p)\), implying that \(C(\widetilde{Z})^p\), the \((\F, \p)\)-compensator of \(C(\widetilde{Z})\), is well-defined.
Moreover, we define the sequence \((\rho_n)_{n \in \mathbb{N}}\) of \(\F\)-stopping times by
\begin{align}\label{gamma}
\rho_n &:= \inf\big(t \geq 0 : \1_{\{\widetilde{Z}_- > 0\}}/\widetilde{Z}_-^2 \cdot C(\widetilde{Z})^p_{t} \geq n\big). 
\end{align}
\begin{condition}\label{gen bound cond}
We define the following conditions:
\begin{enumerate}
\item[\textup{(I)}] We have \(A' \subseteq A\).
\item[\textup{(II)}] The random time \(\sigma := \lim_{n \to \infty} \sigma_n\) is an \(\F\)-predictable time.
\item[\textup{(III)}]
For each \(n \in \mathbb{N}\) there exists a constant \(c_n > 0\) such that 
\begin{align}\label{jump condition}
\frac{\Delta C(\widetilde{Z})^p_{\rho_n}}{\widetilde{Z}_{\rho_n-}^2} \1_{\{\widetilde{Z}_{\rho_n-} > 0\}} \leq c_n. 
\end{align}
\end{enumerate}
\end{condition}
The main result of this section is the following:
\begin{proposition}\label{gen coro extended stoch exp}
Assume that the 
Part \textup{(I)} of Conditions \ref{cond main uni} and \ref{gen bound cond} hold. 
\begin{enumerate}
\item[\textup{(i)}]
If for all \(t \geq 0\) we have 
\begin{align}\label{Q cont assup}
\q\left(\ \frac{\1_{\{\widetilde{Z}_- > 0\}}}{\widetilde{Z}_-^2} \cdot C(\widetilde{Z})^p_t < \infty\right) = 1,
\end{align}
then \(\widetilde{Z}\) is an \((\F, \p)\)-martingale.
\item[\textup{(ii)}]
If we have 
\begin{align*}
\q\left(\ \frac{\1_{\{\widetilde{Z}_- > 0\}}}{\widetilde{Z}_-^2} \cdot C(\widetilde{Z})^p_\infty< \infty\right) = 1, 
\end{align*}
then  \(\widetilde{Z}\) is a uniformly integrable \((\F, \p)\)-martingale.
\end{enumerate}
\end{proposition}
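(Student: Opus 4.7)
The approach is to apply Propositions \ref{main prop uni 1} and \ref{main prop uni 2} with the concrete sequence $(\rho_n)_{n \in \mathbb{N}}$ from \eqref{gamma} and the extension $\widetilde{Z}$ in place of $Z$. Writing $K := \mathbf{1}_{\{\widetilde{Z}_- > 0\}}/\widetilde{Z}_-^{2} \cdot C(\widetilde{Z})^p$, the plan is to verify Convention \ref{conv}, the structural parts (II) respectively (III) of Conditions \ref{cond main uni}, and the convergence assertions \eqref{Main condition}, \eqref{Main condition ui} under $\q$.

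For Convention \ref{conv} I first note that $\widetilde{Z}$ is a non-negative local $(\F,\p)$-martingale by Proposition \ref{lemma tilde Z local martingale}. By construction $K$ stays strictly below $n$ on $\of 0, \rho_n \of$, and the jump bound in Condition \ref{gen bound cond}(III) gives $\Delta K_{\rho_n} \leq c_n$; hence $K^{\rho_n}$ is bounded by $n + c_n$. The boundedness criterion for uniform integrable martingality of non-negative local martingales from \cite{J79} then implies that each $\widetilde{Z}^{\rho_n}$ is a uniformly integrable $(\F,\p)$-martingale.

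For the structural parts I use Corollary \ref{coro subinterval} to restrict the SMP from $A$ to $A'$, after which I pick the announcing sequence $(\tau_n)$ of Section \ref{main section} to be $(\sigma_n)$; then $\tau = \sigma$ is $\F$-predictable by Condition \ref{gen bound cond}(II). Since $\widetilde{Z}$ is constant on $[\sigma,\infty)$, both summands in \eqref{C(Z)} are constant there, so $C(\widetilde{Z})^p$ and hence $K$ stop growing at $\sigma$. Therefore $\{\rho_n < \infty\} \subseteq \{\rho_n \leq \sigma\}$ and $\{\rho_n = \infty\} = \{K_\sigma < n\}$, which is $\mathcal{F}_{\sigma-}$-measurable by predictability of $K$ and of $\sigma$. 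Since $\rho_n$ is itself the debut of a predictable set, it is a predictable stopping time, so I can strictly announce $\tau \wedge \rho_n$ by $\theta_j := \tau_j \wedge \rho_n^j$ and exploit the identity $\mathcal{F}_{(\tau\wedge\rho_n)-} = \bigvee_j \mathcal{F}_{\theta_j}$ (compare the proof of Lemma \ref{lemma 2}), splitting $\{\rho_n > t\}$ into its parts on $\{\rho_n \leq \sigma\}$ (covered by $\{\theta_j > t\} \subseteq \{\rho_n > t\}$) and $\{\rho_n = \infty\}$ (handled by the $\mathcal{F}_{\sigma-}$-measurability just shown) to verify both (II) and (III) of Conditions \ref{cond main uni}.

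Finally, since $K$ is $\F$-predictable, non-decreasing and càdlàg, $\{\rho_n > t\} \supseteq \{K_t < n\}$, and hypothesis (i) yields $\q(\rho_n > t) \to 1$ as $n \to \infty$; Proposition \ref{main prop uni 1}(i) then gives that $\widetilde{Z}$ is an $(\F,\p)$-martingale. Part (ii) is handled analogously at $t = \infty$, producing $\q(\rho_n = \infty) \to 1$ and, via Proposition \ref{main prop uni 2}(i), the uniform integrable martingality of $\widetilde{Z}$. I expect the main obstacle to be the second step: the measurability of $\{\rho_n > t\}$ with respect to $\mathcal{F}_{(\tau \wedge \rho_n)-}$ is delicate, and it is precisely the predictability of $\sigma$ together with the fact that $K$ freezes after $\sigma$ that prevents paths on which $\rho_n$ overtakes $\tau$ from creating measurability obstructions outside the identifiable event $\{\rho_n = \infty\}$.
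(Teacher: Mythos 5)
Your overall strategy coincides with the paper's: you obtain Convention \ref{conv} for the explicit sequence \eqref{gamma} from the bound \(\1_{\{\widetilde{Z}_- > 0\}}/\widetilde{Z}_-^2 \cdot C(\widetilde{Z})^p_{\rho_n} \leq n + c_n\) (jump condition (III)) together with the criterion of Lemma \ref{besser als novi lemma}; you derive the structural measurability from the predictability of \(\sigma\) and \(\rho_n\) and from the fact that the compensator freezes at \(\sigma\) because \(\widetilde{Z} = \widetilde{Z}^{\sigma}\); and you get \eqref{Main condition}, \eqref{Main condition ui} from \(\{K_t < n\} \subseteq \{\rho_n > t\}\) and \(\{K_\infty < n\} \subseteq \{\rho_n = \infty\}\) before invoking Propositions \ref{main prop uni 1} and \ref{main prop uni 2}. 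This is precisely the content of Lemma \ref{con extended loc mart} and of the paper's proof.

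There is, however, one genuine misstep in how you match the hypotheses. You restrict the SMP to \(A'\) via Corollary \ref{coro subinterval} and re-declare the announcing sequence of Section \ref{main section} to be \((\sigma_n)_{n \in \mathbb{N}}\). But Part (I) of Conditions \ref{cond main uni} is assumed for the fixed announcing sequence \((\tau_n)_{n \in \mathbb{N}}\) of \(A\) and only grants \(\{\tau_n \wedge \rho_m,\, n,m \in \mathbb{N}\}\)-uniqueness; after your swap, Lemma \ref{lemma 2} (hence Proposition \ref{main prop uni 1}) would require \(\{\sigma_n \wedge \rho_m\}\)-uniqueness of the SMP on \(\bigcup_{n} \of 0, \sigma_n \wedge \rho_n\gs\). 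Existence does restrict to the smaller interval set, but uniqueness does not transfer to the new stopping times: the \(\sigma_n\) need not be dominated by any single \(\tau_k\), and a measure solving the SMP stopped at a smaller time need not solve it stopped at a larger one, so the class of competing solutions is strictly larger and the assumed uniqueness gives nothing for \(\sigma_n \wedge \rho_m\). The repair is what the paper does: keep \((\tau_n)\), observe that Condition \ref{gen bound cond} (I) yields \(\sigma \leq \tau\) and hence \(\mathcal{F}_{(\sigma \wedge \rho_n)-} \subseteq \mathcal{F}_{(\tau \wedge \rho_n)-}\), so the \(\mathcal{F}_{(\sigma \wedge \rho_n)-}\)-measurability you establish already verifies (II) and (III) of Conditions \ref{cond main uni} as stated. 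Two smaller inaccuracies in the measurability step: \(\{\rho_n = \infty\} = \{K_\sigma < n\}\) is only correct on \(\{\sigma < \infty\}\) (on \(\{\sigma = \infty\}\) one may have \(K_t \uparrow n\) without attaining it), and Proposition I.2.4 in \cite{JS} only gives \(\mathcal{F}_{\sigma-}\)-measurability of \(K_\sigma \1_{\{\sigma < \infty\}}\); the paper therefore splits into \(\{\sigma < \infty\}\) and \(\{\sigma = \infty\}\), treating the latter via \(\{\sigma \wedge \rho_n = \infty\} \in \mathcal{F}_{(\sigma \wedge \rho_n)-}\). Finally, your announcing-family splitting of \(\{\rho_n > t\}\) is unnecessarily delicate: for the predictable times \(\sigma\) and \(\rho_n\) one has \(\mathcal{F}_{(\sigma \wedge \rho_n)-} = \mathcal{F}_{\sigma-} \cap \mathcal{F}_{\rho_n-}\), and \(\{\rho_n > t\} = \{K_t < n\}\) lies in both.
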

Before the give a proof we shortly discuss an example.
\\\\
\textbf{Example.}
We now consider a finite-dimensional continuous and Markovian setting to illustrate an application of Proposition \ref{gen coro extended stoch exp}.
Let \(\beta= (\beta^i)_{i \leq d}: \mathbb{R}^+ \times \mathbb{R}^d \to \mathbb{R}^d\) and \(b = (b^i)_{i \leq d} : \mathbb{R}^+ \times \mathbb{R}^d \to \mathbb{R}^d\) be Borel functions which are locally bounded, i.e. for each \(n \in \mathbb{N}\) bounded on \([0, n] \times \{x \in \mathbb{R}^d : |x| \leq n\}\), and \(a=(a^{ij})_{i, j \leq d} : \mathbb{R}^+ \times \mathbb{R}^d \to \mathbb{S}^d_+\) be continuous, where \(\mathbb{S}^d_+\) denotes the space of positive definit symmetric \(d \times d\) matrices.
Next we specify the SMP of Standing Assumption \ref{SA}.
We assume that \(A = \of 0, \infty\of\), that the underlying filtered space \((\Omega, \mathcal{F}, \F)\) is the path space \((\mathbb{C}^d, \mathcal{C}^d, \mathfrak{C}^d)\) and that \(X\) is the coordinate process \(X_t(\omega) = \omega\hspace{0.01cm}_t\). Moreover, for \(\omega \in \mathbb{C}^d\) we set
\begin{align*}
B (\omega) = b(\omega)\cdot I,\quad C (\omega) = a(\omega)\cdot I, \quad \nu = 0,
\end{align*}
where \(b(\omega)_t := b(t, \omega\hspace{0.01cm}_t)\) and \(a(\omega)_t := a(t, \omega\hspace{0.01cm}_t)\).
Due to the local boundedness assumptions on \(a\) and \(b\), the SMP associated with \((\mathbb{R}^d; A; \varepsilon_{x_0};X; B, C, 0)\) is equivalent to the \emph{classical martingale problem} of \cite{SV} associated to the operator
\begin{align*}
L_t f (x) = \sum_{i \leq d} b^i(t, x) \partial_i f (x)+ \frac{1}{2} \sum_{i, j \leq d} a^{ij}(t, x) \partial^2_{ij} f (x),
\end{align*}
and the initial law \(\varepsilon_{x_0}\) for \(x_0 \in \mathbb{R}^d\),
c.f. Theorem 13.55 in \cite{J79} and Proposition 5.4.11 in \cite{KaraShre}.
Sufficient conditions for the existence of a solution can for instance be found in \cite{IW89,J79,SV}.
Now we are interested in the martingality of the local \((\mathfrak{C}^d, \p)\)-martingale
\begin{align*}
Z = \mathcal{E}\left(\beta(X) \cdot X^c\right),
\end{align*}
where \(\beta(X)_t := \beta(t, X_{t})\). The local martingality follows immediately from the local boundedness assumption of \(\beta\).
\begin{corollary}\label{coro markov}
\(Z\) is a \((\mathfrak{C}^d, \p)\)-martingale if and only if the SMP \((\mathbb{R}^d; A; \varepsilon_{x_0};X; \dot{B}, \dot{C}, 0)\), where
\begin{align}\label{triplet example}
\dot{B}(\omega) = b (\omega) \cdot I + (\beta a)(\omega)  \cdot I \textit{ and } (\beta a)(\omega)_t := \beta(t,\omega\hspace{0.01cm}_t) a(t,\omega\hspace{0.01cm}_t),
\end{align} 
has a solution.
\end{corollary}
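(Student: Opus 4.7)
The strategy is to prove the two implications separately: the forward direction ``$Z$ is a martingale $\Longrightarrow$ modified SMP has a solution'' reduces directly to Theorem \ref{main theorem new 2}, while the converse will follow from Proposition \ref{gen coro extended stoch exp}\,(i) applied to the sequence built from \eqref{gamma}.

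For the forward implication, the continuous path space $(\mathbb{C}^d, \mathcal{C}^d, \mathfrak{C}^d)$ is full (a standard fact, as recorded in Appendix \ref{Classical Path-Spaces}), so Theorem \ref{main theorem new 2} supplies a solution $\q$ to the SMP associated with $(\mathbb{R}^d; A; \varepsilon_{x_0}; X; \dot{B}(h), C, \dot{\nu})$, together with the relation $\q \ll_{\textup{loc}} \p$ with density process $Z$. It remains to identify $(\dot{B}(h), \dot{\nu})$ with the triplet in \eqref{triplet example}. Since $\nu = 0$ and $X$ is continuous under $\p$, the canonical decomposition reads $X(h) = X_0 + M + b(X) \cdot I$ with $M = X^c$, and clearly $\dot{\nu} = Y \cdot \nu = 0$. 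From $Z = \mathcal{E}(\beta(X) \cdot M)$ and the continuity of $Z$, the standard computation gives
\begin{equation*}
\tfrac{1}{Z_{-}} \cdot \lle Z, M^i \rre^{\p} = \sum_{j \leq d} \beta^j(X)\, a^{ji}(X) \cdot I = (\beta a)^i(X) \cdot I,
\end{equation*}
so $\dot{B} = b(X) \cdot I + (\beta a)(X) \cdot I$, which is precisely \eqref{triplet example}.

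For the backward implication, the natural choice is the sequence \eqref{gamma}, which in this continuous setting reduces to $\rho_n := \inf\{t \geq 0 : \int_0^t \beta^{\top}(s, X_s)\, a(s, X_s)\, \beta(s, X_s) \, \ds \geq n \}$; because $Z$ is a local martingale on all of $\of 0, \infty \of$ we have $\widetilde{Z} = Z$. Conditions \ref{gen bound cond} are then immediate: Part~(I) since $A' = A = \of 0, \infty \of$, Part~(II) since $\sigma = \infty$ is trivially predictable, and Part~(III) since $C(Z)^p$ is continuous so its jumps vanish. Part~(I) of Conditions \ref{cond main uni} requires a solution to the modified SMP on $\bar{A}$ together with the relevant uniqueness; existence is given by hypothesis and transfers to $\bar{A}$ via Corollary \ref{coro subinterval}. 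Finally \eqref{Q cont assup} is verified path-by-path: under $\q$ the coordinate process $X$ is a continuous semimartingale on all of $\of 0, \infty \of$ and is therefore bounded on every $[0, t]$, so local boundedness of $\beta$ together with continuity of $a$ render the integrand bounded on $[0, t]$.

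The chief technical obstacle will be verifying the $\{\tau_n \wedge \rho_m\}$-uniqueness required by Part~(I) of Conditions \ref{cond main uni}, since neither $b$ nor $\beta$ is assumed continuous and $a$ is only assumed continuous (not Lipschitz). The intended route is to invoke the classical Markovian well-posedness theory for the operator $L_t$ (Stroock--Varadhan type results for continuous positive-definite diffusion matrix), localized with stopping times adapted to the locally bounded drift, as flagged by the remark preceding Section \ref{MSE}: in Markovian settings local uniqueness is implied by well-posedness of the SMP, which here applies on each $\of 0, \tau_n \wedge \rho_m \gs$ thanks to the local boundedness of the coefficients up to these times.
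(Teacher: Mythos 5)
Your forward direction and the overall architecture of the backward direction are the same as the paper's: fullness of \((\mathbb{C}^d, \mathcal{C}^d, \mathfrak{C}^d)\) plus Theorem \ref{main theorem new 2} for necessity, and Proposition \ref{gen coro extended stoch exp}\,(i) with the sequence \eqref{gamma} for sufficiency, using that \(\1_{\{Z_->0\}}/Z_-^2 \cdot C(Z)^p = (\beta a \beta^*)(X)\cdot I\) is finite on every \([0,t]\) for continuous paths, with existence of the modified solution restricted to \(\bar{A}\) via Corollary \ref{coro subinterval}. Your explicit Girsanov identification of \(\dot{B}\) with \eqref{triplet example} is a welcome addition that the paper leaves implicit.

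The gap is exactly the step you flag as the chief obstacle: the \(\{\tau_n\wedge\rho_m\}\)-uniqueness is not established by your intended route. First, the principle you invoke (``in Markovian settings local uniqueness is implied by well-posedness'') is not available here, because well-posedness of the modified SMP is not a hypothesis of this corollary — only existence of a solution is assumed; the well-posedness variant is stated separately by the paper as the next corollary, precisely to cover degenerate \(a\). Second, your localization argument is incorrect: \(\rho_m\) is the hitting time of the integral functional \((\beta a\beta^*)(X)\cdot I\), not an exit time of a compact set, so on \(\of 0, n\wedge\rho_m\gs\) the path of \(X\) is not confined to a bounded region and the coefficients evaluated along the path are \emph{not} bounded there; ``local boundedness of the coefficients up to these times'' does not hold. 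Third, even with a Stroock--Varadhan-type uniqueness statement in hand, one must still explain why solutions agree on \(\mathcal{F}_{(n\wedge\rho_m)-}\) for these particular random times, which are neither deterministic nor exit times of compacta. The paper closes this by a different mechanism: Exercise 13.12 in \cite{J79} yields that, for continuous positive definite \(a\) and locally bounded drift, the modified SMP satisfies \(\mathfrak{P}\)-uniqueness, i.e.\ uniqueness at \emph{all} \(\mathfrak{C}^d\)-predictable times, and \(n\wedge\rho_m\), being the d\'ebut of a continuous adapted increasing process, is predictable by Proposition I.2.13 in \cite{JS}, whence \(\Upsilon\subseteq\mathfrak{P}\). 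Your sketch contains neither the reduction to predictable times nor a valid source of uniqueness for merely measurable locally bounded drift with continuous non-degenerate \(a\), so as written the backward implication is not proved.
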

\begin{proof}
Since \((\mathbb{C}^d, \mathcal{C}^d, \mathbf{C}^d)\) is full, c.f. Remark \ref{bichteler remark full} in Appendix \ref{Classical Path-Spaces}, the implication \(\Longrightarrow\) follows from Theorem \ref{main theorem new 2}.
We now show the converse implication. 
Note that the Conditions ~\ref{gen bound cond} are trivially satisfied and that 
\begin{align*}
\frac{\1_{\{Z_- > 0\}}}{Z^2_-} \cdot C(Z)^p_t = (\beta c \beta^*)(X) \cdot I_t,\ \textup{ where } \ (\beta c \beta^*)(X) := \sum_{i, j \leq d} \beta^i(X) c^{ij}(X) \beta^j(X),
\end{align*}
which is clearly finite for each \(t \geq 0\) if \(X\) is a continuous process. Hence \eqref{Q cont assup} is satisfied
and the claim follows from Proposition \ref{gen coro extended stoch exp} if the SMP associated with \((\mathbb{R}^d; A; \varepsilon_{x_0}; \dot{B}, C, 0)\) satisfied \(\Upsilon := \{n \wedge \rho_m, n, m \in \mathbb{N}\}\)-uniqueness,
where \(\rho_m = \inf(t \geq 0 : (\beta c \beta^*)(X) \cdot I_t \geq m)\).
Due to Exercise 13.12 in \cite{J79} the SMP associated with \((\mathbb{R}^d; A; \varepsilon_{x_0};X; \dot{B}, C, 0)\) satisfies \(\mathfrak{P}\)-uniqueness, where \(\mathfrak{P}\) denotes the set of \(\mathfrak{C}^d\)-predictable times.
Thanks to Proposition I.2.13 in \cite{JS} it holds that \(\Upsilon \subseteq \mathfrak{P}\) and the proof is finished.
\end{proof}
In a one-dimensional setting this corollary is contained in Corollary 2.2 in \cite{MU(2012)}.
We shortly give a simple example where Corollary \ref{coro markov} implies martingality and Novikov's and Kazamaki's conditions fail.
Let \(d = 1\) and set \(x_0 = 0, \beta(x) = x, b = 0\) and \(a = 1\), i.e. let \(X\) be a one-dimensional standard Brownian motion and \(Z = \mathcal{E}(X\cdot X)\). 
Then the SMP associated to \((\mathbb{R}; A; \varepsilon_0; X;\beta (X)\cdot I, I, 0)\) has a solution due to classical Lipschitz conditions.
Hence \(Z\) is a \((\mathfrak{C}^d, \p)\)-martingale. However, for large enough \(t \geq 0\) it holds that \(\E[\exp(X/2 \cdot X_t)] = \infty\) which implies that Novikov's and Kazamaki's conditions fail.

The invertibility assumptions on \(a\) may be exchanged by a global uniqueness assumption. More precisely, it follows from Theorem 12.73 in \cite{J79} and Exercise 6.7.4 in \cite{SV} that well-posedness, i.e. the existence of a unique global solution, and locally bounded coefficients imply \(\mathfrak{P}\)-uniqueness.
We state this observation in the following corollary.
\begin{corollary}
Assume that \(\beta\) and \(b\) are as above and \(a : \mathbb{R}^+ \times \mathbb{R}^d \to \mathbb{S}^d\) is a locally bounded Borel function, where \(\mathbb{S}^d\) denotes the set of non-negative definite symmetric \(d \times d\)-matrices. If the SMP associated with \((\mathbb{R}^d; A; \varepsilon_{x_0}; X;\dot{B}, C, 0)\), where \(\dot{B}\) is given as in \eqref{triplet example}, is well-posed, then \(Z\) is a \((\mathfrak{C}^d, \p)\)-martingale. 
\end{corollary}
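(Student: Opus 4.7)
The plan is to reproduce the proof of Corollary \ref{coro markov} almost verbatim, replacing only the source of $\Upsilon$-uniqueness: instead of Exercise 13.12 in \cite{J79}, which required continuity and positive-definiteness of $a$, we will derive $\mathfrak{P}$-uniqueness from well-posedness combined with local boundedness. First I would check that $Z = \mathcal{E}(\beta(X) \cdot X^c)$ is still a local $(\mathfrak{C}^d,\p)$-martingale under any candidate solution: since $\nu = 0$, the coordinate process $X$ is continuous, and local boundedness of $\beta$ makes the integrand $\beta(X)$ locally bounded, so the stochastic integral $\beta(X) \cdot X^c$ is a well-defined continuous local martingale and its stochastic exponential is a non-negative local martingale.

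Second, I would verify the hypotheses of Proposition \ref{gen coro extended stoch exp}. Conditions \ref{gen bound cond} hold trivially because $Z$ is continuous, and exactly as in the proof of Corollary \ref{coro markov},
\[
\frac{\1_{\{Z_-  > 0\}}}{Z_-^2} \cdot C(Z)^p_t = (\beta a \beta^*)(X) \cdot I_t,
\]
which is almost surely finite for every $t \geq 0$ because $\beta$ and $a$ are locally bounded and $X$ has continuous paths. This gives \eqref{Q cont assup}, so everything reduces to verifying Part (I) of Conditions \ref{cond main uni}: existence of a solution $\q$ to the modified SMP and $\Upsilon$-uniqueness with $\Upsilon := \{n \wedge \rho_m : n, m \in \mathbb{N}\}$, where $\rho_m := \inf(t \geq 0 : (\beta a \beta^*)(X) \cdot I_t \geq m)$.

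Third, existence of $\q$ is precisely the well-posedness hypothesis. For uniqueness I would invoke Theorem 12.73 in \cite{J79} together with Exercise 6.7.4 in \cite{SV}: combined, they state that well-posedness together with locally bounded characteristic coefficients implies $\mathfrak{P}$-uniqueness, where $\mathfrak{P}$ denotes the set of $\mathfrak{C}^d$-predictable times. Since $(\beta a \beta^*)(X) \cdot I$ is continuous, the stopping times $\rho_m$, and hence $n \wedge \rho_m$, are predictable, so $\Upsilon \subseteq \mathfrak{P}$ by Proposition I.2.13 in \cite{JS}. Therefore $\Upsilon$-uniqueness holds, Part (I) of Conditions \ref{cond main uni} is satisfied, and Proposition \ref{gen coro extended stoch exp}(i) delivers the martingale property of $Z$.

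The only non-routine step is the bridge from global well-posedness to $\mathfrak{P}$-uniqueness through the two cited results; the rest is a purely mechanical transcription of the proof of Corollary \ref{coro markov}, with $a$ no longer assumed continuous or invertible but still governing a continuous, uniquely determined candidate.
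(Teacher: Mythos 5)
Your proposal is correct and follows exactly the route the paper intends: the paper states this corollary as an immediate consequence of the observation that well-posedness plus locally bounded coefficients yield $\mathfrak{P}$-uniqueness (via Theorem 12.73 in \cite{J79} and Exercise 6.7.4 in \cite{SV}), combined verbatim with the argument used for Corollary \ref{coro markov}. Your filling-in of the remaining steps (local martingality of $Z$, triviality of Conditions \ref{gen bound cond}, finiteness of $(\beta a \beta^*)(X)\cdot I_t$ under $\q$, existence of $\q$ from well-posedness, and $\Upsilon\subseteq\mathfrak{P}$ via predictability of the $\rho_m$) matches the paper's reasoning.
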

For an extension beyond Markovianity and finite dimensions we refer to Section \ref{A Case Study - Martingality in Terms of SDEs driven by Hilbert-Space-Valued Brownian Motion} below.
We now turn to the proof of Proposition \ref{gen coro extended stoch exp} which is based on the following lemma.
\begin{lemma}\label{con extended loc mart}
If Conditions \ref{gen bound cond} \textup{(II), (III)} hold, then for all \(n \in \mathbb{N}\) the process \(\widetilde{Z}^{\rho_n}\) is a uniformly integrable \((\F, \p)\)-martingale.
Moreover, we have for all \(t \geq 0\),
\begin{center}
\(
\{\rho_n > t\} \in \mathcal{F}_{(\sigma \wedge \rho_n)-} 
\)
and \(\{\rho_n = \infty\} \in \mathcal{F}_{(\sigma \wedge \rho_n)-}\).
\end{center}
\end{lemma}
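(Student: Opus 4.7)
For the uniform integrability assertion, I set $V := \1_{\{\widetilde{Z}_- > 0\}}/\widetilde{Z}_-^2 \cdot C(\widetilde{Z})^p$. Since $V$ is increasing and right-continuous and $\rho_n$ is its first-passage time to the level $n$, the left limit satisfies $V_{\rho_n-} \leq n$ on $\{\rho_n < \infty\}$; part \textup{(III)} of Conditions \ref{gen bound cond} bounds $\Delta V_{\rho_n} \leq c_n$ on $\{\widetilde{Z}_{\rho_n-} > 0\}$, while on $\{\widetilde{Z}_{\rho_n-} = 0\}$ the integrand $\1_{\{\widetilde{Z}_- > 0\}}/\widetilde{Z}_-^2$ vanishes so $\Delta V_{\rho_n} = 0$. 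Hence $V^{\rho_n}_\infty \leq n + c_n$ $\p$-a.s. Because $\widetilde{Z}$ is a non-negative local $(\F, \p)$-martingale by Proposition \ref{lemma tilde Z local martingale} and the predictable compensator commutes with stopping, so that the functional corresponding to $\widetilde{Z}^{\rho_n}$ is exactly $V^{\rho_n}$, a classical bounded-compensator criterion from \cite{J79} then upgrades $\widetilde{Z}^{\rho_n}$ to a uniformly integrable $(\F, \p)$-martingale.

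For the measurability assertions, the pivotal observation is that $V$ is predictable and constant on $\of \sigma, \infty \of$. Predictability is automatic: $C(\widetilde{Z})^p$ is a predictable compensator, $\widetilde{Z}_-$ is predictable as the left-continuous version of a c\`adl\`ag process, and Stieltjes integration of a predictable integrand against a predictable finite-variation integrator preserves predictability. For constancy, the definition \eqref{extension} yields $\widetilde{Z}_t = \widetilde{Z}_\sigma$ on $\{t \geq \sigma\}$, so neither $\lle \widetilde{Z}^c, \widetilde{Z}^c\rre^{\p}$ nor the jump series in $C(\widetilde{Z})$ grows on $\gs \sigma, \infty \of$; this gives $C(\widetilde{Z}) = C(\widetilde{Z})^\sigma$, and since compensation commutes with stopping, also $C(\widetilde{Z})^p = (C(\widetilde{Z})^p)^\sigma$, so that $V_t = V_{t \wedge \sigma}$ for every $t$.

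Using monotonicity and right-continuity of $V$ one has $\{\rho_n > t\} = \{V_t < n\}$, which I split as $\{V_t < n\} \cap \{t < \sigma\}$ and $\{V_t < n\} \cap \{\sigma \leq t\}$. On the first piece, $V_t < n$ forces $\rho_n > t$, so it equals $\{V_t < n\} \cap \{t < \sigma \wedge \rho_n\}$ and lies in $\mathcal{F}_{(\sigma \wedge \rho_n)-}$ by the standard identity $A \cap \{t < \tau\} \in \mathcal{F}_{\tau-}$ for $A \in \mathcal{F}_t$. On the second piece, constancy yields $V_t = V_\sigma$, so it equals $\{V_\sigma < n\} \cap \{\sigma \leq t\}$, which lies in $\mathcal{F}_{\sigma-}$: $V_\sigma$ is $\mathcal{F}_{\sigma-}$-measurable (predictable process evaluated at the predictable time $\sigma$), and $\{\sigma \leq t\} \in \mathcal{F}_{\sigma-}$ via $\mathcal{F}_{\sigma-} = \bigvee_k \mathcal{F}_{\sigma_k}$ (\cite{DellacherieMeyer78}, Theorem IV.56). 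Furthermore, on this piece $V_\sigma < n$ combined with constancy of $V$ past $\sigma$ forces $\rho_n = \infty$ and hence $\sigma \leq \rho_n$; the standard fact that $B \cap \{\sigma \leq \rho_n\} \in \mathcal{F}_{(\sigma \wedge \rho_n)-}$ for $B \in \mathcal{F}_{\sigma-}$ then places this piece in $\mathcal{F}_{(\sigma \wedge \rho_n)-}$ as well. Finally, $\{\rho_n = \infty\} = \bigcap_{k \in \mathbb{N}} \{\rho_n > k\}$ inherits the same measurability.

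The main technical difficulty is the measurability step: \emph{a priori} the event $\{\rho_n > t\}$ is only $\mathcal{F}_{\rho_n-}$-measurable, and one has to exploit the freezing of $V$ past the predictable time $\sigma$ to transfer it into the strictly smaller $\sigma$-field $\mathcal{F}_{(\sigma \wedge \rho_n)-}$ on $\{\sigma \leq t\}$; by contrast, the uniform integrability assertion is a direct consequence of the bound $V^{\rho_n}_\infty \leq n + c_n$ together with a standard criterion.
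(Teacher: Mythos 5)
Your treatment of the uniform integrability claim is fine and is essentially the paper's own argument (the bound \(V^{\rho_n}_\infty \leq n + c_n\) from the first-passage structure plus part (III), fed into the compensator criterion of Lemma \ref{besser als novi lemma}, i.e.\ Theorem 8.25 in \cite{J79}); likewise your first piece \(\{V_t<n\}\cap\{t<\sigma\}\) and the reduction \(\{\rho_n=\infty\}=\bigcap_k\{\rho_n>k\}\) are correct.

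The gap is in the last step of the measurability argument. The ``standard fact'' you invoke --- that \(B\cap\{\sigma\leq\rho_n\}\in\mathcal{F}_{(\sigma\wedge\rho_n)-}\) whenever \(B\in\mathcal{F}_{\sigma-}\) --- is false for general stopping times, and you never bring in the hypotheses that would rescue it. For a counterexample, take \(\Omega=\{a,b\}\), \(\mathcal{F}_t=\{\emptyset,\Omega\}\) for \(t<1\) and \(\mathcal{F}_t=2^\Omega\) for \(t\geq1\) (right-continuous), \(S(a)=1,\ S(b)=2\), \(T(a)=2,\ T(b)=1\): then \(S\wedge T\equiv1\), so \(\mathcal{F}_{(S\wedge T)-}\) is trivial, while \(B=\{a\}\) belongs to \(\mathcal{F}_{S-}\) and is contained in \(\{S\leq T\}\), yet \(B\notin\mathcal{F}_{(S\wedge T)-}\). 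So knowing that your second piece lies in \(\mathcal{F}_{\sigma-}\) and is contained in \(\{\rho_n=\infty\}\subseteq\{\sigma\leq\rho_n\}\) does not by itself place it in \(\mathcal{F}_{(\sigma\wedge\rho_n)-}\). This is exactly where predictability must be used, and in your proof Conditions (II) and (III) never enter the measurability step ((III) is used only for the jump bound). The paper's route is: \(\rho_n\) is an \(\F\)-predictable time (it is the d\'ebut of the predictable set \(\{V\geq n\}\), whose graph it carries by right-continuity; the paper cites (III) and Proposition I.2.13 in \cite{JS}), \(\sigma\) is predictable by (II), hence \(\mathcal{F}_{(\sigma\wedge\rho_n)-}=\mathcal{F}_{\sigma-}\cap\mathcal{F}_{\rho_n-}\) (\cite{DellacherieMeyer78}, p.~119); since \(V=V^{\sigma}\) is predictable, \(V_t\) is \(\mathcal{F}_{\sigma-}\)-measurable, so \(\{\rho_n>t\}=\{V_t<n\}\in\mathcal{F}_{\sigma-}\), and together with the trivial \(\{\rho_n>t\}\in\mathcal{F}_{\rho_n-}\) one lands in the intersection. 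Your proof needs this (or an equivalent predictability argument) in place of the asserted general fact; as a minor aside, \(\{\sigma\leq t\}\in\mathcal{F}_{\sigma-}\) is immediate from \(\{\sigma>t\}=\Omega\cap\{t<\sigma\}\), and the identity \(\mathcal{F}_{\sigma-}=\bigvee_k\mathcal{F}_{\sigma_k}\) you cite requires an announcing sequence, which \((\sigma_k)\) need not be.
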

\begin{proof}
Note that 
\begin{align*}
\frac{\1_{\{\widetilde{Z}_- > 0\}}}{\widetilde{Z}_-^2} \cdot C(\widetilde{Z})^p_{\rho_n} \leq n + c_n,
\end{align*}
c.f. part (III) of Conditions \ref{gen bound cond}. Hence it follows from Lemma \ref{besser als novi lemma} in Appendix \ref{An Integrability Condition to Indentify} that \(\widetilde{Z}^{\rho_n}\) is a uniformly integrable \((\F, \p)\)-martingale. 
Due to construction we have \(\widetilde{Z} = \widetilde{Z}^\sigma\). 
This yields 
\begin{align*}
\frac{\1_{\{\widetilde{Z}_- > 0\}}}{\widetilde{Z}_-^2} \cdot C(\widetilde{Z})^p = \frac{\1_{\{\widetilde{Z}_- > 0\}}}{\widetilde{Z}_-^2} \cdot C (\widetilde{Z})^p_{\cdot \wedge \sigma} =: U.
\end{align*}
Since \(U\) is \(\F\)-predictable, Proposition I.2.4 in \cite{JS} yields that for all \(t \geq 0\), the random variable \(U_t\) is \(\mathcal{F}_{\sigma-}\)-measurable.
Each \(\rho_n\) is \(\F\)-predictable thanks to part (III) of Conditions \ref{gen bound cond} and Proposition I.2.13 in \cite{JS}. Therefore, since \(\sigma\) is also \(\F\)-predictable, c.f. part (II) of Conditions \ref{gen bound cond}, we have 
\begin{align}\label{pred cap F}
\mathcal{F}_{(\sigma \wedge \rho_n)-} = \mathcal{F}_{\sigma-} \cap \mathcal{F}_{\rho_n-},\end{align}
c.f. \cite{DellacherieMeyer78}, p. 119.
Therefore, since \(U\) is an increasing process, we obtain \(\{\rho_n > t\} = \{U_t < n\} \in \mathcal{F}_{\sigma -}\). 
This, together with \eqref{pred cap F} and \(\{\rho_n > t\} \in \mathcal{F}_{\rho_n -}\), yields that \(\{\rho_n > t\} \in \mathcal{F}_{(\sigma \wedge \rho_n)-}\).

Since 
\(U_{\sigma} \1_{\{\sigma < \infty\}}\) is \(\mathcal{F}_{\sigma-}\)-measurable, c.f. Proposition I.2.4 in \cite{JS}, and \(\{\sigma < \infty\} \in \mathcal{F}_{\sigma -}\), we also obtain that
\begin{align*}
\{\sigma< \infty\} &\cap \{\rho_n = \infty\} 
= \{\sigma < \infty\}\cap \{U_\sigma \1_{\{\sigma < \infty\}} < n\} \in \mathcal{F}_{\sigma -}.
\end{align*}
This, together with 
\begin{align*} 
\{\sigma = \infty\} \cap \{\rho_n = \infty\} = \{\sigma \wedge \rho_n = \infty\} \in \mathcal{F}_{(\sigma \wedge \rho_n) -} \subseteq \mathcal{F}_{\sigma-}
\end{align*}
yields that \(\{\rho_n = \infty\} \in \mathcal{F}_{\sigma-}\). Employing again \eqref{pred cap F} and \(\{\rho_n = \infty\} \in \mathcal{F}_{\rho_n -}\), we conclude that \(\{\rho_n = \infty\} \in \mathcal{F}_{(\sigma \wedge \rho_n)-}\).
This finishes the proof.
\end{proof}

\hspace{-0.6cm}
\textit{Proof of Proposition \ref{gen coro extended stoch exp}:}
Part (I) of Conditions \ref{gen bound cond} implies \(\sigma \leq \lim_{n \to \infty} \tau_n = \tau\).
Therefore \(\mathcal{F}_{(\sigma\wedge \rho_n)-} \subseteq \mathcal{F}_{(\tau \wedge \rho_n)-}\). 
Note that 
\begin{align*}
\{\rho_n > t\} &= \big\{\1_{\{\widetilde{Z}_- > 0\}}/\widetilde{Z}_-^2 \cdot C(\widetilde{Z})^p_t < n\big\},\\ 
 \{\rho_n = \infty\} &= \big\{\1_{\{\widetilde{Z}_- > 0\}}/\widetilde{Z}_-^2 \cdot C(\widetilde{Z})^p_\infty < n,\ \forall t \geq 0\big\}.
\end{align*}
Now the claim of (i) follows from 
Lemma \ref{con extended loc mart} and Proposition \ref{main prop uni 1}, and the claim of (ii) follows from Lemma \ref{con extended loc mart},
Proposition \ref{main prop uni 2}, and the identity
\begin{align*}
\bigcup_{n \in \mathbb{N}} \{\rho_n = \infty\} &= \big\{\1_{\{\widetilde{Z}_- > 0\}}/\widetilde{Z}_-^2 \cdot C(\widetilde{Z})^p_\infty < \infty \big\}.
\end{align*}
This finishes the proof. \qed

\subsubsection{Martingality of Generalized Stochastic Exponentials}\label{Martingality of Generalized Stochastic Exponentials}
Let us now expand the setting considered by \cite{KLS-LACOM2} in their study of local absolute continuity of laws of semimartingales by allowing for stochastic lifetimes.
The continuous case was studied by \cite{RufSDE}. 
Here we derive condition for a general semimartingale setting.
We assume 
the following:
\begin{enumerate}
\item[\textup{(I)}]
\(\mathbb{H} = \mathbb{R}^d\) for \(d \in \mathbb{N}\).
\item[\textup{(II)}] 
For any announcing sequence \((\tau_n)_{n \in \mathbb{N}}\) for \(A\), the random time \(\tau := \lim_{n \to \infty}\tau_n\) is an \(\F\)-predictable time.
\item[\textup{(III)}]
We choose a \emph{good version} of \(C\), i.e. we assume that \(C = c \cdot \bar{c}\), where \(c\) is \(\F\)-predictable and \(\bar{c}\) is real-valued, continuous and increasing.
\item[\textup{(IV)}]
We have identically \(a_t := \nu(\{t\} \times \mathbb{R}^d) \leq 1\) for all \(t \geq 0\).
\item[\textup{(V)}]
We are given a positive \(\mathcal{P}(\F) \otimes \mathcal{B}^d\)-measurable function \(U\) such that 
\begin{align*}
\widehat{U}_t := \int_{\mathbb{R}^d} U(t, x)\nu(\{t\} \times \dd x) \leq 1,
\end{align*}
for all \(t \geq 0\), and \(\{a = 1\} \subseteq \{\widehat{U} = 1\}\).
\item[\textup{(VI)}]
We are given an \(\F\)-predictable \(\mathbb{R}^d\)-valued process \(K\). The transposed of \(K\) is denoted by \(K^*\). 
\end{enumerate}
We define by \(R\) a process on \(A\) such that 
\begin{align}\label{R}
&R^{\tau_n} =KcK^* \cdot \bar{c}_{\cdot \wedge \tau_n} + \big(1 - \sqrt{U}\hspace{0.02cm}\big)^2* \nu_{\cdot \wedge \tau_n} + \sum_{s \leq \cdot \wedge \tau_n} \bigg( \sqrt{1 - a_s} - \sqrt{1 - \widehat{U}}\ \bigg)^2,
\end{align}
and denote its extension \eqref{extension} by \(\widetilde{R}\).
We furthermore set
\begin{align*}
U' := U - 1 + \frac{\widehat{U} - a}{1 - a}\textup{ and } \rho_n := \inf(t \geq 0 : \widetilde{R}_{t} \geq n).
\end{align*}
\begin{SA}\label{SA2}
Assume that \(\widetilde{R}\) \emph{does not jump to infinity}, i.e. \(\widetilde{R}_{\rho-} = \infty\) on \(\{\rho < \infty\}\), where \(\rho := \inf(t \geq 0 : \widetilde{R}_{t} = \infty)\).
\end{SA}
\begin{remark}\label{pred remark}
Standing Assumption \ref{SA2} implies that \(\widetilde{R}\) is right-continuous and hence that \((\rho_n)_{n \in \mathbb{N}}\) is a sequence of \(\F\)-stopping times.
Moreover, for all \(n \in \mathbb{N}\) we have \(\rho_n < \rho\) on \(\{\rho < \infty\}\). Therefore \(\rho\) is an \(\F\)-predictable time as it can be fortelled by the sequence \((\rho_n \wedge n)_{n \in \mathbb{N}}\) and \(\{\rho = 0\} = \emptyset\), c.f. \cite{DellacherieMeyer78}, Theorem IV.71.
\end{remark}
Next we 
define the process
\begin{align*}
N^{\tau_n \wedge \rho_n} := K \1_{\of 0, \tau_n \wedge  \rho_n\gs} \cdot X^c + U' \1_{\of 0, \tau_n \wedge \rho_n\gs\times \mathbb{R}^d} * (\mu^X - \nu),
\end{align*}
whose well-definedness is established in the following lemma.
\begin{lemma}\label{Lemma N}
For all \(n \in \mathbb{N}\) the process \(N^{\tau_n \wedge \rho_n}\) is a local \((\F, \p)\)-martingale. Moreover, it holds that \(\Delta N^{\tau_n \wedge \rho_n} > -1\).
\end{lemma}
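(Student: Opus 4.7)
The plan is to decompose the candidate process as
\[
N^{\tau_n\wedge\rho_n} \;=\; \bigl(K\,\1_{\of 0,\tau_n\wedge\rho_n\gs}\bigr)\cdot X^c \;+\; \bigl(U'\,\1_{\of 0,\tau_n\wedge\rho_n\gs\times\mathbb{R}^d}\bigr)*(\mu^X-\nu) \;=:\; N^c + N^d,
\]
and to verify that each summand is a well-defined local martingale, the key point being that both kinds of integrability are delivered by the bound $\widetilde R_{(\tau_n\wedge\rho_n)-}\le n$, which in turn makes sense thanks to Standing Assumption~\ref{SA2}.

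For the continuous summand the predictable quadratic variation $\lle N^c,N^c\rre^{\p}$ equals $KcK^*\cdot\bar c$ on $\of 0,\tau_n\wedge\rho_n\gs$, i.e.\ the first summand of $R$. Since $\bar c$ is continuous this piece of $R$ is continuous, so
\[
(KcK^*\cdot\bar c)_{\tau_n\wedge\rho_n} \;=\; (KcK^*\cdot\bar c)_{(\tau_n\wedge\rho_n)-} \;\le\; \widetilde R_{(\tau_n\wedge\rho_n)-} \;\le\; n,
\]
whence $K\,\1_{\of 0,\tau_n\wedge\rho_n\gs}\in L^2(X^c)$ and $N^c$ is a square-integrable $(\F,\p)$-martingale.

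For the jump summand I would invoke the Jacod--Shiryaev characterisation (II.1.33 in \cite{JS}) of membership in $G_{\loc}(\mu^X)$. The algebraic rearrangement $U'=U-1+(\widehat U-a)/(1-a)$ yields the compensator $\widehat{U'}=(\widehat U-a)/(1-a)$, so that the associated predictable jump function
\[
\widetilde{U'}_t \;=\; U'(t,\Delta X_t)\,\1_{\{\Delta X_t\ne 0\}} - \widehat{U'}_t
\]
collapses to $U(t,\Delta X_t)-1$ on $\{\Delta X_t\ne 0\}$ and to $(a_t-\widehat U_t)/(1-a_t)$ on the complement. The corresponding Jacod--Shiryaev control quantity $(U'-\widehat{U'})^2*\nu + \sum_{s}(1-a_s)(\widehat{U'}_s)^2$ is then identified, via the elementary inequalities linking $(x-1)^2$ with $(\sqrt x-1)^2$ on bounded sets, with the second and third summands of $R$; the uniform bound $\widetilde R_{(\tau_n\wedge\rho_n)-}\le n$ therefore supplies the needed local integrability. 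This makes $N^d$ a purely discontinuous local $(\F,\p)$-martingale whose jumps are exactly $\widetilde{U'}$.

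The jump inequality is then immediate from the explicit form of $\widetilde{U'}$: on $\{\Delta X_t\ne 0\}$ one has $\Delta N^d_t = U(t,\Delta X_t) - 1 > -1$ by positivity of $U$, while on the complementary set, which contributes only at predictable times where $\nu$ has an atom, $\Delta N^d_t = (a_t-\widehat U_t)/(1-a_t) > -1$, the compatibility hypothesis $\{a=1\}\subseteq\{\widehat U=1\}$ being exactly what renders this quotient consistent together with the natural limiting convention at $a_t=1$. The principal obstacle I anticipate is the Hellinger-type identification linking the last two summands of $R$ with the Jacod--Shiryaev $G_{\loc}$-quantity; once this identification is in place, the remaining steps are routine.
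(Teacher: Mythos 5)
Your decomposition into a continuous integral and a compensated jump integral, the computation of \(\widehat{U'} = (\widehat{U}-a)/(1-a)\) and of \(\widetilde{U}'\), and the derivation of \(\Delta N^{\tau_n\wedge\rho_n} = \widetilde{U}' > -1\) from positivity of \(U\), \(\widehat{U}\le 1\) and \(\{a=1\}\subseteq\{\widehat{U}=1\}\) all run parallel to the paper's proof; your bound for the continuous integrand via \(\widetilde{R}_{(\tau_n\wedge\rho_n)-}\le n\) and continuity of \(\bar{c}\) is a harmless variant of the paper's bound \(\widetilde{R}_{\tau_n\wedge\rho_n}\le n+2\), obtained there from the jump estimate \(\Delta\widetilde{R}_{\tau_n\wedge\rho_n}\le 2\) (which you will need anyway, since for the jump summands of \(R\) the atom of \(\nu\) at the stopping time itself must be included, not only the left limit).

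The genuine gap is exactly at the point you flag, and your proposed fix does not close it. You want to establish \(U'\1_{\of 0,\tau_n\wedge\rho_n\gs\times\mathbb{R}^d}\in G_\textup{loc}(\mu^{X^{\tau_n}},\F,\p)\) through the quadratic control quantity \((U'-\widehat{U'})^2*\nu+\sum_s(1-a_s)(\widehat{U'}_s)^2\), comparing it with the last two summands of \(R\) by ``elementary inequalities linking \((x-1)^2\) with \((\sqrt{x}-1)^2\) on bounded sets''. But no boundedness of \(U\) is available: positivity and \(\widehat{U}\le 1\) constrain only the atoms of \(\nu\), and \(U'-\widehat{U'}=U-1\) is unbounded in general. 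Since \((U-1)^2=(1-\sqrt{U})^2(1+\sqrt{U})^2\), the quantity \((U-1)^2*\nu\) may well be infinite while \((1-\sqrt{U})^2*\nu\) — which is all that the stopping times \(\rho_n\) control through \(\widetilde{R}\) — stays bounded; so the integrability your criterion requires cannot be extracted from the bound on \(\widetilde{R}\), and your route would silently impose the extra hypothesis that \(U'\) lies in \(G^2_\textup{loc}\). The paper proceeds differently: it uses the exact identity \eqref{12.41} (Equation 12.41 in \cite{J79}) to rewrite the two jump summands of \(R\) as the Hellinger-form expression \(\big(1-\sqrt{1+U'-\widehat{U'}}\big)^2*\nu+\sum_s(1-a_s)\big(1-\sqrt{1-\widehat{U'}_s}\big)^2\), and then applies Theorem II.1.33 in \cite{JS}, whose necessary-and-sufficient condition — local integrability of \(\big[\sum_s(\widetilde{U}'_s)^2\big]^{1/2}\), not of its quadratic compensator — is strictly weaker and is exactly what a bound on this square-root quantity yields (via \(x^2\wedge|x|\le \textup{const}\,(\sqrt{1+x}-1)^2\) for \(x>-1\)). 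Replacing your quadratic criterion by this square-root criterion, and the approximate comparison by the exact identity, is what makes the bound \(\widetilde{R}_{\tau_n\wedge\rho_n}\le n+2\) sufficient without any additional integrability assumption.
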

\begin{proof}
Since \(\rho_n < \rho\) on \(\{\rho < \infty\}\) we obtain 
\begin{align}\label{Delta R bound}
\Delta \widetilde{R}_{\tau_n \wedge \rho_n} = 2\bigg(1 - \widehat{\sqrt{U_{\tau_n \wedge \rho_n}}} - \sqrt{(1 - a_{\tau_n \wedge \rho_n})(1 - \widehat{U}_{\tau_n \wedge \rho_n})}\ \bigg) \leq 2,
\end{align}
where we denote
\begin{align*}
\widehat{\sqrt{U_{\tau_n \wedge \rho_n}}} := \int_{\mathbb{R}^d} \sqrt{U(\tau_n \wedge \rho_n, x)} \nu(\{\tau_n \wedge \rho_n\} \times \dd x).
\end{align*}
Hence it holds that
\begin{align}\label{R bound}
\widetilde{R}_{\tau_n \wedge \rho_n} \leq n + 2.
\end{align}
This yields 
 \(K\1_{ \of 0, \tau_n \wedge \rho_n\gs} \in L^2(X_{\cdot \wedge \tau_n}^c, \F, \p)\).
Note that for \((\omega, t) \in \of 0, \tau_n \wedge \rho_n\gs\) we have
\begin{equation}\label{delta N}
\begin{split}
\widetilde{U}'_t (\omega):\hspace{-0.13cm}&= U'(\omega, t, \Delta X_t(\omega)) \1_{\{\Delta X_t(\omega) \not= 0 \}} - \int_{\mathbb{R}^d} U'(\omega, t, x) \nu(\omega, \{t\} \times \dd x)
\\&= (U(\omega, t, \Delta X_t(\omega)) - 1) \1_{\{\Delta X_t(\omega) \not = 0\}} - \frac{\widehat{U}_t (\omega)- a_t(\omega)}{1 - a_t(\omega)}\1_{\{\Delta X_t(\omega) = 0\}},
\end{split}
\end{equation}
with the convention that \(0/0 = 0\).
Since \(U\) is positive, we have \(\widetilde{U}' > -1\) on \(\of 0, \tau_n \wedge \rho_n\gs \cap \{\Delta X \not = 0\}\).
The assumption that identically \(\widehat{U} \leq 1\) implies \(\widehat{U} - a \leq 1 - a\), where equality holds on \(\{\widehat{U} = 1\}\). Thanks to the assumption that \(\{a = 1\}\subseteq \{\widehat{U} = 1\}\) and the convention that \(0/0 = 0\), 
we obtain that \(\widetilde{U}' > -1\) on \(\of 0, \tau_n \wedge \rho_n\gs\cap\{\Delta X= 0\}\), from which
we deduce that \(\widetilde{U}' > -1\) on \(\of 0, \tau_n \wedge \rho_n\gs\). 
Due to Equation 12.41 in \cite{J79} we have on \(\of 0, \tau_n \wedge \rho_n\gs\)
\begin{equation}\label{12.41}
\begin{split}
\bigg(1 - \sqrt{1 + U' - \widehat{U}'}\ &\bigg)^2 * \nu + \sum_{s \leq \cdot}(1 - a_s)\bigg(1 - \sqrt{1 - \widehat{U}'}\ \bigg)^2 
\\&= \big(1 - \sqrt{U}\big)^2 * \nu + \sum_{s \leq \cdot} \bigg(\sqrt{1 - a_s} - \sqrt{1 - \widehat{U}}\ \bigg)^2.
\end{split}
\end{equation}
Thanks to Theorem II.1.33 in \cite{JS} and the bound \eqref{R bound} we conclude that \(U' \1_{\of 0,\tau_n \wedge \rho_n\gs\times \mathbb{R}^d} \in G_\textup{loc}(\mu^{X^{\tau_n}}, \F, \p).\)
This implies that \(N^{\tau_n \wedge \rho_n}\) is a well-defined local \((\F, \p)\)-martingale.
Due to the definition of the integral process \(U' \1_{\of 0, \tau_n \wedge \rho_n\gs \times \mathbb{R}^d} * (\mu^X - \nu)\), we have \(\Delta N^{\tau_n \wedge \rho_n} = \widetilde{U}'_{\cdot \wedge \tau_n \wedge \rho_n} >-1\).
This finishes the proof.
\end{proof}
In view of this lemma we can define a non-negative local \((\F, \p)\)-martingale on \(\bar{A} := \bigcup_{n \in \mathbb{N}} \of 0, \tau_n \wedge \rho_n\gs\) by \(Z := \mathcal{E}(N)\). 
Now we extend \(Z\) to \(\widetilde{Z}\) in the same manner as in \eqref{extension}. The extension \(\widetilde{Z}\) is a non-negative local \((\F, \p)\)-martingale, c.f. Proposition \ref{lemma tilde Z local martingale} in Appendix ~\ref{Extensions of non-negative local martingales on sets of interval type}.
\begin{remark}\label{pred remark}
In the notation of the previous section we have \(\sigma_n = \tau_n \wedge \rho_n\).
Since \(\tau = \lim_{n \to \infty} \tau_n\) is assumed to be \(\F\)-predictable, and \(\rho\) is \(\F\)-predictable due to Standing Assumption \ref{SA2}, c.f. Remark \ref{pred remark}, the random time \(\lim_{n \to \infty} \sigma_n  
= \tau \wedge \rho\) is \(\F\)-predictable.
\end{remark}
The following corollary generalizes one direction of Theorem 3.3 in \cite{RufSDE}.
For another generalization we refer to Section \ref{Martingality on Standard Systems} below.
\begin{corollary}\label{coro j 1}
Assume that the SMP associated with \((\mathbb{R}^d; \bar{A}; \eta; X; \dot{B}(h), C, \dot{\nu})\), where on \(\bar{A}\)
\begin{equation}\label{applied triplet}
\begin{split}
\dot{B}(h) &= B(h) + KcK^* \cdot \bar{c} + h(x)(U - 1) * \nu\textit{ and } \dot{\nu} =  U\cdot \nu,
\end{split}
\end{equation}
has a solution \(\q\) and satisfies \(\{\tau_n \wedge \rho_m, n, m \in \mathbb{N}\}\)-uniqueness. 
\begin{enumerate}
\item[\textup{(i)}] If for all \(t \geq 0\) we have 
\begin{align*}
\q\big(\widetilde{R}_{t \wedge \rho} < \infty\big) = 1
\end{align*}
then \(\widetilde{Z}\) is an \((\F, \p)\)-martingale.
\item[\textup{(ii)}]
If we have 
\begin{align*}
\q\big(\widetilde{R}_{\rho} < \infty\big) = 1
\end{align*}
then \(\widetilde{Z}\) is a uniformly integrable \((\F, \p)\)-martingale.
\end{enumerate}
\end{corollary}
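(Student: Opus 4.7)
The plan is to reduce the statement to Proposition \ref{gen coro extended stoch exp} applied with the choice $\sigma_n := \tau_n \wedge \rho_n$ and $A' := \bar{A}$. The crucial step is to identify the process $\1_{\{\widetilde{Z}_->0\}}/\widetilde{Z}_-^2 \cdot C(\widetilde{Z})^p$ appearing in \eqref{gamma} with the process $\widetilde{R}$ defined in \eqref{R}; once this is established, the stopping times $\rho_n$ of Proposition \ref{gen coro extended stoch exp} coincide with those of the present corollary, and the finiteness hypothesis on $C(\widetilde{Z})^p$ translates directly to the hypothesis on $\widetilde{R}$.

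To carry out the identification on $\bar{A}$, I would use $Z = \mathcal{E}(N)$ together with Lemma \ref{Lemma N}. This yields $Z^c = (Z_- K) \cdot X^c$, hence the continuous contribution $Z_-^2 (KcK^*) \cdot \bar{c}$ to $\lle \widetilde{Z}^c, \widetilde{Z}^c\rre^\p$. For the jump part, $\Delta Z = Z_- \widetilde{U}'$ with $\widetilde{U}'$ as in \eqref{delta N}, so that
\begin{align*}
\big(\widetilde{Z}_{s-} - \sqrt{\widetilde{Z}_s \widetilde{Z}_{s-}}\hspace{0.03cm}\big)^2 = \widetilde{Z}_{s-}^2\big(1 - \sqrt{1 + \widetilde{U}'_s}\hspace{0.03cm}\big)^2
\end{align*}
on $\{\widetilde{Z}_{s-} > 0\}$. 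Computing the predictable compensator of this sum by splitting $\mu^X$ into its totally inaccessible and predictable-time contributions, and dividing by $\widetilde{Z}_-^2$, produces
\begin{align*}
\big(1 - \sqrt{1 + U' - \widehat{U}'}\hspace{0.03cm}\big)^2 * \nu + \sum_{s \leq \cdot}(1 - a_s)\big(1 - \sqrt{1 - \widehat{U}'}\hspace{0.03cm}\big)^2,
\end{align*}
which by identity \eqref{12.41} coincides with the jump part of \eqref{R}. Together with the continuous contribution, this gives $\1_{\{\widetilde{Z}_->0\}}/\widetilde{Z}_-^2 \cdot C(\widetilde{Z})^p = \widetilde{R}$ on $\bar{A}$, and both sides remain constant after $\sigma = \tau \wedge \rho$ since $\widetilde{Z}$ is $\sigma$-stopped.

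Next I would verify the remaining hypotheses of Proposition \ref{gen coro extended stoch exp}. Part \textup{(I)} of Conditions \ref{gen bound cond} is immediate from $\bar{A} \subseteq A$, \textup{(II)} is the content of Remark \ref{pred remark}, and \textup{(III)} follows with $c_n = 2$ from the bound $\Delta \widetilde{R}_{\rho_n} \leq 2$ derived in \eqref{Delta R bound}. Part \textup{(I)} of Conditions \ref{cond main uni} is precisely the standing hypothesis of the corollary, provided one checks that the candidate triplet $(\dot{B}(h), C, \dot{\nu})$ in \eqref{applied triplet} agrees on $\bar{A}$ with the Girsanov triplet of \eqref{candidtate triplet main theorem}; this follows from $\lle Z, M(h)\rre^\p / Z_- = KcK^* \cdot \bar{c}$ (using $Z^c = (Z_-K)\cdot X^c$ and $M(h)^c = X^c$) and from $M^\p_{\mu^X}(Z | \widetilde{\mathcal{P}}(\F)) = Z_- U$ on $\bar{A}$, the latter being a direct consequence of $Z_s = Z_{s-} U(s, \Delta X_s)$ on $\{\Delta X_s \neq 0\}$.

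Applying Proposition \ref{gen coro extended stoch exp}\textup{(i)} (respectively \textup{(ii)}) then yields claim \textup{(i)} (respectively \textup{(ii)}), using the identification above and the $\sigma$-stopping to translate $\q(\widetilde{R}_{t \wedge \rho} < \infty) = 1$ into $\q(\1_{\{\widetilde{Z}_->0\}}/\widetilde{Z}_-^2 \cdot C(\widetilde{Z})^p_t < \infty) = 1$, and similarly for $t = \infty$. The main technical hurdle is the compensator computation for the jump part of $C(\widetilde{Z})$, which relies essentially on identity \eqref{12.41} and a careful bookkeeping of the contributions of the predictable and inaccessible jump times of $X$.
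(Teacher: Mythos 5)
Your overall route is the paper's: reduce to Proposition \ref{gen coro extended stoch exp} with $\sigma_n=\tau_n\wedge\rho_n$, identify $\widetilde{R}^{\rho}$ with $\1_{\{\widetilde{Z}_->0\}}/\widetilde{Z}_-^2\cdot C(\widetilde{Z})^p$ (Lemma \ref{iden R lemma}) and check Conditions \ref{gen bound cond}. However, your verification that the triplet \eqref{applied triplet} agrees on $\bar{A}$ with the Girsanov triplet \eqref{candidtate triplet main theorem} contains a genuine error: you assert $\1_{\{Z_->0\}}/Z_-\cdot\lle Z,M(h)\rre^{\p}=KcK^*\cdot\bar{c}$, computed only from $Z^c=(Z_-K)\cdot X^c$ and $M(h)^c=X^c$. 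But $\lle Z,M(h)\rre^{\p}$ is the predictable compensator of the full covariation $\llbr Z,M(h)\rrbr^{\p}$, which besides $\lle Z^c,M(h)^c\rre^{\p}$ contains the jump part $\sum_{s\le\cdot}\Delta Z_s\,\Delta M(h)_s$; since $\Delta Z=Z_-\widetilde{U}'$ and the jumps of $M(h)$ are governed by $h(\Delta X)$, compensating this sum produces the additional term $Z_-\big(h(x)(U-1)*\nu\big)$ (up to the usual corrections at predictable times). With your formula the Girsanov drift in \eqref{candidtate triplet main theorem} would be $B(h)+KcK^*\cdot\bar{c}$, which does \emph{not} coincide with $\dot{B}(h)$ in \eqref{applied triplet}, so the corollary's hypothesis cannot be fed into part (I) of Conditions \ref{cond main uni} as you claim. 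The correct identity is the paper's \eqref{ausgerechnet}, $\1_{\{\widetilde{Z}_->0\}}/\widetilde{Z}_-\cdot\lle\widetilde{Z},M(h)\rre^{\p}=KcK^*\cdot\bar{c}+h(x)(U-1)*\nu$ (obtained along the lines of Theorem III.3.24 and Lemma III.5.27 in \cite{JS}); once the jump contribution is included, the two triplets do agree on $\bar{A}$ and the remainder of your reduction goes through.

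A secondary gap: you establish the identification of $\widetilde{R}$ with $\1_{\{\widetilde{Z}_->0\}}/\widetilde{Z}_-^2\cdot C(\widetilde{Z})^p$ on $\bar{A}$ and then say both sides ``remain constant after $\sigma=\tau\wedge\rho$''. This does not cover the single time $\tau\wedge\rho$ itself when it is not contained in $\bar{A}$, i.e.\ the set \eqref{remains}. There one must show that neither side jumps at $\tau\wedge\rho$: for $C(\widetilde{Z})^p$ this uses $\widetilde{Z}_{\tau\wedge\rho}=\widetilde{Z}_{(\tau\wedge\rho)-}$ (by construction of the extension), and for $\widetilde{R}$ on $\of\rho\gs$ it requires Standing Assumption \ref{SA2}, that $\widetilde{R}$ does not jump to infinity — an assumption your argument never invokes. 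Without this step the identity $\widetilde{R}^{\rho}=\1_{\{\widetilde{Z}_->0\}}/\widetilde{Z}_-^2\cdot C(\widetilde{Z})^p$, and hence the translation of $\q(\widetilde{R}_{t\wedge\rho}<\infty)=1$ into \eqref{Q cont assup}, is incomplete.
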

The proof of this corollary is based on the following lemma and Proposition \ref{gen coro extended stoch exp}.
\begin{lemma}\label{iden R lemma}
We have 
\begin{align}\label{R = C stuff}
\widetilde{R}^{\rho} = \frac{\1_{\{\widetilde{Z}_- > 0\}}}{\widetilde{Z}_-^2} \cdot C(\widetilde{Z})^p.
\end{align}
\end{lemma}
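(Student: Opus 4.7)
The plan is to verify the identity first on each stochastic interval $\of 0, \tau_n \wedge \rho_n\gs$, where $Z = \mathcal{E}(N)$ is strictly positive thanks to Lemma \ref{Lemma N}, and then patch and extend to the whole of $\of 0, \rho\rrbr$.

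On $\of 0, \tau_n \wedge \rho_n\gs$, I would exploit the Dol\'eans--Dade exponential identities
\[
Z^c = Z_-\cdot N^c = Z_-\, K\cdot X^c,\qquad \Delta Z_s = Z_{s-}\,\Delta N_s,
\]
together with $Z_- > 0$, to obtain
\[
\lle Z^c, Z^c\rre^\p = Z_-^2\cdot KcK^*\cdot\bar c, \qquad (Z_{s-}-\sqrt{Z_s Z_{s-}})^2 = Z_{s-}^2\,(1-\sqrt{1+\Delta N_s})^2.
\]
The continuous summand of $C(Z)$ is already $\F$-predictable, so the task reduces to computing the $(\F,\p)$-compensator of the jump sum. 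Since $Z_-^2$ is predictable, it can be pulled out of the compensator operation.

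The crucial step is the compensation of the pure jump sum $\sum_{s\le\cdot}(1-\sqrt{1+\Delta N_s})^2$. According to \eqref{delta N}, the jumps of $N$ split into totally inaccessible jumps at times $s$ with $\Delta X_s\ne 0$, for which $1+\Delta N_s = 1+U'(s,\Delta X_s)-\widehat U'_s$, and accessible jumps at predictable times $T$ of $\nu$ where $X$ does not jump, for which $1+\Delta N_T = 1-\widehat U'_T$. The compensator of the first contribution is $(1-\sqrt{1+U'-\widehat U'})^2*\nu$, while the compensator of the second one equals $\sum_{s\le\cdot}(1-a_s)(1-\sqrt{1-\widehat U'})^2$, the factor $(1-a_s)$ reflecting that at a predictable jump time of $\nu$ the event $\{\Delta X=0\}$ has $(\F,\p)$-predictable probability $1-a$. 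Adding the two parts and invoking the identity \eqref{12.41}, already recorded in the proof of Lemma \ref{Lemma N}, turns the sum into $(1-\sqrt U)^2*\nu+\sum_{s\le\cdot}(\sqrt{1-a_s}-\sqrt{1-\widehat U})^2$. Combining with the predictable continuous part yields
\[
C(Z)^p = Z_-^2\cdot R \quad\text{on } \of 0, \tau_n\wedge\rho_n\gs,
\]
and dividing by $Z_-^2>0$ gives the desired identity on this stochastic interval.

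To conclude, I would patch across $n$. Since $\tau_n\wedge\rho_n\uparrow\tau\wedge\rho$ and Standing Assumption \ref{SA2} ensures $\rho_n<\rho$ on $\{\rho<\infty\}$, the sequence $(\tau_n\wedge\rho_n)_{n\in\N}$ exhausts $\of 0, \rho\rrbr$ from the left. The extension procedure \eqref{extension} yields $\widetilde Z^{\tau_n\wedge\rho_n}=Z^{\tau_n\wedge\rho_n}$ and $\widetilde R^{\tau_n\wedge\rho_n}=R^{\tau_n\wedge\rho_n}$, so the identity transfers first to $\bar A$; since both $\widetilde Z$ and $\widetilde R$ are constant past $\tau\wedge\rho$ on $\{\tau<\rho\}$, both sides are then also constant there, and the equality propagates up to $\rho$. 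The main technical hurdle is the correct bookkeeping of the compensator of the accessible part of the jump sum and the verification of the measurability needed to apply \eqref{12.41}; the extension step itself is routine given the stability results in Section \ref{Processes on Stochastic Intervals}.
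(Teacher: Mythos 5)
Your computation on \(\bar{A}\) is essentially correct: it amounts to re-deriving by hand what the paper obtains by citing Lemma 8.8 in Jacod (1979) together with \eqref{12.41} — factor the predictable \(Z_-^2\) out of the compensation, compensate the \(\{\Delta X \neq 0\}\) contribution by the corresponding \(*\,\nu\)-integral, and compensate the \(\{\Delta X = 0\}\) contribution at the atoms of \(\nu\) with the factor \(1-a\). (One cosmetic slip: the jumps with \(\Delta X \neq 0\) need not be totally inaccessible, since \(X\) may jump at predictable times whenever \(a_s>0\); the bookkeeping you set up is nevertheless the right one, and strict positivity of \(\mathcal{E}(N^{\tau_n\wedge\rho_n})_-\) justifies dividing by \(\widetilde{Z}_-^2\), exactly as in the paper.)

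The gap is in the final ``patching'' step. Equality on \(\bar{A}\) only gives \eqref{R = C stuff} strictly before \(\tau\wedge\rho\) on the event where this time is not attained by the \(\tau_n\wedge\rho_n\), and constancy of both sides on \(\gs \tau\wedge\rho, \infty\of\) does \emph{not} by itself propagate the equality to time \(\tau\wedge\rho\): each side may pick up its own jump there. Indeed \(C(\widetilde{Z})^p\) is a compensator, and a compensator can charge a predictable time even when the compensated process itself does not jump there pathwise on the event in question; since stopping commutes with compensation, this potential jump at \(\tau\wedge\rho\) survives in the stopped right-hand side. This is precisely what the second half of the paper's proof addresses via \eqref{remains}: on \(\{\tau<\rho\}\cap\bigcap_{n}\{\tau_n<\tau<\infty\}\) the liminf extension and the c\`adl\`ag supermartingale paths give \(\widetilde{Z}_{\tau\wedge\rho}=\widetilde{Z}_{(\tau\wedge\rho)-}\), hence \(\Delta C(\widetilde{Z})_{\tau\wedge\rho}=0\), and then predictability of \(\tau\wedge\rho\) together with I.3.21 and Theorem I.2.28 (ii) in \cite{JS} yields \(\Delta C(\widetilde{Z})^p_{\tau\wedge\rho}=0\), matching \(\Delta\widetilde{R}_{\tau\wedge\rho}=0\); on \(\{\rho\le\tau\}\cap\{\rho<\infty\}\) it is Standing Assumption \ref{SA2} (no jump of \(\widetilde{R}\) to infinity) that controls the left-hand side at \(\rho\) — you invoke \ref{SA2} only to get \(\rho_n<\rho\), not for this purpose, and you do not discuss the case \(\{\rho\le\tau\}\) at all. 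As written, ``both sides are constant there, and the equality propagates'' is therefore not a proof of the boundary behaviour; the step is fixable, but only by the jump-of-the-compensator argument the paper actually carries out.
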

\begin{proof}
For each \(n \in \mathbb{N}\) we obtain that on \(\of 0, \tau_n \wedge \rho_n\gs\)
\begin{align*} 
\1_{\{\widetilde{Z}_- > 0\}}/\widetilde{Z}_-^2 \cdot C(\widetilde{Z})^p 
&= \1_{\{\mathcal{E}(N^{\tau_n \wedge \rho_n})_- > 0\}}/\mathcal{E}(N^{\tau_n \wedge \rho_n})^2_- \cdot C(\mathcal{E}(N^{\tau_n \wedge \rho_n}))^p
= \widetilde{R},
\end{align*}
where we used that identically \(\mathcal{E}(N^{\tau_n \wedge \rho_n})_- > 0\), c.f. Theorem I.4.61 in \cite{JS}, Lemma 8.8 in \cite{J79}, and \eqref{12.41}.
Therefore the identity \eqref{R = C stuff} holds on \(\bar{A}\).
Due to construction, we have 
\begin{align*}
\widetilde{R}^\rho = \widetilde{R}^{\tau \wedge \rho}\textup{ and } \1_{\{\widetilde{Z}_- > 0\}}/\widetilde{Z}_-^2 \cdot C(\widetilde{Z})^p = \1_{\{\widetilde{Z}_- > 0\}}/\widetilde{Z}_-^2 \cdot C(\widetilde{Z})^p_{\cdot \wedge \tau \wedge \rho}.
\end{align*}
Therefore it remains to study \eqref{R = C stuff} on 
\begin{equation}
\label{remains}
\begin{split}
\big(\bar{A} &\ \cup\ \gs \tau \wedge \rho, \infty\of\big)^c 
= \begin{cases}
\of \tau\gs& \textup{ on } \{\tau < \rho\} \cap \big(\bigcap_{n \in \mathbb{N}} \{\tau_n < \tau < \infty\}\big) \times \mathbb{R}^+,
\\
\of \rho \gs&\textup{ on }
\{\rho \leq \tau\} \cap  \big(\{ \rho < \infty\}
 \times \mathbb{R}^+\big),
\\
\ \emptyset&\textup{ otherwise}.
\end{cases}
\end{split}
\end{equation}
In the first two cases the definition of the extension \eqref{extension} and the \cadlag paths of \(\widetilde{Z}\) imply \(\widetilde{Z}_{\tau \wedge \rho} = \widetilde{Z}_{(\tau \wedge \rho)-}\). Hence
\(\Delta C(\widetilde{Z})^p_{\tau \wedge \rho} = 0\), using I.3.21 and Theorem I.2.28 (ii) in \cite{JS}. 
In the first case of \eqref{remains}, this implies that the identity \eqref{R = C stuff} holds due to the definition of \(\widetilde{R}\), which yields that \(\Delta \widetilde{R}_{\tau \wedge \rho} = 0\). 
In the second case of \eqref{remains},
Standing Assumption \ref{SA2} yields
that \(\Delta \widetilde{R}_{\tau \wedge \rho} = 0\). 
This finishes the proof. 
\end{proof}
\hspace{-0.57cm}
\textit{Proof of Corollary \ref{coro j 1}:}
We set \(\sigma_n = \tau_n \wedge \rho_n\), then part (I) of Conditions \ref{gen bound cond} holds obviously, and part (II) of Conditions \ref{gen bound cond} holds due to Remark \ref{pred remark}.
It follows by the same arguments as in the proof of Theorem III.3.24 and Lemma III.5.27 in \cite{JS}, that on \(\bar{A}\) we have 
\begin{equation}\label{ausgerechnet}
\begin{split}
1/ \widetilde{Z}_- \1_{\{\widetilde{Z}_- > 0\}} M^{\p}_{\mu^X}(\widetilde{Z} |\widetilde{\mathcal{P}}(\F)) &= U,\\
1/\widetilde{Z}_-\1_{\{\widetilde{Z}_- > 0\}} \cdot \lle \widetilde{Z}, M(h)\rre^\p &=KcK^* \cdot \bar{c}+ h(x) (U- 1) * \nu. 
\end{split}
\end{equation}
Moreover, it follows similarly to \eqref{Delta R bound}, that for all \(n, m \in \mathbb{N}\), \(\Delta \widetilde{R}_{\tau_m \wedge \rho_n} \leq 2\). Therefore, 
\begin{align*}
\Delta \widetilde{R}_{\rho_n}   
&= \sum_{m \in \mathbb{N}} \Delta \widetilde{R}_{\tau_m \wedge \rho_n} \1_{\gs \tau_{m-1}, \tau_m\gs}(\cdot, \rho_n) \leq 2,
\end{align*}
where \(\tau_0 := 0\). Hence, in view of Lemma \ref{iden R lemma} and noting that 
\begin{align*}
\rho_n = \inf(t \geq 0 : \widetilde{R}^{\rho} \geq n),
\end{align*}
we conclude that part (III) of Conditions \ref{gen bound cond} holds.
Now the claim follows from the identities \eqref{ausgerechnet}, Lemma \ref{iden R lemma} and Proposition ~\ref{gen coro extended stoch exp}.
\qed

\subsubsection{An Explosion Condition}\label{Explosion criteria}

In this section we derive an explosion-type condition. 
Let \((m_n)_{n \in \mathbb{N}}\) be an increasing sequence of non-negative real numbers. 
We define
\begin{align}\label{explosion sequence}
\rho_n := \inf(t \geq 0 : \|X_t\| \geq m_n \textup{ or } \|X_{t-}\|\geq m_n) \wedge n,
\end{align}
and impose the following condition:
\begin{SA}\label{SA3}
Assume that for all \(n \in \mathbb{N}\) we have \(\rho_n \leq \lim_{m \to \infty} \tau_m\), where \((\tau_n)_{n \in \mathbb{N}}\) is an arbitrary announcing sequence for \(A\), and 
that \((\rho_n)_{n \in \mathbb{N}}\) satisfies Convention \ref{conv}.
\end{SA}
\begin{remark}\label{local character remark}
If for all \(n \in \mathbb{N}\) there exists a positive constant \(c_n\) such that 
\begin{align*}
\frac{\1_{\{Z_- > 0\}}}{Z^2_-} \cdot C(Z)^p_{\rho_n} \leq c_n,
\end{align*}
where \(C(Z)\) is defined as in \eqref{C(Z)}, then the sequence \((\rho_n)_{n \in \mathbb{N}}\) satisfies Convention \ref{conv}, c.f. Lemma \ref{besser als novi lemma} in Appendix \ref{An Integrability Condition to Indentify}.
\end{remark}
Standing Assumption \ref{SA3} has a \emph{local character}, c.f. Remark \ref{local character remark}, and therefore a broad scope.
The following result gives a sufficient condition for the martingality of \(Z\). In the one-dimensional diffusion setting a similar result may be deduced from Theorem 2.1 in \cite{MU(2012)}.

\begin{corollary}\label{explosion cond coro}
\begin{enumerate}
\item[\textup{(i)}]
Assume part \textup{(I)} of Conditions \ref{cond main uni}.
If we have for all \(t \geq 0\) 
\begin{align}\label{explosion cond}
\lim_{n \to \infty} \q\bigg(\sup_{s \leq t} \|X_s\| < m_n\bigg) = 1,
\end{align}
then \(Z\) is an \((\F, \p)\)-martingale. If \(\p\)-a.s. \(\rho_n \uparrow_{n \to \infty} \infty\) and \(Z\) is an \((\F, \p)\)-martingale, then \eqref{explosion cond} holds.
\item[\textup{(ii)}]
Let \((\bar{B}(h), C, \bar{\nu})\) be a candidate triplet which coincides with \((\dot{B}(h), C, \dot{\nu})\) from \eqref{candidtate triplet main theorem}  on \(\bar{A}\).
If the SMP associated with \((\mathbb{H}; \of 0, \infty\of; \eta; X; \bar{B}(h), C, \bar{\nu})\) has a solution \(\q\) and satisfies \(\{\rho_n \wedge \tau_m, n, m \in \mathbb{N}\}\)-uniqueness and if \(m_n \uparrow_{n \to \infty} \infty\) then 
\(Z\) is an \((\F, \p)\)-martingale.
\end{enumerate}
\end{corollary}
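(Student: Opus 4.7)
The plan is to reduce both parts of the corollary to Proposition~\ref{main prop uni 1}. For part~(i), the structural observation is that Standing Assumption~\ref{SA3} forces $\rho_n\le\tau$, hence $\tau\wedge\rho_n=\rho_n$, so that part~(II) of Conditions~\ref{cond main uni} collapses to $\{\rho_n>t\}\in\mathcal{F}_{\rho_n-}$. This is one of the canonical generators of $\mathcal{F}_{\rho_n-}$ (take $B=\Omega$ in \cite{JS}, Proposition~I.2.4), so together with the assumed part~(I), both hypotheses of Proposition~\ref{main prop uni 1} are in force.

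It then only remains to identify condition~\eqref{explosion cond} of the corollary with condition~\eqref{Main condition} of the proposition. For $n>t$ the c\`adl\`ag property of $X$ yields $\sup_{s\le t}\|X_{s-}\|\le\sup_{s\le t}\|X_s\|$, and hence
\[
\{\rho_n>t\}=\Big\{\sup_{s\le t}\|X_s\|<m_n\Big\}\qquad(n>t).
\]
Consequently the two limiting conditions coincide, and both implications of part~(i) follow from the corresponding implications of Proposition~\ref{main prop uni 1} (the converse using additionally $\p$-a.s.\ $\rho_n\uparrow\infty$).

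For part~(ii), the plan is to deduce the hypotheses of part~(i) from the global solvability assumption. Since $(\bar{B}(h),C,\bar{\nu})$ agrees with $(\dot{B}(h),C,\dot{\nu})$ on $\bar{A}\subseteq\of 0,\infty\of$, Corollary~\ref{coro subinterval} turns $\q$ into a solution to the SMP on $\bar{A}$ with triplet $(\dot{B}(h),C,\dot{\nu})$, and the assumed $\{\rho_n\wedge\tau_m\}$-uniqueness transfers to $\bar{A}$ via Remark~\ref{uni prop}, after noting $\of 0,\tau_m\wedge\rho_n\gs\subseteq\of 0,\tau_k\wedge\rho_k\gs\subseteq\bar{A}$ for $k=m\vee n$---an immediate consequence of monotonicity of $(\tau_k)$ and $(\rho_k)$. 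Thus part~(I) of Conditions~\ref{cond main uni} is in force. Finally, being a solution to the SMP on $\of 0,\infty\of$ forces $X$ to be a $\q$-a.s.\ non-exploding $(\F,\q)$-semimartingale, so $\sup_{s\le t}\|X_s\|<\infty$ $\q$-a.s.\ by c\`adl\`agness on $[0,t]$; since $m_n\uparrow\infty$, continuity of measure yields~\eqref{explosion cond} and part~(i) finishes the argument. The main obstacle is purely organisational: weaving $(\rho_n)$ into the framework of Section~\ref{main section}, which is contained in the two observations $\rho_n\le\tau$ and the set-identity above.
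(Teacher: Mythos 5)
Your proposal is correct and follows essentially the same route as the paper: Standing Assumption \ref{SA3} gives $\tau\wedge\rho_n=\rho_n$ so that part (II) of Conditions \ref{cond main uni} holds, the identity $\{\rho_n>t\}=\{\sup_{s\leq t}\|X_s\|<m_n\}\cap\{n>t\}$ matches \eqref{explosion cond} with \eqref{Main condition}, and both implications reduce to Proposition \ref{main prop uni 1}. Part (ii) likewise coincides with the paper's argument via Corollary \ref{coro subinterval}, Remark \ref{uni prop}, and the $\q$-a.s.\ finiteness of $\sup_{s\leq t}\|X_s\|$ from the global solvability, with $m_n\uparrow\infty$.
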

\begin{proof}
(i). Due to Standing Assumption \ref{SA3}, part (II) of Conditions \ref{cond main uni} is satisfied.
Moreover, note that for all \(t \geq 0\) we have
\(
\{\rho_n > t\} = \{ \sup_{s \leq t} \|X_s\| < m_n\} \cap \{n > t\}.
\)
Therefore, Proposition \ref{main prop uni 1} implies that \eqref{explosion cond} is sufficient for \(Z\) to be an \((\F, \p)\)-martingale.
If \(Z\) is an \((\F, \p)\)-martingale and \(\p\)-a.s. \(\rho_n \uparrow_{n \to \infty} \infty\), then Lemma \ref{ruf mimic}, Lemma \ref{lemma 2} and Standing Assumption \ref{SA3} imply that \eqref{explosion cond} is satisfied.

(ii). Corollary \ref{coro subinterval} and Remark \ref{uni prop} yield that part (I) of Conditions \ref{cond main uni} is satisfied. Moreover, since \(m_n\uparrow_{n \to \infty} \infty\) and \(\q\)-a.e. path of \(X\) is in \(\mathbb{D}^\mathbb{H}\) we obtain that 
\begin{align*}
\lim_{n \to \infty} \q\left(\sup_{s \leq t} \|X_s\| < m_n\right) = \q\left(\sup_{s \leq t} \|X_s\| < \infty\right) = 1.
\end{align*}
Therefore part (i) yields the claim.
\end{proof}
Note that the condition \eqref{explosion cond} only depends on the process \(X\) under \(\q\). The condition may even imply that \(Z\) is a true martingale, while \(X\) explodes under \(\p\), c.f. Example 3.1 in \cite{MU(2012)}.
It is fascinating and surprising that such a condition 
not only holds in the well-studied one-dimensional diffusion setting, but also in our general infinite dimensional setting which allows for discontinuities.

In the case where \(A = \of 0, \infty\of\) and \((\Omega, \mathcal{F}, \F)\) is full we obtain a classical explosion condition which we state in the following corollary. 
\begin{corollary}
Assume that \(m_n \uparrow_{n \to \infty} \infty\), \(A = \of 0, \infty\of\), that \((\Omega, \mathcal{F}, \F)\) is full, and that the SMP \((\mathbb{H}; A; \eta; X; \dot{B}(h), C, \dot{\nu})\), where \(\dot{B}(h)\) and \(\dot{\nu}\) are given by \eqref{candidtate triplet main theorem} on \(A\), satisfies \(\{\rho_n \wedge \tau_m, n, m \in \mathbb{N}\}\)-uniqueness.
Then the following is equivalent:
\begin{enumerate}
\item[\textup{(i)}] \(Z\) is an \((\F, \p)\)-martingale.
\item[\textup{(ii)}] The SMP associated with \((\mathbb{H}; A; \eta; X; \dot{B}(h), C, \dot{\nu})\) has a solution.
\end{enumerate}
\end{corollary}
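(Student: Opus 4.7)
My plan is to observe that this corollary is essentially a direct combination of two results already proved earlier in the paper: Theorem \ref{main theorem new 2} supplies one direction, and part (ii) of Corollary \ref{explosion cond coro} supplies the other. The role of the corollary is to package these into a clean equivalence in the simpler setting where $A = \of 0, \infty \of$ and the full topological assumption is imposed.

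For the direction (i) $\Longrightarrow$ (ii), I would simply invoke Theorem \ref{main theorem new 2} with $A = \of 0, \infty \of$. Since $(\Omega, \mathcal{F}, \F)$ is assumed to be full and $Z$ is a martingale, that theorem directly produces a solution $\q$ to the SMP associated with $(\mathbb{H}; \of 0, \infty \of; \eta; X; \dot{B}(h), C, \dot{\nu})$. No extra work is needed.

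For the direction (ii) $\Longrightarrow$ (i), I would apply Corollary \ref{explosion cond coro}(ii) with the choice $(\bar{B}(h), C, \bar{\nu}) = (\dot{B}(h), C, \dot{\nu})$. Since $\bar{A} = \bigcup_{n} \of 0, \tau_n \wedge \rho_n\gs \subseteq A = \of 0, \infty\of$, the two triplets trivially coincide on $\bar{A}$. The hypotheses of Corollary \ref{explosion cond coro}(ii) are then exactly what the present corollary assumes: existence of a solution to the modified SMP on all of $\of 0, \infty\of$, the $\{\rho_n \wedge \tau_m, n, m \in \mathbb{N}\}$-uniqueness condition, and $m_n \uparrow \infty$. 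The conclusion of that corollary is precisely that $Z$ is an $(\F, \p)$-martingale.

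There is essentially no obstacle in this proof; it is a bookkeeping statement that combines a forward implication based on a F\"ollmer-type extension argument (Theorem \ref{main theorem new 2}) with the explosion-type sufficient condition (Corollary \ref{explosion cond coro}(ii)). The only mildly delicate point is verifying that the hypotheses needed to apply Corollary \ref{explosion cond coro}(ii) are in force here, which boils down to noting that Standing Assumption \ref{SA3} is implicit in this subsection (so that $(\rho_n)_{n \in \mathbb{N}}$ satisfies Convention \ref{conv}, with $\rho_n \leq \lim_m \tau_m$ trivially since $A = \of 0, \infty\of$), and that the two candidate triplets coincide on $\bar{A}$ by construction.
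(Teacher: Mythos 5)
Your proposal is correct and matches the paper's own proof, which likewise obtains (i) \(\Longrightarrow\) (ii) from Theorem \ref{main theorem new 2} and (ii) \(\Longrightarrow\) (i) from Corollary \ref{explosion cond coro} (ii). Your added remarks on checking that the triplets coincide on \(\bar{A}\) and that Standing Assumption \ref{SA3} keeps Convention \ref{conv} in force are just the implicit bookkeeping the paper leaves unstated.
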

\begin{proof} The claim follows immediately from Theorem \ref{main theorem new 2} and Corollary \ref{explosion cond coro} (ii).
\end{proof}
\subsection{Martingality on Standard Systems}\label{Martingality on Standard Systems}
The following approach depends on topological properties of the underlying filtered space and is related to the setting of the F\"ollmer measure, c.f. \cite{follmer72,perkowski2015}.
In particular, the result applies if the underlying filtered space is given by \((\mathbb{D}^{E_\Delta}, \mathcal{D}^{E_\Delta}, \mathfrak{D}^{E_\Delta})\) where \(E\) is an arbitrary Polish space. Hence we even allow \(Z\) to be driven by \(E_\Delta\)-valued processes.
Connecting the approach to the SMP setting of the previous sections allows us to drop the uniqueness assumptions of Conditions ~\ref{cond main uni}.
For the terminology used in this section we refer to the Appendix \ref{Extension of Probability Measures}.
We recall the notation \(\rho = \lim_{n \to \infty} \rho_n\), where \((\rho_n)_{n \in \mathbb{N}}\) is given as in Convention ~\ref{conv}. 
\begin{proposition}\label{ext prop}
Assume that \((\Omega, \mathcal{F}_{\rho_n-})_{n \in \mathbb{N}}\) is a standard system. Then there exists a unique probability measure \(\q\) on \((\Omega, \mathcal{F}_{\rho-})\) such that for all \(n \in \mathbb{N}\), \(\q(F) = \E[Z_{\rho_n} \1_F]\) for all \(F \in \mathcal{F}_{\rho_n-}\).
If in addition \((\Omega, \mathcal{F}_{\infty-})\) is a standard Borel space and \(\mathcal{F}_{\rho-}\) is countably generated, then \(\q\) has an extension to \((\Omega, \mathcal{F}_{\infty-})\).
\end{proposition}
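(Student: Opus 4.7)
The plan is to realize $\q$ as the projective limit of a consistent family $(\q_n)_{n \in \mathbb{N}}$ of probability measures and then invoke the extension theorem for standard systems in the spirit of \cite{follmer72,perkowski2015}. I would first define $\q_n$ on $(\Omega, \mathcal{F}_{\rho_n-})$ by $\q_n(F) := \E[Z_{\rho_n}\1_F]$. Since $Z^{\rho_n}$ is a uniformly integrable $(\mathfrak{F}, \p)$-martingale by Convention~\ref{conv} and $\p$-a.s.\ $Z_0 = 1$, the total mass is $\E[Z_{\rho_n}] = 1$, so each $\q_n$ is a probability measure. For consistency of the family, pick $n \le m$ and $F \in \mathcal{F}_{\rho_n-} \subseteq \mathcal{F}_{\rho_n}$; Doob's optional stopping theorem applied to the uniformly integrable martingale $Z^{\rho_m}$ at $\rho_n \le \rho_m$ gives $\E[Z_{\rho_m}\mid \mathcal{F}_{\rho_n}] = Z_{\rho_n}$, hence $\q_m(F) = \E[Z_{\rho_m}\1_F] = \E[Z_{\rho_n}\1_F] = \q_n(F)$.

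Next, since $(\Omega, \mathcal{F}_{\rho_n-})_{n \in \mathbb{N}}$ is a standard system, Parthasarathy's extension theorem yields a probability measure $\q$ on $\bigvee_{n \in \mathbb{N}} \mathcal{F}_{\rho_n-}$ whose restriction to $\mathcal{F}_{\rho_n-}$ agrees with $\q_n$ for every $n$. By Theorem IV.56 in \cite{DellacherieMeyer78}, $\bigvee_{n} \mathcal{F}_{\rho_n-} = \mathcal{F}_{\rho-}$, which is the desired $\sigma$-algebra. Uniqueness of such $\q$ follows from Dynkin's $\pi$-$\lambda$ theorem, because $\bigcup_{n} \mathcal{F}_{\rho_n-}$ is a $\pi$-system generating $\mathcal{F}_{\rho-}$ on which any two candidate extensions must already coincide.

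For the last assertion, I would invoke that on a standard Borel space $(\Omega, \mathcal{F}_{\infty-})$ any probability measure carried by a countably generated sub-$\sigma$-algebra extends to $\mathcal{F}_{\infty-}$: via the countable generators, $\mathcal{F}_{\rho-}$ may be realized as the pullback of a Borel $\sigma$-algebra on a Polish space; $\q$ pushes forward to a Borel probability there, and a measurable disintegration (available thanks to the standard Borel hypothesis) allows one to lift the pushforward back to an extension of $\q$ on $\mathcal{F}_{\infty-}$. The main obstacle of the whole proof is verifying the hypotheses of the extension theorem for standard systems; the consistency is a direct consequence of optional stopping, but moving from a countable family of consistent marginals to a measure on the generated $\sigma$-algebra genuinely exploits the tightness-like compact inner approximation property encoded in the definition of a standard system.
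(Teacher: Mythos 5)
Your argument is correct and follows essentially the same route as the paper: consistency of $\q_n = Z_{\rho_n}\cdot\p$ via Doob's optional stopping theorem, Parthasarathy's extension theorem for the standard system $(\Omega,\mathcal{F}_{\rho_n-})_{n\in\mathbb{N}}$ combined with $\bigvee_{n\in\mathbb{N}}\mathcal{F}_{\rho_n-}=\mathcal{F}_{\rho-}$, and, for the final claim, the extension result for measures on countably generated sub-$\sigma$-fields of a standard Borel space (which the paper simply cites as Theorem \ref{extension P2}, whereas you sketch its proof).
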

\begin{proof}
Denote \(\q_n := Z_{\rho_n} \cdot \p\) and note that for all \(A \in \mathcal{F}_{\rho_{n}-}\) we have 
\begin{align*}
\q_{n+1} (A) = \E[\E[Z_{\rho_{n+1}}\1_A|\mathcal{F}_{\rho_n}]] = \E[Z_{\rho_n}\1_{A}] = \q_n(A),
\end{align*}
where we use Doob's optional stopping theorem. 
Hence the first claim of the proposition is an immediate consequence of Parthasarathy's extension theorem, c.f. Theorem \ref{extension P} in Appendix \ref{Extension of Probability Measures}, and the identity \(\bigvee_{n \in \mathbb{N}} \mathcal{F}_{\rho_n-} = \mathcal{F}_{\rho-}\).
The second claim follows from Theorem \ref{extension P2} in Appendix \ref{Extension of Probability Measures}.
\end{proof}
As an immediate consequence of Proposition \ref{ext prop}, Lemma \ref{ruf mimic} and Lemma \ref{neues lemma} we obtain the following condition for the martingality of \(Z\).
\begin{proposition}\label{theorem extension1}
Assume that \((\Omega, \mathcal{F}_{\rho_n-})_{n \in \mathbb{N}}\) is a standard system and denote by \(\q\) the probability measure as given in Proposition \ref{ext prop}.
\begin{enumerate}
\item[\textup{(i)}]
If \eqref{Main condition} holds,
then \(Z\) is an \((\F, \p)\)-martingale. If \(\p\)-a.s. \(\rho_n \uparrow_{n \to \infty} \infty\) and \(Z\) is an \((\F, \p)\)-martingale, then \eqref{Main condition} holds.
\item[\textup{(ii)}]
If \eqref{Main condition ui} holds, 
then \(Z\) is a uniformly integrable \((\F, \p)\)-martingale.
\end{enumerate}
\end{proposition}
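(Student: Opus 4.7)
The plan is to reduce the proposition to the elementary characterizations of martingality obtained in Lemma~\ref{ruf mimic} and Lemma~\ref{neues lemma}, exploiting the fact that the probability measure $\q$ produced by Proposition~\ref{ext prop} literally agrees with the shifted measures $\q_n = Z_{\rho_n} \cdot \p$ on the $\sigma$-fields $\mathcal{F}_{\rho_n-}$.

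First I would verify the set-theoretic ingredient that makes the reduction work: for every $n \in \mathbb{N}$ and every $t \geq 0$, the event $\{\rho_n > t\}$ belongs to $\mathcal{F}_{\rho_n-}$. This is immediate from the standard description of $\mathcal{F}_{\rho_n-}$ as being generated by $\mathcal{F}_0$ together with sets of the form $A \cap \{t < \rho_n\}$ with $A \in \mathcal{F}_t$. Taking $A = \Omega$ yields the claim, and intersecting over $t \in \mathbb{N}$ further gives $\{\rho_n = \infty\} \in \mathcal{F}_{\rho_n -}$.

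Combining this with the defining property of $\q$ from Proposition~\ref{ext prop}, one obtains for all $n \in \mathbb{N}$ and $t \geq 0$
\begin{align*}
\q(\rho_n > t) &= \E\bigl[Z_{\rho_n} \1_{\{\rho_n > t\}}\bigr] = \q_n(\rho_n > t), \\
\q(\rho_n = \infty) &= \E\bigl[Z_{\rho_n} \1_{\{\rho_n = \infty\}}\bigr] = \q_n(\rho_n = \infty).
\end{align*}
Hence \eqref{Main condition} and \eqref{Main condition ui} are literally the conditions that appear in Lemma~\ref{ruf mimic} and Lemma~\ref{neues lemma}. By Convention~\ref{conv}, each $Z^{\rho_n}$ is a uniformly integrable $(\F,\p)$-martingale, so Lemma~\ref{neues lemma}(i) applied with these identifications yields the martingale conclusion in (i), and Lemma~\ref{neues lemma}(ii) yields the uniformly integrable martingale conclusion in (ii).

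For the converse statement in (i), if $Z$ is an $(\F,\p)$-martingale and $\p$-a.s.\ $\rho_n \uparrow \infty$, then $(\rho_n)_{n\in\mathbb{N}}$ is itself a localizing sequence for $Z$ (uniform integrability of $Z^{\rho_n}$ is built into Convention~\ref{conv}), so Lemma~\ref{ruf mimic}(iii) gives $\lim_n \q_n(\rho_n > t) = 1$, which via the identity above is exactly \eqref{Main condition}. I do not expect any genuine obstacle here: all the hard analytic work is encapsulated in Proposition~\ref{ext prop} (Parthasarathy's extension theorem on standard systems) and in the already-established Lemmata~\ref{ruf mimic} and~\ref{neues lemma}. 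The only point that requires a moment of care is distinguishing $\mathcal{F}_{\rho_n-}$ from $\mathcal{F}_{\rho_n}$, since $\q$ was only constructed on the strict past, which is why the verification that $\{\rho_n > t\}, \{\rho_n = \infty\} \in \mathcal{F}_{\rho_n-}$ is the decisive (but routine) step.
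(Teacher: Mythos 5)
Your proposal is correct and follows exactly the route the paper intends: the paper states Proposition \ref{theorem extension1} as an immediate consequence of Proposition \ref{ext prop}, Lemma \ref{ruf mimic} and Lemma \ref{neues lemma}, and your argument simply fills in the (routine) details, namely that \(\{\rho_n > t\}\) and \(\{\rho_n = \infty\}\) lie in \(\mathcal{F}_{\rho_n-}\) so that \(\q\) agrees with \(\q_n = Z_{\rho_n}\cdot\p\) on these events, after which the two lemmata apply verbatim under Convention \ref{conv}.
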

In general, due to the lack of information on \(\q\), these conditions are difficult to check. 
However, \eqref{Main condition} holds if \(\rho_n(\omega) \uparrow_{n \to \infty} \infty\) for all \(\omega \in \Omega\). This path-wise condition is stated in the following corollary, which generalizes Corollary 3.4 in \cite{RufSDE}.
\begin{corollary}\label{path assumption}
Assume that \((\Omega, \mathcal{F}_{\rho_n-})_{n \in \mathbb{N}}\) is a standard system.
If \(\{\rho_n > t\} \uparrow_{n \to \infty} \Omega\) for all \(t \geq 0\), then \(Z\) is an \((\F, \p)\)-martingale. If even \(\{\rho_n =\infty\} \uparrow_{n \to \infty} \Omega\), then \(Z\) is a uniformly integrable \((\F, \p)\)-martingale.
\end{corollary}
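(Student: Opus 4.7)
The plan is to verify directly the hypotheses of Proposition \ref{theorem extension1}, which already delivers both conclusions once \eqref{Main condition} or \eqref{Main condition ui} is established. The standard system assumption supplies, via Proposition \ref{ext prop}, a probability measure \(\q\) on \((\Omega, \mathcal{F}_{\rho-})\) such that \(\q(F) = \E[Z_{\rho_n}\mathbf{1}_F]\) whenever \(F \in \mathcal{F}_{\rho_n-}\). The strategy is therefore to rewrite \(\q(\rho_n > t)\) and \(\q(\rho_n = \infty)\) using this identity and then simply pass to the monotone limit of the increasing sequences of events.

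For part (i), I would first record that for each \(\F\)-stopping time \(\rho_n\) and each \(t \geq 0\) one has \(\{\rho_n > t\} = \{t < \rho_n\} \in \mathcal{F}_{\rho_n-}\), by the standard description of \(\mathcal{F}_{\rho_n-}\) as generated by \(\mathcal{F}_0\) together with sets of the form \(A \cap \{s < \rho_n\}\) with \(A \in \mathcal{F}_s\). Consequently these sets lie in \(\mathcal{F}_{\rho-}\), and the defining property of \(\q\) yields \(\q(\rho_n > t) = \E[Z_{\rho_n}\mathbf{1}_{\{\rho_n > t\}}]\). Since \(\{\rho_n > t\}\uparrow\Omega\) by hypothesis, monotone continuity of \(\q\) gives \(\q(\rho_n > t)\uparrow \q(\Omega) = 1\), which is exactly \eqref{Main condition}. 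Proposition \ref{theorem extension1}(i) then concludes that \(Z\) is an \((\F, \p)\)-martingale.

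For part (ii), the key measurability check is that \(\{\rho_n = \infty\} \in \mathcal{F}_{\rho_n-}\), which follows from writing \(\{\rho_n = \infty\} = \bigcap_{k\in\mathbb{N}}\{k < \rho_n\}\) and using that each \(\{k < \rho_n\}\) lies in \(\mathcal{F}_{\rho_n-}\) as above. Exactly the same argument as in part (i) now gives \(\q(\rho_n = \infty)\uparrow \q(\Omega) = 1\), verifying \eqref{Main condition ui}, and Proposition \ref{theorem extension1}(ii) delivers uniform integrability of \(Z\).

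There is no real obstacle in this argument: the content is entirely in Proposition \ref{ext prop} (which constructs \(\q\) on \(\mathcal{F}_{\rho-}\)) and Proposition \ref{theorem extension1} (which translates convergence of \(\q\)-probabilities of \(\{\rho_n > t\}\) or \(\{\rho_n = \infty\}\) into the martingale property). The only point requiring a moment's thought is the \(\sigma\)-algebra membership \(\{\rho_n = \infty\}\in\mathcal{F}_{\rho_n-}\); after that, the proof is a two-line application of monotone continuity of a probability measure along an increasing sequence of events exhausting \(\Omega\).
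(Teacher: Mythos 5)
Your proof is correct and is exactly the argument the paper intends (the paper leaves the corollary unproved as an immediate consequence of Propositions \ref{ext prop} and \ref{theorem extension1}): one checks \(\{\rho_n>t\}\in\mathcal{F}_{\rho_n-}\) and \(\{\rho_n=\infty\}\in\mathcal{F}_{\rho_n-}\), identifies \(\q(\rho_n>t)=\E[Z_{\rho_n}\1_{\{\rho_n>t\}}]\), and uses continuity from below of \(\q\) along the increasing exhaustion of \(\Omega\) to obtain \eqref{Main condition}, respectively \eqref{Main condition ui}. Your measurability verifications are the only nontrivial points and they are handled correctly.
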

\begin{remark}
On the path-space \((\mathbb{C}^E, \mathcal{C}^E, \mathfrak{C}^E)\), the pathwise condition \(\{\rho_n > t\} \uparrow_{n \to \infty} \mathbb{C}^E\) from Corollary \ref{path assumption} is not sufficient for \(Z\) to be a \((\mathfrak{C}^E, \p)\)-martingale, c.f. \cite{RufSDE} for a counterexample.
\end{remark}
We now assume a setting comparable to the previous section, which enables us to derive structural properties of \(\q\).
These allow us to express \(\q\) in Proposition \ref{theorem extension1} as a solution to a SMP:
\begin{condition}\label{cond topo}
\begin{enumerate}
\item[\textup{(I)}]
Assume that \((\Omega, \mathcal{F}_{\rho_n-})_{n \in \mathbb{N}}\) is a standard system, that \(\mathcal{F}_{\rho -}\) is countably generated and that \((\Omega, \mathcal{F}_{\infty-})\) is a standard Borel space.
\item[\textup{(II)}]
Assume that \(\mathcal{F} = \mathcal{F}_{\infty - }\) and that for each \(n \in \mathbb{N}\) there exists an \(m_n > n\) such that 
\(\rho_n \wedge \tau_n < \rho_{m_n} \wedge \tau_{m_n}\) on \(\{\rho_n \wedge \tau_n< \infty\}\).
\item[\textup{(III)}]
Assume that \((\tau_n \wedge \rho_n)_{n \in \mathbb{N}}\) is a sequence of \(\F\)-predictable times, and that \(\F\) is quasi-left continuous.
\end{enumerate}
\end{condition}
\begin{theorem}\label{theorem extension2}
Assume Standing Assumption \ref{SA}, part  \textup{(I)} of Conditions \ref{cond topo}, and additionally part \textup{(II)} or \textup{(III)} of Conditions \ref{cond topo}. Moreover, denote by \(\q\) the extension of the probability measure as given in Proposition ~\ref{ext prop}.
Then 
\(\q\) is a solution to the SMP associated with \((\mathbb{H}; \bar{A}; \eta; X; \dot{B}(h), C, \dot{\nu})\), where \(\bar{A} = A \cap (\bigcup_{n \in \mathbb{N}} \of 0,  \rho_n\gs)\), and \(\dot{B}(h)\) and \(\dot{\nu}\) are given by \eqref{candidtate triplet main theorem}.
\end{theorem}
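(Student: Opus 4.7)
The plan is to identify the extended measure $\q$ from Proposition \ref{ext prop} piecewise with the locally absolutely continuous measures $\q_{n,n} := Z_{\tau_n \wedge \rho_n} \cdot \p$ and then to invoke Lemma \ref{lemma stopped}, which already exhibits $\q_{n,n}$ as a solution to the SMP associated with $(\mathbb{H}; \tau_n \wedge \rho_n; \eta; X; \dot{B}(h), C, \dot{\nu})$. Since $(\tau_n \wedge \rho_n)_{n \in \mathbb{N}}$ is an increasing sequence of $\F$-stopping times announcing $\bar{A}$, this will yield the SMP property of $\q$ on $\bar{A}$.

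First, I would invoke Part (I) of Conditions \ref{cond topo} and Proposition \ref{ext prop} to construct $\q$ with $\q = Z_{\rho_n}\cdot \p$ on $\mathcal{F}_{\rho_n-}$ for each $n$; under Part (II) this is in addition a probability measure on $(\Omega, \mathcal{F})$. Doob's optional stopping theorem applied to the uniformly integrable martingale $Z^{\rho_n}$ immediately upgrades this to $\q = \q_{n,n}$ on $\mathcal{F}_{(\tau_n \wedge \rho_n)-}$. The next step is to promote this identity from $\mathcal{F}_{(\tau_n \wedge \rho_n)-}$ to $\mathcal{F}_{\tau_n \wedge \rho_n}$. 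Under Part (III), predictability of $\tau_n \wedge \rho_n$ combined with quasi-left continuity of $\F$ gives $\mathcal{F}_{(\tau_n \wedge \rho_n)-} = \mathcal{F}_{\tau_n \wedge \rho_n}$ for free. Under Part (II), the strict inequality $\tau_n \wedge \rho_n < \tau_{m_n} \wedge \rho_{m_n}$ on $\{\tau_n \wedge \rho_n < \infty\}$ together with \cite{DellacherieMeyer78}, Theorem IV.56, gives $\mathcal{F}_{\tau_n \wedge \rho_n} \subseteq \mathcal{F}_{(\tau_{m_n} \wedge \rho_{m_n})-}$; a further application of optional stopping, now to the uniformly integrable martingale $Z^{\tau_{m_n} \wedge \rho_{m_n}}$, then collapses $\q$ to $\q_{n,n}$ on $\mathcal{F}_{\tau_n \wedge \rho_n}$. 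The initial-law condition $\q \circ X_0^{-1} = \eta$ then follows from $\mathcal{F}_0 \subseteq \mathcal{F}_{\tau_n \wedge \rho_n}$ together with Lemma \ref{lemma stopped}.

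The hard part will be to transfer the full SMP property from $\q_{n,n}$ to $\q$ given only that the two measures agree on $\mathcal{F}_{\tau_n \wedge \rho_n}$. My key observation would be that both $X^{\tau_n \wedge \rho_n}$ and the stopped candidate triplet $(\dot{B}(h)^{\tau_n \wedge \rho_n}, C^{\tau_n \wedge \rho_n}, \dot{\nu}^{\tau_n \wedge \rho_n})$ are already stopped at $\tau_n \wedge \rho_n$, so every local martingale $M$ entering the characterisation of the characteristics satisfies $M = M^{\tau_n \wedge \rho_n}$. For any such $M$, any $F \in \mathcal{F}_s$ and any $s \leq t$, I would decompose $F = (F \cap \{\tau_n \wedge \rho_n \geq s\}) \cup (F \cap \{\tau_n \wedge \rho_n < s\})$: by standard stopping-time arguments the first piece lies in $\mathcal{F}_{\tau_n \wedge \rho_n}$, while on the second piece $M_t = M_s = M_{\tau_n \wedge \rho_n}$, so the martingale identity $\E^{\q}[M_t \1_F] = \E^{\q}[M_s \1_F]$ reduces to one that only sees the restriction of $\q$ to $\mathcal{F}_{\tau_n \wedge \rho_n}$. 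Consequently the $(\F, \q)$- and $(\F, \q_{n,n})$-local martingale properties coincide for such $M$, and the same decomposition handles integrability and localisation. Together with Lemma \ref{lemma stopped} this shows that $\q$ solves the SMP on every $\of 0, \tau_n \wedge \rho_n \gs$, and hence on $\bar{A}$.
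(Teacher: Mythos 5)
Your proposal is correct and follows essentially the same route as the paper's proof: Lemma \ref{lemma stopped} for \(\q_{n,n}=Z_{\tau_n\wedge\rho_n}\cdot\p\), identification of \(\q\) with \(\q_{n,n}\) on \(\mathcal{F}_{(\tau_n\wedge\rho_n)-}\) via Proposition \ref{ext prop} and optional stopping, upgraded to \(\mathcal{F}_{\tau_n\wedge\rho_n}\) either by predictability plus quasi-left continuity under part (III) or by the Ruf-type separation argument under part (II), after which the stopped SMP property transfers (a step the paper leaves implicit and you spell out). The one imprecision is your claim that \cite{DellacherieMeyer78} and the strict inequality alone give \(\mathcal{F}_{\tau_n\wedge\rho_n}\subseteq\mathcal{F}_{(\tau_{m_n}\wedge\rho_{m_n})-}\): that result only yields \(F\cap\{\tau_n\wedge\rho_n<\infty\}\in\mathcal{F}_{(\tau_{m_n}\wedge\rho_{m_n})-}\), and to place the remaining piece \(F\cap\{\tau_n\wedge\rho_n=\infty\}\) there you must additionally invoke \(\mathcal{F}=\mathcal{F}_{\infty-}\) from part (II) (without it the inclusion fails), which is precisely why the paper, following Ruf, splits \(F\) into the sets \(\{\tau_n\wedge\rho_n<\infty\}\) and \(\{\tau_n\wedge\rho_n=\infty\}\) and treats them separately.
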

\begin{proof}
Lemma \ref{lemma stopped} yields that the probability measure \(\q_{n, n} = Z_{\tau_n \wedge \rho_n} \cdot \p\) is a solution to the SMP associated with \((\mathbb{H}; \tau_n \wedge \rho_n; \eta; X; \dot{B}(h), C, \dot{\nu})\).
Due to the construction of \(\q\) we have \(\q_{n, n} = \q\) on \(\mathcal{F}_{(\tau_n \wedge \rho_n)-}\). If we assume part (III) of Conditions \ref{cond topo}, then \(\q_{n, n} = \q\) on \(\mathcal{F}_{\tau_n \wedge \rho_n}\) which yields the claim.
Now assume part (II) of Conditions \ref{cond topo}. Following the proof of Proposition 3.1 in \cite{RufSDE}, we obtain for all \(F \in \mathcal{F}_{\tau_n \wedge \rho_n}\) 
\begin{align*}
\q(F) = \E[Z_{\tau_{m_n} \wedge \rho_{m_n}} \1_{F \cap \{\tau_n \wedge \rho_{n} < \infty\}}] + \E[Z_{\tau_n \wedge \rho_n} \1_{F \cap \{\tau_n \wedge \rho_n = \infty\}}] = \E[Z_{\tau_n \wedge \rho_n} \1_F],
\end{align*}
where we used \cite{DellacherieMeyer78}, Theorem V.56, and Doob's optional stopping theorem.
This finishes the proof.
\end{proof}
Combining Proposition \ref{theorem extension1} and Theorem \ref{theorem extension2} leads to a martingale condition similarly to the Propositions \ref{main prop uni 1} and \ref{main prop uni 2}. 
\begin{remark}
If the underlying filtered space is given by
\((\mathbb{D}^{\mathbb{H}_\Delta}, \mathcal{D}^{\mathbb{H}_\Delta}, \mathfrak{D}^{\mathbb{H}_\Delta}),
\) 
the topological assumptions of Theorem \ref{theorem extension2}, part (I) of Conditions \ref{cond topo}, are satisfied, c.f. Appendix \ref{Extension of Probability Measures}. 
\end{remark}

\section{A Case Study - Martingality in Terms of Infinite-Dimensional SDEs}
\label{A Case Study - Martingality in Terms of SDEs driven by Hilbert-Space-Valued Brownian Motion} 
The generality of our setting allows us to derive sufficient conditions for the martingality of stochastic exponentials driven by Hilbert-space-valued Brownian motion.
The main result of this section is in the spirit of Theorem 3.1.1 in \cite{BR16}.

Let us first introduce some additional terminology. 
As in the previous section let \(\mathbb{H}\) be a real separable Hilbert space. Moreover, we fix a filtered probability space \((\Omega, \mathcal{F}, \F, \p)\) with right-continuous filtration \(\F\) and a non-negative and self-adjoint nuclear operator \(Q\in \mathcal{N}(\mathbb{H}, \mathbb{H})\).
\begin{definition}
An \(\mathbb{H}\)-valued process \(W\) is called an \((\mathbb{H}, Q, \F, \p)\)-\emph{Brownian motion}, if the following holds:
\begin{enumerate}
\item[\textup{(i)}]
\(\p\)-a.s. \(W_0 = 0\).
\item[\textup{(ii)}]
\(W\) has \(\p\)-a.s. continuous paths.
\item[\textup{(iii)}]
For all \(s < t\) the random variable \(W_t - W_s\) is \(\p\)-independent of \(\mathcal{F}_s\);
\item[\textup{(iv)}]
For all \(s < t\) and all \(h \in \mathbb{H}\) the (real-valued) random variable \((W_t - W_s) \bullet h\) is centered Gaussian with variance \((t-s) (Qh \bullet h)\). 
\end{enumerate}
\end{definition}
	The operator \(Q\) is usually called \emph{covariance} of \(W\).
Moreover, as in the finite-dimensional case we have a L\'evy characterization theorem: An \(\mathbb{H}\)-valued continuous \(\F\)-adapted process \(W\) with \(\p\)-a.s. \(W_0 = 0\) is an \((\mathbb{H},Q, \F, \p)\)-Brownian motion if and only if it is a square-integrable \((\F, \p)\)-martingale with \(\lle W, W\rre^\p= I Q\), where \(I_t = t\) denotes the identity process, c.f. Proposition 4.11 in \cite{MP80} or 
Theorem 4.6 in \cite{deprato}.

It is also well-known that \(Q\) has a decomposition of the form \(Q = Q^{1/2} Q^{1/2}\), where \(Q^{1/2}\) is a non-negative and self-adjoint Hilbert-Schmidt operator. 

We denote by \(\mathbb{C}^\mathbb{H}_0\) the set of continuous functions \(\mathbb{R}^+ \to \mathbb{H}\) which start at \(0\).
The coordinate process is denoted by \(\widehat{X}\).
We equip \(\mathbb{C}^\mathbb{H}_0\) with the \(\sigma\)-field \(\mathcal{C}^\mathbb{H}_0 := \sigma(\widehat{X}_t, t \geq 0)\) and define the filtrations \(\mathfrak{C}^{\mathbb{H}, 0}_0 := (\mathcal{C}^{\mathbb{H}, 0}_{0, t})_{t \geq 0} := (\sigma(\widehat{X}_s, s \leq t))_{t \geq 0}\) and \(\mathfrak{C}^\mathbb{H}_0 := (\mathcal{C}^{\mathbb{H}}_{0, t})_{t \geq 0} := (\bigcap_{s > t} \mathcal{C}^{\mathbb{H}, 0}_{0, s})_{t \geq 0}\).

Let \(\p\) be a solution to the SMP on \((\mathbb{C}^\mathbb{H}_0, \mathcal{C}^\mathbb{H}_0, \mathfrak{C}^\mathbb{H}_0)\) associated with \((\mathbb{H}; \of 0, \infty\of; \epsilon_0; \widehat{X}; 0, IQ, 0)\). 
In other words, we assume that the coordinate process is an \((\mathbb{H}, Q, \mathfrak{C}^\mathbb{H}_0, \p)\)-Brownian motion.
Moreover, let \(\varphi : \mathbb{R}^+ \times \mathbb{C}^\mathbb{H}_0 \to \mathbb{H}\) be \(\mathfrak{C}^\mathbb{H}_0\)-predictable such that \(\varphi(\cdot, 0)\) is constant.
We use the notation \(\varphi(\cdot, \omega) =: \varphi_\cdot(\omega)\).
Next we formulate the main result of this section, which states that under a local Lipschitz and a linear growth condition 
the stochastic exponential
\begin{align*}
Z: = \mathcal{E}\left(\varphi(\widehat{X}) \cdot \widehat{X}\right)
\end{align*}
is a true martingale.
For details concerning stochastic integration w.r.t. Hilbert-space-valued processes we refer to the monographs of \cite{metivier,MP80}.
To clarify the notation of \(\varphi(\widehat{X})\cdot\widehat{X}\), we note that we identify \(\varphi\) with \(v \rightsquigarrow \varphi \bullet v\).
\begin{theorem}\label{existence uniqueness concrete}
Assume the following:
\begin{enumerate}
\item[\textup{(i)}]
For all \(\alpha \in (0, \infty)\) there exists a positive \cadlag increasing function \(L^\alpha\) such that
\begin{align*}
\qquad\quad\|Q\varphi(t, \omega) - Q\varphi(t, \omega^*)\|_{\mathbb{H}} \leq L^{\alpha}_t \sup_{s < t} \|\omega(s) - \omega^*(s)\|_\mathbb{H}
\end{align*}
for all \(t \geq 0\), and all \(\omega, \omega^* \in \{\bar{\omega} \in \mathbb{C}^{\mathbb{H}}_0 : \sup_{s < t} \|\bar{\omega}(s)\|_\mathbb{H} \leq \alpha\}\).
\item[\textup{(ii)}] There exists a constant \(\lambda > 0\) such that 
\begin{align*}
\|Q^{1/2} \varphi(t, \omega)\|^2_\mathbb{H} \leq \lambda\bigg(1 + \sup_{s < t} \|\omega(s)\|^2_\mathbb{H}\bigg),
\end{align*}
for all \((t, \omega) \in \mathbb{R}^+ \times \mathbb{C}^{\mathbb{H}}_0\).
\end{enumerate}
Then the process
\(
Z = \mathcal{E}(\varphi(\widehat{X}) \cdot \widehat{X}) 
\)
is an \((\mathfrak{C}^\mathbb{H}_0, \p)\)-martingale.
\end{theorem}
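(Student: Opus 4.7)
The plan is to invoke Proposition \ref{gen coro extended stoch exp}(i) in the continuous, infinite-dimensional setting, with $A = A' = \of 0, \infty\of$ and $\sigma_n = n$. Hypothesis (i) (specialised at $\omega^* = 0$, using that $\varphi(\cdot,0)$ is constant) ensures that $s \mapsto Q\varphi_s(\omega)$ is bounded on every finite interval for each $\omega \in \mathbb{C}^{\mathbb{H}}_0$, and together with (ii) it makes $N := \int_0^\cdot \varphi_s(\widehat{X}) \bullet d\widehat{X}_s$ a continuous real-valued local $(\mathfrak{C}^\mathbb{H}_0, \p)$-martingale on $\of 0, \infty\of$, so $Z = \mathcal{E}(N)$ is continuous and non-negative and no extension procedure is needed. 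Since $Z$ is continuous, parts (II), (III) of Conditions \ref{gen bound cond} hold trivially ($\sigma = \infty$ is predictable, $c_n = 0$), and a direct computation gives
\begin{align*}
\frac{\mathbf{1}_{\{Z_- > 0\}}}{Z_-^2}\cdot C(Z)^p_t \;=\; \langle N, N\rangle_t \;=\; \int_0^t \|Q^{1/2}\varphi_s(\widehat{X})\|_{\mathbb{H}}^2\, ds \;=:\; R_t.
\end{align*}

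The main step is to verify part (I) of Conditions \ref{cond main uni} for the modified SMP associated with $(\mathbb{H}; \of 0, \infty\of; \epsilon_0; \widehat{X}; Q\varphi(\widehat{X})\cdot I, IQ, 0)$. Such a solution $\q$ is precisely a weak solution of the Hilbert-space-valued path-dependent SDE $dY_t = Q\varphi_t(Y)\, dt + dW_t$, $Y_0 = 0$, with $W$ an $(\mathbb{H}, Q, \mathfrak{C}^\mathbb{H}_0, \q)$-Brownian motion. The estimate $\|Q\varphi\|_{\mathbb{H}} \leq \|Q^{1/2}\|_{\mathrm{op}}\|Q^{1/2}\varphi\|_{\mathbb{H}}$ combined with (i) and (ii) makes the coefficient $Q\varphi$ locally Lipschitz on bounded cylinders of $\mathbb{C}^{\mathbb{H}}_0$ and of linear growth, so a standard Picard iteration localised at the hitting times $\tau^\omega_n := \inf\{t\geq 0 : \sup_{s < t}\|\omega(s)\|_{\mathbb{H}} \geq n\}$, together with a Gronwall non-explosion argument driven by (ii), produces a unique pathwise global strong solution. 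This yields weak existence of $\q$, and pathwise uniqueness transfers to the $\Lambda$-uniqueness required by Conditions \ref{cond main uni}(I) via the Yamada--Watanabe-type results of Appendix \ref{SMPs and SDEs}. This step is the technical core of the proof.

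Finally, I verify that $\q(R_t < \infty) = 1$ for every $t \geq 0$. Under $\q$ the coordinate process satisfies $\widehat{X}_t = \int_0^t Q\varphi_s(\widehat{X})\,ds + W_t$. Applying Doob's maximal inequality and the Burkholder--Davis--Gundy inequality for the Hilbert-space-valued martingale $W$, Jensen's inequality to the drift part, and the linear growth bound (ii) to both pieces, one obtains an inequality of the form
\begin{align*}
\mathbf{E}_\q\Big[\sup_{s\leq t} \|\widehat{X}_s\|_{\mathbb{H}}^2\Big] \;\leq\; c_t\bigg(1 + \int_0^t \mathbf{E}_\q\Big[\sup_{u\leq s} \|\widehat{X}_u\|_{\mathbb{H}}^2\Big] \, ds\bigg),
\end{align*}
with $c_t$ depending only on $t$, $\lambda$ and $\|Q^{1/2}\|_{\mathrm{op}}$. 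Gronwall's lemma then gives $\mathbf{E}_\q[\sup_{s\leq t}\|\widehat{X}_s\|_{\mathbb{H}}^2] < \infty$ for every $t \geq 0$, and a second application of (ii) yields $R_t < \infty$ $\q$-almost surely. Proposition \ref{gen coro extended stoch exp}(i) now concludes that $Z$ is an $(\mathfrak{C}^\mathbb{H}_0, \p)$-martingale. The routine estimates of this last paragraph are the easy part; the real obstacle is the passage from (i), (ii) to well-posedness of the infinite-dimensional SDE and the precise form of $\Lambda$-uniqueness demanded by the framework.
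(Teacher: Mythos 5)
Your proposal is correct, and its technical core coincides with the paper's: the modified SMP is identified with the path-dependent SDE \(\dd Y_t = Q\varphi_t(Y)\dd t + \dd W_t\), and its well-posedness is obtained from the local Lipschitz and linear growth hypotheses exactly as in Lemma \ref{key1} via Proposition \ref{theorem existence uniqueness SMP} (existence plus \(\mathcal{T}^*\)-uniqueness through pathwise uniqueness and Yamada--Watanabe). Where you diverge is the final reduction: the paper takes the exit times \(\rho_n = \inf(t : \|\widehat{X}_t\|\geq n)\wedge n\), checks Convention \ref{conv} via a square-integrability argument, and concludes by Corollary \ref{explosion cond coro}(ii), where the required condition \eqref{explosion cond} holds automatically because the global solution \(\q\) lives on \(\mathbb{C}^\mathbb{H}_0\); you instead use the bracket-based times \eqref{gamma} and Proposition \ref{gen coro extended stoch exp}(i), verifying \(\q(\langle N,N\rangle_t<\infty)=1\) by a BDG/Gronwall moment estimate. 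That estimate is sound (after the usual localization to make Gronwall legitimate) but unnecessary: by continuity of the coordinate process and hypothesis (ii), \(\int_0^t\|Q^{1/2}\varphi_s(\widehat{X})\|^2\ds \leq \lambda t(1+\sup_{s<t}\|\widehat{X}_s\|^2)\) is finite \(\q\)-a.s. pathwise, which is exactly how the paper argues in the finite-dimensional example (Corollary \ref{coro markov}). Two details you gloss and should make explicit to match the framework: the computation \(\1_{\{Z_->0\}}/Z_-\cdot\lle Z, M(h)\rre^\p = Q\varphi(\widehat{X})\cdot I\), which the paper proves via polarization and the quadratic-variation formula for Brownian integrals in Lemma \ref{key1}; and the verification that your \(n\wedge\rho_m\) are positive \(\mathfrak{C}^{\mathbb{H},0}_0\)-stopping times (they are, since \(t\mapsto\int_0^t\|Q^{1/2}\varphi_s(\widehat{X})\|^2\ds\) is continuous, increasing and adapted to the raw filtration), so that they belong to \(\mathcal{T}^*\) and the \(\mathcal{T}^*\)-uniqueness of the appendix really delivers the \(\{\tau_n\wedge\rho_m\}\)-uniqueness demanded by Conditions \ref{cond main uni}(I); the paper performs the analogous check for its exit times using Lemma 74.2 in \cite{RW}. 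With those two points filled in, your route is a valid alternative proof; it trades the paper's purely topological non-explosion argument for a (redundant) moment bound.
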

\subsection{Proof of Theorem \ref{existence uniqueness concrete}}\label{Proof of Theorem}
We assume the notation of Section \ref{MSE}. Then \(A' = A = \of 0, \infty\of\) and we may set \(\sigma_n = \tau_n = n\), and define
\begin{align}\label{rho neu}
\rho_n := \inf\big(t \geq 0 : \|\widehat{X}_t\| \geq n\big) \wedge n.
\end{align}
Let us first show that \(Z\) is a local \((\mathfrak{C}^{\mathbb{H}}_0, \p)\)-martingale with \((\mathfrak{C}^\mathbb{H}_0, \p)\)-localizing sequence \((\rho_n)_{n \in \mathbb{N}}\).
Due to Assumption (ii) in Theorem \ref{existence uniqueness concrete} we have identically
\begin{align*}
\left(\varphi(\widehat{X}) \bullet Q\varphi(\widehat{X})\right) \cdot I_{\rho_n} \leq \lambda n \left(1 + \sup_{s < \rho_n} \|\widehat{X}_s\|^2_\mathbb{H}\right) \leq \lambda n(1 + n^2).
\end{align*}
Hence Theorem 2.3 in \cite{gawarecki2010stochastic} yields that \(\varphi(\widehat{X}) \cdot \widehat{X}_{\cdot \wedge \rho_n}\) is a square-integrable \((\mathfrak{C}^\mathbb{H}_0, \p)\)-martingale, implying that \(Z\) is a local \((\mathfrak{C}^\mathbb{H}_0, \p)\)-martingale.
This furthermore implies that \((\rho_n)_{n \in \mathbb{N}}\) satisfies Convention \ref{conv}.
Now the claim of Theorem \ref{existence uniqueness concrete} is an immediate consequence of Corollary \ref{explosion cond coro} (ii) and
Lemma \ref{key1} below.
\begin{lemma}\label{key1}
Under the assumptions of Theorem \ref{existence uniqueness concrete}, 
the SMP on \((\mathbb{C}^\mathbb{H}_0, \mathcal{C}^\mathbb{H}_0, \mathfrak{C}^\mathbb{H}_0)\) associated with \((\mathbb{H}; A; \varepsilon_0; \widehat{X}; \dot{B}, IQ, 0)\), where
\begin{align}\label{BC}
\dot{B} = \frac{\1_{\{Z_- > 0\}}}{Z_-} \cdot \lle Z, M(h)\rre^\p, 
\end{align} 
has a solution and satisfies \(\{\rho_n \wedge \tau_m, n, m \in \mathbb{N}\}\)-uniqueness.
\end{lemma}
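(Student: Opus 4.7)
The strategy is to rewrite $\dot B$ in a clean closed form, recognise the modified SMP as the canonical-space reformulation of a standard Hilbert-space SDE with locally Lipschitz drift and constant diffusion, and then invoke classical existence and pathwise uniqueness for such SDEs together with the translation between pathwise uniqueness of SDEs and uniqueness of SMPs provided in Appendix \ref{SMPs and SDEs}.

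\textbf{Identification of $\dot B$.} Under $\p$ the coordinate process $\widehat X$ is an $(\mathbb{H},Q,\mathfrak{C}^\mathbb{H}_0,\p)$-Brownian motion, hence continuous and a square-integrable martingale, so $M(h)=\widehat X$ and the random-measure part of $\dot B$ is absent because $\nu=0$. Since $\dd Z_t=Z_{t-}\,\varphi(t,\widehat X)\bullet\dd\widehat X_t$ and $\lle\widehat X,\widehat X\rre^\p=IQ$, a coordinate-wise computation yields $\lle Z,\widehat X\rre^\p=Z_-\,Q\varphi(\widehat X)\cdot I$. As $Z$ is strictly positive, this gives $\dot B=Q\varphi(\widehat X)\cdot I$, so the modified SMP is the SMP on $(\mathbb{C}^\mathbb{H}_0,\mathcal{C}^\mathbb{H}_0,\mathfrak{C}^\mathbb{H}_0)$ of the SDE
\begin{align*}
\dd Y_t=Q\varphi(t,Y)\,\dd t+\dd W_t,\qquad Y_0=0,
\end{align*}
driven by an $(\mathbb{H},Q)$-Brownian motion $W$.

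\textbf{Existence.} Assumption (i) of Theorem \ref{existence uniqueness concrete} is exactly a path-dependent local Lipschitz condition on the drift $Q\varphi$, while Assumption (ii) combined with the boundedness of $Q^{1/2}$ (a Hilbert--Schmidt operator) yields the linear growth estimate $\|Q\varphi(t,\omega)\|_{\mathbb{H}}\le C(1+\sup_{s<t}\|\omega(s)\|_{\mathbb{H}})$. These are precisely the hypotheses of the classical existence theory for Hilbert-space-valued SDEs with path-dependent coefficients, along the lines of \cite{deprato,gawarecki2010stochastic}, and they produce, on a suitable filtered probability space carrying an $(\mathbb{H},Q)$-Brownian motion $W$, a unique strong, non-explosive solution $Y$. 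The image law of $Y$ on $(\mathbb{C}^\mathbb{H}_0,\mathcal{C}^\mathbb{H}_0)$ is then a global solution $\q$ to the modified SMP on $\of 0,\infty\of$, and \emph{a fortiori} a solution on $\bar A$.

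\textbf{Uniqueness.} Pathwise uniqueness of the SDE stopped at $\rho_n\wedge m$ follows from the same local Lipschitz and linear growth conditions by a standard Gronwall argument on the stopped $\mathbb{H}$-norm, using the local Lipschitz bound on the ball $\{\|\cdot\|_\mathbb{H}\le n\}$ that contains the stopped paths. By the abstract translation between pathwise uniqueness of SDEs and uniqueness of SMPs developed in Appendix \ref{SMPs and SDEs}, this pathwise uniqueness of the stopped SDE is equivalent to $\{\rho_n\wedge\tau_m:n,m\in\mathbb{N}\}$-uniqueness of the modified SMP, recalling that $\tau_m=m$ here and that $\rho_n$ is the exit time of an open ball by a continuous process, hence predictable. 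The main obstacle is precisely this translation step: one has to verify that pathwise uniqueness at the SDE level captures uniqueness of SMP solutions on $\mathcal{F}_{(\rho_n\wedge m)-}$ rather than on $\mathcal{F}_{\rho_n\wedge m}$. The predictability of $\rho_n\wedge m$ and the continuous-path setting are what make this work and are handled abstractly in Appendix \ref{SMPs and SDEs}.
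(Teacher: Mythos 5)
Your proposal is correct and follows essentially the same route as the paper: identify \(\dot{B} = Q\varphi(\widehat{X})\cdot I\) by computing \(\lle Z, \widehat{X}\rre^\p = Z_- Q\varphi(\widehat{X})\cdot I\), check the local Lipschitz and linear growth hypotheses for the drift \(Q\varphi\), and then invoke the SDE existence/pathwise-uniqueness machinery of Appendix \ref{SMPs and SDEs} (the paper applies Proposition \ref{theorem existence uniqueness SMP} directly with \(F = Q\varphi\), \(G = \operatorname{Id}\)) together with the observation that \(\rho_n \wedge \tau_m = \rho_n \wedge m\) are admissible stopping times. The only minor discrepancy is your justification for the last step: the paper does not use predictability of \(\rho_n \wedge m\), but rather that these are \emph{positive} \(\mathfrak{C}^{\mathbb{H},0}_0\)-stopping times (continuity of \(\widehat{X}\) plus Lemma 74.2 in \cite{RW}), hence elements of \(\mathcal{T}^*\), which is exactly what Corollary \ref{coro uniqueness SDE SMP} requires to pass from uniqueness in law of the SDE to uniqueness of the SMP on \(\mathcal{F}_{(\rho_n\wedge m)-}\).
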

\begin{proof}
First we compute \(\lle Z, M(h)\rre^\p = \lle Z, \widehat{X}\rre^\p\). In view of Equation 4.1.4 in \cite{MP80}, \(\lle Z, \widehat{X}\rre^\p\) is the unique 
\(\mathfrak{C}^\mathbb{H}_0\)-predictable process of finite variation such that \(Z \widehat{X} - \lle Z, \widehat{X}\rre^\p\) is a local \((\mathfrak{C}^\mathbb{H}_0, \p)\)-martingale.
Since \(\mathbb{H}\) is assumed to be separable, weak and strong measurability are equivalent. Therefore \(Z \widehat{X} - \lle Z, \widehat{X}\rre^\p\) is an \(\mathbb{H}\)-valued local \((\mathfrak{C}_0^\mathbb{H}, \p)\)-martingale if and only if for all \(v \in \mathbb{H}\) the process \(Z \widehat{X} \bullet v - \lle Z, \widehat{X}\rre^\p \bullet v\) is an \(\mathbb{R}\)-valued local \((\mathfrak{C}^\mathbb{H}_0, \p)\)-martingale, c.f. \cite{gawarecki2010stochastic}, p. 21.
Define \(L^{\pm v} : \mathbb{H} \to \mathbb{R}\) by \(L^{\pm v} w = (\varphi(\widehat{X}) \pm v) \bullet w\), and note that the adjoint of \(L^{\pm v}\) is given by \(w \rightsquigarrow (\varphi(\widehat{X}) \pm v) w\).
Employing the polarization identity, the formula for the quadratic variation of Brownian integrals, c.f. Theorem 4.27 in \cite{deprato}, and Proposition A.2.2 in \cite{liu2015stochastic},
we obtain 
\begin{align*}
\langle Z, \widehat{X} \bullet v\rangle^\p 
&= Z_-/4 \cdot \big(\big\langle L^{+v} \cdot \widehat{X}, L^{+v}\cdot \widehat{X}\big\rangle^\p + \big\langle L^{-v} \cdot \widehat{X}, L^{-v} \cdot \widehat{X}\big\rangle^\p\big)
\\&=Z_-/4 \cdot \big( L^{+v} Q (\varphi(\widehat{X}) + v) \cdot I + L^{-v} Q (\varphi(\widehat{X}) - v)\cdot I\big)
\\&= \big(Z_- Q \varphi(\widehat{X}) \cdot I \big) \bullet v.
\end{align*}
Therefore we conclude \(\lle Z, M(h)\rre^\p = Z_- Q \varphi(\widehat{X}) \cdot I\), which implies that 
\begin{align*}
\frac{\1_{\{Z_- > 0\}}}{Z_-} \cdot \lle Z, M(h)\rre^\p = 
Q \varphi(\widehat{X}) \cdot I.
\end{align*}
Now we note that 
\begin{align*}
\|Q\varphi(t, \omega)\|_\mathbb{H}^2 \leq \|Q^{1/2}\|_{\textup{HS}}^2 \|Q^{1/2}\varphi(t, \omega)\|_\mathbb{H}^2 \leq \lambda \|Q^{1/2}\|^2_{\textup{HS}} \left(1 + \sup_{s < t}\|\omega(s)\|^2_\mathbb{H}\right),
\end{align*}
for all \((t, \omega)\in \mathbb{R}^+ \times \mathbb{C}^\mathbb{H}_0\), where \(\|\cdot\|_\textup{HS}\) denotes the Hilbert-Schmidt norm.
Hence we may apply Proposition \ref{theorem existence uniqueness SMP} in Appendix \ref{SMPs and SDEs} with \(F(t, \omega) = Q\varphi(t, \omega)\) and \(G(t, \omega) = \operatorname{Id}\), which yields 
that the SMP on \((\mathbb{C}^\mathbb{H}_0, \mathcal{C}^\mathbb{H}_0, \mathfrak{C}^\mathbb{H}_0)\) associated with \((\mathbb{H}; \of 0, \infty\of; \eta; \widehat{X}; \dot{B}, C, 0)\) 
has a solution and satisfies \(\mathcal{T}^*\)-uniqueness.
The continuity of \(\widehat{X}\) and 
Lemma 74.2 in \cite{RW} imply that for all \(n, m \in \mathbb{N}\), \(\rho_n \wedge \tau_m = \rho_n \wedge m\) are positive \(\mathfrak{C}^{\mathbb{H}, 0}_0\)-stopping times, i.e. \(\rho_n \wedge \tau_m \in \mathcal{T}^*\).
This implies the claim.
\end{proof}
\appendix
\section{Remarks on the \emph{Usual Conditions}}\label{A UH}
Girsanov's theorem is at the core of our proofs. As pointed out by \cite{Bichteler02} on the infinite time horizon some formulations of Girsanov's theorem may be false if the underlying filtered space is complete. 
Our framework therefore requires incompleteness of filtrations.
Since the usual conditions, and therewith completeness  of filtrations, are frequently imposed in the literature we summarize some results which allow us to lift results from completed filtrations to uncompleted filtrations.

Let \((\Omega, \mathcal{F}, \F, \p)\) be a filtered probability space with right-continuous filtration \(\F\), and let \(\mathbb{H}\) be a real separable Hilbert space. This notation stays in force throughout all appendices.
We denote \(\mathcal{F}^{\hspace{0.03cm}\p}\) the \(\p\)-completion of the \(\sigma\)-field \(\mathcal{F}\) and \(\F^{\hspace{0.02cm}\p}\) the \(\p\)-completion of the filtration \(\F\), i.e. \(\F^{\hspace{0.02cm}\p} = (\mathcal{F}^{\hspace{0.03cm}\p}_t)_{t \geq 0}\).
\begin{lemma}[Lemma I.1.19 in \cite{JS}]\label{ST comp}
Any \(\F^\p\)-stopping time is \(\p\)-a.s. equal to an \(\F\)-stopping time.
\end{lemma}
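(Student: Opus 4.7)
The plan is to build an $\F$-stopping time $\sigma$ that coincides with the given $\F^{\p}$-stopping time $\tau$ outside a $\p$-null set, by extracting $\F$-measurable inner approximations of the level sets $\{\tau \leq q\}$ for rational $q$ and recombining them. First, for each $q \in \mathbb{Q}^+$, since $\{\tau \leq q\} \in \mathcal{F}^{\hspace{0.03cm}\p}_q$, the definition of the $\p$-completion of $\mathcal{F}_q$ lets me pick $B_q \in \mathcal{F}_q$ with $B_q \subseteq \{\tau \leq q\}$ and $\p(\{\tau \leq q\} \setminus B_q) = 0$. To make the family monotone in $q$, I would replace $B_q$ by $\widetilde B_q := \bigcup_{r \in \mathbb{Q}^+ \cap [0,q]} B_r \in \mathcal{F}_q$, which still satisfies $\widetilde B_q \subseteq \{\tau \leq q\}$ and $\p(\{\tau \leq q\} \setminus \widetilde B_q) = 0$.

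Next, I define $\sigma(\omega) := \inf\{q \in \mathbb{Q}^+ : \omega \in \widetilde B_q\}$ with $\inf \emptyset = \infty$. Thanks to monotonicity of $(\widetilde B_q)$, for each $t > 0$ one has $\{\sigma < t\} = \bigcup_{q \in \mathbb{Q}^+,\, q < t} \widetilde B_q \in \mathcal{F}_t$, and since $\F$ is right-continuous by standing assumption, this is enough to conclude that $\sigma$ is an $\F$-stopping time.

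Finally, I would show that $\sigma = \tau$ on the $\p$-almost sure event $\Omega_0 := \bigcap_{q \in \mathbb{Q}^+}\bigl(\{\tau > q\} \cup \widetilde B_q\bigr)$, which is a countable intersection of sets of full measure. For $\omega \in \Omega_0$: if $\tau(\omega) < \infty$, then for each rational $q > \tau(\omega)$ one has $\omega \in \widetilde B_q$, so $\sigma(\omega) \leq q$, and letting $q \downarrow \tau(\omega)$ gives $\sigma(\omega) \leq \tau(\omega)$; conversely, if $\sigma(\omega) < \infty$, then for each rational $q > \sigma(\omega)$ the definition of $\sigma$ together with monotonicity yields $\omega \in \widetilde B_q \subseteq \{\tau \leq q\}$, and $q \downarrow \sigma(\omega)$ gives $\tau(\omega) \leq \sigma(\omega)$. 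The two implications also force $\tau(\omega) = \infty$ iff $\sigma(\omega) = \infty$, finishing the proof.

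There is no real obstacle here; the only point demanding care is the monotonization step, which guarantees that a single countable union of null sets suffices to carry the identification $\sigma = \tau$ uniformly in $q \in \mathbb{Q}^+$, and that $\{\sigma < t\}$ belongs to $\mathcal{F}_t$ rather than only to $\mathcal{F}^{\hspace{0.03cm}\p}_t$.
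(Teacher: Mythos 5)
Your overall strategy --- approximating the level sets \(\{\tau \le q\}\), \(q\) rational, by \(\mathcal{F}_q\)-measurable sets, monotonizing, and taking the d\'ebut --- is the standard proof of this fact (the paper itself gives no proof and simply cites Lemma I.1.19 of Jacod--Shiryaev), and everything from the monotonization onward is sound. However, your very first step is not justified as written. The completion is \(\mathcal{F}^{\p}_q = \mathcal{F}_q \vee \mathcal{N}\), where \(\mathcal{N}\) denotes the \(\p\)-null sets of \((\Omega, \mathcal{F}^{\p}, \p)\); for \(A \in \mathcal{F}^{\p}_q\) this only yields some \(B_q \in \mathcal{F}_q\) with \(\p\)-null \emph{symmetric difference} \(B_q \triangle A\). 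An \emph{inner} approximation \(B_q \subseteq \{\tau \le q\}\), \(B_q \in \mathcal{F}_q\), \(\p(\{\tau \le q\} \setminus B_q) = 0\) need not exist, because the null set witnessing the approximation lies in \(\mathcal{F}\), not in \(\mathcal{F}_q\). Concretely: let \(\Omega = \{1,2\}\), \(\mathcal{F} = 2^{\Omega}\), \(\p(\{1\}) = 1\), \(\mathcal{F}_t = \{\emptyset, \Omega\}\) for \(t < 1\) and \(\mathcal{F}_t = 2^{\Omega}\) for \(t \ge 1\) (a right-continuous filtration), and \(\tau(1) = 0\), \(\tau(2) = 1\). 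Then \(\tau\) is an \(\F^{\p}\)-stopping time and \(\{\tau \le 1/2\} = \{1\} \in \mathcal{F}^{\p}_{1/2}\), but the only subset of \(\{1\}\) lying in \(\mathcal{F}_{1/2}\) is \(\emptyset\), which misses a set of measure one. So the containment \(\widetilde B_q \subseteq \{\tau \le q\}\), which you use \emph{surely} in the direction \(\tau \le \sigma\), is exactly what is unavailable.

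The repair is easy and leaves your architecture intact: choose \(B_q \in \mathcal{F}_q\) with \(\p(B_q \triangle \{\tau \le q\}) = 0\), monotonize as you do (the symmetric differences remain null after taking the unions), and set \(N := \bigcup_{q \in \mathbb{Q}^+} \big(\widetilde B_q \triangle \{\tau \le q\}\big)\), which is \(\p\)-null. On \(\Omega \setminus N\) one has \(\omega \in \widetilde B_q\) if and only if \(\tau(\omega) \le q\) for every rational \(q\), whence \(\sigma(\omega) = \inf\{q : \tau(\omega) \le q\} = \tau(\omega)\), including the value \(+\infty\). In other words, both inclusions --- not only \(\{\tau \le q\} \subseteq \widetilde B_q\) --- must be absorbed into the exceptional null event. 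The measurability part of your argument (\(\{\sigma < t\} = \bigcup_{q < t} \widetilde B_q \in \mathcal{F}_t\), then right-continuity of \(\F\) to pass from \(\{\sigma < t\}\) to \(\{\sigma \le t\}\)) is correct as written.
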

\begin{lemma}[Lemma I.2.17 in \cite{JS}, Lemma A.2 in \cite{perkowski2015}]\label{P comp}
Any \(\F^\p\)-predictable, resp. \(\F^\p\)-optional, process is \(\p\)-indistinguishable from an \(\F\)-predictable, resp. \(\F\)-optional, process.
\end{lemma}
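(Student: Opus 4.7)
The plan is to treat both assertions by a monotone-class (or functional monotone-class) argument after handling explicit generators of the two \(\sigma\)-fields, exploiting the fact that augmenting a \(\sigma\)-field by \(\p\)-null sets only changes events and processes up to a \(\p\)-null modification.

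First I would dispose of the predictable case. Recall that the \(\F^{\p}\)-predictable \(\sigma\)-field \(\mathcal{P}(\F^{\p})\) is generated by the elementary sets \(\{0\}\times F\) with \(F\in\mathcal{F}^{\p}_0\) together with \((s,t]\times F\) for \(s<t\) and \(F\in\mathcal{F}^{\p}_s\). By definition of the \(\p\)-completion, any such \(F\) satisfies \(F=F'\cup N\setminus N''\) for some \(F'\in\mathcal{F}_s\) and \(\p\)-null sets \(N,N''\); hence \(\mathbf{1}_{(s,t]\times F}\) and \(\mathbf{1}_{(s,t]\times F'}\) agree outside an evanescent set under \(\p\), and the latter is \(\F\)-predictable. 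Let \(\mathcal{H}\) be the class of bounded real \(\F^{\p}\)-predictable processes \(Y\) that are \(\p\)-indistinguishable from a bounded \(\F\)-predictable process. Then \(\mathcal{H}\) is a vector space, closed under bounded monotone limits (take a \(\p\)-a.s. limit of \(\F\)-predictable representatives, which remains \(\F\)-predictable), and contains the indicators of a \(\pi\)-system generating \(\mathcal{P}(\F^{\p})\). The functional monotone class theorem yields \(\mathcal{H}\) equals the bounded \(\mathcal{P}(\F^{\p})\)-measurable processes. A truncation/limit argument then removes boundedness.

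For the optional case I would run the same argument using stochastic intervals as generators. The \(\F^{\p}\)-optional \(\sigma\)-field \(\mathcal{O}(\F^{\p})\) is generated by the stochastic intervals \(\llbracket 0,\tau\llbracket\) for \(\F^{\p}\)-stopping times \(\tau\) (or equivalently by càdlàg \(\F^{\p}\)-adapted processes). By Lemma \ref{ST comp}, every such \(\tau\) admits an \(\F\)-stopping time \(\tau'\) with \(\p\)-a.s.\ \(\tau=\tau'\); then \(\llbracket 0,\tau\llbracket\) and \(\llbracket 0,\tau'\llbracket\) coincide outside an evanescent set under \(\p\), and the latter is \(\F\)-optional. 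The same monotone-class scheme as above finishes the proof.

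The only delicate point is arranging the monotone-class argument so that the \(\F\)-predictable (resp. \(\F\)-optional) representatives can be patched coherently when passing to monotone limits, but this is automatic: a pointwise monotone limit of \(\F\)-predictable (resp. \(\F\)-optional) processes is itself \(\F\)-predictable (resp. \(\F\)-optional), and \(\p\)-indistinguishability is preserved under such limits since the exceptional evanescent set is a countable union of evanescent sets. Beyond this there are no real obstacles; everything else is routine book-keeping.
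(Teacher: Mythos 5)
The paper does not prove this lemma at all---it is quoted from Lemma I.2.17 in \cite{JS} and Lemma A.2 in \cite{perkowski2015}---and your argument is correct and is essentially the standard proof behind those citations: reduce to the generating $\pi$-systems ($\{0\}\times F$ and $(s,t]\times F$ for the predictable case, stochastic intervals $\of 0,\tau\of$ together with Lemma \ref{ST comp} for the optional case), replace the completed objects by genuine $\F$-measurable ones up to evanescence, and close up by the functional monotone class theorem, using that evanescent sets are stable under countable unions and that monotone limits preserve $\F$-predictability/optionality. The only point worth adding is that the paper applies the lemma to $\mathbb{H}$-valued processes (e.g.\ in Lemma \ref{LM comp}), whereas your monotone-class argument as written treats real-valued ones; this is routine to repair, since an $E$-valued measurable process with $E$ Polish is a pointwise limit of countably-valued $\F^\p$-predictable (resp.\ optional) processes, to each of whose level sets the indicator version of your argument applies.
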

\begin{lemma}\label{LM comp}
An \(\mathbb{H}\)-valued \(\F\)-adapted process \(Z\) is a local \((\F, \p)\)-martingale if and only if it is a local \((\F^\p, \p)\)-martingale.
Moreover, each local \((\F^\p, \p)\)-martingale is \(\p\)-indistinguishable from a local \((\F, \p)\)-martingale.
\end{lemma}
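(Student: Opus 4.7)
The plan is to prove the two assertions in sequence, using Lemmas \ref{ST comp} and \ref{P comp} as the primary tools, together with the classical fact that conditional expectations are insensitive to enlargements of the conditioning $\sigma$-field by $\p$-null sets.

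For the forward direction of the first claim, I would argue directly from the inclusion $\F \subseteq \F^\p$. If $Z$ is a local $(\F, \p)$-martingale with $\F$-localizing sequence $(\sigma_n)_{n \in \mathbb{N}}$, then each $\sigma_n$ is automatically an $\F^\p$-stopping time, $Z$ is $\F^\p$-adapted, and for $s \leq t$ the identity $\E[Z^{\sigma_n}_t \mid \mathcal{F}^\p_s] = \E[Z^{\sigma_n}_t \mid \mathcal{F}_s] = Z^{\sigma_n}_s$ holds $\p$-a.s.\ because $\mathcal{F}^\p_s$ differs from $\mathcal{F}_s$ only by $\p$-null sets. Uniform integrability of $Z^{\sigma_n}$ is unaffected by the change of filtration, so $Z$ is a local $(\F^\p, \p)$-martingale.

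For the reverse direction of the first claim, suppose that $Z$ is an $\F$-adapted process which is a local $(\F^\p, \p)$-martingale with $\F^\p$-localizing sequence $(\tau_n)_{n \in \mathbb{N}}$. Lemma \ref{ST comp} supplies $\F$-stopping times $\tau_n'$ with $\tau_n = \tau_n'$ $\p$-a.s., and a countable intersection argument gives $\tau_n' \uparrow \infty$ $\p$-a.s. The processes $Z^{\tau_n'}$ and $Z^{\tau_n}$ are $\p$-indistinguishable, so $Z^{\tau_n'}$ is uniformly integrable and $\F$-adapted; a conditional-expectation computation as above, combined with the fact that $Z^{\tau_n'}_s$ is a $\p$-a.s.\ $\mathcal{F}_s$-measurable version of $Z^{\tau_n}_s$, shows $Z^{\tau_n'}$ is an $(\F, \p)$-martingale. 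Hence $Z$ is a local $(\F, \p)$-martingale.

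For the second claim, let $Z$ be a local $(\F^\p, \p)$-martingale. Then $Z$ has \cadlag paths and is $\F^\p$-adapted, hence $\F^\p$-optional. Lemma \ref{P comp} yields an $\F$-optional — in particular $\F$-adapted — process $Z'$ which is $\p$-indistinguishable from $Z$. Since indistinguishability preserves the localizing sequence and the martingale identity on its stopped processes, $Z'$ is itself a local $(\F^\p, \p)$-martingale, and by the reverse direction of the first claim applied to the $\F$-adapted process $Z'$ we conclude that $Z'$ is a local $(\F, \p)$-martingale. The only real point requiring care is that swapping $\F^\p$-stopping times for $\p$-a.s.\ equal $\F$-stopping times preserves both the martingale identity and the uniform integrability of the stopped process; this follows because conditioning on the $\p$-completion of a $\sigma$-field gives the same $\p$-a.s.\ result as conditioning on the original.
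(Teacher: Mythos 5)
Your proposal is correct and follows essentially the same route as the paper's proof: Lemma \ref{ST comp} reduces the localization to the martingale case, the martingale identity transfers because every set in \(\mathcal{F}^{\hspace{0.03cm}\p}_t\) is \(\p\)-a.s.\ equal to a set in \(\mathcal{F}_t\), and the second claim is obtained from the first together with Lemma \ref{P comp}. The only cosmetic point is that the \(\F\)-stopping times \(\tau_n'\) obtained from Lemma \ref{ST comp} need not be increasing everywhere, only \(\p\)-a.s., which is trivially repaired (e.g.\ by passing to \(\max_{k \leq n} \tau_k'\)).
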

\begin{proof}
Due to Lemma \ref{ST comp}, for the first claim it is sufficient to show that \(Z\) is an \((\F, \p)\)-martingale if and only if it is an \((\F^\p, \p)\)-martingale.
This follows immediately from the fact that for all \(t \geq 0\) any set in \(\mathcal{F}^\p_t\) are \(\p\)-a.s. equal to a set in \(\mathcal{F}_t\).
The second claim follows from the first one and Lemma \ref{P comp}.
\end{proof}
\section{Extensions of non-negative local martingales}\label{Extensions of non-negative local martingales on sets of interval type}
As pointed out at the end of Section \ref{Processes on Stochastic Intervals}, \(X\in \mathcal{C}^A\) does not imply that the extension \(\widetilde{X}\in \mathcal{C}\). In this appendix we show that this claim, however, is true if \(X\) is non-negative and \(\mathcal{C}\) is the class of real-valued local martingales. 
\begin{proposition}\label{lemma tilde Z local martingale}
For \(A \in \mathcal{I}(\F)\) let \(X \in \mathcal{M}_\textup{loc}^{A}(\mathbb{R},\F, \p)\) be non-negative, then 
we have \(\widetilde{X} \in \mathcal{M}_{\textup{loc}}(\mathbb{R},\F, \p)\), where \(\widetilde{X}\) is defined as in \eqref{extension}.
\end{proposition}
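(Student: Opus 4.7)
The strategy is to exploit that each stopped process $X^{\tau_n}$ is already by hypothesis a nonneg local $(\F, \p)$-martingale, and to patch these together using the constancy of $\widetilde{X}$ beyond the explosion time $\tau := \lim_n \tau_n$.

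First I would establish the path-regularity and adaptedness of $\widetilde{X}$. Since $X^{\tau_n}$ is a nonneg local $(\F, \p)$-martingale, Fatou's lemma shows it is an $(\F, \p)$-supermartingale, and Doob's optional stopping gives that $(X_{\tau_n})_n$ is a nonneg discrete-time supermartingale with respect to $(\mathcal{F}_{\tau_n})_n$. Martingale convergence and Doob's regularity theorem then guarantee that $\lim_{s \uparrow \tau,\, s < \tau} X_s$ exists $\p$-a.s.\ on $\{\tau < \infty\}$, so the $\liminf$ in the definition of $\widetilde{X}_\tau$ on $G$ coincides $\p$-a.s.\ with this actual limit. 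Together with the \cadlag behaviour of $X$ on $\of 0, \tau\of$ and the constant extension on $\of \tau, \infty\of$, it follows that $\widetilde{X}$ is $\p$-a.s.\ \cadlag, and the countable $\liminf$-construction ensures $\F$-adaptedness.

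For the localization, I would proceed as follows. For each $n$, choose a localizing sequence $(\sigma_{n,k})_k$ for the local martingale $X^{\tau_n}$ and, via a Borel--Cantelli argument, pick $k_n$ so that $\sigma_n := \sigma_{n, k_n}$ satisfies $\sigma_n \to \infty$ $\p$-a.s. Setting $\rho_n := \tau_n \wedge \sigma_n$, we have $\of 0, \rho_n\gs \subseteq \of 0, \tau_n\gs \subseteq A$, so the Remark following \eqref{extension} yields $\widetilde{X}^{\rho_n} = X^{\rho_n}$, which is a uniformly integrable $(\F, \p)$-martingale by construction.

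The last and most delicate step is upgrading $(\rho_n)$ to a sequence tending $\p$-a.s.\ to infinity. This is automatic on $\{\tau = \infty\}$ but fails in general on $\{\tau < \infty\}$, where $\rho_n \leq \tau$ stays bounded. To resolve this, I would exploit that $\widetilde{X}$ is constant on $\of \tau, \infty\of$ by very definition. Setting $\bar{\rho}_n := \rho_n$ on $\{\rho_n < \tau\}$ and $\bar{\rho}_n := n \vee \rho_n$ on $\{\rho_n = \tau\}$, one checks that $\bar{\rho}_n$ is an $\F$-stopping time (using that $\{\rho_n = \tau\} \in \mathcal{F}_{\rho_n}$ since $\rho_n \leq \tau$) and that $\widetilde{X}^{\bar{\rho}_n} = \widetilde{X}^{\rho_n}$ up to indistinguishability, whence $\widetilde{X}^{\bar{\rho}_n}$ remains a UI martingale and $\bar{\rho}_n \uparrow \infty$ $\p$-a.s. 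The main obstacle is precisely this patching: verifying the stopping-time property of $\bar{\rho}_n$ and that extending past $\tau$ does not destroy the martingale property of the stopped process, since on $\of \tau, \infty\of$ the process $\widetilde{X}$ is only defined through a $\liminf$-construction rather than via any direct martingale structure.
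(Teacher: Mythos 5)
Your construction breaks down exactly on the set where the proposition has content, namely \(G = \bigcap_{n}\{\tau_n < \tau < \infty\}\). On \(G\) one has \(\rho_n \leq \tau_n < \tau\) for every \(n\), so the case \(\{\rho_n = \tau\}\) that triggers your modification \(\bar{\rho}_n := n \vee \rho_n\) never occurs there; consequently \(\bar{\rho}_n = \rho_n \leq \tau_n < \tau < \infty\) for all \(n\), and \(\bar{\rho}_n\) does \emph{not} tend to infinity on \(G\) (which in general has positive probability --- that is the whole point of allowing explosion). Your patching only covers \(G^c \cup \{\tau = \infty\}\), where \(\widetilde{X}^{\rho_n} = X^{\rho_n}\) already does the job and nothing delicate happens. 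Indeed, if \(\rho_n = \tau\) then \(\tau_n = \tau\), so \(\omega \in G^c\) and \(\widetilde{X}\) is constant equal to \(X_\tau\) after \(\tau\); the genuinely problematic event, where \(\widetilde{X}_\tau\) is only defined through the \(\liminf\), is never reached by your stopping times. (A secondary, fixable, point: the \(\sigma_n = \sigma_{n,k_n}\) are not nested, so \((\bar{\rho}_n)\) need not be increasing.)

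The gap cannot be closed by any choice of stopping times living in \(\of 0, \tau\of\): the hypotheses control \(X\) only strictly before \(\tau\) on \(G\), and Fatou alone gives merely that \(\widetilde{X}\) is a non-negative supermartingale, i.e. possibly loses mass at \(\tau\). The substantive claim is precisely that no mass is lost at \(\tau\) on \(G\), and this is what the paper proves: \(\widetilde{X}\) is a non-negative \((\F,\p)\)-supermartingale, its Doob--Meyer compensator \(U\) vanishes on \(\of 0,\tau\of\) and off \(G\) because each \(\widetilde{X}^{\tau_n}\) is a local martingale, hence \(U\) is carried by the graph of the time \(\bar{\tau} = \tau_G\), which is \(\F\)-predictable; since \(\widetilde{X}_\tau\) is defined on \(G\) as the left limit, \(\Delta \widetilde{X}_{\bar{\tau}} = 0\), and the identity \(-\Delta U_{\bar{\tau}} = \E[\Delta \widetilde{X}_{\bar{\tau}} \mid \mathcal{F}_{\bar{\tau}-}]\) for predictable times forces \(U = 0\), so \(\widetilde{X}\) is a local martingale. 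Some argument of this type (or an explicit verification that the martingale identity survives passage across \(\tau\) on \(G\)) is exactly the step you flag as ``one checks'' at the end of your proposal; as written, it is the unproven core of the statement rather than a routine verification.
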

\begin{proof}
Let \((\tau_n)_{n \in \mathbb{N}}\) be an announcing sequence for \(A\). 
Then \(X^{\tau_n}\) is a non-negative \((\F, \p)\)-supermartingale and 
Lemma 5.17 in \cite{J79} yields that \(\widetilde{X}\) is a non-negative \((\F, \p)\)-supermartin\-gale. 
We follow main parts of the proof of \cite{J79}, Lemma 12.43.
Since \(\widetilde{X}\) is a non-negative \((\F, \p)\)-supermartingale, the Doob-Meyer decomposition of \(\widetilde{X}\), c.f. \cite{LS}, Theorem 3.9, is given by
\begin{align*} 
\widetilde{X} = M - U,\quad M \in \mathcal{M}_\textup{loc}(\mathbb{R}, \F, \p),\ U \in \mathcal{P}(\F) \cap \mathcal{A}^+(\F, \p).
\end{align*}
Since \(\widetilde{X}^{\tau_n}\in \mathcal{M}_\textup{loc}(\mathbb{R}, \F, \p)\) it follows from \cite{JS}, Corollary I.3.16, that \(U^{\tau_n} = 0\). 
Hence \(U = 0\) on \(\of 0, \tau\of \cup (\of \tau, \infty\of \cap (G^c \times \mathbb{R}^+))\), where \(G = \bigcap_{n \in \mathbb{N}} \{\tau_n < \tau < \infty\}\).
Moreover, since \(\widetilde{X} = \widetilde{X}^\tau\), we have that \(U = \Delta U_\tau \1_{\of \tau, \infty\of \cap (G \times \mathbb{R}^+)}\).
We obtain from \cite{JS}, Corollary I.2.31 and Proposition I.2.6, that 
\begin{align}\label{pred proj jump}
^p(\Delta \widetilde{X}) =\ ^p(\Delta M) - \ ^p(\Delta U) =-\ ^p(\Delta U) = -\Delta U.
\end{align}
Next we define the \(\F\)-stopping time
\begin{align*}
\bar{\tau} := (\tau)_G :=
 \begin{cases} \tau& \textup{ on } G,\\ + \infty&\textup{ on } G^c.
\end{cases}
\end{align*}
By construction we have on \(G = \{\bar{\tau} < \infty\}\) that \(\bar{\tau} = \tau\) and 
\begin{align}\label{jump vanish}
\Delta \widetilde{X}_{\bar{\tau}} = \widetilde{X}_{\tau} - \widetilde{X}_{\tau-} = 0.
\end{align}
Note that the sequence \(((\tau_n)_{\{\tau_n < \tau\}} \wedge n)_{n \in \mathbb{N}}\) of \(\F\)-stopping times foretells \(\bar{\tau}\) in the sense of \cite{DellacherieMeyer78}, Definition IV.70. Since \(\{\bar{\tau} = 0\} = \emptyset\), Theorem IV.71 in \cite{DellacherieMeyer78} yields that \(\bar{\tau}\) is an \(\F\)-predictable time.
Therefore, thanks to \eqref{pred proj jump}, \eqref{jump vanish} and Theorem I.2.28 in \cite{JS}, we obtain that on \(G\) we have \(\p\)-a.s.
\begin{align}\label{jacod conclusion}
- \Delta U_{\tau} = - \Delta U_{\bar{\tau}} = \E[\Delta \widetilde{X}_{\bar{\tau}} |\mathcal{F}_{\bar{\tau}-}] = 0.
\end{align}
This yields \(U = 0\) 
and hence \(\widetilde{X} = M \in \mathcal{M}_\textup{loc}(\mathbb{R},\mathfrak{F}, \p)\)
, which finishes the proof.
\end{proof}

\section{On Hilbert-space-valued Semimartingales 
}\label{A HVS}
Hilbert-space-valued semimartingales have been studied since the 1980s, c.f. \cite{,metivier,MP80}. 
In this appendix we define Hilbert-space-valued semimartingales, recall the most important results and transfer them to the setting of incomplete filtrations. Moreover, we generalize the Girsanov theorem for semimartingales as given in \cite{JS}, Theorem III.3.24, to the infinite-dimensional setting. We also give the most important structural results for Hilbert-space valued semimartingales on stochastic intervals as defined in Section \ref{Hilbert-Vlaued Semimartingales and Semimartingale Problems on Stochastic Intervals}.
The definitions and most of the proofs are similar to the real-valued case.

Let \(\mathbb{H}\) be a real separable Hilbert space, 
denote its scalar product by \(\bullet\) and its induced norm by \(\|\cdot\|\).
Thanks to the separability of \(\mathbb{H}\), strong measurability is equivalent to weak measurability.\footnote{\(f\) is strongly measurable if the real-valued function \(f \bullet h\) is measurable (in the ordinary sense) for all \(h\in \mathbb{H}\), c.f. \cite{curtain2012introduction}, Lemma A.5.2}
\begin{definition}
We call a \cadlag \(\mathbb{H}\)-valued process \(X\) an \emph{\((\F, \p)\)-semimartingale}, if 
\begin{align*}
X = X_0 + M + V,
\end{align*}
where \(X_0\) is \(\mathcal{F}_0\)-measurable and \(\mathbb{H}\)-valued, \(M\) is an \(\mathbb{H}\)-valued local \((\F, \p)\)-martingale with \(M_0 = 0\), and \(V\) is an \(\F\)-adapted \cadlag \(\mathbb{H}\)-valued process of finite variation with \(V_0 = 0\). 
We call \(X\) a \emph{special \((\F, \p)\)-semimartingale}, if \(V\) is \(\F\)-predictable. 
\end{definition}
In the next section we show that many usual facts on \(\mathbb{R}^d\)-valued semimartingales also hold for \(\mathbb{H}\)-valued semimartingales. 
Although the monographs \cite{,metivier,MP80} usually assume the usual conditions, in view of Lemma \ref{P comp} and Lemma \ref{LM comp} the claims can readily be transferred to an uncompleted filtration
\subsection{Decompositions of Hilbert-space-valued Semimartingales}
Let \(X \in \mathcal{S}(\mathbb{H}, \F, \p)\).
We first collect important basic assertions on the semimartingale decomposition.
\begin{proposition}[Exercise 5.11 in \cite{metivier}]\label{bj special}
If \(\|\Delta X\|\) is bounded, then \(X\) is a special \((\F, \p)\)-semimartingale. 
\end{proposition}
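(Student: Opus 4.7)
The plan is to adapt the classical real-valued argument by reducing, via localization on the bounded jumps, to the case of a \emph{bounded} semimartingale, and then showing that a bounded $\mathbb{H}$-valued semimartingale is automatically special.

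First I would localize. Let $K$ be an (essential) upper bound for $\|\Delta X\|$, and set
\begin{align*}
\tau_n := \inf\{t \geq 0 : \|X_t\| \geq n \text{ or } \|X_{t-}\| \geq n\}, \qquad n \in \mathbb{N}.
\end{align*}
These are $\F$-stopping times (d\'ebut times of closed sets under a right-continuous filtration for the \cadlag, Polish-valued process $X$), and $\tau_n \uparrow \infty$ $\p$-a.s.\ because $\p$-a.e.\ path is locally bounded. For $s < \tau_n$ one has $\|X_s\| < n$, hence $\|X_{\tau_n -}\| \leq n$, and the jump bound yields $\|X_{\tau_n}\| \leq \|X_{\tau_n -}\| + \|\Delta X_{\tau_n}\| \leq n + K$. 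Consequently $\sup_{s \leq \tau_n} \|X_s\| \leq n + K$, i.e.\ $X^{\tau_n}$ is bounded. Since being special is stable under localization, it then suffices to show that every bounded $\mathbb{H}$-valued semimartingale is special.

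Second, starting from any decomposition $Y = Y_0 + M + V$ of a bounded semimartingale $Y$, I would pass to the predictable part via compensation. The claim is that $V$ admits an $\F$-predictable \cadlag compensator $\tilde V$ of finite variation such that $V - \tilde V$ is a local $(\F, \p)$-martingale; once this is available,
\begin{align*}
Y = Y_0 + (M + V - \tilde V) + \tilde V
\end{align*}
exhibits $Y$ as special. To construct $\tilde V$, I would first establish that the $\mathbb{H}$-norm total variation of $V$ is locally integrable. Using $V = Y - Y_0 - M$ and the boundedness of $Y$, this reduces, via testing against a countable dense set $D \subset \mathbb{H}$, to the real-valued fact that the finite-variation part of a bounded real semimartingale has locally integrable variation.

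The main obstacle is assembling the real-valued predictable compensators of $V \bullet h$, $h \in D$, into a genuine $\mathbb{H}$-valued $\F$-predictable process. The plan is: for each $h \in D$ let $A^h$ be the predictable compensator of $V \bullet h$; linearity of the dual predictable projection gives $A^{\alpha h_1 + \beta h_2} = \alpha A^{h_1} + \beta A^{h_2}$, and a uniform estimate of the form $\E[\text{Var}(A^h)_{\tau_n}] \lesssim \|h\|$ (inherited from the locally integrable $\mathbb{H}$-norm variation of $V$) lets $h \mapsto A^h$ extend continuously to all of $\mathbb{H}$. Separability of $\mathbb{H}$ makes weak and strong measurability coincide, so this extension corresponds to a Hilbert-space-valued $\F$-predictable \cadlag process $\tilde V$ of finite variation with $\tilde V \bullet h = A^h$ for every $h \in D$, which is the desired compensator. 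Patching together the resulting special decompositions of $X^{\tau_n}$ as $n \to \infty$ (using uniqueness of the predictable finite-variation part) then yields a special decomposition of $X$ itself.
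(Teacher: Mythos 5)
Your Step 1 (localizing via \(\tau_n = \inf\{t: \|X_t\|\ge n \text{ or } \|X_{t-}\|\ge n\}\) and patching the stopped canonical decompositions using uniqueness, i.e.\ Proposition \ref{pred finie var 0}) is sound; note only that \(X_0\) need not be bounded, so one should localize \(X-X_0\), which is harmless. The genuine gap is in Step 2, at the sentence claiming that local integrability of the \(\mathbb{H}\)-norm total variation of \(V\) ``reduces, via testing against a countable dense set \(D\subset\mathbb{H}\), to the real-valued fact.'' The inequality between coordinate variation and norm variation goes the wrong way: one has \(\operatorname{Var}(V\bullet h)_t \le \|h\|\,\operatorname{Var}_{\mathbb{H}}(V)_t\), so knowing that each real FV part \(V\bullet h\), \(h\in D\), has locally integrable variation gives no upper control on \(\operatorname{Var}_{\mathbb{H}}(V)\). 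Concretely, for a partition one has \(\sum_i\|V_{t_{i+1}}-V_{t_i}\| = \sum_i \sup_{\|h\|\le 1}(V_{t_{i+1}}-V_{t_i})\bullet h\), and the optimizing \(h\) changes with \(i\), so this sum is not dominated by \(\sup_{h\in D,\|h\|\le1}\operatorname{Var}(V\bullet h)\); moreover the localizing sequences you obtain from the real-valued fact depend on \(h\) and cannot be merged without a uniform bound. That uniform bound is exactly what you later invoke (``\(\E[\operatorname{Var}(A^h)_{\tau_n}]\lesssim\|h\|\), inherited from the locally integrable \(\mathbb{H}\)-norm variation of \(V\)''), so as written the argument is circular: the assembly of the compensators \(A^h\) into an \(\mathbb{H}\)-valued \(\tilde V\) presupposes the very integrability statement the coordinate testing was supposed to deliver.

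This is not a technical quibble but the actual infinite-dimensional content of the proposition: producing \emph{some} decomposition whose finite-variation part lies in \(\mathcal{A}_{\textup{loc}}(\mathbb{H})\) (in norm variation) is essentially equivalent to specialness, and it cannot be read off coordinatewise. To close the gap you would need a genuinely Hilbert-valued argument, e.g.\ the analogues of the real-valued facts that predictable finite-variation processes and finite-variation local martingales have locally integrable variation in \(\mathbb{H}\)-norm, or the quasimartingale/Doob--Meyer route of M\'etivier (the machinery the paper itself uses in Appendix \ref{Girsanov's Theorem for Hilbert-Space-valued Semimartingales}, via class \((D)\) quasimartingales and Theorem 13.3 in \cite{metivier}), applied after a localization that controls \(X\) itself rather than the coordinates of \(V\). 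For comparison: the paper offers no proof at all here, it simply cites Exercise 5.11 in \cite{metivier}, so your localization framework is a reasonable start, but the reduction-to-coordinates step would fail and must be replaced.
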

\begin{proposition}\label{pred finie var 0}
All \(\mathbb{H}\)-valued \(\F\)-predictable local \((\F, \p)\)-martingales of finite variation are \(0\) up to \(\p\)-indistinguishability.
\end{proposition}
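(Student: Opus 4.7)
The approach is to reduce to the well-known real-valued case by testing against fixed vectors $h \in \mathbb{H}$ and exploiting the separability of the Hilbert space. Let $M$ be an $\mathbb{H}$-valued $\F$-predictable local $(\F, \p)$-martingale of finite variation, which by the paper's convention starts at $M_0 = 0$.

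First, I would verify that for each fixed $h \in \mathbb{H}$ the real-valued process $N^h := M \bullet h$ inherits the three key properties. Predictability is immediate, since the inner product with a fixed vector is continuous (hence Borel) and composing with the $\F$-predictable process $M$ preserves predictability. The finite variation property follows from the Cauchy--Schwarz estimate $|N^h_t - N^h_s| \leq \|M_t - M_s\|\,\|h\|$, which shows the total variation of $N^h$ is bounded by $\|h\|$ times that of $M$. The local martingale property comes directly from the weak-measurability characterization of $\mathbb{H}$-valued martingales recalled in the appendix: if $(\tau_n)_{n \in \mathbb{N}}$ is a localizing sequence making each $M^{\tau_n}$ a uniformly integrable $\mathbb{H}$-valued $(\F, \p)$-martingale, then by that very definition $(M^{\tau_n}) \bullet h = (N^h)^{\tau_n}$ is a uniformly integrable real-valued $(\F, \p)$-martingale for every $h$.

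Next, I would invoke the classical real-valued result (Corollary I.3.16 in \cite{JS}): a real-valued $\F$-predictable local $(\F, \p)$-martingale of finite variation starting at zero is $\p$-indistinguishable from $0$. Applied to $N^h$, this yields, for each $h \in \mathbb{H}$, a $\p$-null set $A_h$ outside of which $M_t \bullet h = 0$ for every $t \geq 0$.

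The final step uses separability to upgrade from ``for each $h$ a null set'' to ``one null set works for all $h$''. Fix a countable dense subset $(h_k)_{k \in \mathbb{N}}$ of $\mathbb{H}$ and set $A := \bigcup_k A_{h_k}$, which is still $\p$-null. On $A^c$ we have $M_t \bullet h_k = 0$ for all $k \in \mathbb{N}$ and all $t \geq 0$, and by continuity of the inner product in its second argument together with density of $(h_k)$, this extends to $M_t \bullet h = 0$ for every $h \in \mathbb{H}$, forcing $M_t = 0$ identically on $A^c$. Hence $M$ is $\p$-indistinguishable from $0$.

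The only genuinely delicate point is the verification that the local martingale property descends to $N^h$; everything else is routine. This delicate point is handled precisely because in a separable Hilbert space weak and strong measurability coincide, so the scalarized martingale property is part of the definition rather than something that needs a separate argument. Once that is in place, the proof is a textbook reduction to the scalar case via a dense countable family.
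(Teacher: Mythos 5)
Your proof is correct, but it takes a genuinely different route from the paper's. The paper remains in the Hilbert-space setting and quotes two structural results from \cite{metivier}: Corollary 13.8, by which every \(\F\)-predictable local \((\F, \p)\)-martingale is continuous and hence locally square-integrable, and Exercise 5.13, by which a locally square-integrable \((\F, \p)\)-martingale of finite variation vanishes; the claim then follows in two lines. You instead scalarize: for each fixed \(h \in \mathbb{H}\) the process \(M \bullet h\) is a real-valued \(\F\)-predictable local \((\F, \p)\)-martingale of finite variation — only the easy direction of the scalarization equivalence is needed here, the same equivalence the paper itself invokes via \cite{gawarecki2010stochastic} in the proof of Lemma \ref{key1} — so it is indistinguishable from \(0\) by Corollary I.3.16 in \cite{JS}, and a countable dense family in \(\mathbb{H}\) collapses the \(h\)-dependent null sets into a single one. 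Your reduction is more elementary and self-contained, needing only the classical real-valued statement plus separability, and it bypasses the infinite-dimensional facts about predictable local martingales; the paper's argument is shorter given the references and records the extra structural information (continuity, local square-integrability) that it borrows. The one point worth keeping explicit is the normalization \(M_0 = 0\), which you correctly flag as a convention: without it both arguments yield only \(M \equiv M_0\), which is what is actually used in the uniqueness of the decomposition \eqref{B(h) def}.
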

\begin{proof}
Due to Exercise 5.13 in \cite{metivier} all locally square-integrable \((\F, \p)\)-martingales of finite \(\p\)-variation are \(0\) up to \(\p\)-indistinguishability. Due to Corollary 13.8 in \cite{metivier} all \(\F\)-predictable local \((\F, \p)\)-martingales are continuous and hence locally \((\F, \p)\)-square integrable. This yields the claim. 
\end{proof}
We call two square-integrable \((\F, \p)\)-martingales \(N, M\) to be \((\F, \p)\)-\emph{strongly orthogonal}, if for all \(\F\)-stopping times \(\tau\) it holds that \(\E[(N \bullet M)_\tau] = 0\).
This definition extends readily to locally square-integrable \((\F, \p)\)-martingales.
The following is a consequence of Theorem ~20.2, Theorem 23.6 and Exercise 5.13 in \cite{metivier}.
\begin{proposition}\label{decomposition clomp}
Each \(\mathbb{H}\)-valued \((\F, \p)\)-semimartingale \(X\) has a decomposition 
\begin{align*}
X = X_0 + X^c + M^d + V,
\end{align*}
where \(X_0\) is \(\mathcal{F}_0\)-measurable, \(X^c\) is a continuous local \((\F, \p)\)-martingale with \(X_0 = 0\), \(M^d\) is a locally square-integrable \((\F, \p)\)-martingale, strongly orthogonal to \(X^c\), with \(M^d_0 = 0\), and \(V\) is of finite variation with \(V_0 = 0\).
Moreover, \(X^c\) is unique (up to \(\p\)-indistinguishability), and called \textup{the continuous local \((\F, \p)\)-martingale part of \(X\)}.
\end{proposition}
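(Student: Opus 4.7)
The plan is to reduce the general semimartingale case to a locally square-integrable setting where the orthogonal decomposition of \cite{metivier} is directly available, and then to derive uniqueness of $X^c$ from Proposition \ref{pred finie var 0}. Starting from the defining decomposition $X = X_0 + M + V$, with $M$ an $\mathbb{H}$-valued local $(\F,\p)$-martingale, $M_0=0$, and $V$ of finite variation, $V_0=0$, it suffices to decompose $M$ as $M = X^c + M^d + W$, where $X^c$ is a continuous local martingale, $M^d$ is locally square-integrable and strongly orthogonal to every continuous local martingale, and $W$ is of finite variation; absorbing $W$ into $V$ then yields the asserted decomposition.

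First I would truncate the large jumps of $M$. Set $W := \sum_{s\le\cdot}\Delta M_s\,\1_{\{\|\Delta M_s\|>1\}} - A$, where $A$ is the $(\F,\p)$-compensator of the jump-sum; after a standard localization the jump-sum has locally integrable variation (its number of summands is locally finite and each summand is controlled by $\|M\|+\|M_-\|$), so $A$ exists and $W$ is a local $(\F,\p)$-martingale of finite variation with $W_0=0$. The residual $L := M - W$ then has $\|\Delta L\|$ locally bounded, so by Proposition \ref{bj special} (after stopping) $L$ is a special semimartingale; being simultaneously a local martingale with locally bounded jumps, it is locally square-integrable. Theorem 20.2 in \cite{metivier} now furnishes the orthogonal decomposition $L = X^c + M^d$, with $X^c$ a continuous locally square-integrable $(\F,\p)$-martingale starting at $0$ and $M^d$ strongly $(\F,\p)$-orthogonal to every continuous local martingale, in particular to $X^c$. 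Theorem 23.6 in \cite{metivier} identifies $X^c$ as the continuous martingale part in the intrinsic sense.

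The main obstacle is the uniqueness of $X^c$. Given two such decompositions $X = X_0 + X^c_i + M^d_i + V_i$ for $i=1,2$, put $Y := X^c_1 - X^c_2$, a continuous local $(\F,\p)$-martingale with $Y_0=0$ satisfying $Y = (M^d_2 - M^d_1) + (V_2 - V_1)$. Since $Y$ is continuous, comparing jumps gives $\Delta(V_2-V_1) = -\Delta(M^d_2 - M^d_1)$, and hence $V_2 - V_1 = Y - (M^d_2 - M^d_1)$ is itself a local $(\F,\p)$-martingale of finite variation. The strong $(\F,\p)$-orthogonality of $M^d_1$ and $M^d_2$ to every continuous local martingale applies in particular to $Y$, so that (using Exercise 5.13 in \cite{metivier} to identify the quadratic variation of the continuous local martingale $Y$ with its predictable bracket) one obtains $\lle Y, Y\rre^\p = 0$. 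This forces $Y$ to be continuous of finite variation; continuity implies $\F$-predictability, and Proposition \ref{pred finie var 0} then yields $Y\equiv 0$ up to $\p$-indistinguishability, establishing uniqueness of $X^c$.
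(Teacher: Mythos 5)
Your proof is correct and is essentially the paper's own argument made explicit: the paper disposes of this proposition by the citation of Theorem 20.2, Theorem 23.6 and Exercise 5.13 in \cite{metivier}, and your reduction to the locally square-integrable case by compensating the large jumps, followed by M\'etivier's orthogonal decomposition and Proposition \ref{pred finie var 0} for uniqueness, is precisely what that citation encapsulates. The only points you leave implicit --- that the finite-variation local martingale $V_2-V_1$ contributes nothing to $\lle Y, Y\rre^{\p}$ because $Y$ is continuous (this is where its being a local martingale of finite variation is actually used), and that your uniqueness argument relies on strong orthogonality of $M^d_i$ to \emph{all} continuous local martingales, which M\'etivier's decomposition does provide but which is strictly stronger than the orthogonality to $X^c_i$ stated in the proposition (orthogonality to $X^c_i$ alone would not even make the uniqueness claim true, as two independent Brownian motions show) --- are one-line additions that do not affect correctness.
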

\begin{proposition}\label{decomp cont SM}
Assume that \(X\) has \((\F, \p)\)-characteristics \((B(h), C, 0)\). Then \(B(h) = B\), and
\(
X = X_0 + B + X^c.
\)
\end{proposition}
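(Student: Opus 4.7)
The plan is to reduce the statement to the single observation that the assumption $\nu = 0$ forces $X$ to be continuous. Once this is established, the independence of $B(h)$ from the truncation function $h$ becomes automatic, and the identification of $M(h)$ with $X^c$ follows from the uniqueness assertion of Proposition \ref{decomposition clomp}.

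First I would show that $X$ is $\p$-a.s. continuous. By the defining property of the third characteristic, $\nu$ is the $\F$-predictable compensator of the jump measure $\mu^X$ of $X$. Testing against non-negative $\F$-predictable integrands $W$ on $\Omega \times \mathbb{R}^+ \times \mathbb{H}$ yields $\E[(W * \mu^X)_\infty] = \E[(W * \nu)_\infty] = 0$, so $\mu^X = 0$ up to $\p$-indistinguishability. In particular $\Delta X = 0$, so $X$ is continuous, and $(x - h(x)) * \mu^X = 0$, whence the truncated process $X(h)$ entering \eqref{B(h) def} coincides with $X$.

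Next I would note that for two truncation functions $h, h'$ the identity $B(h) - B(h') = (h - h') * \nu$ holds (it arises by subtracting the canonical decompositions of $X(h)$ and $X(h')$ and observing that $(h - h') * (\mu^X - \nu)$ is a local martingale with bounded jumps). Because $\nu = 0$, we conclude $B(h) = B(h')$, and I denote the common process by $B$. Thus \eqref{B(h) def} reads $X = X_0 + M(h) + B$ with $M(h)$ a local $(\F,\p)$-martingale and $B$ an $\F$-predictable \cadlag process of finite variation. Continuity of $X$ forces $\Delta M(h) = -\Delta B$, which is $\F$-predictable. Since the predictable projection of the jump of a local martingale vanishes, $\Delta M(h) = {}^{p}(\Delta M(h)) = 0$, so $M(h)$ is continuous. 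The uniqueness of the continuous local martingale part in Proposition \ref{decomposition clomp} then gives $M(h) = X^c$, and substitution yields $X = X_0 + B + X^c$.

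The main obstacle will be justifying the last step in the Hilbert-space setting, i.e.\ that a local martingale with $\F$-predictable jumps must have vanishing jumps. I plan to handle it either by componentwise reduction against a countable dense subset of $\mathbb{H}$, falling back on the real-valued fact, or more intrinsically: since $\Delta M(h) = -\Delta B$ is locally bounded (being a \cadlag predictable process), $M(h)$ is locally square-integrable and admits a decomposition into continuous and purely discontinuous parts, and the purely discontinuous part is an $\F$-predictable local martingale of finite variation, hence zero by Proposition \ref{pred finie var 0}.
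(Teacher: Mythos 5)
Your proposal is correct and takes essentially the same route as the paper: deduce from \(\nu = 0\) that \(\mu^X = 0\) and hence \(\Delta X = 0\), then identify \(M(h)\) with \(X^c\) through the uniqueness assertion of Proposition \ref{decomposition clomp}; the paper simply compresses the \(h\)-independence of \(B(h)\) and the continuity of \(M(h)\) into ``the uniqueness of \(X^c\) yields the claim'', which you spell out. One small caveat: of your two ways to justify that a local martingale with \(\F\)-predictable jumps is continuous, the first (componentwise reduction via a predictable-time/predictable-projection argument, as the paper itself does in Appendix \ref{Extensions of non-negative local martingales on sets of interval type}) is the right one, while the fallback claim that the purely discontinuous part is \(\F\)-predictable and of finite variation is not justified as stated — but it is not needed.
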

\begin{proof}
Since \(\nu = 0\), it follows that \(\p\big(\mu^X(\mathbb{R}^+ \times \mathbb{H}) \geq 1\big) \leq \E[\mu^X(\mathbb{R}^+ \times \mathbb{H})] = 0,\) which implies \(\Delta X = 0\). The uniqueness 
of \(X^c\) yields the claim. 
\end{proof}
\subsection{Semimartingale Characterstics}\label{Semimartingale Characterstics}
Our next step is to define semimartingale characteristics of Hilbert-space-valued semimartingales, similar to \cite{J79,JS}, c.f. also \cite{Xie}. 

In the following we denote by \(\| \cdot\|_1\) the \emph{trace-norm} on \(\mathbb{H} \otimes \mathbb{H}\), and by \(\mathbb{H}\hspace{0.07cm} \hat{\otimes}_1 \mathbb{H}\) the completion of \(\mathbb{H} \otimes \mathbb{H}\) w.r.t. \(\|\cdot\|_1\), c.f. \cite{metivier2006reelle}, Section 11.
Let \(M\) be an \(\mathbb{H}\)-valued square-integrable \((\F, \p)\)-martingale with \(M_0 = 0\). Due to Theorem 14.3 in \cite{MP80} there exists a unique \cadlag
\(\F\)-predictable \(\mathbb{H}\hspace{0.07cm} \hat{\otimes}_1 \mathbb{H}\)-valued process \(\lle M, M\rre^\p\) of finite variation with \(\lle M, M\rre^\p_0 = 0\), 
such that \(M\otimes M - \lle M, M\rre^\p\) is an \(\mathbb{H} \hspace{0.07cm}\hat{\otimes}_1\mathbb{H}\)-valued \((\F, \p)\)-martingale.

In the more general case where \(M\) is a locally square-integrable \((\F, \p)\)-martingale with \((\F, \p)\)-localizing sequence \((\tau_n)_{n \in \mathbb{N}}\) we may set
\begin{align}\label{local tensor QV}
\lle M, M\rre^\p := \sum_{n \geq 1} \lle M^{\tau_n}, M^{\tau_n}\rre^\p \1_{\gs \tau_{n-1}, \tau_n\gs},
\end{align}
with the convention \(\tau_0 := 0\). 
Clearly, \(\lle M^{\tau_n}, M^{\tau_n}\rre^\p_{\cdot \wedge \tau_m} = \lle M^{\tau_m}, M^{\tau_m}\rre^\p\) for \(m \leq n\), due to uniqueness.
Therefore 
\begin{align*}
M \otimes  M - \lle M, M\rre^\p \in \mathcal{M}_\textup{loc}(\mathbb{H}\hspace{0.07cm} \hat{\otimes}_1 \mathbb{H}, \F, \p),
\end{align*}
and Proposition \ref{pred finie var 0} 
yields that \(\lle M, M\rre^\p\) is the unique \cadlag \(\F\)-predictable 
process of finite variation such that \(M \otimes M - \lle M, M\rre^\p\) is an \(\mathbb{H}\hspace{0.07cm} \hat{\otimes}_1\mathbb{H}\)-valued local \((\F, \p)\)-martingale.
Thanks to the uniqueness, the definition \eqref{local tensor QV} is independent of the localizing sequence.
It is convenient to associate to each element \(b\) in \(\mathbb{H}\hspace{0.07cm} \hat{\otimes}_1 \mathbb{H}\) a \emph{nuclear operator} \(\tilde{b} : \mathbb{H} \to \mathbb{H}\) which is uniquely defined by 
\begin{align*}
\tilde{b}h \bullet g = b \bullet_2 (h \otimes g),\quad (h, g) \in \mathbb{H}\times \mathbb{H},
\end{align*}
where \(\bullet_2\) denotes the usual scalar product \((h_1 \otimes g_1)\bullet_2 (h_2 \otimes g_2) = (h_1 \bullet h_2) (g_1 \bullet g_2)\), \(h_i, g_i \in \mathbb{H}\), c.f. \cite{metivier2006reelle}, Section 11. 
With a slight abuse of notation we denote the nuclear operator associated to \(\lle M,M\rre^\p\) again by \(\lle M,M\rre^\p\).


Now let \(X\) be an \(\mathbb{H}\)-valued \((\F, \p)\)-semimartingale.
We associate to \(X\) the integer-valued random measure 
\begin{align*}
\mu^X(\omega, \dd t, \dd x) := \sum_{s > 0} \1_{\{\|\Delta X_s\| \not = 0\}} \varepsilon_{(s, \Delta X_s)} (\dd t, \dd x),
\end{align*}
and denote by \(\nu^{X, \p}\), or simply by \(\nu\), the \((\F, \p)\)-compensator of \(\mu^X\) defined as in \cite{JS}, Theorem II.1.8.

Next we define the modified \(\mathbb{H}\)-valued semimartingale \(X(h)\) by 
\begin{align}\label{X(h)}
X(h) := X - \sum_{s \leq \cdot} (\Delta X_s - h(\Delta X_s)),
\end{align}
where \(h : \mathbb{H} \to \mathbb{H}\) is a \emph{truncation function}, i.e. a bounded function with \(h(x) = x\) on \(\{x \in \mathbb{H} : \|x\| \leq \epsilon\}\) for some \(\epsilon > 0\).
We note that the sum \(\sum_{s \leq \cdot} (\Delta X_s - h(\Delta X_s))\) is well-defined due to the \cadlag paths of \(X\).
Since \(X(h)\) has obviously bounded jumps, \(X(h)\) is a special \((\F, \p)\)-semimartingale, c.f. Proposition \ref{bj special} in Appendix \ref{A HVS}, and hence admits a unique 
decomposition 
\begin{align}\label{B(h) def}
X(h) = X_0 + M(h) + B(h), 
\end{align}
where \(X_0\) is \(\mathcal{F}_0\)-measurable, \(M(h)\) is a local \((\F, \p)\)-martingale with \(M(h)_0 = 0\), and \(B(h)\) is a \(\F\)-predictable process of finite variation with \(B(h)_0 = 0\).
We finally are in the position to define the semimartingale characteristics of \(X\):
\begin{definition}
The \((\F, \p)\)-\emph{semimartingale characteristics of \(X\)} are given by \((B(h), C, \nu)\) consisting of 
\begin{enumerate}
\item[\textup{(i)}]
the \cadlag \(\F\)-predictable process \(B(h)\) of finite variation as given in \eqref{B(h) def}.
\item[\textup{(ii)}]
\(C = \lle X^c, X^c\rre^\p\), where \(X^c\) is the continuous local \((\F, \p)\)-martingale part of \(X\).\footnote{\(\lle X^c, X^c\rre^\p\) is obviously well-defined since all continuous local \((\F, \p)\)-martingales are locally square-integrable \((\F, \p)\)-martingales}
\item[\textup{(iii)}]
\(\nu\) is the \((\F, \p)\)-compensator of \(\mu^X\).
\end{enumerate}
\end{definition}
\begin{remark}
For each truncation function \(h\), the characteristics are unique. 
\end{remark}

\subsection{Girsanov's Theorem for Hilbert-space-valued Semimartingales}\label{Girsanov's Theorem for Hilbert-Space-valued Semimartingales}
In this appendix we give a generalization of Girsanov's theorem for semimartingales as stated in \cite{JS}, Theorem III.3.24, to Hilbert-space-valued semimartingales.
Since this result is fundamental for our main assertions we provide a detailed proof.

We introduce additional notation.
Let \(X\in \mathcal{S}(\mathbb{H}, \F, \p)\) and let \(\mathbb{K}\) be a further real separable Hilbert space with \(Y \in \mathcal{S}(\mathbb{K}, \F, \p)\). 
Moreover, denote \(\Pi_n := \{0 \leq t_0 < ... < t_k < ...\}\) a partition of \(\mathbb{R}^+\) such that \(\lim_{n \to \infty} \sup_{t_i \in \Pi_n} |t_{i+1} - t_i| = 0\).
Then, c.f. Section 4.1 in \cite{MP80}, for every \(t \geq 0\) the following limit exists
\begin{align}\label{TQV}
\llbr X, Y\rrbr^\p_t := \underset{n \to \infty}{\p\ \textup{-}\lim} \sum_{t_i \in \Pi_n} \big(X_{t_{i+1}\wedge t} - X_{t_{i} \wedge t}\big) \otimes \big(Y_{t_{i+1}\wedge t} - Y_{t_i \wedge t}\big).
\end{align}
The process \(\llbr X, Y\rrbr^\p\) is called \emph{tensor quadratic variation of \(X\) and \(Y\)}.
Similar to \cite{JS}, Section III.3c), we denote by \(M^{\p}_{\mu^X}(Z |\widetilde{\mathcal{P}}(\F))\) the \(\mathcal{P}(\F) \otimes \mathcal{H} =: \widetilde{\mathcal{P}}(\F)\)-conditional \(M^{\p}_{\mu^X}\)-expectation, where 
\begin{align*}
M^{\p}_{\mu^X} (W) = \E[W * \mu^X_\infty],
\end{align*}
for \(\widetilde{\mathcal{P}}(\F)\)-measurable functions \(W\).\footnote{this terminology has its origin in the Russian literature where \(M\) is used for the (mathematical) expectation}

\begin{theorem}[Girsanov's Theorem]\label{GT}
Let \(\q \ll_\textup{loc} \p\) with density process \(Z\), and \(X\) be an \(\mathbb{H}\)-valued \((\F, \p)\)-semimartingale with \((\F, \p)\)-characteristics \((B(h), C, \nu)\).
Moreover, let \(M(h)\) be as in \eqref{B(h) def}. 
Then there exists a unique \cadlag \(\F\)-predictable process \(\lle Z, M(h)\rre^\p\) of finite variation such that \(ZM(h) - \lle Z, M(h)\rre^\p\) is a local \((\F, \p)\)-martingale. Moreover, \(X\) is an \((\F, \q)\)-semimartingale such that its \((\F, \q)\)-characteristics are given by 
\begin{align*}
B'(h) &= B(h) + \frac{\1_{\{Z_- > 0\}} }{Z_-}\cdot \lle Z, M(h)\rre^\p,\   C' = C\textit{ and } \nu'(\dd t, \dd x) = Y(t, x) \nu(\dd t, \dd x),
\end{align*}
where \(Y =\1_{\{Z_- > 0\}}/Z_-
 M^\p_{\mu^X}(Z |\widetilde{\mathcal{P}}(\F)).\)
\end{theorem}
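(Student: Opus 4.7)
The plan is to adapt the strategy of Jacod--Shiryaev (Theorem III.3.24 in \cite{JS}) to the Hilbert-space setting, using separability of \(\mathbb{H}\) to reduce delicate steps to the real-valued case wherever possible. Uniqueness of \(\lle Z, M(h)\rre^{\p}\) is immediate from Proposition \ref{pred finie var 0}: the difference of two candidates is an \(\F\)-predictable local \((\F,\p)\)-martingale of finite variation, hence vanishes up to \(\p\)-indistinguishability. For existence I would use integration by parts in \(\mathbb{H}\) (the scalar \(Z\) times the vector \(M(h)\)),
\[
Z\,M(h) \;=\; Z_{0}\,M(h)_{0} + Z_{-}\cdot M(h) + M(h)_{-}\cdot Z + \llbr Z, M(h)\rrbr^{\p},
\]
noting that the first three terms are local \((\F,\p)\)-martingales, so it suffices to exhibit the \(\F\)-predictable dual projection of \(\llbr Z, M(h)\rrbr^{\p}\). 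Decomposing \(M(h) = M(h)^{c} + M(h)^{d}\) via Proposition \ref{decomposition clomp}, the continuous part contributes the angle bracket \(\lle Z^{c}, M(h)^{c}\rre^{\p}\) directly, while \(\llbr Z, M(h)^{d}\rrbr^{\p}\) is a pure-jump process whose variation is controlled by the bounded jumps of \(M(h)\) times \(|\Delta Z|\); a suitable localization then provides a locally integrable-variation \(\mathbb{H}\)-valued process whose \(\F\)-predictable compensator is the desired \(\lle Z, M(h)\rre^{\p}\).

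Next I would identify the \((\F,\q)\)-drift. Testing against an arbitrary \(v \in \mathbb{H}\), the scalar process \(M(h)\bullet v\) is a real-valued local \((\F,\p)\)-martingale and \(\lle Z, M(h)\rre^{\p}\bullet v\) serves as the classical predictable covariation \(\lle Z, M(h)\bullet v\rre^{\p}\). The classical one-dimensional Girsanov theorem then yields that
\[
M(h)\bullet v \;-\; \frac{\1_{\{Z_{-}>0\}}}{Z_{-}}\cdot \lle Z, M(h)\rre^{\p}\bullet v
\]
is a real-valued local \((\F,\q)\)-martingale for every \(v\). Since \(\mathbb{H}\) is separable, applying this to a countable dense family of \(v\)'s and invoking the equivalence of weak and strong measurability assembles an \(\mathbb{H}\)-valued local \((\F,\q)\)-martingale \(N\). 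Substituting into the canonical decomposition \(X(h) = X_{0} + M(h) + B(h)\) and reading off the predictable finite-variation correction produces the announced \(B'(h)\).

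For the jump compensator \(\nu' = Y\cdot \nu\) I would follow the measure-theoretic computation of III.3.17 in \cite{JS}: for a non-negative \(\widetilde{\mathcal{P}}(\F)\)-measurable \(W\), express \(\E_{\q}[W * \mu^{X}_{\infty}]\) in terms of \(Z\) along a localizing sequence, insert the definition of \(M^{\p}_{\mu^{X}}(Z\,|\,\widetilde{\mathcal{P}}(\F))\), and divide by \(Z_{-}\) on \(\{Z_{-}>0\}\). Invariance of \(C\) follows because the continuous local martingale part depends only on the paths of \(X\) and local absolute continuity preserves the quadratic variation of continuous local martingales. The main obstacle I anticipate is the predictable-compensator step for the discontinuous part of \(\llbr Z, M(h)\rrbr^{\p}\): although \(\|\Delta M(h)\|\) is bounded by construction, one must still control \(|\Delta Z|\) through a joint localization compatible with both the supermartingale property of \(Z\) and the integrability demands of Hilbert-space stochastic calculus.
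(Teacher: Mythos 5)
Your outline follows the same Jacod--Shiryaev blueprint as the paper -- (a) existence and uniqueness of \(\lle Z, M(h)\rre^{\p}\), (b) identification of the \(\q\)-drift via the special decomposition \eqref{B(h) def} and Proposition \ref{pred finie var 0}, (c) invariance of \(C\), (d) Theorem III.3.17 of \cite{JS} for \(\nu'\) -- but your two middle steps take genuinely different routes. For (a) the paper does not split \(M(h)\) into continuous and purely discontinuous parts: it sets \(A := \llbr Z, M(h)\rrbr^{\p}\), stops at \(\rho_n \wedge \gamma_n\) (with \(\gamma_n\) the hitting time of level \(n\) by \(\|A\|\) and \(\rho_n\) localizing \(U = (\sum (\Delta Z)^2)^{1/2}\)), bounds the stopped process by \(n + m\, U_{\rho_n}\), and invokes the quasimartingale/class \([D]\) decomposition (Theorem 13.3 in \cite{metivier}, Proposition 9.14 in \cite{MP80}) to produce the compensator; uniqueness is then exactly your Proposition \ref{pred finie var 0} argument. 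For (b) the paper stays \(\mathbb{H}\)-valued: with \(\tau_n = \inf(t : Z_t < 1/n)\) and integration by parts it shows \(Z^{\tau_n}(M(h)^{\tau_n} - G^{\tau_n})\) is a local \((\F,\p)\)-martingale, deduces as in Proposition III.3.8 (ii) of \cite{JS} that \(M(h) - G\) is a local \((\F,\q)\)-martingale (using that \(\q\)-a.s. \(\tau_n \uparrow \infty\)), and reads off \(B'(h)\). Your scalar reduction through \(M(h)\bullet v\) and the one-dimensional Girsanov theorem is consistent with tools the paper itself uses (the equivalence ``\(\mathbb{H}\)-valued local martingale iff all pairings with \(v\) are local martingales'', cited from \cite{gawarecki2010stochastic} in the proof of Lemma \ref{key1}); to make it airtight you should either arrange one localizing sequence for a countable dense family of \(v\)'s, or, more simply, note that \(X\) is already an \((\F,\q)\)-semimartingale (the paper quotes \cite{metivier}, p.~208), so \(X(h)\) is \(\q\)-special by bounded jumps, and identify its predictable finite-variation part componentwise by uniqueness of the scalar decompositions.

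There is, however, one concrete gap in your existence step, and it is precisely the point you flag as an ``anticipated obstacle'' without resolving it: the variation of the purely discontinuous covariation is \(\sum_{s\le t}|\Delta Z_s|\,\|\Delta M(h)_s\|\), and bounding it by \(m\sum_{s\le t}|\Delta Z_s|\) is not good enough, because \(\sum_{s\le t}|\Delta Z_s|\) need not be locally integrable -- it can even be a.s.\ infinite for a nonnegative local martingale with non-absolutely-summable jumps, and no localization repairs that. The missing ingredient, which is exactly what the paper's bound \(n + m\,U_{\rho_n}\) encodes, is Cauchy--Schwarz over the jumps: \(\sum |\Delta Z_s|\,\|\Delta M(h)_s\| \le \big(\sum (\Delta Z_s)^2\big)^{1/2}\big(\sum \|\Delta M(h)_s\|^2\big)^{1/2} \le U_t \cdot (\text{a locally bounded process})\), where \(U = (\sum(\Delta Z)^2)^{1/2}\) is locally integrable by Corollary I.4.55 in \cite{JS} and \(\sum\|\Delta M(h)\|^2\) is locally bounded since \(\|\Delta M(h)\| \le m\). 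With that, your ``locally integrable variation plus dual predictable projection'' route closes. A smaller soft spot: your justification of \(C' = C\) is too quick -- the continuous local martingale part is not measure-invariant; one needs that \(X^{c,\q}\) and \(X^{c}\) differ by the continuous finite-variation correction \(\1_{\{Z_->0\}}/Z_- \cdot \lle Z^c, X^c\rre^{\p}\), so that their tensor quadratic variations agree, together with invariance of \(\llbr X^c, X^c\rrbr\) under \(\q \ll_{\textup{loc}} \p\); this is the content of the paper's argument via Proposition 4.3 in \cite{MP80} and Theorem 22.8 in \cite{metivier}.
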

\begin{proof}
We mimic the proofs of Theorem III.3.11 and Lemma III.3.14 in \cite{JS}, and Theorem 4.7 in \cite{MP80}.
The theorem on p.208 in \cite{metivier} yields that \(X\) is an \((\F, \q)\)-semimartingale.
We define \(\tau_n = \inf(t \geq 0 : Z_t < 1/n)\) and note that \(1/Z_-\) is well-defined on \(\llbr 0, \tau_n\rrbr\).
Recall that \(\|\Delta M(h)\|\) is bounded by some constant \(m\), due to construction. Moreover, due to Corollary I.4.55 in \cite{JS}, \(U = (\sum_{t \geq 0} (\Delta Z_t)^2)^{1/2}\) is \(\F\)-locally \(\p\)-integrable. Denote an \((\F, \p)\)-localizing sequence by \((\rho_n)_{n \in \mathbb{N}}\) and set \(A := \llbr Z, M(h)\rrbr^\p\). 
Define \(\gamma_n := \inf(t \geq 0 : \|A_t\| \geq n)\) and note that \(\p\)-a.s. \(\gamma_n \uparrow_{n \to \infty} \infty\).
Since
\begin{align*}
\|A_{t \wedge \rho_n \wedge \gamma_n}\| \leq n + \|\Delta A_{t \wedge \rho_n \wedge \gamma_n}\| \leq n + m |\Delta Z_{t \wedge \rho_n \wedge \gamma_n}| \leq n + m U_{\rho_n},
\end{align*}
the stopped process \(A^{\rho_n \wedge \gamma_n}\) is an \((\F, \p)\)-quasi-martingale of class \([D]\), c.f. Definitions 8.6 and 13.1 in \cite{metivier}. 
Theorem 13.3 in \cite{metivier} and Proposition 9.14 in \cite{MP80} imply that a unique \cadlag 
\(\F\)-predictable process \(\lle Z, M(h)\rre^\p\) of finite variation exists, such that \(A - \lle Z, M(h)\rre^\p\) is a local \((\F, \p)\)-martingale.
We define an \(\F\)-predictable process \(G\) of finite variation by
\begin{align*}
G = \frac{\1_{\{Z_- > 0\}}}{Z_-} \cdot \lle Z, M(h)\rre^\p.
\end{align*}
Now, due to Equation 4.1.4 in \cite{MP80}, we have
\begin{align*}
Z^{\tau_n} G^{\tau_n} &= Z_-\1_{\of 0, \tau_n\gs} \cdot G + G_-\1_{\of 0, \tau_n\gs} \cdot Z + \llbr Z, G\rrbr^\p_{\cdot \wedge\tau_n}
\\&= \lle Z, M(h)\rre^\p_{\cdot \wedge \tau_n} + G \1_{\of 0, \tau_n\gs}\cdot Z,
\end{align*}
where we use that, due to the fact that \(G\) is \cadlag\hspace{-0.15cm}, \(\F\)-predictable and of finite variation, 
\begin{align*}
\llbr Z, G\rrbr^\p_{\cdot \wedge \tau_n} 
= (G- G_-)\1_{\of 0, \tau_n\gs} \cdot Z,
\end{align*}
c.f. the proof of Proposition I.4.49 in \cite{JS}.
Hence 
the process \(Z^{\tau_n} G^{\tau_n} - \lle Z, M(h)\rre^\p_{\cdot \wedge \tau_n}\) is a local \((\F, \p)\)-martingale. 
Therefore we obtain that the process
\begin{align*}
( Z^{\tau_n}M(h)^{\tau_n} &- \lle Z, M(h)\rre^\p_{\cdot \wedge\tau_n}) - (Z^{\tau_n} G^{\tau_n} - \lle Z, M(h)\rre^\p_{\cdot \wedge\tau_n}) 
= Z^{\tau_n}(M(h)^{\tau_n} - G^{\tau_n}) 
\end{align*}
is a local \((\F, \p)\)-martingale.
It follows similarly to the proof of Proposition III.3.8 (ii) in \cite{JS} that \(M(h)^{\tau_n} - G^{\tau_n}\) is a local \((\F, \q)\)-martingale. 
Since the class of local martingales is stable under localization and \(\q\)-a.s. \(\tau_n \uparrow_{n \to \infty} \infty\), which holds due to the fact that \(\q(Z_t = 0) = \E[Z_t \1_{\{Z_t = 0\}}] = 0\) for all \(t \geq 0\), we conclude that \(M(h) - G\) is a local \((\F, \q)\)-martingale. 
Hence, in view of \eqref{B(h) def}, the representation of \(B'(h)\) follows from Proposition \ref{pred finie var 0}.
A similar argumentation as above yields that \(X^c + \1_{\{Z_- > 0\}}/Z_- \cdot \lle Z^c, X^c\rre^\p\) is a continuous local \((\F, \q)\)-martingale, where \(\lle Z^c, X^c\rre^\p\) is the unique 
continuous \(\F\)-predictable process of finite variation such that \(Z^c X^c - \lle Z^c, X^c\rre^\p\) is a local \((\F, \p)\)-martingale.
Hence, Proposition 4.3 in \cite{MP80} implies
 \(\llbr X^{c, \q}, X^{c, \q} \rrbr^\q = \llbr X^{c}, X^{c}\rrbr^\q\),
where \(X^{c, \q}\) denotes the continuous local \((\F, \q)\)-martingale part of \(X\), and \(X^{c}\) denotes the continuous local \((\F, \p)\)-martingale part of \(X\).
The assumption that \(\q \ll_\textup{loc} \p\) implies readily that \(\llbr X^{c}, X^{c}\rrbr^\q = \llbr X^c, X^c\rrbr^\p\). 
Due to  Theorem 22.8 in \cite{metivier}, 
it holds that \(\llbr X^c, X^c\rrbr^\p = \lle X^c, X^c\rre^\p = C\).
The last remaining claim concerns the \((\F, \q)\)-compensator of the jump measure associated with \(X\).
This claim follows from an application of the Girsanov-type theorem for random-measures as given in \cite{JS}, Theorem III.3.17. This finishes the proof.
\end{proof}

\begin{remark}
In the case where \(\mathbb{H} = \mathbb{R}^d\), Theorem \ref{GT} boils down to the classical version of Girsanov's Theorem for semimartingales given in \cite{JS}, Theorem III.3.24. 
\end{remark}


\subsection{Hilbert-space-valued Semimartingales on Stochastic Intervals}\label{facts HSVSM SI}
Next we collect the most important consistency facts on semimartingales on stochastic intervals.
\begin{proposition}\label{exist A c}
Let \(X \in \mathcal{S}^A(\mathbb{H}, \F, \p)\). Then the following holds:
\begin{enumerate}
\item[\textup{(i)}]
The \((\F, \p, A)\)-characteristics of \(X\) exist and are unique. 
\item[\textup{(ii)}]
Let  \((B(h), C, \nu)\) be \((\F, \p, A)\)-characteristics of \(X\), and let \(\rho\) be an \(\F\)-stopping time such that
\(\of 0, \rho \gs\subseteq A\). 
Then the \((\F, \p)\)-characteristics of \(X^\rho\in \mathcal{S}(\mathbb{H}, \F, \p)\) are \(\p\)-indistinguishable from \((B^\rho(h), C^\rho, \nu^\rho)\).
\item[\textup{(iii)}]
\((B(h), C, \nu)\) are the \((\F, \p, A)\)-characteristics of \(X\) if and only if for all \(\F\)-stopping times \(\rho\) such that \(\of 0, \rho\gs \subseteq A\) the triplet \((B^{\rho}(h), C^{\rho}, \nu^{\rho})\) is \(\p\)-indistinguishable of the \((\F, \p)\)-characteristics of \(X^{\rho}\). 
\end{enumerate}
\end{proposition}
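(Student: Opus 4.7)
\textbf{Proof plan for Proposition \ref{exist A c}.}

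The plan is to bootstrap from the ordinary case by means of an announcing sequence and then rely on the known behaviour of characteristics under stopping. More precisely, fix once and for all an announcing sequence $(\tau_n)_{n \in \mathbb{N}}$ for $A$. By definition of $\mathcal{S}^A(\mathbb{H}, \F, \p)$, each $X^{\tau_n}$ lies in $\mathcal{S}(\mathbb{H}, \F, \p)$, so it possesses $(\F, \p)$-characteristics $(B^n(h), C^n, \nu^n)$. The workhorse I will invoke is the standard fact that if $Y$ is an $(\F, \p)$-semimartingale with characteristics $(B_Y(h), C_Y, \nu_Y)$ and $\sigma$ is an $\F$-stopping time, then $Y^\sigma$ has characteristics $(B_Y(h)^\sigma, C_Y^\sigma, \indik_{\of 0, \sigma\gs \times \mathbb{H}} \cdot \nu_Y)$; this is proved for $\mathbb{R}^d$-valued semimartingales in \cite{JS}, Proposition IX.5.2, and the Hilbert-space version follows by the same argument using Proposition \ref{pred finie var 0} and the Girsanov machinery of Appendix \ref{Girsanov's Theorem for Hilbert-Space-valued Semimartingales}.

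For (i), apply this stopping result to $X^{\tau_{n+1}}$ with $\sigma = \tau_n$. Since $(X^{\tau_{n+1}})^{\tau_n} = X^{\tau_n}$, uniqueness of the ordinary characteristics forces
\[
B^{n+1}(h)^{\tau_n} = B^n(h), \quad C^{n+1,\tau_n} = C^n, \quad \indik_{\of 0, \tau_n\gs \times \mathbb{H}}\cdot \nu^{n+1} = \nu^n,
\]
up to $\p$-indistinguishability. This consistency lets me glue the families into $\F$-predictable processes $B(h)$, $C$ on $A$ by $B(h) := B^n(h)$ on $\of 0, \tau_n\gs$ (and analogously for $C$), and into an $\F$-predictable random measure $\nu$ on $\mathbb{R}^+\times \mathbb{H}$ satisfying \eqref{nu dec}. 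By construction these coincide upon stopping with $(B^n(h), C^n, \nu^n)$, hence form $(\F, \p, A)$-characteristics. Uniqueness of the $A$-characteristics reduces to the uniqueness of the $(B^n(h), C^n, \nu^n)$ for each $n$ together with the fact that any two announcing sequences may be meshed into a common refinement.

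For (ii), Proposition \ref{subinterval semimartingale} applied to $\mathcal{C} = \mathcal{S}(\mathbb{H}, \F, \p)$ yields $X^\rho \in \mathcal{S}(\mathbb{H}, \F, \p)$, so its ordinary characteristics $(\widetilde{B}(h), \widetilde{C}, \widetilde{\nu})$ exist and are unique. For each $n$ I stop them at $\tau_n$ and get, via the standard stopping result, the characteristics of $(X^\rho)^{\tau_n} = X^{\tau_n \wedge \rho} = (X^{\tau_n})^\rho$. Applying the same stopping result now with the roles reversed gives
\[
(\widetilde{B}(h)^{\tau_n}, \widetilde{C}^{\tau_n}, \indik_{\of 0,\tau_n\gs\times\mathbb{H}}\cdot \widetilde{\nu}) = (B^n(h)^\rho, C^{n,\rho}, \indik_{\of 0,\rho\gs \times \mathbb{H}}\cdot \nu^n) = (B(h)^{\rho,\tau_n}, C^{\rho,\tau_n}, \nu^{\rho,\tau_n}),
\]
using that on $\of 0, \tau_n\gs$ the $A$-characteristics agree with the $n$-th piece. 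Letting $n \to \infty$ and invoking that $(\tau_n)_{n \in \mathbb{N}}$ is announcing for $A \supseteq \of 0, \rho\gs$, the indistinguishability $(\widetilde{B}(h), \widetilde{C}, \widetilde{\nu}) = (B^\rho(h), C^\rho, \nu^\rho)$ follows. For (iii), the $\Longrightarrow$ direction is precisely (ii), and $\Longleftarrow$ is obtained by specialising to $\rho = \tau_n$ for an announcing sequence of $A$, which directly verifies Definition \ref{def A-c}.

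The only delicate point is the consistency check in (i): one must be careful that the stopped random measures $\nu^n$ truly patch into a single predictable random measure $\nu$ with $\nu(\omega, \dd t, \dd x) = \indik_{A\times \mathbb{H}}(\omega, t, x)\nu(\omega, \dd t, \dd x)$, and that independence of the choice of announcing sequence is established by a monotone-class argument on predictable rectangles. Once this bookkeeping is settled, parts (ii) and (iii) are essentially formal consequences.
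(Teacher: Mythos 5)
Your proposal is correct and follows essentially the same route as the paper: existence is obtained by gluing the ordinary characteristics of the stopped processes $X^{\tau_n}$ along an announcing sequence using their consistency under stopping, part (ii) by identifying the characteristics of $X^{\rho}$ with the stopped $A$-characteristics at $\tau_n \wedge \rho$ and letting $n \to \infty$ (the paper invokes Proposition I.2.18 in \cite{JS} for this limit step), part (iii) formally, and uniqueness in (i) by the same stopping-consistency idea (the paper deduces it from (ii), your meshing of two announcing sequences amounts to the same thing). The only cosmetic slip is the reference for the stopping behaviour of characteristics (it is not JS Proposition IX.5.2, and no Girsanov machinery is needed—only uniqueness of the canonical decomposition via Proposition \ref{pred finie var 0} and of the compensator), but the paper uses exactly this standard fact in the same way.
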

\begin{proof}
(i). We first prove existence. Denote by \((\tau_n)_{n \in \mathbb{N}}\) an arbitrary announcing sequence for \(A\). Then for each \(n \in \mathbb{N}\) the \((\F, \p)\)-characteristics of the stopped process \(X^{\tau_n}\) exist. We denote them by \((B^n(h), C^n, \nu^n)\).
Now we \emph{stick them together} by setting
\begin{align*}
B(h) &= \sum_{n \geq 1} \1_{\gs \tau_{n-1}, \tau_n\gs}B^n(h), \ 
C = \sum_{n \geq 1} \1_{\gs \tau_{n-1}, \tau_n\gs} C^n\textup{ and }
\nu = \sum_{n \geq 1} \1_{\gs \tau_{n-1}, \tau_n\gs \times \mathbb{H}} \cdot \nu^n,
\end{align*}
with \(\tau_0 := 0\). 
Obviously, \(B(h), C\) and \(\nu\) are \(\F\)-predictable, and
\begin{equation}\label{cha eq}
\begin{split}
B(h)^{\tau_n} &= \sum_{i \leq n} \1_{\gs \tau_{i-1}, \tau_i\gs} B^i(h) + B^n(h)_{\tau_n} \1_{\gs \tau_n, \infty\of} = B^n(h),\\ 
C^{\tau_n} &= \sum_{i \leq n} \1_{\gs \tau_{i-1}, \tau_i\gs} C^i + C^n_{\tau_n} \1_{\gs \tau_n, \infty\of} = C^n,\\ 
\1_{\of 0, \tau_n\gs\times \mathbb{H}}\cdot\nu &= \sum_{i \leq n} \1_{\gs \tau_{i-1}, \tau_i\gs\times \mathbb{H}}\cdot \nu^i = \nu^n,
\end{split}
\end{equation}
where we used that \(B^n(h)_{\cdot \wedge \tau_i} = B^i(h), C^n_{\cdot \wedge \tau_i} = C^i\) and \(\1_{\of 0, \tau_i\gs \times \mathbb{H}} \cdot \nu^n = \nu^i\) for all \(i \leq n\). 
Now, in view of \eqref{cha eq}, it is obvious that the triplet \((B(h)^{\tau_n}, C^{\tau_n}, \nu^{\tau_n})\) is \(\p\)-indistinguishable from the \((\F, \p)\)-characteristics of \(X^{\tau_n}\), i.e. that \((B(h), C, \nu)\) are \((\F, \p,A)\)-characteristics of \(X\).
Once we have shown part (ii), the uniqueness of the \((\F, \p, A)\)-characteristics is a consequence of the uniqueness of the (ordinary) semimartingale characteristics and Proposition I.2.18 in \cite{JS}.

Therefore we now turn to the proof of (ii). Denote by \((\tau_n)_{n \in \mathbb{N}}\) an arbitrary announcing sequence for \(A\), and by \((\bar{B}(h), \bar{C}, \bar{\nu})\) the (ordinary) \((\F, \p)\)-characteristics of \(X^\rho\). Due to the uniqueness we have for all \(n \in \mathbb{N}\) 
\[(B^{\tau_n \wedge \rho}(h), C^{\tau_n \wedge \rho}, \nu^{\tau_n \wedge \rho}) = (\bar{B}^{\tau_n \wedge \rho}(h), \bar{C}^{\tau_n \wedge \rho}, \bar{\nu}^{\tau_n \wedge \rho}).\]
Now, since \(\of 0, \rho\gs \subseteq A\), letting \(n \to \infty\) and employing Proposition I.2.18 in \cite{JS} yields the claim.

Part (iii) is an immediate consequence of parts (i) and (ii).
\end{proof}
\begin{proposition}\label{clmp}
Let \(X \in \mathcal{S}^A(\mathbb{H}, \F, \p)\), then there exists a unique continuous local \((\F, \p)\)-martingale \(X^c\) on \(A\) such that for all \(\F\)-stopping times \(\rho\) with \(\of 0, \rho\gs\subseteq A\) the stopped process \(X^c_{\cdot \wedge \rho}\) is the continuous local \((\F, \p)\)-martingale part of \(X^\rho\).
\(X^c\) is called \emph{continuous local \((\F, \p)\)-martingale part of \(X\) on \(A\)}.
\end{proposition}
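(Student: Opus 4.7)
The plan is to construct $X^c$ by mimicking the sticking-together argument used in the proof of Proposition \ref{exist A c}. Fix any announcing sequence $(\tau_n)_{n \in \mathbb{N}}$ for $A$. Since $\mathcal{S}(\mathbb{H}, \F, \p)$ is stable under stopping and localization, each $X^{\tau_n}$ is an ordinary $\mathbb{H}$-valued $(\F, \p)$-semimartingale and thus admits, by Proposition \ref{decomposition clomp}, a $\p$-a.s.\ unique continuous local $(\F, \p)$-martingale part, which I denote $(X^{\tau_n})^c$.

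The first key step is a consistency observation: for $m \leq n$, the uniqueness part of Proposition \ref{decomposition clomp} applied to $(X^{\tau_n})^{\tau_m} = X^{\tau_m}$ yields $((X^{\tau_n})^c)^{\tau_m} = (X^{\tau_m})^c$ up to $\p$-indistinguishability. This allows me to define
\begin{equation*}
X^c := \sum_{n \geq 1} \1_{\gs \tau_{n-1}, \tau_n\gs} (X^{\tau_n})^c, \quad \tau_0 := 0,
\end{equation*}
on $A$. The consistency observation immediately yields $(X^c)^{\tau_n} = (X^{\tau_n})^c$ for every $n$, so that $X^c \in \mathcal{M}^A_\textup{loc}(\mathbb{H}, \F, \p)$, and the paths inherit continuity from each $(X^{\tau_n})^c$.

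The main obstacle, and the second key step, is to upgrade the stopping-time property from the chosen announcing sequence $(\tau_n)_{n\in\mathbb{N}}$ to an arbitrary $\F$-stopping time $\rho$ with $\of 0, \rho \gs \subseteq A$. Proposition \ref{subinterval semimartingale} guarantees $X^\rho \in \mathcal{S}(\mathbb{H}, \F, \p)$, so $(X^\rho)^c$ exists. For each $n \in \mathbb{N}$, the process $X^{\tau_n \wedge \rho}$ is the stopping of $X^{\tau_n}$, whose continuous local martingale part is $(X^{\tau_n})^c_{\cdot \wedge \rho} = X^c_{\cdot \wedge \tau_n \wedge \rho}$; it equally is the stopping of $X^\rho$, whose continuous local martingale part is $(X^\rho)^c_{\cdot \wedge \tau_n}$. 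Uniqueness in Proposition \ref{decomposition clomp} applied to $X^{\tau_n \wedge \rho}$ therefore gives $X^c_{\cdot \wedge \tau_n \wedge \rho} = (X^\rho)^c_{\cdot \wedge \tau_n}$ up to $\p$-indistinguishability. Letting $n \to \infty$ and using that $(\tau_n)_{n \in \mathbb{N}}$ announces $A \supseteq \of 0, \rho \gs$ concludes that $X^c_{\cdot \wedge \rho} = (X^\rho)^c$ on $\of 0, \rho \gs$, hence everywhere by the stopping convention.

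Uniqueness of $X^c$ on $A$ is then immediate: if $Y$ is another continuous local $(\F, \p)$-martingale on $A$ with $Y_{\cdot \wedge \rho} = (X^\rho)^c$ for every admissible $\rho$, then in particular $Y_{\cdot \wedge \tau_n} = (X^{\tau_n})^c = X^c_{\cdot \wedge \tau_n}$ for all $n$, so by Corollary \ref{ordinary semimartingale} applied to the class of continuous local martingales (which is stable under stopping and localization) we get $Y = X^c$ up to $\p$-indistinguishability on $A$. Note that independence of the construction from the specific announcing sequence is also a consequence of this uniqueness, applied to two candidates arising from two different announcing sequences.
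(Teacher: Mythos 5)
Your proposal is correct and follows essentially the same route as the paper: construct \(X^c\) by sticking together the continuous local martingale parts \((X^{\tau_n})^c\) along an announcing sequence, then use the uniqueness statement of Proposition \ref{decomposition clomp} applied to the doubly stopped process \(X^{\tau_n \wedge \rho}\) and pass to the limit \(n \to \infty\) to identify \(X^c_{\cdot \wedge \rho}\) with \((X^\rho)^c\) for an arbitrary admissible \(\rho\). The only cosmetic difference is that the paper phrases the limit step via Proposition I.2.18 in \cite{JS}, whereas you argue directly from \(\tau_n \wedge \rho \uparrow \rho\) on \(\of 0, \rho\gs \subseteq A\).
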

\begin{proof}
Denote by \((\tau_n)_{n \in \mathbb{N}}\) an arbitrary announcing sequence for \(A\).
Then let \(X^{c, n}\) be the unique continuous local \((\F, \p)\)-martingale part of \(X^{\tau_n}\), which exists due to Proposition \ref{decomposition clomp}.
We now \emph{stick them together} by setting
\begin{align*}
X^c := \sum_{n \in \mathbb{N}} X^{c, n} \1_{\gs \tau_{n-1}, \tau_n\gs},\textup{ where } \tau_0 := 0.
\end{align*}
Due to uniqueness we have \(X^{c}_{\cdot \wedge \tau_n} = X^{c, n}\), i.e. \(X^c\) is a continuous local \((\F, \p)\)-martingale on \(A\).
Now let \(\rho\) be an \(\F\)-stopping time with \(\of 0, \rho\gs\subseteq A\) and denote the continuous local \((\F, \p)\)-martingale part of \(X^\rho\) by \(\bar{X}\).
Uniqueness yields that we have for all \(\F\)-predictable times \(\gamma\) with \(\of 0, \gamma\gs\subseteq A\), \(\p\)-a.s. \(\bar{X}_{\tau_n \wedge \rho \wedge \gamma} = X^c_{\tau_n \wedge \rho \wedge \gamma}\) for all \(n \in \mathbb{N}\). Now letting \(n \to \infty\) and employing Proposition I.2.18 in \cite{JS} yields that \(\bar{X} = X^c_{\cdot \wedge \rho}\).
This finishes the proof.
\end{proof}
\section{An Integrability Condition to Indentify \((\rho_n)_{n \in \mathbb{N}}\)}\label{An Integrability Condition to Indentify}
This section provides an integrability condition to identify a sequence \((\rho_n)_{n \in \mathbb{N}}\) of stopping times which satisfy Convention \ref{conv}. 
Let \(\widetilde{Z}\) be a non-negative local \((\F, \p)\)-martingale and
set \(C(\widetilde{Z})\) as in \eqref{C(Z)}. 
\begin{lemma}\label{besser als novi lemma}
Let \(\sigma\) be an \(\F\)-stopping time. 
Assume that there exists a non-negative constant ~\(c\), such that 
\begin{align*} 
\frac{\1_{\{\widetilde{Z}_- > 0\}}}{\widetilde{Z}^2_-}\cdot C(\widetilde{Z})^p_{\sigma} \leq c. 
\end{align*}
Then \(\widetilde{Z}^\sigma\) is a uniformly integrable \((\F, \p)\)-martingale.
\end{lemma}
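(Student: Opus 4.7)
The plan is to reduce the question to uniform integrability of a stopped Doléans--Dade exponential via a Hellinger-process argument. First, since $\widetilde{Z}$ is a non-negative local $(\F,\p)$-martingale it stays at zero once it hits zero (Corollary~I.3.16 of \cite{JS}), so on $\{\widetilde{Z}_->0\}$ I introduce the stochastic logarithm $L:=(\1_{\{\widetilde{Z}_->0\}}/\widetilde{Z}_-)\cdot \widetilde{Z}$, which is a local $(\F,\p)$-martingale satisfying $\Delta L>-1$ and $\widetilde{Z}=\widetilde{Z}_0\,\mathcal{E}(L)$ by Theorem~I.4.61 of \cite{JS}; without loss of generality I take $\widetilde{Z}_0=1$.

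Next I identify the quantity in the hypothesis with the predictable Hellinger compensator of $L$. Using $\widetilde{Z}^c=\widetilde{Z}_-\cdot L^c$, $\Delta\widetilde{Z}=\widetilde{Z}_-\Delta L$, and the factorisation $(\widetilde{Z}_{s-}-\sqrt{\widetilde{Z}_s\widetilde{Z}_{s-}})^2=\widetilde{Z}_{s-}^2(1-\sqrt{1+\Delta L_s})^2$, a direct computation yields
\[
\frac{\1_{\{\widetilde{Z}_->0\}}}{\widetilde{Z}_-^2}\cdot C(\widetilde{Z})\;=\;\langle L^c,L^c\rangle+\sum_{s\leq\cdot}\bigl(1-\sqrt{1+\Delta L_s}\bigr)^2.
\]
Since the integrand $\1_{\{\widetilde{Z}_->0\}}/\widetilde{Z}_-^2$ is $\F$-predictable, taking $(\F,\p)$-predictable compensators on both sides produces the analogous identity with $C(\widetilde{Z})^p$ on the left and $\langle L^c\rangle+(1-\sqrt{1+x})^2\ast\nu^L$ on the right, where $\nu^L$ denotes the $(\F,\p)$-compensator of the jump measure of $L$. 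The hypothesis thus becomes a deterministic bound of $c$ on the predictable Hellinger compensator of $L$ up to the stopping time~$\sigma$.

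It then remains to invoke a Hellinger-type uniform integrability criterion for stochastic exponentials: a deterministic bound on the predictable Hellinger compensator of a local martingale $L$ with $\Delta L>-1$ suffices for $\mathcal{E}(L)^\sigma$ to be a uniformly integrable $(\F,\p)$-martingale, in the spirit of the boundedness approach pursued in \cite{J79} and closely related to the Lépingle--Mémin criterion. Combined with the identity $\widetilde{Z}^\sigma=\widetilde{Z}_0\,\mathcal{E}(L)^\sigma$, this yields the claim.

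The main obstacle lies precisely in executing this final step, because our predictable Hellinger compensator is not pointwise comparable to the $x\log x$-type quantity appearing in the classical Novikov--Lépingle--Mémin sufficient condition (so a Hellinger-specific argument is genuinely needed rather than a reduction to the $x\log x$ criterion). The identification in step two across the absorbing set $\{\widetilde{Z}_-=0\}$ is handled painlessly by the indicator, and the compatibility of the uniform integrability criterion with our filtered-space setting (no usual conditions) is justified by the lifting results of Appendix~\ref{A UH}.
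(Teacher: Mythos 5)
Your route is essentially the paper's: the lemma is an application of the boundedness criterion of \cite{J79}, and the ``main obstacle'' you flag does not actually exist, because the Hellinger-type uniform integrability criterion you want to invoke is precisely Theorem 8.25 in \cite{J79} — this is exactly what the paper cites, and it applies directly to the non-negative local martingale \(\widetilde{Z}^\sigma\), so no detour through the stochastic logarithm is needed. The only work the paper does beyond that citation is to remove the indicator: since \(\widetilde{Z}\) is absorbed at zero after \(\xi := \inf(t \geq 0 : \widetilde{Z}_t = 0 \textup{ or } \widetilde{Z}_{t-} = 0)\) (Lemma III.3.6 in \cite{JS}), Theorem I.3.18 in \cite{JS} yields \(\1_{\{\widetilde{Z}_- = 0\}} \cdot C(\widetilde{Z})^p = 0\), so the assumed bound is in fact a bound on \((1/\widetilde{Z}_-^2)\cdot C(\widetilde{Z})^p_\sigma\), the quantity to which Theorem 8.25 applies. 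Your reformulation via \(L\) and the compensator interchange is fine in substance, with one slip: since \(\widetilde{Z}\) may jump to zero, you only get \(\Delta L \geq -1\), with equality possible, so any criterion you quote must allow \(\Delta L = -1\) (Jacod's formulation in terms of \(C(\widetilde{Z})\) does, which is another reason to work with \(\widetilde{Z}\) directly). Your correct remark that the Hellinger quantity is not pointwise dominated by the \(x\log x\) quantity only shows that the L\'epingle--M\'emin condition is the wrong reference here, not that a genuinely new argument has to be supplied.
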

\begin{proof}
We first show that \(\p\)-a.s.
\begin{align}\label{J Z}
\frac{1}{\widetilde{Z}^2_-}\cdot C(\widetilde{Z})^p = \frac{\1_{\{\widetilde{Z}_- > 0\}}}{\widetilde{Z}^2_-} \cdot C(\widetilde{Z})^p.
\end{align}
Due to Lemma III.3.6 in \cite{JS} we have \(\p\)-a.s. \(\widetilde{Z}_t = 0\) for all \(t \geq \xi:= \inf(t \geq 0 : \widetilde{Z}_t = 0\textup{ or } \widetilde{Z}_{t-} = 0)\).
Due to Theorem I.3.18 in \cite{JS}, this implies 
\begin{align*}
\1_{\{\widetilde{Z}_- = 0\}} \cdot C(\widetilde{Z})^p = \big(\1_{\{\widetilde{Z}_- = 0\}} \cdot C(\widetilde{Z})\big)^p=\big(\1_{\{\widetilde{Z}_{\xi -} = 0\}}\1_{\of \xi\gs}\Delta C(\widetilde{Z})_\xi\big)^p= 0,
\end{align*}
which readily yields \eqref{J Z}.
Since 
\(\widetilde{Z}^{\sigma}\) is a non-negative local \((\F, \p)\)-martingale, 
the claim follows thanks to Theorem 8.25 in \cite{J79}. 
\end{proof}
Further localization conditions can be deduced from other integrability conditions in Chapter ~8 of the monograph of \cite{J79}.
\section{Classical Path-Spaces}\label{Classical Path-Spaces}
Particularly important underlying filtered spaces for SMPs are path spaces: Let \(E\) be a Polish space, then
the classical path spaces \((\mathbb{D}^{E}, \mathcal{D}^E, \mathfrak{D}^{E})\), respectively \((\mathbb{C}^E, \mathcal{C}^E, \mathfrak{C}^E)\), are defined as follows:
\(\mathbb{D}^{E}\), respectively \(\mathbb{C}^E\), is defined as the space of all c\`adl\`ag, respectively continuous, mappings \(\mathbb{R}^+ \to E\). \(\mathbb{D}^E\) equipped with the Skorokhod topology is a Polish space, c.f. \cite{EK}, Chapter 3. 
For \(\omega \in \mathbb{D}^E, \omega \in \mathbb{C}^E\) or \(\omega \in \mathbb{D}^{E_\Delta}\) we denote by \(\widehat{X}_t(\omega) = \omega(t)\) the so-called \emph{coordinate process}.
Denote by \(\mathcal{D}^E, \mathcal{C}^E, \mathcal{D}^{E_\Delta}\) respectively, the \(\sigma\)-fields generated by the coordinate process of \(\mathbb{D}^E, \mathbb{C}^E, \mathbb{D}^{E_\Delta}\) respectively.
Moreover, we define the filtrations \(\mathfrak{D}^{E, 0} = (\mathcal{D}^{E, 0}_t)_{t \geq 0}\) and \(\mathfrak{D}^{E} = (\mathcal{D}^E_t)_{t \geq 0} = (\bigcap_{s > t} \mathcal{D}^{E, 0}_s)_{t \geq 0}\) 
with
\(
\mathcal{D}^{E, 0}_t = \sigma(\widehat{X}_s, s \leq t)\) on \((\mathbb{D}^{E}, \mathcal{D}^E)\), \(\mathfrak{C}^{E, 0} = (\mathcal{C}^{E, 0}_t)_{t \geq 0}\) and \(\mathfrak{C}^E = (\mathcal{C}^E_t)_{t \geq 0} = (\bigcap_{s > t} \mathcal{C}^{E, 0}_s)_{t \geq 0}\) with \(\mathcal{C}^{E, 0}_t = \sigma(\widehat{X}_s, s \leq t)\) on \((\mathbb{C}^E, \mathcal{C}^{E})\), and analogously, we define 
\(\mathcal{D}^{E_\Delta, 0}_t = \sigma(\widehat{X}_s, s \leq t)\), \(\mathcal{D}^{E_\Delta}_t = \bigcap_{s > t} \mathcal{D}^{E_\Delta,0}_s\), 
\(\mathfrak{D}^{E_\Delta} = (\mathcal{D}^{E_\Delta}_t)_{t \geq 0}\) and \(\mathfrak{D}^{E_\Delta, 0} = (\mathcal{D}^{E_\Delta, 0}_t)_{t \geq 0}\) on \((\mathbb{D}^{E_\Delta}, \mathcal{D}^{E_\Delta})\).
\begin{remark}
The \(\sigma\)-field \(\mathcal{D}^{E}\) is the topological Borel \(\sigma\)-field on \(\mathbb{D}^{E}\), c.f. \cite{EK}, Proposition 3.7.1.
\end{remark}

We borrow the following definition from \cite{Bichteler02}.
\begin{definition}\label{def full}
We say that a filtered space \((\Omega, \mathcal{F}, \F)\), where \(\F\) is not necessarily right-continuous, is \emph{full}, if the following holds:
\begin{enumerate}
\item[\textup{(i)}]
\(\mathcal{F} = \mathcal{F}_{\infty-} := \bigvee_{t\geq 0} \mathcal{F}_t\).
\item[\textup{(ii)}]
If whenever \((\mathcal{F}_t, \p_t)_{t \geq 0}\) is a \emph{consistent family}, 
i.e. for all \(s < t\) we have 
\begin{align*}
\p_t \big|_{\mathcal{F}_s} = \p_s,
\end{align*}
there exists a probability measure \(\p\) on \((\Omega, \mathcal{F})\) such that for all \(t \geq 0\)
\begin{align*}
\p \big|_{\mathcal{F}_t} = \p_t.
\end{align*}
\end{enumerate}
\end{definition}
\begin{remark}\label{bichteler remark full}
\begin{enumerate}
\item[\textup{(i)}]
Due to \cite{Bichteler02}, p. 167, if \((\Omega, \mathcal{F}, \F)\) is full, then so is \((\Omega, \mathcal{F}, \F^+)\), where 
\[\F^+ := (\mathcal{F}_{t+})_{t \geq 0}
\textup{ and }
\mathcal{F}_{t+} := \bigcap_{s > t} \mathcal{F}_t.\]
\item[\textup{(ii)}]
Proposition 3.9.17 in \cite{Bichteler02} yields that \((\mathbb{D}^E, \mathcal{D}^E, \mathfrak{D}^{E, 0})\) and \((\mathbb{C}^E, \mathcal{C}^E, \mathfrak{C}^{E, 0})\) are full. 
Therefore, due to part (i) of this remark, \((\mathbb{D}^E, \mathcal{D}^E, \mathfrak{D}^{E})\) and \((\mathbb{C}^E, \mathcal{C}^E, \mathfrak{C}^{E})\) are also full. 
\end{enumerate}
\end{remark}

\begin{remark}
We have to impose a small warning concerning the usual conditions. 
Let \((\Omega, \mathcal{F}, \F)\) be full and let \(\p\) be a probability measure on \((\Omega, \mathcal{F}, \F)\).
Then, in general, the completed space \((\Omega, \mathcal{F}^{\hspace{0.03cm} \p}, \F^{\hspace{0.02cm}\p})\) is not full, c.f. \cite{Bichteler02}, Warning 3.9.20.
However, a more careful \textit{enlargement}, called the \textit{natural enlargement} in \cite{Bichteler02}, Definition 1.3.38, has many advantages of the usual completion and preserves the property of fullness, c.f. \cite{Bichteler02}, Proposition 3.9.18 (ii).
\end{remark}

\section{Extension of Probability Measures}\label{Extension of Probability Measures}
In this appendix we shortly discuss extensions of probability measures. 
We call a set \(A \in \mathcal{X}\) an \emph{atom} of a measurable space \((X, \mathcal{X})\), if the relation \(A' \subseteq A,\) \(A' \in \mathcal{X}\) implies \(A' = A\) or \(A' = \emptyset\).
\begin{definition}[\cite{follmer72}, Appendix]
Let \(\mathbb{T}\subseteq \mathbb{R}^+\) be an index set. We call a system \((\Omega, \mathcal{F}_t)_{t \in \mathbb{T}}\) a \emph{standard system}, if the following holds:
\begin{enumerate}
\item[\textup{(i)}]
For all \(t, t' \in \mathbb{T}\) such that \(t \leq t'\) we have \(\mathcal{F}_t \subseteq \mathcal{F}_{t'}\). 
\item[\textup{(ii)}]
For all \(t \in \mathbb{T}\) the measurable space \((\Omega, \mathcal{F}_t)\) is a standard Borel space, c.f. Definition ~V.2.2. in \cite{parthasarathy1967}.
\item[\textup{(iii)}]
For any increasing sequence \((t_n)_{n \in \mathbb{N}}\) in \(\mathbb{T}\), and for any decreasing sequence \((A_n)_{n \in \mathbb{N}}\), where 
 \(A_n\) is an atom of \((\Omega, \mathcal{F}_{t_n})\), we have \(\bigcap_{n \in \mathbb{N}} A_n \not = \emptyset\).
\end{enumerate}
\end{definition}
\begin{remark}
Although one might expect that \((\mathbb{D}^\mathbb{R},\mathcal{D}^{\mathbb{R}, 0}_t)_{t \geq 0}\) and \((\mathbb{C}^\mathbb{R}, \mathcal{C}^{\mathbb{R}, 0}_t)_{t \geq 0}\) are standard systems, this is not the case as the following fact shows:
The function
\begin{align*}
\omega (t) = \sin(1/(t - 1))
\end{align*}
is a continuous real-valued function on \([0, 1)\), but not on \([0,1]\), c.f. \cite{SV} p.17 for a related discussion. 
\end{remark}
\begin{lemma}[\cite{follmer72}, Appendix, \cite{perkowski2015}, Lemma E.1]
The space \((\mathbb{D}^{E_\Delta}, \mathcal{D}^{E_\Delta})\) is a standard Borel space, and for all \(\mathfrak{D}^{E_\Delta}\)-stopping times \(\tau\), the \(\sigma\)-field \(\mathcal{D}^{E_\Delta}_{\tau-}\) is countably generated.
\end{lemma}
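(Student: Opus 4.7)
The plan is to verify the two assertions in order.

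For the first claim, I would start by noting that $E$ is Polish by assumption, and $E_\Delta = E \cup \{\Delta\}$ remains Polish when equipped with the topology making $\Delta$ an isolated point. The Skorokhod space of all càdlàg maps $\mathbb{R}^+ \to E_\Delta$ is then Polish by standard theory (see e.g.\ Ethier--Kurtz, Chapter 3). Our space $\mathbb{D}^{E_\Delta}$ is the subset of paths that are absorbed at $\Delta$, i.e.\ satisfying $\alpha(s) = \Delta$ for all $s \geq \xi(\alpha)$. This absorption condition cuts out a closed (hence Borel) subset of the Skorokhod space, so $\mathbb{D}^{E_\Delta}$ equipped with the relative topology is itself Polish, and its Borel $\sigma$-field coincides with $\mathcal{D}^{E_\Delta}$ because the latter is the $\sigma$-field generated by the coordinate projections, which is a known equivalent description of the Skorokhod Borel structure. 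Thus $(\mathbb{D}^{E_\Delta}, \mathcal{D}^{E_\Delta})$ is a standard Borel space.

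For the second claim, I would proceed in two steps. First, I show that each $\mathcal{D}^{E_\Delta}_t$ is countably generated. Since $E_\Delta$ is Polish, its Borel $\sigma$-field admits a countable generator $\mathcal{E}$. Then $\mathcal{D}^{E_\Delta,0}_t = \sigma(\widehat{X}_s, s \leq t)$ is generated by the countable family $\{\widehat{X}_q^{-1}(B) : q \in \mathbb{Q} \cap [0,t], B \in \mathcal{E}\}$ together with $\widehat{X}_t^{-1}(\mathcal{E})$, using càdlàg paths to recover general $s \leq t$ as right limits of rationals. The intersection $\mathcal{D}^{E_\Delta}_t = \bigcap_{s > t} \mathcal{D}^{E_\Delta,0}_s$ is then also countably generated, because a càdlàg path is determined by its values at rationals together with its value at $t$, so one can exhibit a countable generator directly from the rational-skeleton representation.

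Second, I extend this to $\mathcal{D}^{E_\Delta}_{\tau-}$. By definition, $\mathcal{D}^{E_\Delta}_{\tau-}$ is generated by $\mathcal{D}^{E_\Delta}_0$ together with sets of the form $A \cap \{t < \tau\}$ for $t > 0$ and $A \in \mathcal{D}^{E_\Delta}_t$. Using right-continuity of $\mathfrak{D}^{E_\Delta}$ and the fact that $\{t < \tau\} = \bigcup_{q \in \mathbb{Q}, q > t} \{q \leq \tau\}$, one checks that it suffices to take $t$ from a countable dense subset of $\mathbb{R}^+$ and $A$ from a countable generator of the corresponding $\mathcal{D}^{E_\Delta}_t$, yielding a countable generator of $\mathcal{D}^{E_\Delta}_{\tau-}$.

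The main obstacle is the second step: isolating a countable generator of $\mathcal{D}^{E_\Delta}_{\tau-}$ uniformly in the stopping time $\tau$. The delicate point is that naively one would need to index the generators by all $t \geq 0$, but càdlàg regularity of paths combined with the right-continuity of the filtration permits the restriction to rational times. I would verify this carefully by showing that for any $A \in \mathcal{D}^{E_\Delta}_t$ the event $A \cap \{t < \tau\}$ can be approximated from the right by events in $\mathcal{D}^{E_\Delta}_{q-} \cap \{q \leq \tau\}$ with $q \in \mathbb{Q}$, $q \downarrow t$, which ultimately reduces the generation problem to a countable collection.
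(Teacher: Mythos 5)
The paper itself gives no proof of this lemma (it is quoted from \cite{follmer72} and \cite{perkowski2015}), so your argument has to stand on its own, and as written it has two genuine gaps. The first concerns the standard Borel claim: you treat \(\mathbb{D}^{E_\Delta}\) as the set of absorbed paths inside the Skorokhod space of c\`adl\`ag maps \(\mathbb{R}^+\to E_\Delta\). But by the paper's definition a path \(\alpha\in\mathbb{D}^{E_\Delta}\) is only required to be c\`adl\`ag on \((0,\xi(\alpha))\); no left limit at the lifetime is required. Hence \(\mathbb{D}^{E_\Delta}\) is \emph{not} a subset of the Skorokhod space: the path \(\alpha(t)=\sin(1/(t-1))\) for \(t<1\), \(\alpha(t)=\Delta\) for \(t\geq1\) belongs to \(\mathbb{D}^{\mathbb{R}_\Delta}\) but has no left limit at \(t=1\) — and such paths are precisely the reason this space is used (cf.\ the paper's remark on standard systems). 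So the ``closed subset of a Polish space'' argument proves the claim only for a strictly smaller space. A correct proof must cope with the irregularity at the lifetime, e.g.\ by showing that the rational-skeleton map \(\alpha\mapsto(\alpha(q))_{q\in\mathbb{Q}^+}\) is injective, generates \(\mathcal{D}^{E_\Delta}\), and has Borel image in the standard Borel space \((E_\Delta)^{\mathbb{Q}^+}\), or by the time-change constructions of F\"ollmer and Perkowski--Ruf.

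The second gap is the step ``the intersection \(\mathcal{D}^{E_\Delta}_t=\bigcap_{s>t}\mathcal{D}^{E_\Delta,0}_s\) is then also countably generated.'' Your rational-skeleton reasoning only shows that the raw \(\sigma\)-fields \(\mathcal{D}^{E_\Delta,0}_t\) are countably generated; an intersection of countably generated \(\sigma\)-fields need not be, and for these germ-type \(\sigma\)-fields it generally is not. Indeed, for \(E=\mathbb{R}\) the law of Brownian motion defines a probability measure on \((\mathbb{D}^{\mathbb{R}_\Delta},\mathcal{D}^{\mathbb{R}_\Delta})\) under which every set in \(\bigcap_{s>0}\mathcal{D}^{\mathbb{R}_\Delta,0}_s\) has probability \(0\) or \(1\) (Blumenthal), while the atoms of this \(\sigma\)-field are exactly the germ-equivalence classes (paths agreeing on some \([0,\epsilon]\)), each a countable union of null sets; a countably generated \(\sigma\)-field all of whose events have probability \(0\) or \(1\) would have an atom of full measure — a contradiction. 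Consequently your reduction of \(\mathcal{D}^{E_\Delta}_{\tau-}\) to generators of the \(\mathcal{D}^{E_\Delta}_t\) is built on a false premise. The countable generation of the strict past has to be obtained without it: for \(A\in\mathcal{D}^{E_\Delta}_t\) one writes \(A\cap\{t<\tau\}=\bigcup_{q\in\mathbb{Q},\,q>t}\bigl(A\cap\{q<\tau\}\bigr)\) with \(A\in\mathcal{D}^{E_\Delta,0}_q\), so that only the countably generated \emph{raw} \(\sigma\)-fields at rational times ever enter; the right-continuous \(\sigma\)-fields at fixed times are never needed and, as above, would not help. Even along this route the time-zero \(\sigma\)-field entering the definition of \(\mathcal{D}^{E_\Delta}_{\tau-}\) requires separate care (this is where the precise conventions of the cited references matter), a point your sketch does not address.
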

\begin{remark}\label{remark b3}
\begin{enumerate}
\item[\textup{(i)}]
Let \(\F = (\mathcal{F}_t)_{t \geq 0}\) be a (not-necessarily right-continuous) filtration on \((\Omega, \mathcal{F})\), such that \((\Omega, \mathcal{F}_t)_{t \geq 0}\) is a standard system,
and let \((\rho_n)_{n \in \mathbb{N}}\) be a sequence of \(\F^+\)-stopping times, then \((\Omega, \mathcal{F}^+_{\rho_n-})_{n \in \mathbb{N}}\) is a standard system, c.f. \cite{follmer72}, Remark 6.1.1.
\item[\textup{(ii)}]
\((\mathbb{D}^{E_\Delta}, \mathcal{D}^{E_\Delta, 0}_t)_{t \geq 0}\) is a standard system, c.f. \cite{follmer72}, Example 6.3.2.
\end{enumerate}
\end{remark}
Standard systems are the key element in Parthasarathy's extension theorem, which reads as follows:
\begin{theorem}[\cite{parthasarathy1967}, Theorem V.4.2]\label{extension P}
Let \((\Omega, \mathcal{F}_t)_{t \in \mathbb{T}}\) be a standard system and denote the measurable space \((\Omega, \bigvee_{t \in \mathbb{T}}\mathcal{F}_t) =: (\Omega, \mathcal{F})\). Then 
for each family \((\mu_t)_{t \in \mathbb{T}}\) of probability measures \(\mu_t\) on \((\Omega, \mathcal{F}_t)\) such that
\begin{align*}
\mu_t(A) = \mu_{s}(A)\ \textit{ for all } A \in \mathcal{F}_{s} \textit{ and }  s \leq t,
\end{align*}
there exists a unique probability measure \(\mu\) on \((\Omega, \mathcal{F})\) such that \(\mu(A) = \mu_t(A)\) for all \(A \in \mathcal{F}_{t}\) and \(t \in \mathbb{T}\).
\end{theorem}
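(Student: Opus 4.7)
The plan is to extend $\mu$ via Carathéodory's theorem, using the standard-system hypothesis as the substitute for topological compactness. Since $\mathbb{T} \subseteq \mathbb{R}^+$ is totally ordered and the family $(\mathcal{F}_t)_{t \in \mathbb{T}}$ is monotone, the union $\mathcal{A} := \bigcup_{t \in \mathbb{T}} \mathcal{F}_t$ is an algebra generating $\mathcal{F} = \bigvee_{t \in \mathbb{T}} \mathcal{F}_t$. I would first define $\mu$ on $\mathcal{A}$ by $\mu(A) := \mu_t(A)$ whenever $A \in \mathcal{F}_t$; the consistency hypothesis makes this well-defined and finitely additive. Uniqueness of the eventual extension is then immediate from the $\pi$–$\lambda$ theorem, since $\mathcal{A}$ is $\cap$-stable and generates $\mathcal{F}$.

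The decisive step is $\sigma$-additivity of $\mu$ on $\mathcal{A}$, equivalently continuity at $\emptyset$. I would assume for contradiction that $(A_n) \subset \mathcal{A}$ is decreasing with $\bigcap_n A_n = \emptyset$ but $\mu(A_n) \geq \epsilon$ for all $n$. Choose $t_n \in \mathbb{T}$ with $A_n \in \mathcal{F}_{t_n}$; after replacing $t_n$ by $\max(t_1, \dots, t_n)$ one may assume $t_n$ is increasing. Since each $(\Omega, \mathcal{F}_{t_n})$ is standard Borel, fix Borel isomorphisms $\varphi_n \colon (\Omega, \mathcal{F}_{t_n}) \to (B_n, \mathcal{B}(B_n))$ with $B_n \subseteq [0,1]$ Borel. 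Inner regularity of Borel probability measures on $[0,1]$ yields compact sets $L_n \subseteq \varphi_n(A_n)$ with $(\mu_{t_n} \circ \varphi_n^{-1})(\varphi_n(A_n) \setminus L_n) < \epsilon / 2^{n+1}$; pulling back, $K_n := \varphi_n^{-1}(L_n) \in \mathcal{F}_{t_n}$ satisfies $K_n \subseteq A_n$ and $\mu(A_n \setminus K_n) < \epsilon / 2^{n+1}$.

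Replace $K_n$ by $\widetilde{K}_n := \bigcap_{k \leq n} K_k$; then $(\widetilde{K}_n)$ is decreasing, lies in $A_n$, and has $\mu(\widetilde{K}_n) \geq \epsilon / 2$. In particular each $\widetilde{K}_n$ is nonempty, so it contains an atom $D_n$ of $(\Omega, \mathcal{F}_{t_n})$. By always selecting $D_{n+1}$ inside $\widetilde{K}_{n+1}$ among atoms of $\mathcal{F}_{t_{n+1}}$ lying in $D_n$ (which exist because $D_n \cap \widetilde{K}_{n+1}$ is nonempty and atoms of $\mathcal{F}_{t_{n+1}}$ refine those of $\mathcal{F}_{t_n}$), one obtains a decreasing chain. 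The standard-system property then forces $\bigcap_n D_n \neq \emptyset$; any $\omega$ in this intersection lies in every $A_n$, contradicting $\bigcap_n A_n = \emptyset$. An application of Carathéodory's extension theorem to the now $\sigma$-additive $\mu$ completes the proof.

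The main obstacle is the joint construction of the compact-type approximations $\widetilde{K}_n$ together with the nested atom chain $(D_n)$: one needs the sets $K_n$ from inner regularity to be refinable and simultaneously to contain atoms of $\mathcal{F}_{t_n}$ that can be chosen to nest along $n$. This rests on the fact that under the Borel isomorphism with a subset of $[0,1]$, atoms correspond to singletons, so inner regularity really approximates by sets carrying atoms, and the standard-system axiom is exactly what converts the nested-atom diagonal into a nonempty intersection.
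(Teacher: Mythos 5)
The paper itself states this result as a citation of Parthasarathy (Theorem V.4.2) and gives no proof, so your attempt has to stand on its own. Its skeleton is fine: \(\mathcal{A}=\bigcup_{t\in\mathbb{T}}\mathcal{F}_t\) is an algebra generating \(\mathcal{F}\), the consistency hypothesis gives a well-defined finitely additive \(\mu\) on \(\mathcal{A}\), uniqueness follows from the \(\pi\)--\(\lambda\) theorem, and everything reduces to continuity at \(\emptyset\) along an increasing sequence \((t_n)\). The gap is the nested-atom selection. You pick an atom \(D_n\) of \(\mathcal{F}_{t_n}\) inside \(\widetilde{K}_n\) and then assert that \(D_n\cap\widetilde{K}_{n+1}\neq\emptyset\), so that an atom of \(\mathcal{F}_{t_{n+1}}\) inside \(D_n\cap\widetilde{K}_{n+1}\) can be chosen. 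Nothing supports this: \(D_n\) is a single atom, which under your isomorphism corresponds to one point of \(B_n\) and is typically \(\mu_{t_n}\)-null, while \(\widetilde{K}_{n+1}\) is merely some subset of \(\widetilde{K}_n\) of measure at least \(\epsilon/2\); it can perfectly well miss \(D_n\). For instance, with the constant standard system \(\mathcal{F}_{t_n}=\mathcal{B}((0,1])\) and the decreasing nonempty sets \((0,1/n]\), atoms are singletons and a greedy choice of nested atoms dies at the first step at which the chosen point drops out; condition (iii) is satisfied there and cannot rescue the selection. (In that example the theorem is of course true, but for a different reason: the compacts live in one fixed space and intersect.)

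Relatedly, as written your inner-regularity step does no real work: the compactness of the \(L_n\) is used only through the bound \(\mu(\widetilde{K}_n)\geq\epsilon/2\), i.e.\ through nonemptiness, and nonempty decreasing sets in \(\bigcup_n\mathcal{F}_{t_n}\) do not in general contain a decreasing chain of atoms --- if they did, \(\mathcal{A}\) would be a compact class and \(\sigma\)-additivity would be automatic without any of the hypotheses. The actual content of the theorem is to make compactness act \emph{across} the levels: the atoms of \(\mathcal{F}_{t_{n+1}}\) refine those of \(\mathcal{F}_{t_n}\), giving Borel (not continuous) connecting maps between the atom quotients, so a limit taken at level \(n+1\) need not be consistent with the limit at level \(n\). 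One must either shrink the compacts so that these connecting maps are continuous on them (Lusin's theorem) and then run a diagonal/sequential-compactness argument producing one coherent thread of atoms, to which condition (iii) is finally applied, or pass to the projective limit of the quotient standard Borel spaces via regular conditional probabilities, which is the route taken by Parthasarathy and by F\"ollmer's appendix. Without such a coherence step your chain \((D_n)\) need not exist and the contradiction is not reached. A minor further point: since \(\mathcal{F}_{t_n}\) need not separate points of \(\Omega\), the Borel isomorphism \(\varphi_n\) should be understood as an isomorphism of the atom quotient of \((\Omega,\mathcal{F}_{t_n})\) onto \(B_n\), not as a bijection defined on \(\Omega\) itself.
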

For the discussion in Section \ref{Martingality on Standard Systems} it is convenient that measures on countably generated sub \(\sigma\)-fields have an extension as stated in the following theorem. 
\begin{theorem}[\cite{perkowski2015}, Theorem D.4]\label{extension P2}
Let \((\Omega, \mathcal{F})\) be a standard Borel space and \(\mathcal{G}\subseteq \mathcal{F}\) be countably generated. If \(\mu\) is a measure on \((\Omega, \mathcal{G})\), then there exists a measure \(\bar{\mu}\) on \((\Omega, \mathcal{F})\), such that \(\bar{\mu}|_{\mathcal{G}} = \mu\).
\end{theorem}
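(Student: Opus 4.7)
The plan is to realise $\mathcal{G}$ as the pull-back of a Borel $\sigma$-algebra on a standard Borel image space and then construct $\bar\mu$ from a universally measurable selector.

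First I would pick a countable generator $(G_n)_{n \in \mathbb{N}}$ of $\mathcal{G}$ and define the $\mathcal{F}/\mathcal{B}(\{0,1\}^{\mathbb{N}})$-measurable map
\begin{equation*}
T \colon \Omega \longrightarrow \{0,1\}^{\mathbb{N}}, \qquad T(\omega) := \bigl(\1_{G_n}(\omega)\bigr)_{n \in \mathbb{N}}.
\end{equation*}
Since $T^{-1}$ sends the cylinder $\{y : y_n = 1\}$ back to $G_n$, one checks that $\mathcal{G} = T^{-1}(\mathcal{B}(\{0,1\}^{\mathbb{N}}))$. Hence the push-forward $\nu := T_*\mu$ is a Borel probability measure on the Polish space $\{0,1\}^{\mathbb{N}}$, and any measure $\bar\mu$ on $(\Omega, \mathcal{F})$ with $T_*\bar\mu = \nu$ automatically satisfies $\bar\mu|_{\mathcal{G}} = \mu$, by the change-of-variables identity $\bar\mu(T^{-1}(B)) = \nu(B) = \mu(T^{-1}(B))$ for Borel $B$.

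Next, I would use that $(\Omega, \mathcal{F})$ is standard Borel to conclude that $T(\Omega)$ is an analytic, hence universally measurable, subset of $\{0,1\}^{\mathbb{N}}$ satisfying $\nu(T(\Omega)) = \mu(T^{-1}(T(\Omega))) = 1$. The Borel graph $\{(y,\omega) \in \{0,1\}^{\mathbb{N}} \times \Omega : T(\omega) = y\}$ projects onto $T(\Omega)$, so by the Jankov--von Neumann uniformization theorem there exists a universally measurable section $s \colon \{0,1\}^{\mathbb{N}} \to \Omega$ with $T \circ s = \operatorname{id}$ on $T(\Omega)$ (defined arbitrarily, say constantly equal to some fixed $\omega_0$, off $T(\Omega)$). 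Because every $A \in \mathcal{F}$ is Borel in $\Omega$, the preimage $s^{-1}(A)$ is universally measurable in $\{0,1\}^{\mathbb{N}}$ and in particular $\nu$-measurable.

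Finally, I would set $\bar\mu(A) := \nu(s^{-1}(A))$ for $A \in \mathcal{F}$; countable additivity transfers directly from $\nu$. For $A = T^{-1}(B) \in \mathcal{G}$ one has
\begin{equation*}
s^{-1}(A) \cap T(\Omega) \;=\; (T \circ s)^{-1}(B) \cap T(\Omega) \;=\; B \cap T(\Omega),
\end{equation*}
and since $\nu$ is carried by $T(\Omega)$ this yields $\bar\mu(A) = \nu(B) = \mu(A)$, so $\bar\mu$ extends $\mu$. The main obstacle is measure-theoretic bookkeeping around the selection $s$: it is only universally measurable, so one has to invoke the analytic-set machinery (Lusin--Souslin together with Jankov--von Neumann) to justify the existence of $s$, and then verify that $\nu$ charges neither the non-Borel complement $\{0,1\}^{\mathbb{N}} \setminus T(\Omega)$ nor any non-measurable set arising from $s^{-1}(A)$. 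Everything else is routine.
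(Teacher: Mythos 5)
Your argument is correct, but there is nothing in the paper to compare it against: Theorem \ref{extension P2} is imported by citation from \cite{perkowski2015} (Theorem D.4) and the paper gives no proof of its own. Your proof is a legitimate self-contained derivation along the standard lines: the Marczewski map \(T(\omega) = (\1_{G_n}(\omega))_{n}\) correctly identifies \(\mathcal{G}\) with \(T^{-1}(\mathcal{B}(\{0,1\}^{\mathbb{N}}))\), the image \(T(\Omega)\) is analytic because \((\Omega,\mathcal{F})\) is standard Borel, and pushing \(\nu = T_*\mu\) back through a Jankov--von Neumann section \(s\) yields the desired extension. The only point that deserves to be stated explicitly rather than left implicit is that \(\bar\mu(A) := \nu(s^{-1}(A))\) is really defined via the \emph{completion} of \(\nu\): both \(T(\Omega)\) and the sets \(s^{-1}(A)\) are merely universally measurable, so one should note that \(\nu^*(T(\Omega)) = \mu(T^{-1}(T(\Omega))) = \mu(\Omega)\) forces the completed measure to charge \(T(\Omega)\) fully, and that countable additivity of \(\bar\mu\) is inherited from the completed measure. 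You flag this yourself, and the gap is routine to close; for a general (non-finite) measure \(\mu\) one would additionally reduce to the \(\sigma\)-finite case, but in the paper's applications \(\mu\) is a probability measure, so this is immaterial.
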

Note that \((\Omega, \mathcal{F}, \F)\) may be full, whereas \((\Omega, \mathcal{F}_t)_{t \geq 0}\) is no standard system. 
However, a converse statement holds as we show next.
\begin{lemma}
If \((\Omega, \mathcal{F}_t)_{t \geq 0}\) is a standard system, then \((\Omega, \mathcal{F}_{\infty-},\F)\) is full.
In particular, the filtered spaces \((\mathbb{D}^{E_\Delta}, \mathcal{D}^{E_\Delta}, \mathfrak{D}^{E_\Delta, 0})\) and \((\mathbb{D}^{E_\Delta}, \mathcal{D}^{E_\Delta}, \mathfrak{D}^{E_\Delta})\) are full.
\end{lemma}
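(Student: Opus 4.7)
The plan is to reduce everything to one invocation of Parthasarathy's extension theorem (Theorem \ref{extension P}) together with the small bookkeeping furnished by Remarks \ref{bichteler remark full} and \ref{remark b3}. For the first assertion, I set $\mathcal{F} := \mathcal{F}_{\infty-} = \bigvee_{t\geq 0}\mathcal{F}_t$, so that part (i) of Definition \ref{def full} is satisfied by construction. Given any consistent family $(\mathcal{F}_t, \p_t)_{t\geq 0}$, the consistency requirement $\p_t|_{\mathcal{F}_s} = \p_s$ for $s\leq t$ is exactly the hypothesis of Theorem \ref{extension P} with $\mathbb{T} = \mathbb{R}^+$. Since by assumption $(\Omega, \mathcal{F}_t)_{t\geq 0}$ is a standard system, the theorem supplies a (unique) probability measure $\p$ on $(\Omega, \mathcal{F}_{\infty-})$ satisfying $\p|_{\mathcal{F}_t} = \p_t$ for every $t\geq 0$, which is precisely part (ii) of Definition \ref{def full}.

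For the second assertion, I first invoke Remark \ref{remark b3}(ii), which guarantees that $(\mathbb{D}^{E_\Delta}, \mathcal{D}^{E_\Delta, 0}_t)_{t\geq 0}$ is a standard system. The bookkeeping step is to verify
\[
\mathcal{D}^{E_\Delta} \;=\; \sigma(\widehat{X}_t, t \geq 0) \;=\; \bigvee_{t\geq 0}\sigma(\widehat{X}_s, s\leq t) \;=\; \bigvee_{t\geq 0}\mathcal{D}^{E_\Delta, 0}_t \;=\; \mathcal{D}^{E_\Delta, 0}_{\infty-},
\]
so that the $\sigma$-field produced by the first part coincides with the prescribed $\mathcal{D}^{E_\Delta}$. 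Therefore the first part yields that $(\mathbb{D}^{E_\Delta}, \mathcal{D}^{E_\Delta}, \mathfrak{D}^{E_\Delta, 0})$ is full.

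To pass from $\mathfrak{D}^{E_\Delta, 0}$ to its right-continuous regularization $\mathfrak{D}^{E_\Delta} = (\mathfrak{D}^{E_\Delta, 0})^+$, I apply Remark \ref{bichteler remark full}(i), which says that fullness is preserved under passing to the right-continuous refinement; this immediately gives that $(\mathbb{D}^{E_\Delta}, \mathcal{D}^{E_\Delta}, \mathfrak{D}^{E_\Delta})$ is full as well. I do not anticipate a genuine obstacle: all substantive content has been packaged into the cited results, and the only point requiring a moment's care is confirming the identification $\mathcal{D}^{E_\Delta} = \bigvee_{t\geq 0}\mathcal{D}^{E_\Delta, 0}_t$, which holds because both $\sigma$-fields are generated by the coordinate process.
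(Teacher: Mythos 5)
Your proposal is correct and follows essentially the same route as the paper: the first claim is exactly Parthasarathy's extension theorem applied to the consistent family, and the second claim combines the standard-system property of \((\mathbb{D}^{E_\Delta}, \mathcal{D}^{E_\Delta, 0}_t)_{t \geq 0}\) (Remark \ref{remark b3}) with preservation of fullness under right-continuous regularization (Remark \ref{bichteler remark full}). Your explicit check that \(\mathcal{D}^{E_\Delta} = \bigvee_{t\geq 0}\mathcal{D}^{E_\Delta, 0}_t\) is a detail the paper leaves implicit, but it is the same argument.
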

\begin{proof}
The first claim is an immediate consequence of Parthasarathy's extension theorem. The second claim follows from the fact that \((\mathbb{D}^{E_\Delta}, \mathcal{D}^{E_\Delta, 0}_t)_{t \geq 0}\) is a standard system, c.f. Remark \ref{remark b3} and
Remark \ref{bichteler remark full}.
\end{proof}
\section{Explicit Conditions for Uniqueness and Exsistence}\label{SMPs and SDEs}
Let \(\mathbb{K}\) and \(\mathbb{H}\) be real separable Hilbert spaces and denote by \(\mathcal{L}(\mathbb{K},\mathbb{H})\) the space of all continuous linear operators from \(\mathbb{K}\) to \(\mathbb{H}\) equipped with the usual norm of bounded operators.
Let \(F : \mathbb{R}^+ \times \mathbb{C}^{\mathbb{H}}_0 \to \mathbb{H}\) and \(G : \mathbb{R}^+ \times \mathbb{C}^{\mathbb{H}}_0\to \mathcal{L}(\mathbb{K}, \mathbb{H})\) be \(\mathfrak{C}^{\mathbb{H}}_0\)-predictable
processes such that \(F(\cdot, 0)\) and \(G(\cdot, 0)\) are constant. We again use the notation \(F(\cdot, \omega) =: F_\cdot(\omega)\) and \(G(\cdot, \omega) =: G_\cdot(\omega)\), and denote the adjoint of \(G\) by \(G^*\).
Moreover, 
let \(Q \in \mathcal{N}(\mathbb{K}, \mathbb{K})\) be non-negative and self-adjoint.
We assume that \(GQ^{1/2}\) is a Hilbert-Schmidt operator.
Define \(\mathcal{T}^*\) as the set of all \(\mathfrak{C}^{\mathbb{H}, 0}_0\)-stopping times \(\rho\) such that for all \(R \in \mathcal{C}^{\mathbb{H}}_{0,\rho-}\) we have \(R \cap \{\rho = 0\} \in \mathcal{C}^{\mathbb{H}, 0}_{0,\rho}\).
Obviously, all positive \(\mathfrak{C}^{\mathbb{H},0}_0\)-stopping times are included in \(\mathcal{T}^*\). 
\begin{proposition}\label{theorem existence uniqueness SMP}
Assume the following: 
\begin{enumerate}
\item[\textup{(i)}]
For all \(\alpha \in (0, \infty)\) there exists a positive \cadlag increasing function \(L^\alpha\) such that
\begin{align*}
\ \qquad\quad \|F(t, \omega) - F(t, \omega^*)\|_\mathbb{H} + \|G(t, \omega) - G(t, \omega^*)\|_{\mathcal{L}(\mathbb{K}, \mathbb{H})} \leq L^{\alpha}_t \sup_{s < t} \|\omega(s) - \omega^*(s)\|_\mathbb{H}
\end{align*}
for all \(t \geq 0\) and all \(\omega, \omega^* \in \{\bar{\omega} \in \mathbb{C}^{\mathbb{H}}_0 : \sup_{s < t} \|\bar{\omega}(s)\|_\mathbb{H} \leq \alpha\}\).
\item[\textup{(ii)}] There exists a constant \(\lambda > 0\) such that
\begin{align*}
\|F(t, \omega)\|^2_\mathbb{H} + \|G(t, \omega)\|^2_{\mathcal{L}(\mathbb{K}, \mathbb{H})} \leq \lambda  \bigg(1 + \sup_{ s < t} \|\omega(s)\|^2_\mathbb{H}\bigg),
\end{align*}
for all \((t, \omega) \in \mathbb{R}^+ \times \mathbb{C}^{\mathbb{H}}_0\).
\end{enumerate}
Then the SMP on \((\mathbb{C}^{\mathbb{H}}_0, \mathcal{C}^{\mathbb{H}}_0, \mathfrak{C}^{\mathbb{H}}_0)\) associated with \((\mathbb{H}; \of 0, \infty\of; \varepsilon_0; \widehat{X}; B, C, 0)\), where 
\begin{align}\label{B,C}
B = F(\widehat{X}) \cdot I,\quad C = G(\widehat{X}) Q G^*(\widehat{X})\cdot I, 
\end{align}
has a solution and satisfies \(\mathcal{T}^*\)-uniqueness. 
\end{proposition}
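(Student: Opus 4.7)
My plan is to translate the SMP into a stochastic differential equation of the form $dX_t = F(X)_t\,dt + G(X)_t\,dW_t$ driven by a $(\mathbb{K}, Q, \cdot, \cdot)$-Brownian motion $W$, for which the candidate triplet $(B,C,0)$ in \eqref{B,C} is precisely the set of (continuous) $(\mathfrak{C}^{\mathbb{H}}_0, \p)$-characteristics. Weak existence and $\mathcal{T}^*$-uniqueness for the SMP will then follow from weak existence, pathwise uniqueness, and a Yamada--Watanabe-type principle for this SDE in the Hilbert-space setting.

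For existence, I would take an auxiliary filtered probability space $(\tilde{\Omega}, \tilde{\mathcal{F}}, \tilde{\mathfrak{F}}, \tilde{\p})$ carrying an $(\mathbb{K}, Q, \tilde{\mathfrak{F}}, \tilde{\p})$-Brownian motion $\tilde{W}$, and construct a continuous $\mathbb{H}$-valued $\tilde{\mathfrak{F}}$-adapted process $Y$ with $Y_0 = 0$ solving
\begin{align*}
Y_t = \int_0^t F(Y)_s\,ds + \int_0^t G(Y)_s\,d\tilde{W}_s, \quad t \geq 0.
\end{align*}
Standard infinite-dimensional SDE theory (cf.\ \cite{liu2015stochastic, deprato}) yields such a $Y$: a Picard iteration, localized at the hitting times $\inf\{t\ge 0:\|Y_t\|_\mathbb{H}\ge n\}$, exploits the local Lipschitz bound (i), while the linear growth bound (ii) combined with a Gronwall estimate rules out explosion so that the localizing times tend to infinity $\tilde{\p}$-a.s. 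The pushforward $\p := \tilde{\p}\circ Y^{-1}$ on $(\mathbb{C}^{\mathbb{H}}_0, \mathcal{C}^{\mathbb{H}}_0)$ then satisfies $\p\circ \widehat{X}_0^{-1} = \varepsilon_0$, and a routine verification identifies the $(\mathfrak{C}^{\mathbb{H}}_0, \p)$-characteristics of the coordinate process $\widehat{X}$ with the pushed characteristics of $Y$, namely $(F(\widehat{X})\cdot I,\, G(\widehat{X}) Q G^*(\widehat{X})\cdot I,\, 0) = (B, C, 0)$, using that $F, G$ are functionals of the path so that the characteristics are automatically $\mathfrak{C}^{\mathbb{H}}_0$-predictable. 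Hence $\p$ solves the SMP.

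For $\mathcal{T}^*$-uniqueness, let $\p$ be any solution. Under $\p$, $\widehat{X}$ is a continuous $\mathbb{H}$-valued semimartingale with drift $F(\widehat{X})\cdot I$ and continuous local martingale part $\widehat{X}^c$ of tensor quadratic variation $G(\widehat{X}) Q G^*(\widehat{X})\cdot I$. A Hilbert-space martingale representation theorem (cf.\ \cite{deprato}), applied on a suitable enlargement of $(\mathbb{C}^{\mathbb{H}}_0, \mathcal{C}^{\mathbb{H}}_0, \mathfrak{C}^{\mathbb{H}}_0, \p)$ to accommodate a possibly degenerate $Q$, then produces an $(\mathbb{K}, Q, \cdot, \cdot)$-Brownian motion $\bar{W}$ with $\widehat{X}^c = G(\widehat{X})\cdot \bar{W}$, so that $(\widehat{X}, \bar{W})$ is a weak solution of the SDE. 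Pathwise uniqueness for this SDE is a standard Gronwall argument: for two solutions $Y, Y'$ on a common basis driven by the same $\bar{W}$, setting $\sigma_n := \inf\{t \geq 0 : \|Y_t\|_\mathbb{H}\vee \|Y'_t\|_\mathbb{H} \geq n\}$, It\^o's isometry together with (i) yields a Gronwall inequality for $\E\bigl[\sup_{s \leq t \wedge \sigma_n} \|Y_s - Y'_s\|^2_\mathbb{H}\bigr]$, forcing $Y = Y'$ on $\of 0, \sigma_n\gs$, while (ii) ensures $\sigma_n \uparrow \infty$. Pathwise uniqueness combined with weak existence gives, via Yamada--Watanabe in the Hilbert-space setting, uniqueness in law of the SDE, so any two solutions $\p, \p'$ of the SMP coincide as laws on $\mathcal{C}^{\mathbb{H}}_0$ and \emph{a fortiori} on each $\mathcal{C}^{\mathbb{H}}_{0, \rho-} \subseteq \mathcal{C}^{\mathbb{H}}_0$ for $\rho \in \mathcal{T}^*$. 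The main obstacle I anticipate is the careful execution of the martingale representation step and the Yamada--Watanabe principle in this infinite-dimensional, possibly degenerate, functional-coefficient setting, where the corresponding results are well known in principle but less uniformly formulated in the literature than their finite-dimensional counterparts.
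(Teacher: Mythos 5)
Your overall route (SMP existence via weak existence of the SDE plus transport to the canonical space; SMP uniqueness via a martingale representation theorem, pathwise uniqueness by a localized Gronwall argument, and a Hilbert-space Yamada--Watanabe principle) is exactly the paper's strategy, down to the citations one would use (Theorem 8.2 in \cite{deprato} for the representation step, \cite{RSZ08} for Yamada--Watanabe, which is where the standing hypothesis that \(GQ^{1/2}\) is Hilbert--Schmidt enters). The existence half of your argument is fine.

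The uniqueness half, however, has a genuine gap: you prove the wrong statement. \(\mathcal{T}^*\)-uniqueness is \emph{not} the assertion that any two solutions of the global SMP associated with \((\mathbb{H}; \of 0,\infty\of; \varepsilon_0; \widehat{X}; B, C, 0)\) have the same law on \(\mathcal{C}^{\mathbb{H}}_0\); it is the assertion that, for every \(\rho \in \mathcal{T}^*\), any two solutions of the \emph{stopped} SMP associated with \((\mathbb{H};\rho;\varepsilon_0;\widehat{X};B,C,0)\) coincide on \(\mathcal{C}^{\mathbb{H}}_{0,\rho-}\). A solution of the stopped problem only makes \(\widehat{X}^{\rho}\) a semimartingale with the stopped characteristics and says nothing about the path after \(\rho\), so your representation step can only exhibit \(\widehat{X}^{\rho}\) as a weak solution of the SDE whose coefficients are cut off at \(\rho(X)\), i.e. \(F\1_{\of 0,\rho(X)\gs}\), \(G\1_{\of 0,\rho(X)\gs}\). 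You then need pathwise uniqueness for this \emph{stopped} SDE, and here there is a path-functional subtlety your Gronwall sketch does not address: two solutions \(Y, Y'\) agreeing on \(\of 0, \rho(Y)\wedge\rho(Y')\gs\) must be shown to satisfy \(\rho(Y)=\rho(Y')\), which uses that \(\rho\) is a \(\mathfrak{C}^{\mathbb{H},0}_0\)-stopping time and a Galmarino-type argument (the paper isolates this as Lemma \ref{lemma PU}). Finally, uniqueness in law of the stopped SDE only identifies the two measures on \(\mathcal{C}^{\mathbb{H},0}_{0,\rho}\) (the \emph{raw} filtration), whereas the claim concerns \(\mathcal{C}^{\mathbb{H}}_{0,\rho-}\) built from the right-continuous filtration; bridging these is precisely where the defining property of \(\mathcal{T}^*\) (together with \cite{JS}, III.2.36, which handles \(R\cap\{\rho>0\}\)) is used. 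The fact that \(\mathcal{T}^*\) plays no role anywhere in your argument is the symptom of this gap: as written, your proof would "show" uniqueness for arbitrary stopping times, which is not what the statement asserts and is not what the argument can deliver.
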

The proof of this proposition is based on the connection of SMPs and SDEs, which is well-established in the finite-dimensional case, c.f. \cite{J79}.
From now on let \(\rho\) be a \(\mathfrak{C}^{\mathbb{H}, 0}_0\)-stopping time. 
The following definition is in the spirit of \cite{RSZ08}.

\begin{definition}
A triplet \(((\Omega, \mathcal{F}, \F, \p); W; X)\) is called \emph{solution to the SDE associated with \((Q, F, G, \rho)\)}, if \((\Omega, \mathcal{F}, \F, \p)\) is a filtered probability space which satisfies the usual conditions, supporting a \((\mathbb{K}, Q, \F, \p)\)-Brownian motion \(W\) and an \(\F\)-adapted process \(X\) taking values in \(\mathbb{C}^\mathbb{H}_0\), such that for all \(t \geq 0\), \(\p\)-a.s.
\begin{align}\label{SDE def int}
\|F(X)\|_\mathbb{H} \1_{\of 0, \rho(X)\gs}\cdot I_t + \textup{Tr}(G(X)Q G^*(X))\1_{\of 0, \rho(X)\gs} \cdot I_t < \infty, 
\end{align}
and \(\p\)-a.s.
\begin{align}\label{SDE def int rep}
X = F(X)\1_{\of 0, \rho(X)\gs} \cdot I + G(X)\1_{\of 0, \rho(X)\gs} \cdot W. 
\end{align}
We call \((\Omega, \mathcal{F}, \F, \p; W)\) the \emph{driving system} of the solution \(X\).
\end{definition}
Next we define two types of uniqueness, where the first concept is in the spirit of the monograph of \cite{MP80}.
\begin{definition}
\begin{enumerate}
\item[\textup{(i)}] We say that \emph{pathwise uniqueness} holds for the SDE associated with \((Q,F, G, \rho)\), if whenever \(X\) and \(Y\) are two solutions on the same driving system, we have \(X^{\rho(X) \wedge \rho(Y)} = Y^{\rho(X) \wedge \rho(Y)}\) up to indistinguishability. 
\item[\textup{(ii)}] We say that \emph{uniqueness in law} holds for the SDE associated with \((Q,F, G, \rho)\), if whenever \(X\) and \(Y\) are two solutions on possibly different driving systems, the laws, seen as probability measures on \((\mathbb{C}^\mathbb{H}_0, \mathcal{C}^{\mathbb{H}}_0)\), of \(X\) and \(Y\) coincide. 
\end{enumerate}
\end{definition}

\begin{lemma}\label{lemma PU}
The SDE associated with \((Q,F, G, \rho)\) satisfies pathwise uniqueness if and only if all solutions on the same driving system are indistinguishable. 
\end{lemma}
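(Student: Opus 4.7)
The plan is to handle the two implications separately, with the nontrivial work concentrated in the ``if'' direction. The ``only if'' direction is immediate: if every pair of solutions on the same driving system is indistinguishable, then in particular any two solutions $X, Y$ satisfy $X = Y$ up to indistinguishability, hence \emph{a fortiori} $X^{\rho(X) \wedge \rho(Y)} = Y^{\rho(X) \wedge \rho(Y)}$, which is precisely pathwise uniqueness.

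For the ``if'' direction, let $X, Y$ be two solutions on the same driving system and assume pathwise uniqueness. The first observation is that each solution is automatically constant after its own stopping time: inspecting the defining identity
\[
X = F(X)\mathbf{1}_{\of 0, \rho(X)\gs}\cdot I + G(X)\mathbf{1}_{\of 0, \rho(X)\gs}\cdot W,
\]
the integrands vanish on $\gs \rho(X), \infty\of$, so $X_t = X_{\rho(X)}$ for all $t \geq \rho(X)$, i.e.\ $X = X^{\rho(X)}$ up to indistinguishability, and similarly $Y = Y^{\rho(Y)}$. Pathwise uniqueness gives $X^{\rho(X) \wedge \rho(Y)} = Y^{\rho(X) \wedge \rho(Y)}$ up to indistinguishability, so it suffices to prove $\rho(X) = \rho(Y)$ almost surely, since then $X = X^{\rho(X)} = X^{\rho(X)\wedge \rho(Y)} = Y^{\rho(X)\wedge \rho(Y)} = Y^{\rho(Y)} = Y$.

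The step I expect to be the main obstacle is showing $\rho(X) = \rho(Y)$ a.s.; this is where the precise choice of filtration for $\rho$ matters. Since $\rho$ is a $\mathfrak{C}_0^{\mathbb{H},0}$-stopping time on $\mathbb{C}_0^{\mathbb{H}}$, the event $\{\rho \leq s\}$ lies in $\mathcal{C}_{0,s}^{\mathbb{H},0} = \sigma(\widehat{X}_u, u \leq s)$, so it is a genuine path-functional event: any two paths in $\mathbb{C}_0^{\mathbb{H}}$ that coincide on $[0, s]$ both belong to, or both miss, $\{\rho \leq s\}$. Now fix $\omega$ in the underlying probability space for which $X^{\rho(X)\wedge\rho(Y)}(\omega) = Y^{\rho(X)\wedge\rho(Y)}(\omega)$, so that the paths $X(\omega), Y(\omega) \in \mathbb{C}_0^{\mathbb{H}}$ agree on $[0, \rho(X(\omega))\wedge \rho(Y(\omega))]$. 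Without loss of generality $s := \rho(X(\omega)) \leq \rho(Y(\omega))$; then the two paths agree on $[0, s]$, and since $X(\omega) \in \{\rho \leq s\}$, the path-functional property forces $Y(\omega) \in \{\rho \leq s\}$, i.e.\ $\rho(Y(\omega)) \leq s = \rho(X(\omega))$. Combined with the opposite inequality, we conclude $\rho(X) = \rho(Y)$ almost surely.

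The last paragraph then just concatenates: on the a.s.\ event where both the pathwise uniqueness identity holds and $\rho(X) = \rho(Y)$, the chain $X = X^{\rho(X)} = X^{\rho(X)\wedge\rho(Y)} = Y^{\rho(X)\wedge\rho(Y)} = Y^{\rho(Y)} = Y$ gives the desired indistinguishability, completing the proof.
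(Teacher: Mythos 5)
Your proof is correct and follows essentially the same route as the paper: the nontrivial direction rests on the same Galmarino-type property of the raw filtration on the canonical space (the paper cites Exercise 7.1.21 in Stroock) to deduce \(\rho(X)=\rho(Y)\) a.s.\ from agreement of the stopped paths, with your explicit remark that \(X=X^{\rho(X)}\) merely spelling out what the paper leaves implicit. Only note that you swapped the labels ``if'' and ``only if'' (the direction you call ``only if'' is the trivial one, namely indistinguishability implies pathwise uniqueness), though the content of both implications is argued correctly.
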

\begin{proof}
The implication \(\Longleftarrow\) is trivial. We prove the implication \(\Longrightarrow\). Let \(X\) and \(Y\) be two solutions on the same driving system.
Since \(\rho\) is a \(\mathfrak{C}^{\mathbb{H}, 0}_0\)-stopping time, Exercise 7.1.21 in \cite{stroock2010} implies that
\begin{align}\label{JS s}
\omega, \omega^* \in \mathbb{C}^\mathbb{H}_0 : \omega_s = \omega_s^*\ \textup{ for all } s \leq t \ \Longrightarrow\ \textup{ for all } A \in \mathcal{C}^{\mathbb{H}, 0}_{0,t},\ \omega \in A \Leftrightarrow\omega^* \in A.
\end{align}
Let \(\omega^* \in \mathbb{C}^\mathbb{H}_0\) such that \(t := \rho(\omega^*) < \infty\).
Since \(\omega^* \in \{\omega \in \mathbb{C}^\mathbb{H}_0 : \rho(\omega) = t\} \in \mathcal{C}^{\mathbb{H},0}_{0,t}\), \eqref{JS s} yields that
\begin{align*}
\omega, \omega^* \in \mathbb{C}^\mathbb{H}_0 : \omega_s = \omega_s^*\ \textup{ for all } s \leq \rho(\omega^*)\ \Longrightarrow\ \rho(\omega) = \rho(\omega^*).
\end{align*}
If \(\rho(\omega^*) = \infty\) the same conclusion holds trivially. 
Now symmetry yields 
\begin{align*}
\omega, \omega^* \in \mathbb{C}^\mathbb{H}_0 : \omega_s = \omega^*_s\ \textup{ for all } s \leq \rho(\omega) \wedge \rho(\omega^*) \ \Longrightarrow\ \rho(\omega) = \rho(\omega^*).
\end{align*}
Hence we obtain that \(X^{\rho(X) \wedge \rho(Y)} = Y^{\rho(X) \wedge \rho(Y)}\) up to indistinguishability yields that a.s. \(\rho(X) = \rho(Y)\), i.e. \(X = Y\) up to indistinguishability. This finishes the proof.
\end{proof}
It is known that in this setting a result in the spirit of \cite{YW} holds, i.e. that pathwise uniqueness implies uniqueness in law, c.f. \cite{RSZ08}. 

It is no surprise that the following relation of SMPs and SDEs, which is well-known to be true in the finite dimensional setting, also holds in the infinite dimensional setting. 
\begin{lemma}\label{coincide law}
Let \(\p\) be a solution to the SMP on \((\mathbb{C}^{\mathbb{H}}_0, \mathcal{C}^{\mathbb{H}}_0, \mathfrak{C}^{\mathbb{H}}_0)\) associated with \((\mathbb{H}; \rho;\) \(\varepsilon_0; \widehat{X}; B, C, 0)\), where \(B, C\) are given as in \eqref{B,C},
then \(\p\) coincides on \(\mathcal{C}^{\mathbb{H}, 0}_{0,\rho}\) with the law of a solution to the SDE associated with \((Q,F, G, \rho)\).
\end{lemma}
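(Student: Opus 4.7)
The plan is to start from the SMP solution \(\p\) on \((\mathbb{C}^{\mathbb{H}}_0, \mathcal{C}^{\mathbb{H}}_0, \mathfrak{C}^{\mathbb{H}}_0)\) and enlarge the probability space by an independent \((\mathbb{K}, Q)\)-Brownian motion, then construct on this enlargement a driving Brownian motion \(W\) for which the coordinate process \(\widehat{X}\) satisfies the SDE associated with \((Q, F, G, \rho)\). Since the enlargement does not alter the \(\p\)-law of \(\widehat{X}\), the law of the resulting solution coincides with \(\p\) on \(\mathcal{C}^{\mathbb{H}, 0}_{0,\rho}\) automatically.

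First I would exploit that \(\p\) is an SMP solution with vanishing jump measure on \(\of 0, \rho\gs\). By Proposition \ref{exist A c} combined with Proposition \ref{decomp cont SM}, the stopped process \(\widehat{X}^\rho\) is a continuous \((\mathfrak{C}^{\mathbb{H}}_0, \p)\)-semimartingale admitting the decomposition
\[
\widehat{X}^\rho = F(\widehat{X})\1_{\of 0, \rho\gs} \cdot I + M,
\]
where \(M := (\widehat{X}^c)^\rho\) is a continuous local \((\mathfrak{C}^{\mathbb{H}}_0, \p)\)-martingale whose tensor quadratic variation equals \(\lle M, M\rre^\p = G(\widehat{X}) Q G^*(\widehat{X})\1_{\of 0, \rho\gs} \cdot I\). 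The task then reduces to exhibiting a \((\mathbb{K}, Q)\)-Brownian motion \(W\) with \(G(\widehat{X})\1_{\of 0, \rho\gs} \cdot W = M\).

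Next I would enlarge the space: set \(\bar{\Omega} := \mathbb{C}^{\mathbb{H}}_0 \times \mathbb{C}^{\mathbb{K}}_0\), \(\bar{\mathcal{F}} := \mathcal{C}^{\mathbb{H}}_0 \otimes \mathcal{C}^{\mathbb{K}}_0\) and \(\bar{\p} := \p \otimes \mu_W\), where \(\mu_W\) is the Wiener measure of a \((\mathbb{K}, Q)\)-Brownian motion, and let \(\bar{\F}\) be the natural enlargement of the product filtration, so that the lift \(\widetilde{W}\) of the second coordinate is an independent \((\mathbb{K}, Q, \bar{\F}, \bar{\p})\)-Brownian motion and \(M\) remains a continuous local \((\bar{\F}, \bar{\p})\)-martingale with the same tensor quadratic variation. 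Mimicking the finite-dimensional construction in \cite{KaraShre}, Section 5.3, I would then define
\[
W := \Psi(\widehat{X})\1_{\of 0, \rho\gs} \cdot M + \bigl(\operatorname{Id}_{\mathbb{K}} - \Psi(\widehat{X}) G(\widehat{X})\bigr)\1_{\of 0, \rho\gs} \cdot \widetilde{W} + \1_{\gs\rho, \infty\of} \cdot \widetilde{W},
\]
where \(\Psi(\omega)_t\) is a measurably chosen Moore--Penrose-type pseudo-inverse of \(G(\omega)_t Q^{1/2}\) composed with an appropriate section on \(\operatorname{ran}(Q^{1/2})\), engineered so that \(G\Psi = P_{\overline{\operatorname{ran}(GQ^{1/2})}}\) and \(\Psi\, GQG^* \Psi^* = Q\) after integration.

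A direct computation using Theorem 4.27 in \cite{deprato} and Proposition A.2.2 in \cite{liu2015stochastic} then yields \(\lle W, W\rre^{\bar{\p}} = IQ\), so L\'evy's characterization (recalled in Section \ref{A Case Study - Martingality in Terms of SDEs driven by Hilbert-Space-Valued Brownian Motion}) shows that \(W\) is a \((\mathbb{K}, Q, \bar{\F}, \bar{\p})\)-Brownian motion. The same pseudo-inverse identities give \(G(\widehat{X}) \1_{\of 0, \rho\gs} \cdot W = M\), so \(((\bar{\Omega}, \bar{\mathcal{F}}, \bar{\F}, \bar{\p}); W; \widehat{X})\) is a solution to the SDE associated with \((Q, F, G, \rho)\), and its \(\bar{\p}\)-law on \((\mathbb{C}^{\mathbb{H}}_0, \mathcal{C}^{\mathbb{H}}_0)\) equals \(\p\) by the product construction, hence coincides with \(\p\) on \(\mathcal{C}^{\mathbb{H}, 0}_{0,\rho}\). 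The main obstacle is the careful construction of the pseudo-inverse \(\Psi\) in infinite dimensions: it must be \(\mathfrak{C}^{\mathbb{H}}_0\)-predictable, defined on the possibly non-closed range of \(GQ^{1/2}\), and compatible with the Hilbert-space stochastic integration theory. This step is essentially routine in finite dimensions but genuinely subtle here, and most of the work in turning the sketch into a proof is concentrated there.
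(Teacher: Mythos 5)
Your overall route is the same as the paper's: decompose the stopped coordinate process via Propositions \ref{exist A c} and \ref{decomp cont SM} into \(F(\widehat{X})\1_{\of 0,\rho\gs}\cdot I\) plus a continuous local \((\mathfrak{C}^{\mathbb{H}}_0,\p)\)-martingale \(M\) with \(\lle M,M\rre^\p = G(\widehat{X})QG^*(\widehat{X})\1_{\of 0,\rho\gs}\cdot I\), realize \(M\) as \(G(\widehat{X})\1_{\of 0,\rho\gs}\cdot W\) for a \((\mathbb{K},Q)\)-Brownian motion \(W\) living on an enlargement carrying an independent Wiener process, and then read off the law of the solution. The difference lies in the middle step: the paper closes it in one stroke by citing the martingale representation theorem, Theorem 8.2 in \cite{deprato}, which produces exactly such a \(W\) on exactly such a product extension; you instead propose to rebuild that theorem by hand through a predictable Moore--Penrose-type pseudo-inverse of \(GQ^{1/2}\), and you explicitly leave its construction (measurable selection, the possibly non-closed range of \(Q^{1/2}\), compatibility with the Hilbert-space integration theory) open. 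That unfinished step is precisely the content of the cited theorem, so as written your argument has a genuine gap at its central point --- but it is a gap that is closable by citation rather than by completing the sketched construction, which would amount to reproving a known result.

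There is a second, smaller gap. The SDE associated with \((Q,F,G,\rho)\) requires the solution \(X\) to satisfy \(X = F(X)\1_{\of 0,\rho(X)\gs}\cdot I + G(X)\1_{\of 0,\rho(X)\gs}\cdot W\), so any solution is constant after \(\rho(X)\); consequently the process you must exhibit is the stopped process \(\widehat{X}^{\rho(\widehat{X})}\) (lifted to the extension), not \(\widehat{X}\) itself. To replace \(F(\widehat{X})\), \(G(\widehat{X})\) and \(\rho(\widehat{X})\) by their values at the stopped path, and to see that the law of \(\widehat{X}^{\rho(\widehat{X})}\) agrees with \(\p\) on \(\mathcal{C}^{\mathbb{H},0}_{0,\rho}\) (it does not equal \(\p\) on all of \(\mathcal{C}^{\mathbb{H}}_0\)), the paper invokes a Galmarino-type argument (Exercise 7.1.21 in \cite{stroock2010}) twice; your claim that the law identification is ``automatic'' by the product construction conflates \(\widehat{X}\) with \(\widehat{X}^{\rho(\widehat{X})}\) and skips both of these steps.
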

\begin{proof}
In view of Proposition \ref{decomp cont SM} we have
\begin{align*}
\widehat{X}_{\cdot \wedge \rho(\widehat{X})}= F(\widehat{X}) \1_{\of 0, \rho(\widehat{X})\gs} \cdot I + \widehat{X}^c_{\cdot \wedge \rho(\widehat{X})},
\end{align*}
where \(\widehat{X}^c_{\cdot \wedge \rho(\widehat{X})}\) is a continuous local \((\mathfrak{C}^\mathbb{H}_0, \p)\)-martingale with 
\begin{align*}\lle\widehat{X}^c_{\cdot \wedge \rho(\widehat{X})}, \widehat{X}^c_{\cdot \wedge \rho(\widehat{X})}\rre^\p = G(\widehat{X}) Q G^*(\widehat{X}) \1_{\of 0, \rho(\widehat{X})\gs} \cdot I.\end{align*}
Hence, a classical representation theorem, c.f. Theorem 8.2 in \cite{deprato}, yields that we can find an extension \((\Omega^* \times \mathbb{C}^\mathbb{H}_0, \mathcal{F}^* \otimes \mathcal{C}^\mathbb{H}_0, \bar{\mathfrak{F}^* \otimes \mathfrak{C}^\mathbb{H}_0}, \p^* \otimes \p =: \bar{\p})\), such that 
\begin{align*}
\widehat{X}_{\cdot \wedge \rho(\widehat{X})}\circ \varphi = F(\widehat{X})\1_{\of 0, \rho(\widehat{X})\gs} \circ \varphi \cdot I +  G(\widehat{X})\1_{\of 0, \rho(\widehat{X})\gs} \circ \varphi \cdot W,
\end{align*}
where \(\varphi : \Omega^* \times \mathbb{C}^\mathbb{H}_0 \to \mathbb{C}^\mathbb{H}_0\) with \(\varphi(\omega^*, \omega) = \omega\), and \(W\) is a \((\mathbb{K}, Q, \bar{\mathfrak{F}^* \otimes \mathfrak{C}^\mathbb{H}_0}, \bar{\p})\)-Brownian motion, denoting by \(\bar{\mathfrak{F}^* \otimes \mathfrak{C}^\mathbb{H}_0}\) the \(\bar{\p}\)-completion of \(\mathfrak{F}^* \otimes \mathfrak{C}^\mathbb{H}_0\).
From Exercise 7.1.21 in \cite{stroock2010} we deduce that 
\begin{align}\label{SDE sol rep}
\widehat{X}_{\cdot \wedge \rho(\widehat{X})} \circ \varphi = F(\widehat{X}_{\cdot \wedge \rho(\widehat{X})} \circ \varphi) \1_{\of 0, \rho(\widehat{X}_{\cdot \wedge \rho(\widehat{X})}\circ \varphi)\gs} \cdot I + G(\widehat{X}_{\cdot \wedge \rho(\widehat{X})}\circ \varphi)  \1_{\of 0, \rho(\widehat{X}_{\cdot \wedge \rho(\widehat{X})}\circ \varphi)\gs} \cdot W.
\end{align}
Moreover, the integrability condition \eqref{SDE def int} is satisfied due to the well-definedness of the \((\mathfrak{C}^\mathbb{H}_0, \p)\)-characteristics of \(\widehat{X}\) and the definition of \(\bar{\p}\). 
Since \(\bar{\p} \circ (\widehat{X}_0 \circ \varphi)^{-1} = \p \circ \widehat{X}^{-1}_0 = \eta\), by definition, we conclude from \eqref{SDE sol rep} that \(\widehat{X}^{\rho(\widehat{X})}\circ \varphi\) is a solution to the SDE associated with \((Q, F, G, \rho)\) on the driving system \((\Omega^* \times \mathbb{C}^\mathbb{H}_0, \mathcal{F}^* \otimes \mathcal{C}^\mathbb{H}_0, \bar{\mathfrak{F}^* \otimes \mathfrak{C}^\mathbb{H}_0}, \bar{\p}; W)\).
Due to Exercise 7.1.21 in \cite{stroock2010} we have for \(G \in \mathcal{C}^{\mathbb{H}, 0}_{0,\rho}\) that
\begin{align*}
\bar{\p} \circ (\widehat{X}^{\rho(\widehat{X})} \circ \varphi)^{-1} (G) =\p \circ \widehat{X}_{\cdot \wedge \rho(\widehat{X})}^{-1} (G) = \p(G).
\end{align*}
This finishes the proof .
\end{proof}
\begin{corollary}\label{coro uniqueness SDE SMP}
Let \(\rho \in \mathcal{T}^*\). If the SDE associated with \((Q, F, G, \rho)\) satisfies uniqueness in law, 
then the SMP on \((\mathbb{C}^{\mathbb{H}}_0, \mathcal{C}^{\mathbb{H}}_0, \mathfrak{C}^{\mathbb{H}}_0)\) associated with \((\mathbb{H}; \rho; \varepsilon_0; \widehat{X}; B, C, 0)\), where \(B, C\) are given by \eqref{B,C}, satisfies uniqueness.
\end{corollary}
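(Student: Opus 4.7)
The plan is to reduce the uniqueness statement for the SMP to the hypothesised uniqueness in law for the SDE via Lemma \ref{coincide law}. Let $\p$ and $\p'$ be two solutions of the SMP associated with $(\mathbb{H}; \rho; \varepsilon_0; \widehat{X}; B, C, 0)$. By Lemma \ref{coincide law}, each of them coincides on $\mathcal{C}^{\mathbb{H}, 0}_{0, \rho}$ with the law (seen as a probability measure on $(\mathbb{C}^\mathbb{H}_0, \mathcal{C}^\mathbb{H}_0)$) of some solution $X$, respectively $X'$, to the SDE associated with $(Q, F, G, \rho)$, carried on possibly different driving systems. The assumed uniqueness in law forces $\mathrm{Law}(X) = \mathrm{Law}(X')$ on all of $\mathcal{C}^\mathbb{H}_0$, so restricting yields $\p = \p'$ on $\mathcal{C}^{\mathbb{H}, 0}_{0, \rho}$.

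The remaining task is to upgrade this identity to the $\sigma$-field $\mathcal{C}^\mathbb{H}_{0, \rho-}$ entering the definition of SMP uniqueness. I would show, using $\rho \in \mathcal{T}^*$, the containment $\mathcal{C}^\mathbb{H}_{0, \rho-} \subseteq \mathcal{C}^{\mathbb{H}, 0}_{0, \rho}$. Given $R \in \mathcal{C}^\mathbb{H}_{0, \rho-}$ I split
\[
R = \bigl(R \cap \{\rho > 0\}\bigr) \cup \bigl(R \cap \{\rho = 0\}\bigr).
\]
The boundary piece $R \cap \{\rho = 0\}$ lies in $\mathcal{C}^{\mathbb{H}, 0}_{0, \rho}$ by the very definition of $\mathcal{T}^*$. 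For the other piece I would invoke a Galmarino-type argument: $\mathcal{C}^\mathbb{H}_{0, \rho-}$ is generated by $\mathcal{C}^\mathbb{H}_{0,0}$ together with sets of the form $A \cap \{t < \rho\}$ with $A \in \mathcal{C}^\mathbb{H}_{0, t}$ and $t \geq 0$, and each such set intersected with $\{\rho > 0\}$ is $\mathcal{C}^{\mathbb{H}, 0}_{0, \rho}$-measurable, since on $\{\rho > 0\}$ the restriction of the path to $[0, \rho]$ already decides membership in $A \cap \{t < \rho\}$. Combining the two pieces yields $R \in \mathcal{C}^{\mathbb{H}, 0}_{0, \rho}$, hence $\p(R) = \p'(R)$.

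The delicate point is precisely this second step, which has to bridge the gap between the raw filtration $\mathfrak{C}^{\mathbb{H}, 0}_0$ and its right-continuous refinement $\mathfrak{C}^\mathbb{H}_0$: without the $\mathcal{T}^*$ restriction, $\mathcal{C}^\mathbb{H}_{0,0}$ can be strictly larger than $\mathcal{C}^{\mathbb{H}, 0}_{0, 0}$, and the inclusion $\mathcal{C}^\mathbb{H}_{0, \rho-} \subseteq \mathcal{C}^{\mathbb{H}, 0}_{0, \rho}$ would fail on the atom $\{\rho = 0\}$. The membership condition defining $\mathcal{T}^*$ is exactly what is needed to circumvent this, and once the $\sigma$-field inclusion is established the corollary is an immediate consequence of the transfer in the first paragraph.
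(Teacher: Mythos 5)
Your argument is correct and takes essentially the same route as the paper: decompose each \(R \in \mathcal{C}^{\mathbb{H}}_{0,\rho-}\) over \(\{\rho = 0\}\) (handled by the definition of \(\mathcal{T}^*\)) and \(\{\rho > 0\}\), conclude \(\mathcal{C}^{\mathbb{H}}_{0,\rho-} \subseteq \mathcal{C}^{\mathbb{H},0}_{0,\rho}\), and combine this with Lemma \ref{coincide law} and uniqueness in law for the SDE. The only difference is that the inclusion \(R \cap \{\rho > 0\} \in \mathcal{C}^{\mathbb{H},0}_{0,\rho}\), which you verify by hand through the Galmarino-type generator argument, is simply quoted in the paper from Lemma III.2.36 in \cite{JS}.
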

\begin{proof}
First note that for all \(R \in \mathcal{C}^{\mathbb{H}}_{0,\rho-}\) we have \(R \cap \{\rho >0\} \in \mathcal{C}^{\mathbb{H}, 0}_{0,\rho}\), c.f. \cite{JS}, III.2.36.
Now the claim follows from the definition of the set \(\mathcal{T}^*\) and Lemma \ref{coincide law}.
\end{proof}
\hspace{-0.6cm}
\textit{Proof of Proposition \ref{theorem existence uniqueness SMP}:}
Due to Theorem 7.2 of \cite{MP80} there exists a solution to the SDE associated with \((Q, F, G, \infty)\). 
Then transporting the solution onto the space \((\mathbb{C}^\mathbb{H}_0, \mathcal{C}^\mathbb{H}_0, \mathfrak{C}^\mathbb{H}_0)\) equipped with the law of the solution as done in the proof of Theorem 14.80 in \cite{J79} yields the existence of the claimed solution to the SMP.
We fix an arbitrary \(\rho \in \mathcal{T}^*\).
Proposition 6.11 in \cite{MP80} yields that the SDE associated with \((Q, F, G, \rho)\) satisfies pathwise uniqueness. 
In view of Lemma \ref{lemma PU}, thanks to the assumption that \(GQ^{1/2}\) is a Hilbert-Schmidt operator, we may apply the classical Yamada-Watanabe-type result given as in Remark 1.10 and Theorem 2.1 in \cite{RSZ08}. This yields that the SDE associated with \((Q, F, G, \rho)\) satisfies uniqueness in law. 
Hence Corollary \ref{coro uniqueness SDE SMP} yields the claimed uniqueness of the SMP.
This finishes the proof.
\qed


\bibliographystyle{plain}
\bibliography{References}

\end{document}